\newcommand\AddLabel[1]{%
  \refstepcounter{equation}
  (\theequation)
  \label{#1}
}
\newcolumntype{M}{>{\hfil$\displaystyle}X<{$\hfil}} 
\newcolumntype{L}{>{\collectcell\AddLabel}r<{\endcollectcell}}
\definecolor{mysquare}{HTML}{ecb434}
\definecolor{mytriangle}{HTML}{1ca3b3}
\definecolor{mybigcube}{HTML}{fb8484}
\title{Derivative-Free Policy Optimization for Linear Risk-Sensitive and \\ Robust Control Design: Implicit Regularization and Sample Complexity} 
\begin{document}
\setlength\abovedisplayskip{5pt}
\setlength\belowdisplayskip{5pt}
\author{Kaiqing Zhang$^{\natural}$ \and Xiangyuan Zhang$^{\natural}$ \and Bin Hu \and Tamer Ba\c{s}ar {\let\thefootnote\relax\footnote{Equal contribution.}\footnote{{The authors are with the Department of Electrical and Computer Engineering and the Coordinated Science Laboratory at the University of Illinois at Urbana-Champaign, Urbana, IL 61801. Emails: \{kzhang66, xz7, binhu7, basar1}\}@illinois.edu. Part of this work was done while K. Zhang was visiting the Simons Institute for the Theory of Computing.} }}
\date{}  
  
\maketitle      
   
\begin{abstract} 
Direct policy search serves as one of the workhorses in modern reinforcement learning (RL), and its applications in continuous control tasks have recently attracted increasing attention. In this work, we investigate the convergence theory of policy gradient (PG)  methods for learning the linear risk-sensitive and robust controller. In particular, we develop PG methods that can be implemented in a \emph{derivative-free} fashion by sampling system trajectories, and establish both global convergence and sample complexity results in the solutions of two fundamental settings in risk-sensitive and robust control: the finite-horizon linear exponential quadratic Gaussian, and the finite-horizon linear-quadratic disturbance attenuation problems. As a by-product, our results also provide the first sample complexity for the global convergence of PG methods on solving zero-sum  linear-quadratic dynamic games, a nonconvex-nonconcave minimax optimization problem that serves as a baseline setting in multi-agent reinforcement learning (MARL) with continuous spaces. One feature of our algorithms is that during the learning phase, a certain level of \emph{robustness/risk-sensitivity} of the controller is preserved, which we termed as the \emph{implicit regularization} property, and is an essential requirement in safety-critical control systems.   
\end{abstract}   

\section{Introduction}
Recent years have witnessed the rapid development of reinforcement learning (RL) methods in handling continuous control tasks \cite{schulman2015high, lillicrap2015continuous, recht2019tour}. Central to the success of RL are policy optimization (PO) methods, including policy gradient (PG) \cite{sutton2000policy, kakade2002natural, zhang2019global}, actor-critic \cite{konda2000actor, bhatnagar2009natural}, and other variants \cite{schulman2017proximal, schulman2015trust}. Progress reported in the literature has clearly shown an increasing interest in understanding theoretical properties of PO methods for relatively simple baseline problems such as various linear control problems
\cite{fazel2018global, tu2019gap, bu2019LQR, zhang2019policy, zhang2019policymixed, mohammadi2019convergence, furieri2019learning, malik2020derivative, jansch2020convergence,fatkhullin2020optimizing}. However, the theory of model-free PO methods on \emph{risk-sensitive/robust} control remains underdeveloped in the literature.  Since risk-sensitivity and robustness are important issues for designing  safety-critical systems, it is natural to bring up the questions of whether and how model-free PO methods would converge for these continuous control tasks.

Our work in this paper is motivated by the above concern, and
studies the sample complexity of model-free PG methods on two important baseline problems in risk-sensitive/robust control, namely the linear exponential quadratic Gaussian (LEQG), and the linear quadratic (LQ) disturbance attenuation problems. The former covers a fundamental setting in risk-sensitive control, and the latter is an important baseline for robust control. Based on the well-known equivalence between these problems and LQ dynamic games \cite{jacobson1973optimal, glover1988state, basar1991dynamic, bacsar2008h, bacsar1998dynamic}, we develop a \emph{unified} PO perspective for both. 
A common feature for the above two problems is that their optimization landscapes are by nature more challenging than that of the linear quadratic regulator (LQR) problem, and existing proof techniques for model-free PG methods \cite{fazel2018global, mohammadi2019convergence, furieri2019learning, malik2020derivative} are no longer effective due to lack of coercivity of the objective functions. Specifically, when applying PG methods to LQR, the feasible set for the resultant constrained optimization problem  is the set of all linear state-feedback controllers that \emph{stabilize} the closed-loop dynamics. The objective function of LQR is coercive on this feasible set and serves as a barrier function itself \cite{bu2019LQR}, guaranteeing for the PG iterates to stay in the feasible set and converge to the globally optimal controller. For the LEQG and LQ disturbance attenuation problems, the risk-sensitivity/robustness conditions have reshaped the feasible set in a way that the objective function becomes non-coercive, i.e. the objective value can remain finite while approaching the boundary. The objective is no longer a barrier function, and new proof techniques are needed to show that model-free PG iterates will stay in the feasible set. This is significant for safety-critical control systems with model uncertainty, as the iterates' feasibility here is equivalent to \emph{risk-sensitivity/robustness} of the controller (cf. Remark \ref{remark:ir}), and the  failure to preserve robustness during learning can cause catastrophic effects, e.g., destabilizing the system in face of disturbances.

The most relevant result was developed in \cite{zhang2019policymixed}, which proposes implicit regularization (IR) arguments to show the convergence of several PG methods on the mixed $\cH_2/\cH_{\infty}$ control design problem (which can be viewed as the infinite-horizon variant of the LQ disturbance attenuation problem studied in this paper). The main finding there is that two specific PG search directions (with perfect model information)  are automatically biased towards the interior of the robustness-related feasible set.
In \cite{zhang2019policymixed},  it is emphasized that IR is a feature of both the \emph{problem} and the \emph{algorithm}, contrasting to that the stability-preserving nature of PG methods for LQR problems is based on the barrier function property of the objective and hence is \emph{algorithm-agnostic}. Although the idea of IR is relevant to PO problems with non-coercive objective functions,  the arguments in \cite{zhang2019policymixed} only apply to the setting with a \emph{known} model, since they rely on a specialized perturbation technique which may potentially generate arbitrarily small ``margins". However, in the model-free setting, a uniform margin is required for provable tolerance of statistical errors. 
 
In this paper, for the LEQG and LQ disturbance attenuation problems, we overcome the above margin issue and obtain the first IR result in the model-free setting. This enables the first model-free PG method that provably solves these control problems with a finite {number of} samples. We highlight our contributions as follows. 

\paragraph{Contributions.} 
We provide the first sample complexity results for model-free PG methods for solving linear control problems with risk-sensitivity/robustness concerns  (the LEQG and LQ disturbance attenuation problems), which was viewed as an important open problem in the seminal work \cite{fazel2018global}. From the robust control perspective, one feature of our algorithms is that, during the learning process, a certain level of \emph{robustness/risk-sensitivity} of the controller is proved to be preserved. This has  generalized the results in \cite{zhang2019policymixed} with a known model, and has thus enabled the finite-sample convergence guarantees of PO methods for risk-sensitive/robust control design. Our algorithms and sample complexity results also address two-player zero-sum LQ dynamic games in the finite-horizon time-varying setting, which are among the first sample complexity results for the global convergence of policy-based methods for competitive multi-agent RL. Second, in the context of minimax optimization,  our {results address a class of {nonconvex-nonconcave minimax} constrained optimization problems, using \emph{zeroth-order} multi-step gradient descent-ascent methods. Finally, part of our results provide the sample complexity analysis for PG methods that solve the finite-horizon time-varying LQR problem with system noises and a possibly \emph{indefinite} state-weighting matrix. 

\subsection{Related Work}
\noindent\textbf{Risk-Sensitive/Robust Control and Dynamic Games. } $\cH_{\infty}$-robust control has been one of the most fundamental research fields in control theory, addressing worst-case controller design for linear plants in the presence of unknown disturbances and uncertainties. Frequency-domain and time-domain/state-space formulations of the $\cH_{\infty}$-robust control problem were first introduced in \cite{zames1981feedback} and \cite{doyle1989state}, respectively. Based on the time-domain approach, the precise equivalence relationships between controllers in the disturbance attenuation (which belongs to a class of $\cH_{\infty}$-robust control) problem, risk-sensitive linear control, and zero-sum LQ dynamic games have been studied extensively \cite{jacobson1973optimal, glover1988state, basar1991dynamic, basar1990minimax, bacsar1998dynamic, bacsar2008h}. Specifically, \cite{jacobson1973optimal} first demonstrated the equivalence of the controllers between LEQG and zero-sum LQ (differential) games; \cite{glover1988state} studied the relationship between the mixed $\cH_2/\cH_{\infty}$ design problem, a sub-problem of the $\cH_{\infty}$-robust control problem, and LEQG; \cite{basar1991dynamic, basar1990minimax} investigated the disturbance attenuation problem through a dynamic games approach, for both finite- and infinite-horizons, time-invariant and -varying, deterministic and stochastic settings. We refer the readers to \cite{bacsar1998dynamic, bacsar2008h} for comprehensive studies of $\cH_{\infty}$-robust control, dynamic game theory, risk-sensitive control, and also their precise interconnections.
 
\vspace{0.5em}\noindent\textbf{Policy Optimization for LQ Control. }
Applying PG methods to LQ control problems has been investigated in both control and learning communities extensively.  In contrast to the early work on this topic \cite{rautert1997computational, maartensson2009gradient}, the recent study focuses more on theoretical aspects such as global convergence and sample complexity of these PG methods~\cite{fazel2018global, tu2019gap, bu2019LQR, zhang2019policy, bu2019global, mohammadi2019convergence, zhang2019policymixed,  gravell2019learning, malik2020derivative, bu2020global, mohammadi2020linear}.  Specifically, \cite{fazel2018global} was the first work to show the global convergence of PG methods for the LQR problem. Initial sample complexity results for the {derivative}-free PG methods based on zeroth-order optimization techniques were also reported in \cite{fazel2018global}. Subsequently, \cite{bu2019LQR} characterized the optimization landscape of the LQR in detail,  and provided an initial extension toward the distributive LQR. Building upon these works, \cite{malik2020derivative} enhanced the sample complexity result by adopting a two-point zeroth-order optimization method, in contrast to the single-point method adopted in \cite{fazel2018global}. The sample complexity result of the two-point method was further improved recently in both continuous- \cite{mohammadi2019convergence} and discrete-time LQR \cite{mohammadi2020linear}. Moreover, \cite{bu2020global} studied the global convergence of PG methods for the LQR where both the state and control weighting matrices are indefinite, but without addressing the {derivative}-free setting; \cite{zhang2019policy, bu2019global} derived global convergence of PG methods for zero-sum LQ dynamic games, also pointing to the need to investigate the indefinite LQR. 
{The most relevant works that addressed LQ control with \emph{robustness/risk-sensitivity concerns} using policy optimization methods were  \cite{gravell2019learning,roulet2019convergence,zhang2019policymixed}. The work in \cite{gravell2019learning}  considered LQR with multiplicative noises, which enjoys a similar landscape as standard LQR. The work in \cite{roulet2019convergence} studied  finite-horizon risk-sensitive nonlinear control, and established stationary-point convergence when using iterative LEQG and performing optimization over control actions directly.  \cite{zhang2019policymixed} examined the mixed $\cH_2/\cH_{\infty}$ control design problem, a class of $\cH_\infty$-robust control problems, with a different landscape from LQR. However, no sample complexity results were provided in either \cite{roulet2019convergence} or \cite{zhang2019policymixed}.} Very recently,  \cite{hambly2020policy} has also studied the convergence of policy gradient method for finite-horizon LQR problems. Interestingly, the landscape analysis independently developed in \cite{hambly2020policy} matches some of our findings for the inner-loop subproblem of zero-sum LQ dynamic games, while we consider a more general setting with time-varying system dynamics and a possibly indefinite state-weighting matrix. Other recent results on PO for LQ control include \cite{li2019distributed, furieri2019learning, tu2019gap, jansch2020convergence, jansch2020policy, fatkhullin2020optimizing}. 

\vspace{0.5em}\noindent\textbf{Nonconvex-Nonconcave Minimax Optimization \& MARL. }Solving minimax problems using PG methods, especially gradient descent-ascent (GDA), has been one of the most popular sub-fields in the optimization community recently. Convergence of the first-order GDA algorithm (and its variants)  has been studied  in  \cite{daskalakis2018limit, hsieh2019finding, jin2019minimax, vlatakis2019poincare} for general nonconvex-nonconcave objectives, in \cite{yang2020global} for the setting with an additional two-sided Polyak-\L{}ojasiewicz (PL) condition, and in \cite{diakonikolas2020efficient} for the setting under the weak Minty variational inequality condition. In fact, \cite{daskalakis2020complexity} has  shown  very recently the computational hardness of solving nonconvex-nonconcave constrained minimax problems with first-order methods. To the best of our knowledge, \emph{zeroth-order/derivative-free}  methods, as studied in our paper, have not yet been investigated  for nonconvex-nonconcave minimax optimization problems. On the other hand, in the multi-agent RL (MARL) regime, policy optimization for solving zero-sum Markov/dynamic games naturally leads to a nonconvex-nonconcave minimax problem \cite{zhang2019policy,zhang2019multi,daskalakis2020independent}, whose convergence guarantee has remained open \cite{zhang2019multi} until very recently, except for the aforementioned LQ setting \cite{zhang2019policy, bu2019global}. In particular, \cite{daskalakis2020independent} has established the first non-asymptotic global convergence of independent policy gradient methods for tabular (finite-state-action) zero-sum Markov games, where GDA with two-timescale stepsizes is used.  
In stark contrast to these works,  the crucial global \emph{smoothness} assumption (or property that automatically holds in the tabular MARL setting) on the objective therein does not hold in our control setting with \emph{unbounded} and \emph{continuous} spaces. A careful characterization of the iterate trajectory is thus required in order to establish global convergence and sample complexity results.

\vspace{0.5em}\noindent\textbf{Risk-Sensitive/Robust RL. }  Risk-sensitivity/robustness to model uncertainty/misspecification in RL has attracted significant research efforts, following the frameworks of robust adversarial RL (RARL) and robust Markov decision process (RMDP). Tracing back to \cite{morimoto2005robust}, early works on RARL exploited the same game-theoretic perspective as in $\cH_{\infty}$-robust control, by modeling the uncertainty as a fictitious adversary against the nominal agent. This worst-case (minimax) design concept then enabled actor-critic type algorithms with exceptional empirical performance, but rather sparse theoretical footprints \cite{pinto2017robust}. Very recently, theoretical investigations on the convergence and stability of RARL have been carried out in \cite{zhang2020rarl} within the LQ setting. In parallel, the RMDP framework has been proposed in \cite{nilim2005robust, iyengar2005robust}, and the robustness of RL algorithms under such framework has been further studied in \cite{lim2013reinforcement, lim2019kernel, mankowitz2019robust} for the \emph{finite} MDP settings, in contrast to our continuous control tasks. Other recent advances on risk-sensitive/robust RL/MDP include \cite{fei2020risk, zhang2021robust}. 

\subsection{Notations}
For a square matrix $X$ of proper dimension, we use $\Tr(X)$ to denote its trace. We also use $\|X\|$ and $\|X\|_{F}$ to denote, respectively, the operator norm and the Frobenius norm of $X$. If $X$ is further symmetric,  we use $X > 0$ to denote that $X$ is positive definite. Similarly, $X\geq 0$, $X\leq 0$, and $X <0$ are used to denote $X$ being positive semi-definite, negative semi-definite, and negative definite, respectively. Further, again for a symmetric matrix $X$, $\lambda_{\min}(X)$ and $\lambda_{\max}(X)$ are used to denote, respectively, the smallest and the largest eigenvalues of $X$. Moreover, we use $\langle X, Y \rangle:= \Tr(X^{\top}Y)$ to denote the standard matrix inner product and use $diag(X_0, \cdots, X_N)$ to denote the block-diagonal matrix with $X_0, \cdots, X_N$ on the diagonal block entries. In the case where $X_0\hspace{-0.1em}=\hspace{-0.1em}X_1\hspace{-0.1em}=\hspace{-0.1em}\cdots\hspace{-0.1em}=\hspace{-0.1em}X_N\hspace{-0.1em}=\hspace{-0.1em}X$, we further denote it as $diag(X^N)$.  We use $x\sim \cN(\mu,\Sigma)$ to denote a Gaussian random variable with mean $\mu$ and covariance $\Sigma$ and use $\|v\|$ to denote the Euclidean norm of a vector $v$. Lastly, we use $\bI$ and $\bm{0}$ to denote the identity and zero matrices with appropriate dimensions.

\section{Background}
In this section, we first introduce two classic settings in risk-sensitive and robust control, namely LEQG, and LQ disturbance attenuation. We then discuss, in \S\ref{sec:mix}, the challenges one confronts when attempting to address the above two problems directly using derivative-free PG methods by sampling system trajectories. Fortunately, solving zero-sum LQ (stochastic) dynamic games, a benchmark setting in MARL, via derivative-free PG methods by sampling system trajectories provides a workaround to address these problems all in a unified way, due to the well-known equivalence relationships between zero-sum LQ dynamic games and the two aforementioned classes of problems \cite{bacsar2008h}, which we will also discuss in \S\ref{sec:game}. 

\subsection{Linear Exponential Quadratic Gaussian}\label{sec:leqg}
We first consider a fundamental setting of risk-sensitive optimal control, known as the LEQG problem \cite{jacobson1973optimal, whittle1981risk, whittle1990risk}, in the finite-horizon setting. The time-varying (linear)  systems dynamics are described by: 
\begin{align*}
	&x_{t+1} = A_tx_t + B_tu_t + w_t, \quad t\in\{0,\cdots,N-1\},
\end{align*} 
where $x_t \in \RR^{m}$ represents the system state; $u_t \in \RR^{d}$ is the control input; $w_t \in \RR^{m}$ is an independent (across time) Gaussian random noise drawn from $w_t\sim \cN(\bm{0}, W)$ for some $W>0$; the initial state $x_0 \sim \cN(\bm{0}, X_0)$ is a Gaussian random vector for some $X_0 > 0$, independent of the sequence $\{w_t\}$; and $A_t$, $B_t$ are time-varying system matrices with appropriate dimensions. The objective function is given by
\begin{align*}
	\cJ\big(\{u_t\}\big) :=  \frac{2}{\beta}\log\EE\exp\Big[\frac{\beta}{2}\big(\sum^{N-1}_{t=0}(x_t^{\top}Q_tx_t + u_t^{\top}R_tu_t) + x_N^{\top}Q_Nx_N\big)\Big],
\end{align*}
where $Q_t \geq 0$ and $R_t > 0$, for all $t \in \{0, \cdots, N-1\}$, and $Q_N \geq 0$ are symmetric weighting matrices; and $\beta> 0$ is a parameter capturing the degree of risk-sensitivity, which is upper-bounded by some $\beta^* > 0$ \cite{glover1988state, whittle1990risk, nishimura2021rat}. 

The goal in the LEQG problem is to find the $\cJ$-minimizing optimal control policy $\mu^*_t: (\RR^{m}\times\RR^d)^t \times \RR^m \rightarrow \RR^d$ that maps, at time $t$, the history of state-control pairs up to time $t$ and the current state $x_t$ to the control $u_t$, this being so for all $t \in\{0, \cdots, N-1\}$. It has been shown in \cite{jacobson1973optimal} that $\mu_t^*$ has a linear state-feedback form $\mu^*_t(x_t) = -K_t^*x_t$, where $K^*_t \in \RR^{d\times m}$, for all $t\in\{0, \cdots, N-1\}$. Therefore, it suffices to search $K^*_t$ in the matrix space $\RR^{d \times m}$ for all $t\in\{0, \cdots, N-1\}$, without losing any optimality. The resulting policy optimization problem is then represented as  (by a slight abuse of notation with regard to $\cJ$):
\begin{align}\label{eqn:leqg_objective}
	\min_{\{K_t\}}~\cJ\big(\{K_t\}\big) := \frac{2}{\beta}\log\EE\exp\Big[\frac{\beta}{2}\big(\sum^{N-1}_{t=0}\big(x_t^{\top}(Q_t + K_t^{\top}R_tK_t)x_t\big) + x_N^{\top}Q_Nx_N\big)\Big].
\end{align}
To characterize the solution to \eqref{eqn:leqg_objective}, we first introduce the following time-varying Riccati difference equation (RDE):
\begin{align}\label{eqn:RDE_LEQG}
	P_{K_t} = Q_t + K_t^{\top}R_tK_t + (A_t-B_tK_t)^{\top}\tilde{P}_{K_{t+1}}(A_t-B_tK_t), \quad t\in\{0, \cdots, N-1\}, \quad P_{K_N} = Q_N,
\end{align}
where $\tilde{P}_{K_{t+1}}:= P_{K_{t+1}} + \beta P_{K_{t+1}}(W^{-1}- \beta P_{K_{t+1}})^{-1}P_{K_{t+1}}$. Then, \eqref{eqn:leqg_objective} can be expressed by the solution to \eqref{eqn:RDE_LEQG} and the exact PG can be analyzed, as follows, with its proof being deferred to \S\ref{proof:pg_leqg}.

\begin{lemma}(Closed-Form Objective Function and PG) \label{lemma:pg_leqg} For any sequence of $\{K_t\}$ such that \eqref{eqn:RDE_LEQG} generates a sequence of positive semi-definite (p.s.d.) sequence $\{P_{K_{t}}\}$ satisfying $X_0^{-1} - \beta P_{K_0} > 0$ and $W^{-1}\hspace{-0.15em}-\hspace{-0.12em}\beta P_{K_{t}}\hspace{-0.05em} >\hspace{-0.05em}0$, for all $t \in \{1, \cdots, N\}$, the objective function $\cJ\big(\{K_t\}\big)$ can be expressed as
\begin{align}\label{eqn:LEQG_obj_closed}
	\cJ\big(\{K_t\}\big) = -\frac{1}{\beta}\log\det(\bI - \beta P_{K_{0}}X_0) -\frac{1}{\beta}\sum_{t=1}^{N}\log\det(\bI - \beta P_{K_{t}}W).
\end{align}
As $\beta \rightarrow 0$, \eqref{eqn:LEQG_obj_closed} reduces to $\tr(P_{K_0}X_0) + \sum_{t=1}^{N}\tr(P_{K_{t}}W)$, which is the objective function of the \emph{finite-horizon} linear-quadratic-Gaussian (LQG) problem. Moreover, the PG of \eqref{eqn:LEQG_obj_closed} at time $t$ has the form:
	\begin{align}\label{eqn:LEQG_PG}
		\nabla_{K_t}\cJ\big(\{K_t\}\big) := \frac{\partial \cJ\big(\{K_t\}\big)}{\partial K_t} = 2\big[(R_t+B^{\top}_t\tilde{P}_{K_{t+1}}B_t)K_t - B_t^{\top}\tilde{P}_{K_{t+1}}A_t\big]\Sigma_{K_t}, \quad t\in\{0, \cdots, N-1\},
	\end{align}
where
\begin{align*}
	\Sigma_{K_t} &:= \prod^{t-1}_{i=0}\big[(\bI- \beta P_{K_{i+1}}W)^{-\top}(A_{i} -B_{i}K_{i})\big]\cdot X_0^{\frac{1}{2}}(\bI - \beta X_0^{\frac{1}{2}}P_{K_{0}}X_0^{\frac{1}{2}})^{-1}X_0^{\frac{1}{2}} \cdot\prod^{t-1}_{i=0}\big[(A_{i} -B_{i}K_{i})^{\top}(\bI-\beta P_{K_{i+1}}W)^{-1}\big]\\
	&\hspace{1em} + \sum_{\tau = 1}^{t}\bigg\{\prod^{t-1}_{i=\tau}\big[(\bI-\beta P_{K_{i+1}}W)^{-\top}(A_{i} -B_{i}K_{i})\big]\cdot W^{\frac{1}{2}}(\bI - \beta W^{\frac{1}{2}}P_{K_{\tau}}W^{\frac{1}{2}})^{-1}W^{\frac{1}{2}} \cdot\prod^{t-1}_{i=\tau}\big[(A_{i} -B_{i}K_{i})^{\top}(\bI-\beta P_{K_{i+1}}W)^{-1}\big]\bigg\}.
\end{align*}
\end{lemma}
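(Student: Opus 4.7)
The plan is to prove the closed-form objective \eqref{eqn:LEQG_obj_closed} by a backward induction evaluating the conditional MGF of the LEQG cost along the closed-loop trajectory, and then obtain the PG \eqref{eqn:LEQG_PG} by differentiating \eqref{eqn:LEQG_obj_closed} and propagating the chain rule through the backward RDE. The single analytic tool is the Gaussian MGF identity: for $y\sim\cN(\mu,\Sigma)$ and symmetric $M$ with $\Sigma^{-1}-\beta M>0$,
\begin{align*}
\EE\exp\big((\beta/2)\,y^\top M y\big)=\det(\bI-\beta M\Sigma)^{-1/2}\exp\Big((\beta/2)\,\mu^\top\big[M+\beta M(\Sigma^{-1}-\beta M)^{-1}M\big]\mu\Big),
\end{align*}
and with $\Sigma=W$ and $M=P_{K_{t+1}}$, the bracketed quadratic is precisely $\tilde P_{K_{t+1}}$ from the RDE \eqref{eqn:RDE_LEQG}.

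For the closed form, I would induct backward on $t$ with the hypothesis
\begin{align*}
\EE\!\left[\exp\!\Big((\beta/2)\!\!\sum_{\tau=t}^{N-1}\!x_\tau^\top(Q_\tau+K_\tau^\top R_\tau K_\tau)x_\tau+(\beta/2)\,x_N^\top Q_N x_N\Big)\,\Big|\,x_t\right]=\exp\!\big((\beta/2)\,x_t^\top P_{K_t}x_t\big)\!\!\prod_{\tau=t+1}^{N}\!\!\det(\bI-\beta P_{K_\tau}W)^{-1/2},
\end{align*}
with base $t=N$ immediate since $P_{K_N}=Q_N$. For the inductive step, factor out the $x_t$-measurable quadratic, apply the MGF identity to $x_{t+1}\mid x_t\sim\cN((A_t-B_tK_t)x_t,W)$, and observe that the new coefficient of $x_t^\top\cdot x_t$ is $Q_t+K_t^\top R_tK_t+(A_t-B_tK_t)^\top\tilde P_{K_{t+1}}(A_t-B_tK_t)=P_{K_t}$ by \eqref{eqn:RDE_LEQG}. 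A final application of the MGF identity to $x_0\sim\cN(\bm{0},X_0)$ with $\mu=\bm{0}$, followed by $(2/\beta)\log$, yields \eqref{eqn:LEQG_obj_closed}. The stated LQG limit follows from the expansion $-\beta^{-1}\log\det(\bI-\beta M)=\tr(M)+O(\beta)$ together with $\tilde P_{K_{t+1}}\to P_{K_{t+1}}$ as $\beta\to 0$.

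For the PG, I would differentiate \eqref{eqn:LEQG_obj_closed} directly. Since \eqref{eqn:RDE_LEQG} is a backward recursion, $P_{K_s}$ depends on $K_t$ only for $s\le t$. Applying $\partial\log\det(\bI-\beta P\Sigma)=-\beta\langle\Sigma(\bI-\beta P\Sigma)^{-1},\partial P\rangle$ termwise and differentiating \eqref{eqn:RDE_LEQG} implicitly, the direct $K_t$-dependence in the $s=t$ equation produces the \emph{advantage} factor $E_{K_t}:=(R_t+B_t^\top\tilde P_{K_{t+1}}B_t)K_t-B_t^\top\tilde P_{K_{t+1}}A_t$, giving the leading structure $\nabla_{K_t}\cJ=2E_{K_t}\Sigma_{K_t}$, while the implicit contributions from $\partial_{K_t}P_{K_s}$ for $s<t$ accumulate into the weighting $\Sigma_{K_t}$. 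The cleanest way to extract the explicit form is to verify that $\Sigma_{K_t}$ satisfies the twisted forward recursion
\begin{align*}
\Sigma_{K_{t+1}}=(\bI-\beta P_{K_{t+1}}W)^{-\top}(A_t-B_tK_t)\,\Sigma_{K_t}\,(A_t-B_tK_t)^\top(\bI-\beta P_{K_{t+1}}W)^{-1}+W^{1/2}(\bI-\beta W^{1/2}P_{K_{t+1}}W^{1/2})^{-1}W^{1/2},
\end{align*}
with initialization $\Sigma_{K_0}=X_0^{1/2}(\bI-\beta X_0^{1/2}P_{K_0}X_0^{1/2})^{-1}X_0^{1/2}$, and then unroll; the $(\bI-\beta P_{K_{i+1}}W)^{-\top}$ factors arise from the identity $\tilde P=P(\bI-\beta WP)^{-1}$ and encode the risk-sensitive inflation of the state covariance.

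The principal obstacle is the bookkeeping of this nested chain rule: one must separate the direct $K_t$-dependence in the $s=t$ equation of \eqref{eqn:RDE_LEQG} from the implicit dependence through $P_{K_s}$ for $s<t$, and verify that the backward propagation of the Riccati perturbations couples cleanly with the forward closed-loop products to produce exactly the two-term structure (propagated $X_0$ plus injected noises $W$, each inflated by $(\bI-\beta P_{K_{i+1}}W)^{-\top}$) stated in the lemma. A useful sanity check is the limit $\beta\to 0$: all inflation factors reduce to $\bI$ and $\Sigma_{K_t}$ recovers the usual LQG state covariance $\EE[x_tx_t^\top]$, so \eqref{eqn:LEQG_PG} reduces to the standard LQG PG.
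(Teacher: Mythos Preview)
Your proposal is correct and follows essentially the same route as the paper: establish the closed form via the Gaussian MGF identity (the paper simply cites this as an external lemma), then differentiate \eqref{eqn:LEQG_obj_closed} and propagate the chain rule through the backward RDE, using that $P_{K_\tau}$ depends on $K_t$ only for $\tau\le t$ and the key identity $\nabla_{K_t}\tilde P_{K_\tau}=(\bI-\beta P_{K_\tau}W)^{-1}\,\nabla_{K_t}P_{K_\tau}\,(\bI-\beta P_{K_\tau}W)^{-\top}$. The only organizational difference is that you package the accumulated terms as a forward recursion for $\Sigma_{K_t}$ and unroll, whereas the paper unrolls each $\Tr[\nabla_{K_t}P_{K_\tau}\cdot M_\tau]$ backward through the RDE and then sums over $\tau$; these are dual bookkeepings of the same computation.
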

 
\subsection{LQ Disturbance Attenuation}\label{sec:mix}
Second, we introduce the \emph{optimal} LQ disturbance attenuation problem, again for finite-horizon settings, with time-varying (deterministic) dynamical systems described by
\begin{align*}
x_{t+1} = A_tx_t + B_tu_t + D_tw_t, \quad z_t = C_tx_t + E_tu_t, \quad t\in\{0,\cdots,N-1\},
\end{align*}
where $x_t \in \RR^{m}$ is the system state; $u_t \in \RR^{d}$ is the control input; $w_t \in \RR^{n}$ is the (unknown) disturbance input; $z_t \in \RR^{l}$ is the controlled output; $A_t$, $B_t$, $C_t$, $D_t$, $E_t$ are system matrices with appropriate dimensions; and $x_0 \in \RR^{m}$ is unknown. In  addition, we assume that $E_t^{\top} [C_t \ E_t] = [0 \ R_t]$ for some $R_t > 0$ to eliminate the cross-weightings between control input $u$ and system state $x$, which provides no loss of generality, and is standard in the literature  \cite{glover1988state, bacsar2008h}. As shown in \S 3.5.1 of \cite{bacsar2008h}, a simple procedure can transform the general problem into a form where there are no coupling terms between $u$ and $x$, satisfying the above ``normalization” assumption that we have made. Subsequently, we introduce the $\ell^2$-norms of the vectors $\omega:=\big[x_0^{\top}C_0^{\top}, w_0^{\top}, \cdots, w_{N-1}^{\top}\big]^{\top}$ and $z := \big[z_0^{\top}, \cdots, z_{N-1}^{\top}, x_N^{\top}Q_N^{1/2}\big]^{\top}$ as $\|\omega\| := \big\{x_0^{\top}C^{\top}_0C_0x_0 + \sum^{N-1}_{t=0}\|w_t\|^2\big\}^{1/2}$ and $\|z\| =\big\{\cC\big(\{u_t\}, \{w_t\}\big)\big\}^{1/2} := \big\{x_N^{\top}Q_Nx_N + \sum^{N-1}_{t=0}\|z_t\|^2\big\}^{1/2}$, where $Q_N \geq 0$ is symmetric, and $\|\cdot\|$ in the two expressions denote appropriate Euclidean norms. Then, the robustness of a designed controller can be guaranteed by a constraint on the ratio between $\|z\|$ and $\|\omega\|$. Specifically, the goal of the optimal LQ disturbance attenuation problem is to find the optimal control policy $\mu^*_t : (\RR^{m}\times\RR^d)^t \times \RR^m \rightarrow \RR^d$ that maps, at time $t$, the history of state-control pairs until time $t$ and the current state $x_t$, denoted as $h_{t}:=\{x_0, \cdots, x_{t}, u_{0}, \cdots, u_{t-1}\}$, to the control $u_t$, such that
\begin{align}\label{equ:optimal_DA}
	\gamma^* = \inf_{\{\mu_t\}}~\sup_{x_0, \{w_t\}} ~\frac{\big\{\cC\big(\{
\mu_t(h_t)\}, \{w_t\}\big)\big\}^{1/2}}{\|\omega\|} = \sup_{x_0, \{w_t\}} ~\frac{\big\{\cC\big(\{
\mu^*_t(h_t)\}, \{w_t\}\big)\big\}^{1/2}}{\|\omega\|},
\end{align}
where $\gamma^* = \sqrt{(\beta^*)^{-1}} >0$ is the \emph{optimal} (\emph{minimax}) level of disturbance attenuation at the output \cite{glover1988state, bacsar2008h}, and recall that $\beta^*$ is the upper bound for the risk-sensitivity parameter in LEQG. It is known that under closed-loop perfect-state information pattern, the existence of a sequence $\{\mu^*_t\}$, $t\in\{0, \cdots, N-1\}$, is always guaranteed and $\mu^*_t$ is linear state-feedback, i.e., $\mu^*(h_t) = -K^*_tx_t$ (\cite{bacsar2008h}; Theorem 3.5). Thus, it suffices to search for $K_t^* \in \RR^{d\times m}$, for all $t \in \{0, \cdots, N-1\}$, to achieve the optimal attenuation.

The problem \eqref{equ:optimal_DA} can be challenging to solve to the optimum  \cite{bacsar2008h}. Moreover, in practice, due to the inevitable uncertainty in modeling the system, the robust controller that achieves the optimal attenuation level can be too sensitive to the model uncertainty to use. Hence, a reasonable surrogate of the optimal LQ disturbance attenuation problem is the following: Given a $\gamma > \gamma^*$, find a control policy $\mu_t = -K_tx_t$, for all $t \in \{0, \cdots, N-1\}$, that solves
\small
\begin{align}\label{eqn:mixed_LQ}
	\min_{\{\mu_{t}\}}~\cJ\big(\{
\mu_t(x_t)\}\big):= \EE_{\{\xi_t\}}\bigg[\sum_{t=0}^{N-1}\big(x^{\top}_tQ_tx_t + u^{\top}_tR_tu_t\big) + x_N^{\top}Q_Nx_N\bigg] \quad \text{s.t.} \  \sup_{x_0, \{w_t\}} ~\frac{\big\{\cC\big(\{
\mu_t(x_t)\}, \{w_t\}\big)\big\}^{1/2}}{\|\omega\|} < \gamma,
\end{align}
\normalsize
where $Q_t := C_t^{\top}C_t$ for $t\in\{0, \cdots, N-1\}$,  $\cJ\big(\{\mu_t(x_t)\}\big)$ is the LQG cost of the system $x_{t+1} = A_tx_t + B_tu_t + D_t\xi_t$,  and $\xi_t \sim \cN(\bm{0}, \bI)$ is independent Gaussian noise with unit intensity \cite{mustafa1991lqg}. Note that the LQ disturbance attenuation problem in \eqref{eqn:mixed_LQ}, and particularly with $x_0=\bm{0}$, is also called the mixed $\cH_2/\cH_{\infty}$ problem \cite{doyle1989state, mustafa1991lqg, kaminer1993mixed, zhang2019policymixed}, when the system is time-invariant, and the horizon of the problem $N$ is $\infty$. To solve \eqref{eqn:mixed_LQ} using policy optimization methods, we introduce the time-varying RDE:
\begin{align}\label{eqn:RDE_DA}
	P_{K_t} = Q_t + K_t^{\top}R_tK_t + (A_t-B_tK_t)^{\top}\tilde{P}_{K_{t+1}}(A_t-B_tK_t), \quad t\in\{0, \cdots, N-1\}, \quad P_{K_N} = Q_N,
\end{align}
where $\tilde{P}_{K_{t+1}}:= P_{K_{t+1}} + P_{K_{t+1}}(\gamma^2\bI - D_t^{\top} P_{K_{t+1}}D_t)^{-1}P_{K_{t+1}}$. Then, two upper bounds of $\cJ$ and their corresponding PGs can be expressed in terms of solutions to \eqref{eqn:RDE_DA} (if exist) \cite{mustafa1991lqg}. In particular, \eqref{eqn:DA_obj_1} below is closely related to the objective function of LEQG in \eqref{eqn:leqg_objective}, and \eqref{eqn:DA_obj_2} is connected to the objective function of zero-sum LQ dynamic games to be introduced shortly.
 
\begin{lemma}(Closed-Forms for the Objective Function and PGs) \label{lemma:pg_mix} For any sequence of control gains $\{K_t\}$ such that \eqref{eqn:RDE_DA} generates a sequence of p.s.d. solutions $\{P_{K_{t+1}}\}$ satisfying $\gamma^2\bI - D_t^{\top} P_{K_{t+1}}D_t >0$, for all $t \in \{0, \cdots, N-1\}$, and $\gamma^2\bI - P_{K_0} >0$, then two common upper bounds of the objective function $\cJ$ in \eqref{eqn:mixed_LQ} are:
\begin{align}
	\overline{\cJ}\big(\{K_t\}\big) &= -\gamma^{2}\log\det(\bI - \gamma^{-2} P_{K_{0}}) -\gamma^{2}\sum_{t=1}^{N}\log\det(\bI - \gamma^{-2} P_{K_{t}}D_{t-1}D^{\top}_{t-1}),\label{eqn:DA_obj_1} \\
	\overline{\cJ}\big(\{K_t\}\big) &= \tr(P_{K_0}) + \sum_{t = 1}^{N}\tr(P_{K_{t}}D_{t-1}D_{t-1}^{\top}).\label{eqn:DA_obj_2}
\end{align}
The PGs of \eqref{eqn:DA_obj_1} and \eqref{eqn:DA_obj_2} at time $t$ can be expressed as 
	\begin{align}\label{eqn:DA_PG}
		\nabla_{K_t}\overline{\cJ}\big(\{K_t\}\big) := \frac{\partial \overline{\cJ}\big(\{K_t\}\big)}{\partial K_t}= 2\big[(R_t+B^{\top}_t\tilde{P}_{K_{t+1}}B_t)K_t - B_t^{\top}\tilde{P}_{K_{t+1}}A_t\big]\Sigma_{K_t}, \quad t\in\{0, \cdots, N-1\},
	\end{align}
	where $\Sigma_t$, $t \in \{0, \cdots, N-1\}$, for the PGs of \eqref{eqn:DA_obj_1} and \eqref{eqn:DA_obj_2} are expressed, respectively, as
	\small
	\begin{align*}
	\Sigma_{K_t} &:= \prod^{t-1}_{i=0}\big[(\bI- \gamma^{-2}P_{K_{i+1}}D_iD_i^{\top})^{-\top}(A_{i} -B_{i}K_{i})\big]\cdot (\bI - \gamma^{-2} P_{K_{0}})^{-1}\cdot\prod^{t-1}_{i=0}\big[(A_{i} -B_{i}K_{i})^{\top}(\bI-\gamma^{-2} P_{K_{i+1}}D_iD_i^{\top})^{-1}\big] \\
	&+\sum_{\tau = 1}^{t}\bigg\{\prod^{t-1}_{i=\tau}\big[(\bI-\gamma^{-2} P_{K_{i+1}}D_iD_i^{\top})^{-\top}(A_{i} -B_{i}K_{i})\big]\cdot D_{\tau-1}(\bI - \gamma^{-2} D_{\tau-1}^{\top}P_{K_{\tau}}D_{\tau-1})^{-1}D_{\tau-1}^{\top}\cdot\prod^{t-1}_{i=\tau}\big[(A_{i} -B_{i}K_{i})^{\top}(\bI- \gamma^{-2}P_{K_{i+1}}D_iD_i^{\top})^{-1}\big]\bigg\},\\
	\Sigma_{K_t} &:= \prod^{t-1}_{i=0}\big[(\bI- \gamma^{-2}P_{K_{i+1}}D_iD_i^{\top})^{-\top}(A_{i} -B_{i}K_{i})(A_{i} -B_{i}K_{i})^{\top}(\bI-\gamma^{-2} P_{K_{i+1}}D_iD_i^{\top})^{-1}\big] \\
	&+ \sum_{\tau = 1}^{t}\bigg\{\prod^{t-1}_{i=\tau}\big[(\bI-\gamma^{-2} P_{K_{i+1}}D_iD_i^{\top})^{-\top}(A_{i} -B_{i}K_{i})\big]\cdot D_{\tau-1}D_{\tau-1}^{\top}\cdot\prod^{t-1}_{i=\tau}\big[(A_{i} -B_{i}K_{i})^{\top}(\bI-\gamma^{-2} P_{K_{i+1}}D_iD_i^{\top})^{-1}\big]\bigg\}.
	\end{align*} 
	\normalsize 
\end{lemma}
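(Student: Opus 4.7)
The plan is to decompose the lemma into two tasks: (i) establishing each closed-form upper bound, and (ii) deriving the common-form PG. My starting observation for (i) is that the RDE \eqref{eqn:RDE_DA} is structurally identical to the LEQG RDE \eqref{eqn:RDE_LEQG} under the substitution $\beta \leftrightarrow \gamma^{-2}$, with the role of $W$ played by the (time-varying) $D_t D_t^{\top}$ and $X_0$ by $\bI$. Consequently, \eqref{eqn:DA_obj_1} is formally identical to \eqref{eqn:LEQG_obj_closed}, and its evaluation follows from the same Gaussian-integration/log-determinant computation used in the proof of Lemma \ref{lemma:pg_leqg}. The upper-bound property $\cJ \leq \overline{\cJ}$ for \eqref{eqn:DA_obj_1} then follows from Jensen's inequality $\EE[Y] \leq (2/\beta)\log\EE\exp((\beta/2)Y)$ applied to the cumulative quadratic cost along the closed-loop trajectory driven by $\xi_t \sim \cN(\bm{0},\bI)$, with $\beta = \gamma^{-2}$.

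For \eqref{eqn:DA_obj_2}, I would use a storage-function argument with $V_t(x) := x^{\top} P_{K_t} x$. Exploiting the fact that $\tilde{P}_{K_{t+1}} - P_{K_{t+1}} \geq 0$ (the Schur-complement term in \eqref{eqn:RDE_DA}), together with $\EE[\xi_t] = \bm{0}$ and $\EE[\xi_t \xi_t^{\top}] = \bI$, one obtains
\begin{align*}
V_t(x_t) + \tr\bigl(D_t^{\top} P_{K_{t+1}} D_t\bigr) \geq x_t^{\top}\bigl(Q_t + K_t^{\top} R_t K_t\bigr) x_t + \EE_{\xi_t}\bigl[V_{t+1}(x_{t+1})\bigr].
\end{align*}
Taking expectations along the $\xi_t$-driven trajectory, telescoping from $t = 0$ to $N-1$, and invoking the terminal condition $P_{K_N} = Q_N$ yields $\cJ \leq \tr(P_{K_0}) + \sum_{t=1}^{N}\tr\bigl(P_{K_t} D_{t-1} D_{t-1}^{\top}\bigr)$, which is \eqref{eqn:DA_obj_2}.

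For the PG formula \eqref{eqn:DA_PG}, both upper bounds share the same underlying RDE, so the partial derivatives $\partial P_{K_{\tau}}/\partial K_t$ are the same object for either closed form. Differentiating \eqref{eqn:RDE_DA} with respect to $K_t$ at $\tau = t$ produces the direct contribution $2\bigl[(R_t + B_t^{\top}\tilde{P}_{K_{t+1}} B_t) K_t - B_t^{\top}\tilde{P}_{K_{t+1}} A_t\bigr]$, which is exactly the common bracket factor appearing in \eqref{eqn:DA_PG}. For $\tau < t$, the backward RDE propagates this derivative through the closed-loop maps $(A_s - B_s K_s)$; the derivative of each $\tilde{P}_{K_{s+1}}$ with respect to $P_{K_{s+1}}$ is handled by the Sherman-Morrison-Woodbury identity, and the matrix identity $(\bI - XY)^{-1} X = X (\bI - YX)^{-1}$ is used to migrate each factor $(\gamma^2\bI - D_s^{\top} P_{K_{s+1}} D_s)^{-1}$ onto the appropriate side. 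Differentiating the outer $\log\det$ in \eqref{eqn:DA_obj_1} then introduces the additional factors $(\bI - \gamma^{-2} D_{\tau-1}^{\top} P_{K_{\tau}} D_{\tau-1})^{-1}$ that accumulate in the first $\Sigma_{K_t}$, while differentiating the outer trace in \eqref{eqn:DA_obj_2} omits them, producing the simpler second $\Sigma_{K_t}$.

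The main obstacle I anticipate is the bookkeeping when unrolling $\partial P_{K_{\tau}}/\partial K_t$ all the way back to $\tau = 0$: each backward step injects both an $(A_s - B_s K_s)$ factor and a Riccati-correction factor, and these must be regrouped to match the forward products $\prod_{i}\bigl[(\bI - \gamma^{-2} P_{K_{i+1}} D_i D_i^{\top})^{-\top}(A_i - B_i K_i)\bigr]$ appearing in the stated $\Sigma_{K_t}$. A careful backward induction on $\tau$ is required to simultaneously preserve the factorization into the common bracket and a state-like covariance $\Sigma_{K_t}$, and to correctly place each Riccati-correction factor via repeated use of the swap identity. Once this is complete, separating the ``initial-state'' contribution from the ``driving-noise'' contributions over $\tau = 1, \ldots, t$ yields the two displayed forms of $\Sigma_{K_t}$.
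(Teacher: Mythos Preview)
Your approach to the PG derivation is correct and matches the paper's: for \eqref{eqn:DA_obj_1} the paper also invokes the LEQG substitution $(\beta, W, X_0)\leftrightarrow(\gamma^{-2}, D_tD_t^{\top}, \bI)$ and appeals directly to the computation in Lemma~\ref{lemma:pg_leqg}; for \eqref{eqn:DA_obj_2} it differentiates the trace and unrolls the RDE backward exactly as you describe, using the same matrix-inversion/swap identities. The one difference is scope: the paper's proof treats only the PG formulas and takes the upper-bound assertions \eqref{eqn:DA_obj_1}--\eqref{eqn:DA_obj_2} from \cite{mustafa1991lqg} without re-deriving them, whereas your Jensen-inequality and storage-function/telescoping arguments supply self-contained (and correct) justifications of those bounds.
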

The proof of Lemma \ref{lemma:pg_mix} is deferred to \S\ref{proof:pg_mix}. Before moving on to the next section, we provide a remark regarding the challenges of addressing LEQG or the LQ disturbance attenuation problem directly using derivative-free PG methods.

\begin{remark}\label{rmk:challenge_LEQG_atten}
 It seems tempting to tackle LEQG or the LQ disturbance attenuation problem directly using derivative-free PG methods, e.g., vanilla PG or natural PG methods by sampling the system trajectories as in \cite{fazel2018global}. However, there are several challenges. First, when attempting to sample the LEQG objective \eqref{eqn:leqg_objective} using system trajectories (and then to estimate PGs), the bias of gradient estimates can be difficult to control uniformly with a fixed sample size per iterate, as the $\log$ function is not globally Lipschitz\footnote{One workaround for mitigating this bias might be to remove the $\log$ operator in the objective {\scriptsize \eqref{eqn:leqg_objective}}, and only minimize the terms after the $\EE\exp(\cdot)$ function. However, it is not clear yet if derivative-free methods provably converge for this objective, as $\exp$ function is neither globally smooth nor Lipschitz. We have left this direction in our future work, as our goal in the present work is to provide a unified way to solve all three classes of problems.}. Moreover, for the vanilla PG method, even with exact PG accesses, the iterates may not preserve a certain level of risk-sensitivity/disturbance attenuation, i.e., remain feasible, along the iterations, which can make the disturbance input drive the cost to arbitrarily large values (see the numerical examples in \cite{zhang2019policymixed} for the infinite-horizon LTI setting). This failure will only be exacerbated in the derivative-free setting with accesses to only noisy PG estimates.  Lastly, developing a derivative-free natural PG method (which we will show in  \S\ref{sec:double_update} that it can preserve a prescribed attenuation level along the iterations) is also challenging, because the expressions of $\Sigma_{K_t}$ in Lemmas \ref{lemma:pg_leqg} and \ref{lemma:pg_mix} cannot be sampled from system trajectories directly. Therefore, the game-theoretic approach to be introduced next is rather one workaround. 
\end{remark}	 \subsection{An Equivalent Dynamic Game Formulation}\label{sec:game} 
Lastly, and more importantly, we describe an equivalent dynamic game formulation to the LEQG and the LQ disturbance attenuation problems introduced above. Under certain conditions to be introduced in Lemma \ref{lemma:equivalent}, the saddle-point gain matrix of the minimizing player in the dynamic game (if exists) also addresses the LEQG and the LQ disturbance attenuation problem, providing an alternative route to overcome the challenges reported in Remark \ref{rmk:challenge_LEQG_atten}. Specifically, we consider a zero-sum LQ stochastic dynamic game (henceforth, game) model with closed-loop perfect-state information pattern, which can also be viewed as a benchmark setting of MARL for two competing agents 
\cite{shapley1953stochastic, littman1994markov,zhang2019policy, bu2019global, zhang2020model}, as the role played by LQR for  single-agent RL \cite{recht2019tour}. The linear time-varying system dynamics follow
\begin{align}\label{eqn:sto_game_dynamics}
	x_{t+1} = A_tx_t + B_tu_t + D_tw_t + \xi_t, \quad t\in\{0,\cdots,N-1\},
\end{align}
where $x_t \in \RR^{m}$ is the system state, $u_t:=\mu_{u, t}(h_{u, t}) \in \RR^{d}$ (resp., $w_t:=\mu_{w, t}(h_{w, t}) \in \RR^{n}$) is the control input of the minimizing (resp., maximizing) player\footnote{Hereafter we will use player and agent interchangeably.} and $\mu_{u, t}: (\RR^{m}\times\RR^d)^t \times \RR^m \rightarrow \RR^d$ (resp., $\mu_{w, t}: (\RR^{m}\times\RR^n)^t \times \RR^m \rightarrow \RR^n$) is the control policy that maps, at time $t$, the history of state-control pairs up to time $t$, and state at time $t$, denoted as $h_{u, t}:=\{x_0, \cdots, x_{t}, u_{0}, \cdots, u_{t-1}\}$ (resp., $h_{w, t}:=\{x_0, \cdots, x_{t}, w_{0}, \cdots, w_{t-1}\}$), to the control $u_t$ (resp., $w_t$). The independent process noises are denoted by $\xi_t \sim \cD$ and $A_t$, $B_t$, $D_t$ are system matrices with proper  dimensions. Further, we assume $x_0 \sim \cD$ to be independent of the process noises, and the distribution $\cD$ has zero mean and a positive-definite covariance. We also assume that $\|x_0\|, \|\xi_t\| \leq \vartheta$, for all $t\in \{0, \cdots, N-1\}$, almost surely, for some constant $\vartheta$\footnote{{The assumption on the boundedness of distribution is only for ease of analysis \cite{malik2020derivative, furieri2019learning}. Extensions to sub-Gaussian distributions are standard and do not affect the sample complexity result, as noted by \cite{malik2020derivative}.}}. The goal of the minimizing (resp. maximizing) player is to minimize (resp. maximize) a quadratic objective function, namely to solve the zero-sum game 
\begin{align}\label{eqn:game_value_uw}
	\inf_{\{u_t\}}~\sup_{\{w_t\}}~ \EE_{x_0, \xi_0, \cdots, \xi_{N-1}}\bigg[\sum^{N-1}_{t=0} \big(x_t^{\top}Q_tx_t + u_t^{\top}R^u_tu_t - w^{\top}_tR^w_tw_t\big) + x_N^{\top}Q_Nx_N\bigg],
\end{align}
where $Q_t \geq 0$, $R^u_t, R^w_t >0$ are symmetric weighting matrices and the system transitions follow \eqref{eqn:sto_game_dynamics}. Whenever the solution to \eqref{eqn:game_value_uw} exists such that the $\inf$ and $\sup$ operators in \eqref{eqn:game_value_uw} are interchangeable, then the value \eqref{eqn:game_value_uw} is the \emph{value} of the game, and the corresponding policies of the players are known as saddle-point policies. To characterize the solution to \eqref{eqn:game_value_uw}, we first introduce the following time-varying generalized RDE (GRDE):
	\begin{align}\label{eqn:RDE_recursive}
		P^*_t = Q_t + A^{\top}_tP^*_{t+1}\Lambda^{-1}_{t}A_t, \quad t \in \{0, \cdots, N-1\},
	\end{align}
	where $\Lambda_{t}:=\bI + \big(B_t(R^u_t)^{-1}B^{\top}_t - D_t(R^w_t)^{-1}D_t^{\top}\big)P^*_{t+1}$ and $P^*_N = Q_N$. For the stochastic game with closed-loop perfect-state information pattern that we considered, we already know from \cite{bacsar1998dynamic} that whenever a saddle point exists, the saddle-point control policies are linear state-feedback (i.e., $\mu^*_{u, t}(h_{u, t}) = -K^*_tx_t$ and $\mu^*_{w, t}(h_{w, t})= -L^*_tx_t$), and the gain matrices $K^*_t$ and $L^*_t$, for $t \in\{0, \cdots, N-1\}$, are unique and can be expressed by

\vspace{-0.5em}
\hspace{-2.5em}
\begin{minipage}[b]{0.5\linewidth}
 \centering\begin{align}
K^*_t &= (R^u_t)^{-1}B_t^{\top}P^*_{t+1}\Lambda_{t}^{-1}A_t, \label{eqn:optimalK_r}
 \end{align}
\end{minipage}
\hspace{0.4em}
\begin{minipage}[b]{0.5\linewidth}
\centering
\begin{align}
L^*_t &= -(R^w_t)^{-1}D_t^{\top}P^*_{t+1}\Lambda_{t}^{-1}A_t,\label{eqn:optimalL_r}
\end{align}
\end{minipage}

\noindent where $P^*_{t+1}$, $t \in \{0, \cdots, N-1\}$, is the sequence of p.s.d. matrices generated by  \eqref{eqn:RDE_recursive}. We next introduce a standard assumption that suffices to ensure the existence of the value of the game, following from Theorem 3.2 of \cite{bacsar2008h} and Theorem 6.7 of \cite{bacsar1998dynamic}.

\begin{assumption}\label{assumption_game_recursive}
$R^w_t - D^{\top}_tP^*_{t+1}D_t > 0$, for all $t\in\{0, \cdots, N-1\}$, where $P^*_{t+1} \geq 0$ is generated by \eqref{eqn:RDE_recursive}.
\end{assumption}
Under Assumption \ref{assumption_game_recursive}, the value in \eqref{eqn:game_value_uw} is attained by the controller sequence $\big(\{-K^*_tx^*_t\}, \{-L^*_tx^*_t\}\big)$, where $K^*_t$ and $L^*_t$ are given in \eqref{eqn:optimalK_r} and \eqref{eqn:optimalL_r}, respectively, and $x^*_{t+1}$ is the corresponding state trajectory generated by $x^*_{t+1} = \Lambda^{-1}_tA_tx^*_t + \xi_t$, for $t \in \{0, \cdots, N-1\}$ and with $x_0^* = x_0$ \cite{bacsar2008h}. Thus, the solution to \eqref{eqn:game_value_uw} can be found by searching $K^*_t$ and $L^*_t$ in $\RR^{d\times m} \times \RR^{n \times m}$, for all $t \in \{0, \cdots, N-1\}$. The resulting minimax policy optimization problem is
\begin{align}\label{eqn:game_value}
	\min_{\{K_t\}}~\max_{\{L_t\}} ~\cG\big(\{K_t\}, \{L_t\}\big) := \EE_{x_0, \xi_0, \cdots, \xi_{N-1}}\bigg[\sum^{N-1}_{t=0} x_t^{\top}\big(Q_t + K_t^{\top}R^u_tK_t - L^{\top}_tR^w_tL_t)x_t + x_N^{\top}Q_Nx_N\bigg],
\end{align}
subject to the system transition $x_{t+1} = (A_t-B_tK_t-D_tL_t)x_t +\xi_t$, for $t\in\{0, \cdots, N-1\}$. Some further notes regarding Assumption \ref{assumption_game_recursive} are provided in the remark below.
\begin{remark} (Unique Feedback Nash (Saddle-point) Equilibrium)
	By \cite{bacsar2008h, bacsar1998dynamic}, Assumption \ref{assumption_game_recursive} is sufficient to guarantee the \emph{existence} of feedback Nash (equivalently, saddle-point) equilibrium in zero-sum LQ dynamic games (game), under which it is also unique. Besides sufficiency, Assumption \ref{assumption_game_recursive} is also ``almost necessary", and ``quite tight" as noted in Remark 6.8 of \cite{bacsar1998dynamic}. In particular, if the sequence of matrices $R^w_t - D^{\top}_tP^*_{t+1}D_t$ for $t \in \{0, \cdots, N-1\}$ in Assumption \ref{assumption_game_recursive} admits any negative eigenvalues, then the upper value of the game becomes unbounded. In the context of LEQG and LQ disturbance attenuation (LQDA), suppose we set the system parameters of LEQG, LQDA, and game according to Lemma \ref{lemma:equivalent}, and in particular set $\beta^{-1}\bI$ in LEQG,  $\gamma^2\bI$ in LQDA, and $\bR^w$ in the game to be the same. Then these three problems are equivalent as far as their optimum solutions go (which is what we seek). Due to this equivalence, Assumption \ref{assumption_game_recursive} is a sufficient and ``almost necessary" condition for the existence of a solution to the equivalent LEQG/LQDA. Specifically, if the sequence of matrices $R^w_t - D^{\top}_tP^*_{t+1}D_t$ for $t \in \{0, \cdots, N-1\}$ in Assumption \ref{assumption_game_recursive} admits any negative eigenvalues, then no state-feedback controller can achieve a $\gamma$-level of disturbance attenuation in the equivalent LQDA, and no state-feedback controller can achieve a $\beta$-degree of risk-sensitivity in the equivalent LEQG. 	
\end{remark}
Lastly, we formally state the well-known equivalence conditions between LEQG, LQ disturbance attenuation, and zero-sum LQ stochastic dynamic games in the following lemma, and provide a short proof in \S\ref{proof:equivalent}.

\begin{lemma}(Connections)\label{lemma:equivalent}
	For any fixed $\gamma > \gamma^*$ in the LQ disturbance attenuation problem, we can introduce an equivalent LEQG problem and an equivalent zero-sum LQ dynamic game. Specifically, if we set $\beta^{-1}\bI, R_t, C_t^{\top}C_t, W$ in LEQG, $\gamma^2\bI, R_t, C_t^{\top}C_t, D_tD_t^{\top}$ in the LQ disturbance attenuation problem, and $R^w_t, R^u_t, Q_t, D_tD_t^{\top}$ in the game to be the same, for all $t \in \{0, \cdots, N-1\}$, then the optimal gain matrices in LEQG, the gain matrix in the LQ disturbance attenuation problem, and the saddle-point gain matrix for the minimizing player in the game are the same.
\end{lemma}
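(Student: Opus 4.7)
The plan is to establish the equivalence by showing that, under the parameter matching stipulated in the lemma, both LEQG and the LQ disturbance attenuation (LQDA) problems reduce to the same generalized Riccati difference equation (GRDE) as the zero-sum game \eqref{eqn:RDE_recursive}, and hence share the common optimal minimizing gain \eqref{eqn:optimalK_r}. The game side is already done for us by \cite{bacsar1998dynamic}, so the work is to reduce the other two problems to this form.

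For the LEQG-to-game reduction, the key tool is the Sherman-Morrison-Woodbury identity, which rewrites
\[
\tilde P_{K_{t+1}} \;=\; P_{K_{t+1}} + \beta P_{K_{t+1}}\bigl(W^{-1} - \beta P_{K_{t+1}}\bigr)^{-1} P_{K_{t+1}} \;=\; \bigl(P_{K_{t+1}}^{-1} - \beta W\bigr)^{-1}
\]
whenever $W^{-1} - \beta P_{K_{t+1}} > 0$. Minimizing the right-hand side of \eqref{eqn:RDE_LEQG} over $K_t$ by completion of squares yields $K^*_t = (R_t + B_t^\top \tilde P^*_{t+1} B_t)^{-1} B_t^\top \tilde P^*_{t+1} A_t$. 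Substituting the Woodbury form of $\tilde P^*_{t+1}$ and applying Woodbury once more converts this expression into \eqref{eqn:optimalK_r} with $\Lambda_t = I + (B_t R_t^{-1} B_t^\top - \beta D_t D_t^\top) P^*_{t+1}$, after identifying $R^u_t = R_t$, $R^w_t = \beta^{-1}\bI = \gamma^2\bI$, $Q_t = C_t^\top C_t$, and $W = D_t D_t^\top$. The associated fixed-point equation for $P^*_t$ then matches \eqref{eqn:RDE_recursive} term-by-term, proving the LEQG-game equivalence (originally due to \cite{jacobson1973optimal}).

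For the LQDA-to-game reduction, I would argue analogously following \cite{bacsar2008h}: the tilde quantity in \eqref{eqn:RDE_DA} admits the equivalent Woodbury form $\bigl(P_{K_{t+1}}^{-1} - \gamma^{-2} D_t D_t^\top\bigr)^{-1}$, which, under the matching $\beta = \gamma^{-2}$ and $W = D_t D_t^\top$, coincides with the LEQG form derived above. Minimizing \eqref{eqn:RDE_DA} over $K_t$ then produces the same $K^*_t$ as in LEQG, and hence as in the game via \eqref{eqn:optimalK_r}. Transitivity delivers the three-way equivalence of the optimal/saddle-point minimizing gain matrices asserted by the lemma.

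The main obstacle will be the careful bookkeeping of the three invertibility/positivity conditions --- $W^{-1} - \beta P^*_{K_{t+1}} > 0$ in LEQG, $\gamma^2\bI - D_t^\top P^*_{K_{t+1}} D_t > 0$ in LQDA, and $R^w_t - D_t^\top P^*_{t+1} D_t > 0$ in Assumption \ref{assumption_game_recursive} --- and verifying they all coincide under the parameter matching. A congruence/spectrum argument settles this: since $W = D_t D_t^\top$ and $\beta = \gamma^{-2}$, the nonzero eigenvalues of $W^{1/2} P^* W^{1/2}$ and $D_t^\top P^* D_t$ agree, so $W^{-1} - \beta P^* > 0 \Leftrightarrow \lambda_{\max}(W^{1/2} P^* W^{1/2}) < \beta^{-1} \Leftrightarrow \gamma^2\bI - D_t^\top P^* D_t > 0$, which in turn is exactly Assumption \ref{assumption_game_recursive} with $R^w_t = \gamma^2 \bI$. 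Once these conditions are aligned, every Woodbury manipulation above is legitimate and the equivalence of the three $K^*_t$'s follows by direct substitution.
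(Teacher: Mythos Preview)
Your proposal is correct in spirit but takes a genuinely different route from the paper. The paper's proof is essentially a proof by citation: it invokes three separate literature results in sequence --- Chapter 6.4 of \cite{bacsar1998dynamic} for the equivalence of the stochastic and deterministic zero-sum games, Theorem 4.1 of \cite{basar1991dynamic} for the LQDA--game link, and \S VI.B of \cite{jacobson1973optimal} for the LEQG--game link --- and does no algebra of its own. You instead carry out the algebraic reduction directly, using Woodbury to collapse each problem's $\tilde P$-correction into the common form and then matching the resulting Riccati recursions and gain formulas to \eqref{eqn:RDE_recursive}--\eqref{eqn:optimalK_r}. Your approach is more self-contained and actually explains \emph{why} the equivalence holds (the three $\tilde P$ objects are the same matrix in disguise), whereas the paper's proof is shorter but opaque without the cited books in hand.

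One technical point to tighten: your Woodbury step $\tilde P = (P^{-1} - \beta W)^{-1}$ requires $P^*_{t+1}$ to be invertible, but the paper only assumes $Q_t \geq 0$, so $P^*_{t+1}$ may be singular. The cleanest fix is to avoid inverting $P$ altogether and work throughout with the $D$-factored Woodbury form $\tilde P = P + PD(\beta^{-1}\bI - D^\top P D)^{-1}D^\top P$, which needs only $\beta^{-1}\bI - D^\top P D > 0$; this is exactly the condition you already align with Assumption~\ref{assumption_game_recursive}. With that adjustment, your matching of the three RDEs and of the positivity conditions goes through as written. You may also wish to add one sentence noting (as the paper does) that the additive noise $\xi_t$ in the stochastic game does not alter the saddle-point gain matrices, so that the game's \eqref{eqn:optimalK_r} is indeed the right target for comparison.
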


 By the above connections and Remark \ref{rmk:challenge_LEQG_atten}, we will hereafter focus on the stochastic zero-sum LQ dynamic game  between a minimizing controller and a maximizing disturbance, using PG methods with exact gradient accesses in \S\ref{sec:1st}, and derivative-free PG methods by sampling system trajectories in \S\ref{sec:0order}. As a result, the minimizing controller we obtain solves all three classes of problems introduced above altogether.

\section{Policy Gradient Methods}\label{sec:1st}
For ease of analysis, we first present the following compact notations for the game.
\small
\begin{align}
&\bx = \big[x^{\top}_0, \cdots, x^{\top}_{N}\big]^{\top}, ~~~\ \bu = \big[u^{\top}_0, \cdots, u^{\top}_{N-1}\big]^{\top}, ~~~\ \bw = \big[w^{\top}_0, \cdots, w^{\top}_{N-1}\big]^{\top}, ~~~\ \bm{\xi} = \big[x^{\top}_0, ~\xi^{\top}_0, \cdots, \xi^{\top}_{N-1}\big]^{\top}, ~~~\ \bQ = \big[diag(Q_0, \cdots, Q_N)\big],\nonumber\\
&\bA = \begin{bmatrix} \bm{0}_{m\times mN} &  \bm{0}_{m\times m} \\ diag(A_0, \cdots, A_{N-1}) & \bm{0}_{mN\times m}\end{bmatrix}, \
\bB = \begin{bmatrix} \bm{0}_{m\times dN} \\ diag(B_0, \cdots, B_{N-1}) \end{bmatrix}, \ \bD = \begin{bmatrix} \bm{0}_{m\times nN} \\ diag(D_0, \cdots, D_{N-1}) \end{bmatrix}, \  \bR^u = \big[diag(R^u_0, \cdots, R^u_{N-1})\big], \nonumber \\
&\bR^w = \big[diag(R^w_0, \cdots, R^w_{N-1})\big], \qquad \bK = \begin{bmatrix} diag(K_0, \cdots, K_{N-1}) & \bm{0}_{dN\times m} \end{bmatrix}, \qquad \bL = \begin{bmatrix} diag(L_0, \cdots, L_{N-1}) & \bm{0}_{nN\times m} \end{bmatrix}. \label{eqn:KL_form}
\end{align}
\normalsize
We re-derive the earlier game formulation with linear state-feedback policies for the players using compact notations in \S\ref{sec:compact_forms} and show that they are equivalent to the recursive ones in \S\ref{sec:game}. Now, using the compact notations, we develop PG methods with access to the \emph{exact} policy gradient that provably converges to the Nash equilibrium of the game, $(\bK^*, \bL^*) \in \cS(d, m, N) \times \cS(n, m, N)$, where $\cS(d, m, N)$ and $\cS(n, m, N)$ are the subspaces that we confine our searches of $\bK$ and $\bL$ to, respectively. In particular, we only search over $\bK \in \RR^{Nd \times (N+1)m}$ and $\bL \in \RR^{Nn \times (N+1)m}$, and $\bK, \bL$ satisfy the sparsity patterns shown in \eqref{eqn:KL_form}, as they suffice to attain the Nash equilibrium. The degrees of freedom of  $\cS(d, m, N)$ and $\cS(n, m, N)$ are $d\times m\times N $ and $n\times m \times N$, respectively. Then, for any pair of gains $(\bK, \bL)$, the objective function $\cG(\bK, \bL)$ is given by
\begin{align}\label{eqn:ckl}
	\cG(\bK, \bL) = \EE_{\bm{\xi}}\big[\bm{\xi}^{\top}\bP_{\bK, \bL}\bm{\xi}\big] = \Tr\big(\bP_{\bK, \bL}\Sigma_0\big) = \Tr\big((\bQ + \bK^{\top}\bR^u\bK - \bL^{\top}\bR^w\bL)\Sigma_{\bK, \bL}\big), 
\end{align} 
where $\bP_{\bK, \bL}\hspace{-0.2em}:=\hspace{-0.15em}diag(P_{K_0, L_0}, \cdots, P_{K_N, L_N})$ and $P_{K_t, L_t}$ is the solution to the recursive Lyapunov equation
\begin{align}\label{eqn:pkl_recursive}
	P_{K_t, L_t} = (A_t -B_tK_t -D_tL_t)^{\top}P_{K_{t+1}, L_{t+1}}(A_t-B_tK_t-D_tL_t) + Q_t + K_t^{\top}R^u_tK_t -L_t^{\top}R^w_tL_t, \ t \in \{0, \cdots, N-1\}
\end{align}
with $P_{K_N, L_N}:= \hspace{-0.1em}Q_N$. Moreover, $\bP_{\bK, \bL}$ and $\Sigma_{\bK, \bL} := \EE_{x_0, \xi_0, \cdots, \xi_{N-1}}\left[diag(x_0x_0^{\top}, \cdots, x_Nx_N^{\top})\right]$ are the solutions to the recursive Lyapunov equations in the compact form
\begin{align}
	\label{eqn:bpkl}
	\bP_{\bK, \bL} &= (\bA -\bB\bK -\bD\bL)^{\top}\bP_{\bK, \bL}(\bA-\bB\bK-\bD\bL) + \bQ + \bK^{\top}\bR^u\bK -\bL^{\top}\bR^w\bL \\
	\Sigma_{\bK, \bL} &= (\bA -\bB\bK-\bD\bL)\Sigma_{\bK, \bL}(\bA -\bB\bK-\bD\bL)^{\top} + \Sigma_0,\label{eqn:sigmaKL}
\end{align}
where $\Sigma_0 := \EE_{x_0, \xi_0, \cdots, \xi_{N-1}}\left[diag(x_0x_0^{\top}, \xi_0\xi_0^{\top}, \cdots, \xi_{N-1}\xi_{N-1}^{\top})\right] > 0$ is full-rank because $x_0, \xi_0, \cdots, \xi_{N-1}$ are drawn independently from distribution $\cD$ that has a positive-definite covariance. The solutions to \eqref{eqn:bpkl} and \eqref{eqn:sigmaKL} always exist and are unique because they are essentially recursive formulas in blocks. Since $\|x_0\|, \|\xi_t\| \leq \vartheta$ almost surely, for all $t\in \{0, \cdots, N-1\}$, $\big\|diag(x_0x_0^{\top}, \xi_0\xi_0^{\top}, \cdots, \xi_{N-1}\xi_{N-1}^{\top})\big\|_F \leq c_0 :=(N+1)\vartheta^2$ almost surely. Moreover, we define $d_{\Sigma}:=m^2(N+1)$ and $\phi:=\lambda_{\min}(\Sigma_0) > 0$ that will be useful in our analysis.

Our goal is to solve the minimax optimization problem $\min_{\bK}\max_{\bL} \cG(\bK, \bL)$ such that $\cG(\bK^*, \bL) \leq \cG(\bK^*, \bL^*) \leq \cG(\bK, \bL^*)$ for any $\bK$ and $\bL$, where $(\bK^*, \bL^*)$  is the saddle-point (equivalently, Nash equilibrium) solution. In the following lemmas, we show that $\cG(\bK, \bL)$ is \emph{nonconvex-nonconcave}, and also provide explicit forms of the PGs of $\cG(\bK, \bL)$. Lastly, we show that the stationary point of $\cG(\bK, \bL)$ captures the Nash equilibrium of the game under standard regularity conditions. Proofs of the two lemmas are deferred to   \S\ref{proof:nonconvex_nonconcave} and \S\ref{proof:station}, respectively.

\begin{lemma}(Nonconvexity-Nonconcavity)\label{lemma:nonconvex_nonconcave}
	There exist zero-sum LQ dynamic games such that the objective function $\cG(\bK, \bL)$ is nonconcave in $\bL$ for a fixed $\bK$, and nonconvex in $\bK$ for a fixed $\bL$.
\end{lemma}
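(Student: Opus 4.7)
The plan is to prove both claims by exhibiting a single explicit counterexample---a scalar ($m = d = n = 1$) zero-sum LQ game with horizon $N = 2$---and then computing Hessians of $\cG$ directly. Unrolling \eqref{eqn:pkl_recursive} twice gives
\begin{align*}
P_{K_1,L_1} &= (A_1 - B_1 K_1 - D_1 L_1)^2 Q_2 + Q_1 + K_1^2 R^u_1 - L_1^2 R^w_1, \\
P_{K_0,L_0} &= (A_0 - B_0 K_0 - D_0 L_0)^2 P_{K_1,L_1} + Q_0 + K_0^2 R^u_0 - L_0^2 R^w_0,
\end{align*}
so that $\cG(\bK, \bL) = \sigma_x P_{K_0,L_0} + \sigma_{\xi,0} P_{K_1,L_1} + \sigma_{\xi,1} Q_2$ becomes an explicit polynomial of degree at most four in the four scalar gains, and each Hessian entry reduces to a short expression in those gains and the system parameters.

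For the concrete instance I would take $A_0 = A_1 = B_0 = B_1 = D_0 = D_1 = 1$, $Q_0 = Q_1 = 0$, $Q_2 = 10$, $R^u_0 = R^u_1 = 1$, $R^w_0 = 5$, $R^w_1 = 11$, and $\Sigma_0 = \bI$. A short calculation along the GRDE \eqref{eqn:RDE_recursive} yields $P^*_2 = 10$ and $P^*_1 = 110/111$, from which $R^w_t - D_t^2 P^*_{t+1} > 0$ at both stages, so Assumption \ref{assumption_game_recursive} holds and the game is well-posed. To witness nonconvexity in $\bK$ at $(\bK, \bL) = (\bm 0, \bm 0)$, I would evaluate $\nabla^2_{\bK} \cG$ and find the $2 \times 2$ matrix with diagonals $(22, 44)$ and off-diagonals $40$, whose determinant is $22\cdot 44 - 40^2 = -632 < 0$; hence the Hessian is indefinite and $\cG(\cdot, \bm 0)$ is nonconvex. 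To witness nonconcavity in $\bL$ at the same point, I would compute $\partial^2_{L_0} \cG = 2\sigma_x (D_0^2 P_{K_1,L_1} - R^w_0) = 2(10 - 5) = 10 > 0$, so $\cG(\bm 0, \cdot)$ is nonconcave (in fact strictly convex along the $L_0$ direction at this point).

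The main obstacle, though essentially one of bookkeeping, is to find a single parameter choice that simultaneously satisfies Assumption \ref{assumption_game_recursive} and exhibits both kinds of non-curvature. The common source of both phenomena is the multiplicative coupling $(A_0 - B_0 K_0 - D_0 L_0)^2 P_{K_1, L_1}$ in the unrolled Lyapunov recursion: with $\bL$ held fixed, the induced cross derivative $\partial_{K_0, K_1} \cG$ can dominate the positive diagonal, yielding a negative determinant; with $\bK$ held fixed, the accumulated state cost $P_{K_1, L_1}$---driven by $Q_2$---can exceed $R^w_0$ while still respecting the GRDE bound $R^w_0 > D_0^2 P^*_1$, flipping the sign of the $L_0$-diagonal Hessian entry. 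The choice $Q_2 = 10$ and $R^w_0 = 5$ (with $P^*_1 \approx 0.99$) threads this needle, yielding both non-curvatures in one instance.
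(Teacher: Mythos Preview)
Your proposal is correct for the lemma as stated, and it takes a genuinely different route from the paper. The paper exhibits a $3$-dimensional time-invariant system with $N=5$ and verifies nonconvexity/nonconcavity by a midpoint check, i.e., by numerically evaluating $\tfrac{1}{2}\big(\cG(\bK,\bL^1)+\cG(\bK,\bL^2)\big)-\cG(\bK,\bL^3)$ for three explicit gains. You instead build a scalar example with $N=2$, unroll the Lyapunov recursion into a quartic polynomial, and read off the indefiniteness of the $2\times 2$ Hessian in $\bK$ and the positive second derivative in $L_0$. Your argument is more elementary and fully analytic (no floating-point verification), and it makes the mechanism transparent: the cross term $(A_0-B_0K_0-D_0L_0)^2 P_{K_1,L_1}$ is exactly what breaks convexity/concavity.

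One point of comparison worth flagging: the paper's proof is slightly stronger in that it explicitly places the fixed $\bK$ inside $\cK$ (they verify $\lambda_{\min}(\bH_{\bK,\bL(\bK)})>0$). In your instance $\bK=\bm 0\notin\cK$: with $K_1=0$ one computes $P_{K_1,L(K_1)}=Q_2+Q_2^2/(R^w_1-Q_2)=110$, so $R^w_0-D_0^2 P_{K_1,L(K_1)}=5-110<0$. This does not affect the lemma as literally stated, but the paper later invokes precisely the $\bK\in\cK$ version in Lemma~\ref{lemma:landscape_L} (``There exists $\bK\in\cK$ such that $\cG(\bK,\bL)$ is nonconcave in $\bL$''). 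If you want your example to serve that purpose too, you would need to either enlarge $R^w_0$ (but then $\partial^2_{L_0}\cG$ flips sign) or move to a different $\bK$; in general, threading $\bK\in\cK$ together with nonconcavity in $\bL$ in a scalar two-step example is delicate, which is presumably why the paper resorts to a higher-dimensional numerical instance.
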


\begin{lemma}({PG} \& Stationary Point Property)\label{lemma:station}
The policy gradients of $\cG(\bK, \bL)$ can be computed as
\begin{align}  
	 \nabla_{\bK}\cG(\bK, \bL) & = 2\bF_{\bK, \bL}\Sigma_{\bK, \bL} := 2\big[(\bR^u + \bB^{\top}\bP_{\bK, \bL}\bB)\bK - \bB^{\top}\bP_{\bK, \bL}(\bA-\bD\bL)\big] \Sigma_{\bK, \bL}, \label{eqn:nabla_K}\\
	 \nabla_{\bL}\cG(\bK, \bL) & = 2\bE_{\bK, \bL}\Sigma_{\bK, \bL} := 2\big[(-\bR^w + \bD^{\top}\bP_{\bK, \bL}\bD)\bL - \bD^{\top}\bP_{\bK, \bL}(\bA-\bB\bK)\big] \Sigma_{\bK, \bL}. \label{eqn:nabla_L}
\end{align}	
Also, if at some stationary point $(\bK,\bL)$ of $\cG(\bK, \bL)$ (i.e., where  $\nabla_{\bK}\cG(\bK, \bL)=\bm{0}$ and $\nabla_{\bL}\cG(\bK, \bL) = \bm{0}$), it holds that $\bP_{\bK,\bL}\geq 0$ and $\bR^w - \bD^{\top}\bP_{\bK, \bL}\bD > 0$, then this stationary point $(\bK,\bL)$ is the unique Nash equilibrium of the game.
\end{lemma}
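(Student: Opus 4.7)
The plan is to handle the gradient formulas and the stationary-point characterization separately. For the former, I would perturb the Lyapunov equation \eqref{eqn:bpkl} and take traces against $\Sigma_0$, using \eqref{eqn:sigmaKL} to convert the result into the $\Sigma_{\bK,\bL}$-form. For the latter, I would combine the stationarity conditions with the hypotheses on $\bP_{\bK,\bL}$ and $\bR^w - \bD^\top\bP_{\bK,\bL}\bD$ to recover the generalized Riccati recursion \eqref{eqn:RDE_recursive}, and then invoke the known uniqueness of the Nash equilibrium under Assumption \ref{assumption_game_recursive}.

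To derive \eqref{eqn:nabla_K}--\eqref{eqn:nabla_L}, I would start from $\cG(\bK,\bL) = \Tr(\bP_{\bK,\bL}\Sigma_0)$, introduce a directional perturbation $\delta\bK$, and differentiate \eqref{eqn:bpkl}. Writing $\bA_{cl} := \bA - \bB\bK - \bD\bL$, a short calculation gives that $\delta\bP_{\bK,\bL}$ satisfies the linear Lyapunov equation $\delta\bP_{\bK,\bL} - \bA_{cl}^\top \delta\bP_{\bK,\bL}\bA_{cl} = \delta\bK^\top \bF_{\bK,\bL} + \bF_{\bK,\bL}^\top \delta\bK$, where I am using the identity $\bR^u\bK - \bB^\top\bP_{\bK,\bL}\bA_{cl} = \bF_{\bK,\bL}$ that follows from $\bA - \bD\bL = \bA_{cl} + \bB\bK$. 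Taking $\Tr(\cdot\,\Sigma_0)$ and iterating the recursion so that $\Sigma_0$ accumulates into $\Sigma_{\bK,\bL}$ via \eqref{eqn:sigmaKL}, I obtain $\delta\cG = 2\,\Tr(\delta\bK^\top \bF_{\bK,\bL}\Sigma_{\bK,\bL})$, which is \eqref{eqn:nabla_K}. An identical argument with $\delta\bL$, carrying the sign of the $-\bR^w$ term, yields \eqref{eqn:nabla_L}. This step is routine matrix calculus and I do not expect surprises.

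For the Nash characterization, first observe that \eqref{eqn:sigmaKL} gives $\Sigma_{\bK,\bL} \geq \Sigma_0 > 0$, so $\Sigma_{\bK,\bL}$ is invertible and stationarity reduces to $\bF_{\bK,\bL} = \bm{0}$ and $\bE_{\bK,\bL} = \bm{0}$. Since $\bR^u > 0$ and $\bP_{\bK,\bL}\geq 0$, the matrix $\bR^u + \bB^\top\bP_{\bK,\bL}\bB$ is positive definite; by hypothesis so is $\bR^w - \bD^\top\bP_{\bK,\bL}\bD$. Because $\bP_{\bK,\bL}$, $\bK$, and $\bL$ are block-diagonal in time, I would argue by backward induction on $t$: at $t=N$ we have $P_{K_N,L_N}=Q_N=P^*_N$, and at each $t<N$ the equations $\bF_{\bK,\bL}=\bm{0}$ and $\bE_{\bK,\bL}=\bm{0}$ collapse to a coupled linear system in $(K_t,L_t)$ with $P_{K_{t+1},L_{t+1}}$ fixed by the inductive hypothesis. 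The main obstacle will be the algebraic step showing that this coupled system has a unique solution which, after substitution back into \eqref{eqn:pkl_recursive}, reproduces both \eqref{eqn:RDE_recursive} and the gain formulas \eqref{eqn:optimalK_r}--\eqref{eqn:optimalL_r}; this is cleanest via a Schur-complement manipulation on the $2\times 2$ block coefficient matrix assembled from the two stationarity equations, whose invertibility is guaranteed by the positivity just noted. Once the GRDE is matched, uniqueness of its p.s.d.\ solution under Assumption \ref{assumption_game_recursive}, as recorded in Theorem 3.2 of \cite{bacsar2008h} and Theorem 6.7 of \cite{bacsar1998dynamic}, forces $(\bK,\bL)=(\bK^*,\bL^*)$ and completes the proof.
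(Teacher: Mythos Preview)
Your proposal is correct on both counts, and the stationary-point argument is essentially the same as the paper's (invertibility of $\Sigma_{\bK,\bL}$ forces $\bF_{\bK,\bL}=\bE_{\bK,\bL}=\bm{0}$, the positivity hypotheses let one solve the coupled linear system, and uniqueness of the GRDE solution under Assumption~\ref{assumption_game_recursive} closes the loop). The paper writes this step in compact form rather than by explicit backward induction, but the content is identical.

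Where you genuinely differ is in the gradient derivation. The paper takes a \emph{dynamic-programming} route: it defines the cost-to-go $\cG_t$ from stage $t$, writes it via $P_{K_t,L_t}$, differentiates with respect to $K_t$ (resp.\ $L_t$) directly, and then assembles the blocks into the compact form. Your route instead perturbs the compact Lyapunov equation \eqref{eqn:bpkl}, solves the resulting linear recursion for $\delta\bP_{\bK,\bL}$, and uses the nilpotency of $\bA_{cl}$ together with \eqref{eqn:sigmaKL} to trade $\Sigma_0$ for $\Sigma_{\bK,\bL}$. Both are standard and equivalent; your approach is arguably more streamlined in the compact notation and avoids ever unpacking the time index, while the paper's approach makes the per-stage structure (and the role of the recursive $P_{K_t,L_t}$) explicit, which is convenient later when the outer-loop analysis is carried out stage by stage.
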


\subsection{Double-Loop Scheme}\label{sec:double_loop_scheme}
We start by introducing our  \emph{double-loop} update scheme.  Specifically, we first fix an outer-loop control gain matrix $\bK$, and solve for the optimal $\bL$ by maximizing the objective $\cG(\bK,\cdot)$ over $\bL\in \cS(n, m, N)$. We denote the inner-loop solution for a fixed  $\bK$, if exists, by $\bL(\bK)$. After obtaining the inner-loop solution, the outer-loop $\bK$ is then updated to minimize the objective $\cG(\bK, \bL(\bK))$ over $\bK\in\cS(d, m, N)$}. For each fixed $\bK$, the inner-loop is an \emph{indefinite}  LQR problem where the state-weighting matrix, i.e., $-\bQ-\bK^{\top}\bR^u\bK$, is not p.s.d. Our goal is to find, via PG methods, the optimal linear state-feedback controller $\bw(\bK) = -\bL(\bK)\bx$, which suffices to attain the optimal performance for the inner loop, whenever the objective function $\cG(\bK,\cdot)$ admits a finite upper bound value (depending on the choice of the outer-loop $\bK$). We formally state the precise condition for the solution to the inner-loop maximization problem to be \emph{well-defined} as follows, with its proof being deferred to \S\ref{proof:assumption_L}. 

\begin{lemma}\label{lemma:assumption_L}(Inner-Loop Well-Definedness Condition)
For the Riccati equation
		\begin{align}\label{eqn:DARE_black_L}
			\bP_{\bK, \bL(\bK)} = \bQ + \bK^{\top}\bR^u\bK + (\bA-\bB\bK)^{\top}\big(\bP_{\bK, \bL(\bK)}+\bP_{\bK, \bL(\bK)}\bD(\bR^w - \bD^{\top}\bP_{\bK, \bL(\bK)}\bD)^{-1}\bD^{\top}\bP_{\bK, \bL(\bK)}\big)(\bA-\bB\bK),
\end{align}
we define the feasible set of control gain matrices $\bK$ to be
	\begin{align}\label{eqn:set_ck}
		\cK := \big\{\bK \in \cS(d, m, N) \mid \eqref{eqn:DARE_black_L} \text{ admits a solution } \bP_{\bK, \bL(\bK)}\geq 0, \text{~and } \bR^w - \bD^{\top}\bP_{\bK, \bL(\bK)}\bD > 0\big\}.
    \end{align}
  Then, $\bK \in \cK$ is \emph{sufficient} for the inner-loop solution $\bL(\bK)$ to be well defined. Also, $\bL(\bK)$ is unique, and takes the form of:
\begin{align}\label{eqn:l_k}
	\bL(\bK) = (-\bR^w + \bD^{\top}\bP_{\bK, \bL(\bK)}\bD)^{-1}\bD^{\top}\bP_{\bK, \bL(\bK)}(\bA-\bB\bK).
\end{align}
Moreover, $\bK \in \cK$ is also \emph{almost necessary}, in that if $\bK \notin \overline{\cK}$, where 
  \begin{align}\label{eqn:set_ckbar}
  	\overline{\cK} := \big\{\bK \in \cS(d, m, N) \mid \eqref{eqn:DARE_black_L} \text{ admits a solution } \bP_{\bK, \bL(\bK)}\geq 0, \text{~and } \bR^w - \bD^{\top}\bP_{\bK, \bL(\bK)}\bD \geq 0\big\},
  \end{align}
  then the solution to the inner loop is not well defined, i.e., the objective $\cG(\bK, \cdot)$ could be driven to arbitrarily large values. Lastly, the solution to \eqref{eqn:DARE_black_L} satisfies $\bP_{\bK, \bL(\bK)} \geq \bP_{\bK, \bL}$ in the p.s.d. sense for all $\bL$.
\end{lemma}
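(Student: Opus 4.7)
My approach is to treat the inner maximization as a finite-horizon indefinite LQ problem solved by backward dynamic programming. For a fixed outer gain $\bK$, write the inner objective as a functional of the sequence $\{w_t\}$ with $w_t=-L_tx_t$: the state evolves via $x_{t+1}=(A_t-B_tK_t)x_t+D_tw_t+\xi_t$ and the payoff to be maximized is $\sum_{t=0}^{N-1}\{x_t^{\top}(Q_t+K_t^{\top}R_t^uK_t)x_t-w_t^{\top}R_t^ww_t\}+x_N^{\top}Q_Nx_N$. I will posit a quadratic-plus-constant cost-to-go $V_t(x)=x^{\top}P_{K_t,L_t(K)}x+c_t$, initialized at $V_N(x)=x^{\top}Q_Nx$, and verify the ansatz by induction running backwards from $t=N$.

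The Bellman step reads $V_t(x)=x^{\top}(Q_t+K_t^{\top}R_t^uK_t)x+\max_w\{-w^{\top}R_t^ww+\mathbb{E}_{\xi_t}[V_{t+1}((A_t-B_tK_t)x+D_tw+\xi_t)]\}$. Independence of $\xi_t$ and $\mathbb{E}[\xi_t]=0$ ensure the $\xi_t$-average only feeds the constant $c_t$, so the bracketed quantity is a quadratic in $w$ with Hessian $-2(R_t^w-D_t^{\top}P_{K_{t+1},L_{t+1}(K)}D_t)$. Under the assumption $\bK\in\cK$, this matrix is negative definite at every backward step, so the maximizer is unique and directly recovers the formula \eqref{eqn:l_k}; substituting it back and completing the square produces exactly the Riccati recursion \eqref{eqn:DARE_black_L}. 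This proves existence, uniqueness, and the closed form of $\bL(\bK)$ simultaneously, since the ansatz propagates all the way to $t=0$.

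For the comparison $\bP_{\bK,\bL(\bK)}\geq \bP_{\bK,\bL}$ for arbitrary $\bL$, I will again use backward induction. Both $P_{K_N,L_N}$ and $P_{K_N,L_N(K)}$ equal $Q_N$. Assuming $P_{K_{t+1},L_{t+1}(K)}\geq P_{K_{t+1},L_{t+1}}$, plug this upper bound into the RHS of the Lyapunov recursion \eqref{eqn:pkl_recursive}, then observe that the resulting expression is a concave quadratic in $L_t$ (with the strict concavity provided by $\bK\in\cK$) maximized exactly at $L_t=L_t(K)$ from the previous paragraph. A completing-the-square calculation with the residual $\Delta L_t:=L_t-L_t(K)$ yields a representation of $P_{K_t,L_t(K)}-P_{K_t,L_t}$ as the sum of $\Delta L_t^{\top}(R_t^w-D_t^{\top}P_{K_{t+1},L_{t+1}(K)}D_t)\Delta L_t$ and a propagated p.s.d.\ term inherited from the induction hypothesis; both terms are p.s.d., closing the induction.

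The main obstacle is the almost-necessary direction. If $\bK\notin\overline{\cK}$, I will walk the Riccati recursion backwards and locate the first index $t^{\star}$ at which either $R_{t^{\star}}^w-D_{t^{\star}}^{\top}P(t^{\star}{+}1)D_{t^{\star}}$ fails to be p.s.d., or the recursion fails to produce a p.s.d.\ iterate. In the indefinite case, take a unit eigenvector $v$ of a negative eigenvalue and set $w_{t^{\star}}=sv$ with $s\in\RR$ arbitrarily large (while playing the Riccati-optimal $-L_s(K)x_s$ for $s>t^{\star}$ and any fixed stabilizing choice for $s<t^{\star}$); the quadratic-in-$s$ payoff then has a strictly positive coefficient on $s^2$, driving $\cG(\bK,\bL)\to+\infty$. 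The delicate bookkeeping is the nonexistence case: using the monotonicity from the previous paragraph, any candidate p.s.d.\ $\bP_{\bK,\bL}$ would be dominated by the Riccati fixed point, so an obstruction to p.s.d.\ existence at some backward step can be reduced, via a small $\epsilon\bI$-perturbation of $R_t^w$ and a continuity/limiting argument on the (perturbed) Riccati recursion, to an indefiniteness at an earlier index, which then falls back to the direct construction just described.
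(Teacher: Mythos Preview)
Your backward Bellman recursion and completing-the-square induction for $\bP_{\bK,\bL(\bK)}\ge\bP_{\bK,\bL}$ are correct; this is a more self-contained route than the paper's, which obtains the same conclusions by embedding the inner problem into a family of auxiliary deterministic and stochastic LQ games $\Gamma_{d_t}$, $\Gamma_s$ and then importing sufficiency, uniqueness, the closed form, the comparison, and the unboundedness criterion from Theorem~3.2 of \cite{bacsar2008h} and Corollary~6.4 of \cite{bacsar1998dynamic}. The paper's approach is shorter but black-boxes exactly the dynamic-programming argument you wrote out.

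Two issues in the almost-necessary direction. First, minor: your destabilizing choice $w_{t^{\star}}=sv$ is open-loop, whereas $\cG(\bK,\cdot)$ ranges over linear state feedback; set instead $L_{t^{\star}}=-svu^{\top}$ for a fixed unit $u$, so that the $s^2$ coefficient acquires the factor $\EE[(u^{\top}x_{t^{\star}})^2]$, which is strictly positive because the process noise gives $x_{t^{\star}}$ full-rank covariance. Second, your $\epsilon\bI$-perturbation for the ``nonexistence'' branch does not work: shrinking $\bR^w$ to create a negative eigenvalue makes $\cG_\epsilon(\bK,\bL)\ge\cG(\bK,\bL)$ pointwise (smaller penalty on $w$ means larger sup), so unboundedness of the perturbed game says nothing about the original; enlarging $\bR^w$ restores well-posedness of the Riccati, again the wrong direction. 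The cleaner observation that obviates this branch entirely is that it cannot occur in isolation: if $P_{t+1}\ge 0$ and $R_t^w-D_t^{\top}P_{t+1}D_t>0$ then $\tilde P_{t+1}\ge 0$ and hence $P_t=Q_t+K_t^{\top}R_t^uK_t+(A_t-B_tK_t)^{\top}\tilde P_{t+1}(A_t-B_tK_t)\ge 0$ automatically. So walking backward from $P_N=Q_N\ge 0$, the recursion produces p.s.d.\ iterates until the \emph{first} step at which the positive-definiteness of $R_t^w-D_t^{\top}P_{t+1}D_t$ fails; the strictly indefinite case is then your direct construction, and the singular-but-p.s.d.\ boundary case is precisely why the lemma says ``almost necessary'' rather than ``necessary.''
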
 
We note that the set $\cK$ is nonempty due to Assumption \ref{assumption_game_recursive} (as $\bK^*$ must lie in $\cK$), but it might be \emph{unbounded}. Also, every control gain matrix $\bK\in\cK$ for the outer-loop minimizing player in the game is equivalent to a control gain matrix that attains a $\gamma$-level of disturbance in the original disturbance attenuation problem  (cf. Lemma \ref{lemma:equivalent}). Then, by Lemma \ref{lemma:nonconvex_nonconcave}, the inner loop for a fixed $\bK \in \cK$ is a nonconcave optimization problem in terms of $\bL$, represented as $\max_{\bL}\cG(\bK, \bL)$. By Lemma \ref{lemma:assumption_L}, the inner loop always admits a unique maximizing solution for any $\bK\in\cK$. Thus,  the function $\max_{\bL}\cG(\bK, \bL)$ is differentiable with respect to $\bK$ and $\nabla_{\bK}\{\max_{\bL}\cG(\bK, \bL)\} = \nabla_{\bK}\cG(\bK, \bL(\bK))$ according to a variant of Danskin's theorem \cite{bernhard1995theorem}. The outer loop is then a nonconvex (as we will show in Lemma \ref{lemma:landscape_K}) constrained optimization problem in terms of $\bK$, which can be written as $\min_{\bK\in\cK}\cG(\bK, \bL(\bK))$. Note that restricting our search to $\cK$ does not lead to any loss of optimality because Assumption \ref{assumption_game_recursive} ensures that $\bK^*$ lies in $\cK$. Then, the policy gradient of the outer loop can be derived by substituting \eqref{eqn:l_k} into \eqref{eqn:nabla_K}, resulting in $\nabla_{\bK}\cG(\bK, \bL(\bK)) = 2\bF_{\bK, \bL(\bK)}\Sigma_{\bK, \bL(\bK)}$, where $\Sigma_{\bK, \bL(\bK)}$ follows from \eqref{eqn:sigmaKL} and
\begin{align}
	\bF_{\bK, \bL(\bK)} := (\bR^u + \bB^{\top}\tilde{\bP}_{\bK, \bL(\bK)}\bB)\bK - \bB^{\top}\tilde{\bP}_{\bK, \bL(\bK)}\bA, \quad \label{eqn:tilde_pk}
	\tilde{\bP}_{\bK, \bL(\bK)} := \bP_{\bK, \bL(\bK)} + \bP_{\bK, \bL(\bK)}\bD(\bR^w - \bD^{\top}\bP_{\bK, \bL(\bK)}\bD)^{-1}\bD^{\top}\bP_{\bK, \bL(\bK)}.
\end{align}
Lastly, we comment, in the next remark,  on the double-loop scheme we study, and compare it with other possible update schemes, such as alternating/simultaneous updates, that may also be candidates to solve this minimax optimization problem. 

\begin{remark}(Double-Loop Scheme)\label{remark:double}
	Our double-loop scheme, which has also  been used before in \cite{zhang2019policy,bu2019global},  is pertinent to the descent-multi-step-ascent scheme \cite{nouiehed2019solving} and the gradient descent-ascent (GDA) scheme with two-timescale stepsizes  \cite{jin2019minimax,lin2019gradient,fiez2020gradient} for solving nonconvex-(non)concave minimax optimization problems, with the number of multi-steps of ascent going to infinity, or the ratio between the fast and slow  stepsizes going to infinity (the $\tau$-GDA scheme with $\tau\to\infty$ \cite{jin2019minimax,fiez2020gradient}). Another variant is the alternating GDA (AGDA) scheme, which has been investigated in \cite{yang2020global} to address nonconvex-nonconcave problems with two-sided PL condition. However, unlike this literature,  one key condition for establishing convergence, the  \emph{global smoothness} of the objective function, does not hold in our control setting with \emph{unbounded} decision spaces and objective functions. In fact, as per Lemma \ref{lemma:assumption_L}, a non-judicious choice of $\bK$ may lead to an  undefined inner-loop maximization problem with unbounded objective. Hence, one has to carefully control the  update-rule here, to ensure that the iterates do not yield unbounded/undefined values along iterations. As we will show in \S\ref{sec:alternate}, it is not hard to construct cases where descent-multi-step-ascent/AGDA/$\tau$-GDA diverges even with infinitesimal stepsizes, similar to the negative results reported in the infinite-horizon setting in \cite{zhang2020rarl}. These results suggest that it seems challenging to provably show that other candidate update schemes converge to the global NE in  zero-sum LQ games, except the double-loop one, as we will show next. 
\end{remark}

\subsection{Optimization Landscape}\label{sec:landscape}
 
For a fixed $\bK$, the optimization landscape of the inner-loop subproblem is summarized in the following lemma, with its proof being deferred to \S\ref{proof:landscape_L}. 
\begin{lemma}(Inner-Loop Landscape)\label{lemma:landscape_L}
	There exists $\bK \in \cK$ such that $\cG(\bK, \bL)$ is nonconcave in $\bL$. For a fixed $\bK \in \cK$, $\cG(\bK, \bL)$ is coercive (i.e., $\cG(\bK, \bL) \rightarrow - \infty$ as $\|\bL\|_F \rightarrow \infty$), and the superlevel set 
	\small
\begin{align}\label{eqn:levelset_L}
		\cL_{\bK}(a) := \big\{\bL\in \cS(n, m, N) \mid \cG(\bK, \bL) \geq a\big\}
\end{align}  
\normalsize
is compact for any $a$ such that $\cL_{\bK}(a) \neq \varnothing$. Moreover, there exist some $l_{\bK, \bL}, \psi_{\bK, \bL}, \rho_{\bK, \bL} > 0$ such that $\cG(\bK,\bL)$ is $(l_{\bK, \bL}, \rho_{\bK, \bL})$ locally Lipschitz and $(\psi_{\bK, \bL}, \rho_{\bK, \bL})$ locally smooth at $(\bK, \bL)$. In other words, given any $\bL$, we have 
\begin{align*}
	\forall \bL' \text{ ~s.t.~  } \|\bL'-\bL\|_F\leq \rho_{\bK, \bL}, \quad |\cG(\bK, \bL') - \cG(\bK, \bL)| \leq l_{\bK, \bL}\cdot\|\bL' - \bL\|_F, \quad \|\nabla_{\bL}\cG(\bK, \bL') - \nabla_{\bL}\cG(\bK, \bL)\|_F \leq \psi_{\bK, \bL}\cdot\|\bL' - \bL\|_F.
\end{align*}
Also, there exist some $l_{\bK, a}, \psi_{\bK, a}> 0$ such that for any $\bL, \bL' \in \cL_{\bK}(a)$, we have
\begin{align*}
	|\cG(\bK, \bL') - \cG(\bK, \bL)| \leq l_{\bK, a}\cdot\|\bL' - \bL\|_F, \quad \|\nabla_{\bL}\cG(\bK, \bL') - \nabla_{\bL}\cG(\bK, \bL)\|_F \leq \psi_{\bK, a}\cdot\|\bL' - \bL\|_F.
\end{align*}
Lastly, the objective function $\cG(\bK, \bL)$ is $\mu_{\bK}$-PL for some $\mu_{\bK}>0$, such that $\tr (\nabla_{\bL}\cG(\bK, \bL)^{\top}\nabla_{\bL}\cG(\bK, \bL)) \geq \mu_{\bK} (\cG(\bK, \bL(\bK)) - \cG(\bK, \bL))$, where $\bL(\bK)$ is as defined in \eqref{eqn:l_k}. The PL constant $\mu_{\bK}$ depends only on $\bK$ and the problem parameters.
 \end{lemma}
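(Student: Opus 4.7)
The six claims split into two tiers: nonconcavity, coercivity, compact level sets, and the Lipschitz/smoothness statements all reduce to consequences of a single completion-of-squares identity on the inner loop, while the PL inequality is the technical core. The analytic backbone, derived by expanding the Lyapunov recursion \eqref{eqn:pkl_recursive} around the best response $\bL(\bK)$ of \eqref{eqn:l_k} and completing the square in $\bL$, is
\[
\cG(\bK,\bL(\bK))-\cG(\bK,\bL)=\Tr\!\Big((\bL-\bL(\bK))^{\top}\big(\bR^w-\bD^{\top}\bP_{\bK,\bL(\bK)}\bD\big)(\bL-\bL(\bK))\,\Sigma_{\bK,\bL}\Big).
\]
Since $\bK\in\cK$ implies $\bR^w-\bD^{\top}\bP_{\bK,\bL(\bK)}\bD>0$ by Lemma \ref{lemma:assumption_L}, and $\Sigma_{\bK,\bL}\geq \Sigma_0$ with $\lambda_{\min}(\Sigma_0)=\phi>0$, this gives a quadratic lower bound $\cG(\bK,\bL(\bK))-\cG(\bK,\bL)\geq c(\bK)\,\phi\,\|\bL-\bL(\bK)\|_F^2$, hence coercivity and (by closedness-plus-boundedness in the finite-dimensional space $\cS(n,m,N)$) compactness of $\cL_{\bK}(a)$.

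\textbf{Nonconcavity and smoothness.} For nonconcavity I would exhibit a small scalar, low-horizon instance with $\bK\in\cK$, mirroring the construction used for Lemma \ref{lemma:nonconvex_nonconcave}, and verify by direct computation that $\cG(\bK,\cdot)$ is not concave along some one-parameter slice. For local Lipschitz and local smoothness, both $\bP_{\bK,\bL}$ and $\Sigma_{\bK,\bL}$ are rational---hence $C^{\infty}$---functions of the entries of $\bL$, since the block-triangular recursions \eqref{eqn:bpkl}--\eqref{eqn:sigmaKL} are solvable for every finite $\bL$; thus $\nabla_{\bL}\cG$ and $\nabla^2_{\bL}\cG$ are continuous, furnishing local constants $(l_{\bK,\bL},\psi_{\bK,\bL},\rho_{\bK,\bL})$. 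The uniform constants $(l_{\bK,a},\psi_{\bK,a})$ on the compact set $\cL_{\bK}(a)$ are then the maxima of $\|\nabla_{\bL}\cG\|_F$ and of the Hessian operator norm over $\cL_{\bK}(a)$, each finite by continuity on a compact set.

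\textbf{PL inequality (main obstacle).} The hard step is to replace the squared distance $\|\bL-\bL(\bK)\|_F^2$ in the cost-difference identity by $\|\nabla_{\bL}\cG(\bK,\bL)\|_F^2=4\|\bE_{\bK,\bL}\Sigma_{\bK,\bL}\|_F^2$ \emph{uniformly} over all $\bL$. I would follow a gradient-dominance scheme adapted from Fazel et al.: a second completion-of-squares gives
\[
\cG(\bK,\bL(\bK))-\cG(\bK,\bL)\leq \|\Sigma_{\bK,\bL(\bK)}\|\cdot\big\|(\bR^w-\bD^{\top}\bP_{\bK,\bL(\bK)}\bD)^{-1}\big\|\cdot \|\bE_{\bK,\bL}\|_F^2,
\]
the key algebraic move being the insertion of the \emph{optimal} covariance $\Sigma_{\bK,\bL(\bK)}$ in place of $\Sigma_{\bK,\bL}$ by exploiting the monotonicity $\bP_{\bK,\bL}\leq \bP_{\bK,\bL(\bK)}$ from Lemma \ref{lemma:assumption_L}. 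Combining with $\|\nabla_{\bL}\cG\|_F\geq 2\phi\,\|\bE_{\bK,\bL}\|_F$ (from $\Sigma_{\bK,\bL}\geq \phi\,\bI$) then yields the PL inequality with $\mu_{\bK}=4\phi^2/\big(\|\Sigma_{\bK,\bL(\bK)}\|\cdot \|(\bR^w-\bD^{\top}\bP_{\bK,\bL(\bK)}\bD)^{-1}\|\big)$, a constant depending only on $\bK$ and problem data. The delicate point I would verify most carefully is precisely this insertion of the optimal covariance: it is what makes $\mu_{\bK}$ independent of $\bL$ and yields a \emph{global} PL inequality despite the unbounded decision variable.
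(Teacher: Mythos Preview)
Your proposal is essentially correct and follows the same approach as the paper. Two remarks.

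\textbf{Coercivity.} Your route through the exact identity
\[
\cG(\bK,\bL(\bK))-\cG(\bK,\bL)=\Tr\!\big((\bL-\bL(\bK))^{\top}\bH_{\bK,\bL(\bK)}(\bL-\bL(\bK))\,\Sigma_{\bK,\bL}\big),
\]
combined with $\Sigma_{\bK,\bL}\geq\phi\,\bI$ and $\bH_{\bK,\bL(\bK)}>0$, is cleaner than the paper's, which instead expands the recursive Lyapunov equation \eqref{eqn:pkl_recursive} at a single time index and argues that the negative-definite quadratic in $L_t$ eventually dominates. Your argument is global and immediately gives the quadratic growth bound.

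\textbf{PL step.} Your target inequality and constant $\mu_{\bK}=4\phi^2\lambda_{\min}(\bH_{\bK,\bL(\bK)})/\|\Sigma_{\bK,\bL(\bK)}\|$ match the paper exactly, but the mechanism you cite is slightly off. The appearance of $\Sigma_{\bK,\bL(\bK)}$ is \emph{not} obtained by ``replacing $\Sigma_{\bK,\bL}$'' via the monotonicity $\bP_{\bK,\bL}\leq\bP_{\bK,\bL(\bK)}$. Rather, the paper writes a \emph{second} cost-difference identity, this time anchored at $\bL$ with closed-loop matrix $\bA_{\bK,\bL(\bK)}$:
\[
\bP_{\bK,\bL(\bK)}-\bP_{\bK,\bL}=\bA_{\bK,\bL(\bK)}^{\top}(\bP_{\bK,\bL(\bK)}-\bP_{\bK,\bL})\bA_{\bK,\bL(\bK)}+(\bL(\bK)-\bL)^{\top}\bE_{\bK,\bL}+\bE_{\bK,\bL}^{\top}(\bL(\bK)-\bL)-(\bL(\bK)-\bL)^{\top}\bH_{\bK,\bL}(\bL(\bK)-\bL).
\]
A Young-type inequality with parameter $1/\lambda_{\min}(\bH_{\bK,\bL})$ bounds the non-recursive part by $\bE_{\bK,\bL}^{\top}\bE_{\bK,\bL}/\lambda_{\min}(\bH_{\bK,\bL})$; the monotonicity of $\bP$ is then used only to replace $\lambda_{\min}(\bH_{\bK,\bL})$ by the $\bL$-independent $\lambda_{\min}(\bH_{\bK,\bL(\bK)})$. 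Solving the resulting Lyapunov inequality with system matrix $\bA_{\bK,\bL(\bK)}$ and tracing against $\Sigma_0$ is what produces $\|\Sigma_{\bK,\bL(\bK)}\|$ automatically. So your instinct that this is the delicate step is right, but the bookkeeping runs through two distinct cost-difference identities (one with $\bA_{\bK,\bL}$ for the exact formula, one with $\bA_{\bK,\bL(\bK)}$ for the upper bound), not a direct covariance swap.
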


The optimization landscape of the inner loop is similar to that of the standard infinite-horizon LQR problem studied in \cite{fazel2018global, bu2019LQR, malik2020derivative}. We note that in our finite-horizon setting, independent process noises (together with the random initial state) with positive-definite covariances are essential for $\Sigma_{\bK,\bL}$ to be full-rank (in contrast to only requiring a random initial state with positive-definite covariance in \cite{fazel2018global}). The full-rankness of $\Sigma_{\bK,\bL}$ further guarantees that the stationary point of the inner-loop objective function $\cG(\bK, \bL)$ is also the unique optimal solution, as suggested in Lemma \ref{lemma:station}. Furthermore, it also ensures that the inner-loop objective function $\cG(\bK, \bL)$ is PL, as shown in Lemma \ref{lemma:landscape_L}. These two properties together are the key enablers to establish  the global convergence of our double-loop method, despite the problem being nonconvex-nonconcave. 

Subsequently, we analyze the optimization landscape of the outer loop in the following lemma (proved in \S\ref{proof:landscape_K}),  subject to the feasible set $\cK$ defined by \eqref{eqn:set_ck}. Note that the set $\cK$ is critical, as by Lemma \ref{lemma:assumption_L}, it is a sufficient and almost necessary condition to ensure that the solution to the associated inner-loop subproblem is well defined. More importantly, from a robust control perspective, such a set $\cK$ represents the set of control gains that enjoy a certain level of \emph{robustness}, which share the same vein as the $\cH_{\infty}$-norm constraint for the infinite-horizon LTI setting  \cite{zhang2019policymixed}. Indeed,  they both enforce the gain matrix to \emph{attenuate} a prescribed level of disturbance. This level of robustness also corresponds to the level of \emph{risk-sensitivity} of the controllers in LEQG problems. For more discussion of both the finite- and infinite-horizon discrete-time disturbance attenuation problem, we refer the reader to \cite{doyle1989state, whittle1990risk, bacsar2008h}.

\begin{lemma}(Outer-Loop Landscape)\label{lemma:landscape_K}
 There exist zero-sum LQ dynamic games such that $\cG(\bK, \bL(\bK))$ is nonconvex and noncoercive on $\cK$. Specifically, as $\bK$ approaches $\partial\cK$, $\cG(\bK, \bL(\bK))$ does not necessarily approach $+\infty$. Moreover, the stationary point of $\cG(\bK, \bL(\bK))$ in $\cK$, denoted as $(\bK^*, \bL(\bK^*))$, is unique and constitutes the unique Nash equilibrium of the game.
 \end{lemma}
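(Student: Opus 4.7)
The plan is to prove the three assertions of Lemma~\ref{lemma:landscape_K} separately. The two negative assertions (nonconvexity, noncoercivity) are addressed by explicit low-dimensional counterexamples, while the positive assertion (unique stationary point $=$ Nash equilibrium) follows structurally from Lemmas~\ref{lemma:station} and~\ref{lemma:assumption_L} combined with the envelope identity recorded after Lemma~\ref{lemma:assumption_L}.

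For \emph{nonconvexity}, I would consider a small scalar instance ($m=d=n=1$, $N=2$) with time-varying dynamics and compute $\cG(\bK, \bL(\bK))$ in closed form via the backward recursion~\eqref{eqn:DARE_black_L}. The map $K_1 \mapsto P_{K_1,L(K_1)}$ involves a rational factor of the form $(r^w - d^2 q_2)^{-1}$, and after propagation through $(a_0 - b_0 K_0)^2$ this produces competing polynomial and rational terms whose sum, probed along a line segment in $\cK$, can be shown to fail the secant inequality by direct arithmetic. Alternatively, taking $\bD$ small forces $\cK$ to fill up essentially all of $\cS(d, m, N)$ and makes $\cG(\bK, \bL(\bK))$ converge to the finite-horizon time-varying LQR cost, which is already known to be nonconvex via a Fazel-et-al.-style two-stage gain counterexample.

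For \emph{noncoercivity}, the key observation is that $\partial \cK$ is the locus where the inner-loop Riccati condition $\bR^w - \bD^\top \bP_{\bK, \bL(\bK)} \bD > 0$ becomes singular. At such boundary points, the inner-loop gain $\bL(\bK)$ of~\eqref{eqn:l_k} and the modified matrix $\tilde{\bP}_{\bK,\bL(\bK)}$ of~\eqref{eqn:tilde_pk} blow up through the inverse $(\bR^w - \bD^\top \bP_{\bK, \bL(\bK)} \bD)^{-1}$, but the $\cH_2$-type cost matrix $\bP_{\bK, \bL(\bK)}$ itself need not. I plan to construct a scalar instance in which the blow-up of $\tilde{\bP}$ at some intermediate time $t$ is annihilated by the preceding propagator $(A_{t-1} - B_{t-1} K_{t-1})$, using full-actuation parameters to force this factor to zero, so that $\bP_{\bK_n, \bL(\bK_n)}$ remains uniformly bounded along a sequence $\bK_n \to \partial\cK$. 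Then $\cG(\bK_n, \bL(\bK_n)) = \tr(\bP_{\bK_n, \bL(\bK_n)} \Sigma_0)$ stays bounded, ruling out coercivity.

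For the \emph{uniqueness of stationary points as Nash equilibria}, let $\bK \in \cK$ satisfy $\nabla_{\bK}\cG(\bK, \bL(\bK)) = \bm{0}$. By the envelope identity quoted after Lemma~\ref{lemma:assumption_L}, this gradient equals $2 \bF_{\bK, \bL(\bK)} \Sigma_{\bK, \bL(\bK)}$. The Lyapunov recursion~\eqref{eqn:sigmaKL} together with $\Sigma_0 > 0$ yields $\Sigma_{\bK, \bL(\bK)} \geq \Sigma_0 > 0$, so the covariance factor is invertible and $\bF_{\bK, \bL(\bK)} = \bm{0}$. Because $\bL(\bK)$ is the unique maximizer of the PL inner problem (Lemma~\ref{lemma:landscape_L}), one likewise has $\nabla_{\bL}\cG(\bK, \bL(\bK)) = 2 \bE_{\bK, \bL(\bK)} \Sigma_{\bK, \bL(\bK)} = \bm{0}$, hence $\bE_{\bK, \bL(\bK)} = \bm{0}$. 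Thus $(\bK, \bL(\bK))$ is a joint stationary point of $\cG(\cdot, \cdot)$. Since $\bK \in \cK$, Lemma~\ref{lemma:assumption_L} delivers $\bP_{\bK, \bL(\bK)} \geq 0$ and $\bR^w - \bD^\top \bP_{\bK, \bL(\bK)} \bD > 0$, so Lemma~\ref{lemma:station} applies and identifies $(\bK, \bL(\bK))$ as the unique Nash equilibrium of the game; in particular $\bK = \bK^*$. The main obstacle will be the noncoercivity construction, which requires the parameter engineering above to prevent the boundary $\tilde{\bP}$ blow-up from propagating back into $\bP_{\bK, \bL(\bK)}$; the nonconvexity reduction is essentially routine via the LQR limit, and the stationary-point identification is a clean corollary of the structural lemmas once the strict positive-definiteness of $\Sigma_{\bK, \bL(\bK)}$ and the envelope identity are in hand.
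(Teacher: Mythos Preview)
Your proposal is correct and, for two of the three claims, essentially coincides with the paper's argument. For noncoercivity, the paper uses precisely the mechanism you describe: a scalar $N=2$ instance with $A_t=2$, $B_t=D_t=Q_t=R^u_t=1$, $R^w_t=5$ and $K_t=2-\epsilon$, so that $(A_0-B_0K_0)^2=\epsilon^2$ annihilates the $\cO(1/\epsilon)$ blow-up of $\tilde P_1$ and $\cG(\bK^\epsilon,\bL(\bK^\epsilon))\to 11$ as $\bK^\epsilon\to\partial\cK$. For the stationary-point identification, the paper simply cites Lemma~\ref{lemma:station}; your write-up is the same argument spelled out in more detail (envelope identity, invertibility of $\Sigma_{\bK,\bL(\bK)}\geq\Sigma_0>0$, inner-loop optimality giving $\bE_{\bK,\bL(\bK)}=\bm 0$, then the regularity conditions from $\bK\in\cK$).

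The one genuine difference is in the nonconvexity construction. The paper does not use a scalar instance or a perturbation-to-LQR argument; it exhibits a single $3$-dimensional time-invariant example with $N=5$, writes down three explicit gain matrices $\bK^1,\bK^2,\bK^3=(\bK^1+\bK^2)/2\in\cK$, and verifies numerically that the midpoint inequality fails. Your LQR-limit route (take $\bD$ small so that $\cK$ exhausts $\cS(d,m,N)$ and $\cG(\bK,\bL(\bK))$ converges uniformly on compacta to the finite-horizon LQR cost, then import a known LQR nonconvexity example) is more conceptual and avoids heavy numerics, but requires a short continuity argument to pass the strict secant violation from the limit back to small positive $\bD$. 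Either approach works; the paper's is quicker to state, yours explains \emph{why} nonconvexity should be expected.
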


\begin{figure}[t]
\begin{minipage}{.48\textwidth}
\centering
\includegraphics[width=1\textwidth]{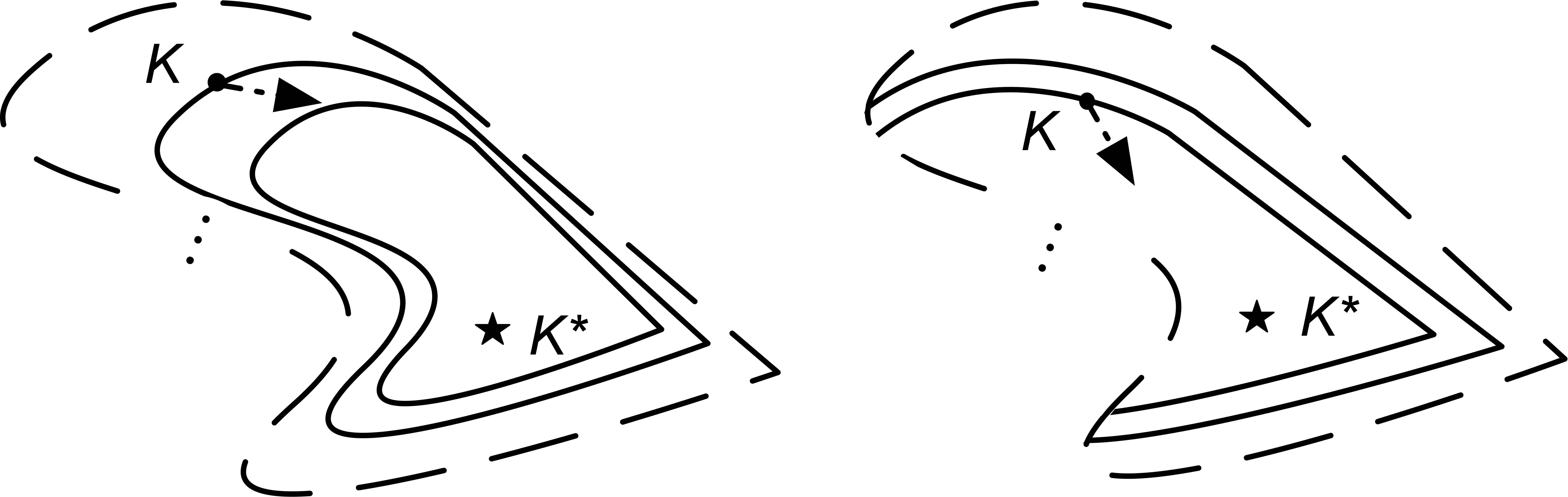}
\caption{\textit{Left:} Optimization landscape of LQR, where the dashed line represents the boundary of the stabilizing controller set. \textit{Right:} Optimization landscape of the outer loop, with the dashed line representing the boundary of $\cK$. The solid lines represent the contour lines of the objective function, $K$ denotes the control gain of one iterate, and $\bigstar$ is the global minimizer.}
\label{fig:landscape}
\end{minipage}
\hfill
\begin{minipage}{.48\textwidth}
\centering
\includegraphics[width=0.8\textwidth]{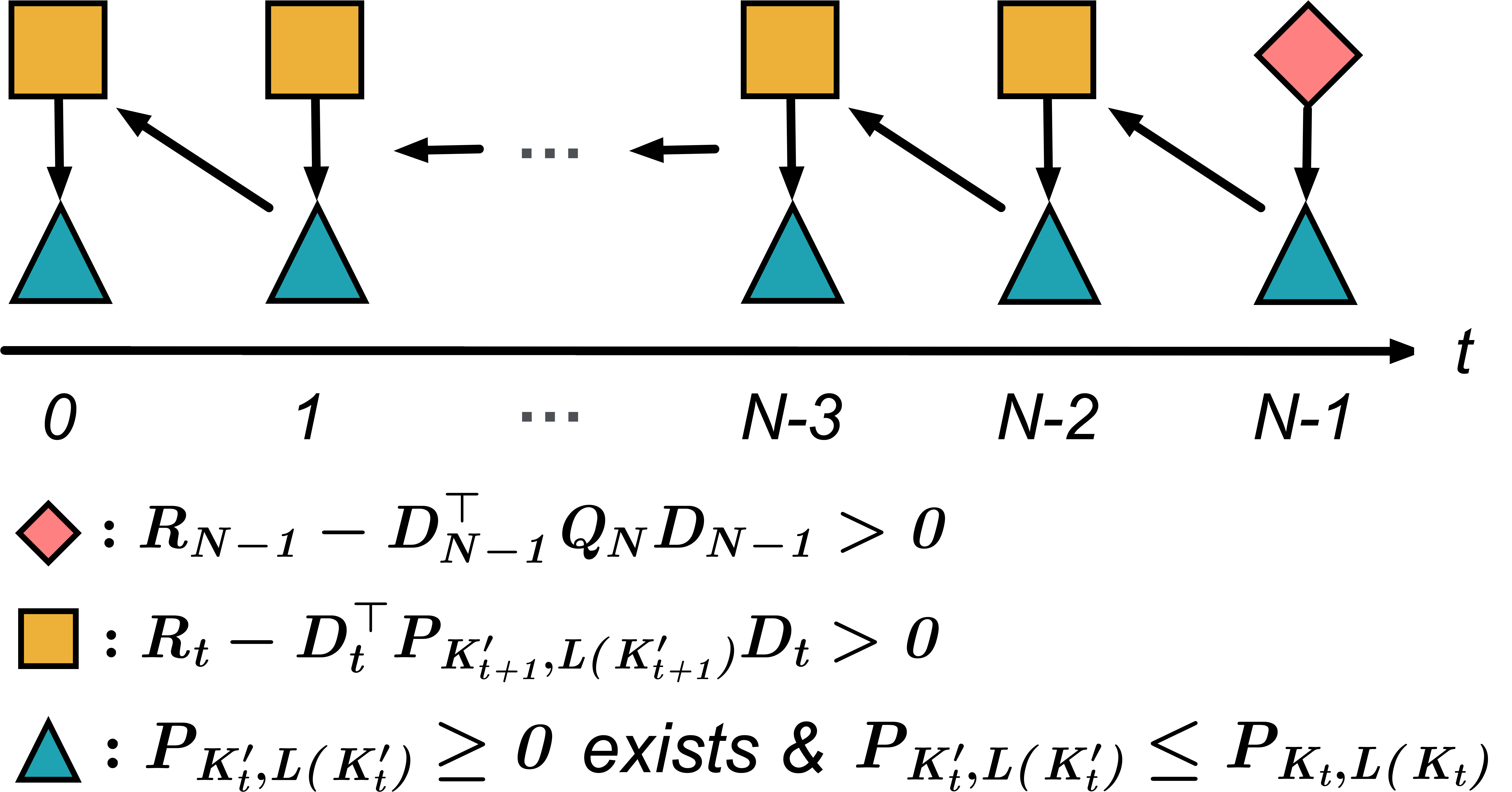}
\caption{Illustrating the proof idea for Theorem \ref{theorem:ir_K}. Starting with any $\bK \hspace{-0.1em} \in \hspace{-0.1em} \cK$ that induces a $\bP_{\bK, \bL(\bK)}$, denote the gain matrix after one step of the updates \eqref{eqn:npg_K} or \eqref{eqn:gn_K} as $\bK'$. We construct an iterative argument backward in time to find a constant stepsize such that $P_{K'_t, L(K'_t)} \hspace{-0.1em}\geq\hspace{-0.1em} 0$ exists and satisfies $P_{K'_t, L(K'_t)} \hspace{-0.1em}\leq\hspace{-0.1em} P_{K_t, L(K_t)}$ for all $t$. Specifically, for any $t \in \{0, \hspace{-0.1em}\cdots\hspace{-0.1em}, N\hspace{-0.1em}-\hspace{-0.1em}1\}$, a $K'_t$ satisfying \textcolor{mysquare}{$\blacksquare$} also satisfies \textcolor{mytriangle}{\large$\blacktriangle$\normalsize}. Moreover, \textcolor{mybigcube}{$\blacklozenge$} is automatically enforced by Assumption \ref{assumption_game_recursive}. Combined, $\bK' \hspace{-0.1em}\in  \hspace{-0.1em}\cK$ is guaranteed.}
\label{fig:proof1}
\end{minipage}
\end{figure}

Lack of the coercivity brings up challenges for convergence analysis, as a decrease in the value of the objective function cannot ensure feasibility of the updated gain matrix, in contrast to the standard LQR problem \cite{fazel2018global,bu2019LQR}. We illustrate the difficult landscape of the outer loop in Figure \ref{fig:landscape}. To address this challenge, we will show next that the natural PG (NPG) and Gauss-Newton (GN) updates, these two specific policy search directions, can  automatically preserve the feasibility of the iterates on-the-fly, which was referred to as the {implicit regularization} property in \cite{zhang2019policymixed} for infinite-horizon LTI systems. 
	
\subsection{Update Rules and Global Convergence}\label{sec:double_update}
	In this section, we introduce three PG-based update rules. We use $l, k \geq 0$ to represent the iteration indices of the inner- and outer-loop updates, respectively, and we additionally define 
	\begin{align}\label{eqn:gh_def}
		\bH_{\bK, \bL} := \bR^w - \bD^{\top}\bP_{\bK, \bL}\bD, \quad \bG_{\bK, \bL(\bK)} := \bR^u + \bB^{\top}\tilde{\bP}_{\bK, \bL(\bK)}\bB,
	\end{align}
 where $\tilde{\bP}_{\bK, \bL(\bK)}$ is defined in \eqref{eqn:tilde_pk}. The update rules, motivated by \cite{fazel2018global, bu2019LQR, zhang2019policymixed}, can be written as follows:
 
 \vspace{-0.9em}
\hspace{-2.55em}
\begin{minipage}[b]{0.5\linewidth}
 \centering\begin{align}
 	\text{PG: } \quad \bL_{l+1} &= \bL_l + \eta\nabla_{\bL}\cG(\bK_k, \bL_l), \label{eqn:pg_L}\\
 	\text{NPG: } \quad  \bL_{l+1} &= \bL_l + \eta\nabla_{\bL}\cG(\bK_k, \bL_l)\Sigma^{-1}_{\bK_k, \bL_l}, \label{eqn:npg_L} \\
 	\text{GN: } \quad  \bL_{l+1} &=  \bL_l + \eta\bH_{\bK_k, \bL_l}^{-1}\nabla_{\bL}\cG(\bK_k, \bL_l)\Sigma^{-1}_{\bK_k, \bL_l},\label{eqn:gn_L}
 \end{align}
\end{minipage}
\hspace{0.5em}
\begin{minipage}[b]{0.5\linewidth}
\centering
\begin{align}
	\bK_{k+1} &= \bK_k - \alpha\nabla_{\bK}\cG(\bK_k, \bL(\bK_k)), \label{eqn:pg_K}\\
	\bK_{k+1} &= \bK_k - \alpha\nabla_{\bK}\cG(\bK_k, \bL(\bK_k))\Sigma^{-1}_{\bK_k, \bL(\bK_k)}, \label{eqn:npg_K} \\
	\bK_{k+1} &= \bK_k - \alpha\bG_{\bK_k, \bL(\bK_k)}^{-1}\nabla_{\bK}\cG(\bK_k, \bL(\bK_k))\Sigma^{-1}_{\bK_k, \bL(\bK_k)},\label{eqn:gn_K}
\end{align}
\end{minipage}
where $\eta, \alpha >0$ are constant stepsizes for the inner loop and the outer loop, respectively. For a fixed $\bK \in \cK$, we have $\bH_{\bK, \bL_l}$ invertible for any $l \geq 0$. This is because $\bH_{\bK, \bL} \geq \bH_{\bK, \bL(\bK)} >0$ for all $\bL$. Also, we have $\bG_{\bK_k, \bL(\bK_k)}$ invertible for any $\bK_k \in \cK$, due to $\bG_{\bK_k, \bL(\bK_k)} \geq \bR^u$ in the p.s.d. sense at such iterations. We note that the PG \eqref{eqn:pg_L}, \eqref{eqn:pg_K} and NPG \eqref{eqn:npg_L}, \eqref{eqn:npg_K} updates can be estimated using samples, as shown in \cite{fazel2018global,malik2020derivative}. To prove the global convergence of our algorithms to the Nash equilibrium, we first present the convergence results for three inner-loop PG updates.

\begin{theorem}(Inner-Loop Global Convergence)\label{theorem:conv_L}
For a fixed $\bK\in \cK$ and an arbitrary $\bL_0$ that induces a finite $\cG(\bK, \bL_0)$, we define a superlevel set $\cL_{\bK}(a)$ as in \eqref{eqn:levelset_L}, where $a < \cG(\bK, \bL_0)$ is an arbitrary constant. Then, for $l \geq 0$, the iterates $\bL_l$ following \eqref{eqn:pg_L}-\eqref{eqn:gn_L} with stepsizes satisfying 
\begin{align*}
	\text{PG: } \ \eta \leq \frac{1}{\psi_{\bK, a}},\quad \text{Natural PG: } \ \eta \leq \frac{1}{2\|\bH_{\bK, \bL_0}\|}, \quad\text{Gauss-Newton: } \ \eta \leq \frac{1}{2},
\end{align*} 
converge to $\bL(\bK)$ at globally linear rates, where $\psi_{\bK, a}$ is the smoothness constant of the objective over $\cL_{\bK}(a)$, and $\bH_{\bK, \bL}$ is as defined in \eqref{eqn:gh_def}. Moreover, with $\eta = 1/2$, GN \eqref{eqn:gn_L} converges to $\bL(\bK)$ with a locally Q-quadratic rate.
\end{theorem}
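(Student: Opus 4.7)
The proof plan rests on a cost-difference identity combined with the PL inequality from Lemma~\ref{lemma:landscape_L}. First, for any admissible $\bL,\bL'$, I derive
\begin{align*}
\cG(\bK,\bL') - \cG(\bK,\bL) \;=\; 2\tr\!\bigl[(\bL'-\bL)^{\top}\bE_{\bK,\bL}\,\Sigma_{\bK,\bL'}\bigr] \;-\; \tr\!\bigl[(\bL'-\bL)^{\top}\bH_{\bK,\bL}(\bL'-\bL)\,\Sigma_{\bK,\bL'}\bigr]
\end{align*}
by unfolding the compact Lyapunov equation \eqref{eqn:bpkl} for $\bP_{\bK,\bL}$ and $\bP_{\bK,\bL'}$, invoking the identifications $\bD^{\top}\bP_{\bK,\bL}(\bA-\bB\bK)+\bH_{\bK,\bL}\bL = -\bE_{\bK,\bL}$ and $\bD^{\top}\bP_{\bK,\bL}\bD-\bR^w = -\bH_{\bK,\bL}$, and then pairing with $\Sigma_0$ in the trace. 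This identity is the workhorse for all three update analyses.

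Next, for each rule I establish \emph{monotone ascent} so the iterates stay in the compact superlevel set $\cL_{\bK}(a)$, on which Lemma~\ref{lemma:landscape_L} supplies a uniform smoothness constant $\psi_{\bK,a}$ and the PL constant $\mu_{\bK}$. For PG with $\eta\leq 1/\psi_{\bK,a}$, the standard ascent lemma for $\psi_{\bK,a}$-smooth functions gives $\cG(\bK,\bL_{l+1})-\cG(\bK,\bL_{l})\geq (\eta/2)\|\nabla_{\bL}\cG(\bK,\bL_{l})\|_F^{2}$. For NPG with $\bL_{l+1}-\bL_{l}=2\eta\bE_{\bK,\bL_l}$, substitution into the identity yields
\begin{align*}
\cG(\bK,\bL_{l+1})-\cG(\bK,\bL_{l}) \;=\; 4\eta\,\tr\!\bigl[\bE_{\bK,\bL_l}^{\top}(\bI-\eta\bH_{\bK,\bL_l})\bE_{\bK,\bL_l}\,\Sigma_{\bK,\bL_{l+1}}\bigr].
\end{align*}
I then induct on $l$ to show $\bP_{\bK,\bL_{l+1}}\geq \bP_{\bK,\bL_l}$: the difference satisfies a finite-horizon Lyapunov equation driven by $R_l := 4\eta\bE_{\bK,\bL_l}^{\top}(\bI-\eta\bH_{\bK,\bL_l})\bE_{\bK,\bL_l}$, whose unique solution is p.s.d.\ as soon as $R_l\geq 0$. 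Under the inductive hypothesis $\bH_{\bK,\bL_l}\leq \bH_{\bK,\bL_0}$, the stepsize $\eta\leq 1/(2\|\bH_{\bK,\bL_0}\|)$ enforces $\eta\bH_{\bK,\bL_l}\leq \bI/2$, giving $R_l\geq 0$, propagating the monotonicity hypothesis to $l+1$, and delivering the improvement $\geq 2\eta\tr[\bE_{\bK,\bL_l}^{\top}\bE_{\bK,\bL_l}\Sigma_{\bK,\bL_{l+1}}]$. For GN with $\bL_{l+1}-\bL_l = 2\eta\bH_{\bK,\bL_l}^{-1}\bE_{\bK,\bL_l}$, the identity collapses to $4\eta(1-\eta)\tr[\bE_{\bK,\bL_l}^{\top}\bH_{\bK,\bL_l}^{-1}\bE_{\bK,\bL_l}\Sigma_{\bK,\bL_{l+1}}]$, non-negative for $\eta\leq 1$ and maximized at $\eta=1/2$; an analogous induction delivers the same monotonicity of $\bP_{\bK,\bL_l}$.

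The final step converts per-step improvement into a linear contraction of the optimality gap. Using $\Sigma_{\bK,\bL_{l+1}}\geq \Sigma_0\geq \phi\bI$ and the uniform upper bound on $\|\Sigma_{\bK,\bL}\|$ available on $\cL_{\bK}(a)$, I relate $\tr[\bE_{\bK,\bL_l}^{\top}\bE_{\bK,\bL_l}\Sigma_{\bK,\bL_{l+1}}]$ (respectively $\tr[\bE_{\bK,\bL_l}^{\top}\bH_{\bK,\bL_l}^{-1}\bE_{\bK,\bL_l}\Sigma_{\bK,\bL_{l+1}}]$) to $\|\nabla_{\bL}\cG(\bK,\bL_l)\|_F^{2}=4\tr[\bE_{\bK,\bL_l}^{\top}\bE_{\bK,\bL_l}\Sigma_{\bK,\bL_l}^{2}]$. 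The PL inequality $\|\nabla_{\bL}\cG\|_F^{2}\geq \mu_{\bK}\,(\cG(\bK,\bL(\bK))-\cG(\bK,\bL))$ then yields $\cG(\bK,\bL(\bK))-\cG(\bK,\bL_{l+1})\leq (1-c)[\cG(\bK,\bL(\bK))-\cG(\bK,\bL_l)]$ for an explicit $c\in(0,1)$ depending on $\eta$, $\phi$, $\mu_{\bK}$, and the $\Sigma$-bound. For the local Q-quadratic rate of GN at $\eta=1/2$, the update simplifies to $\bL_{l+1}=-\bH_{\bK,\bL_l}^{-1}\bD^{\top}\bP_{\bK,\bL_l}(\bA-\bB\bK)$, which is exactly the Kleinman--Hewer policy-iteration recursion for the Riccati equation \eqref{eqn:DARE_black_L}. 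Linearizing the induced fixed-point map at $\bP_{\bK,\bL(\bK)}$ shows that its Fr\'echet derivative vanishes there, so the standard Newton/Hewer analysis yields $\|\bL_{l+1}-\bL(\bK)\|_F\leq c'\|\bL_l-\bL(\bK)\|_F^{2}$ in a neighborhood of $\bL(\bK)$.

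The principal difficulty I anticipate is that the cost-difference identity references $\Sigma_{\bK,\bL_{l+1}}$ rather than $\Sigma_{\bK,\bL_l}$, so closing the argument requires maintaining in lockstep along the induction both (i) the monotonicity $\bP_{\bK,\bL_l}\uparrow$, which legitimizes $\|\bH_{\bK,\bL_l}\|\leq \|\bH_{\bK,\bL_0}\|$ and hence the stated stepsize rule for NPG, and (ii) a uniform upper bound on $\|\Sigma_{\bK,\bL_l}\|$ that permits converting traces against $\Sigma_{\bK,\bL_{l+1}}$ into traces against $\Sigma_{\bK,\bL_l}^{2}$. Setting up the induction so that the stepsize restrictions in the theorem remain self-consistent across iterations is the most delicate part of the proof.
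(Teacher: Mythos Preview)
Your NPG and GN analyses are essentially identical to the paper's: both derive the Lyapunov recursion $\bP_{\bK,\bL_{l+1}}-\bP_{\bK,\bL_l} = \bA_{\bK,\bL_{l+1}}^{\top}(\bP_{\bK,\bL_{l+1}}-\bP_{\bK,\bL_l})\bA_{\bK,\bL_{l+1}} + R_l$ with $R_l\geq 0$ under the stated stepsizes, conclude p.s.d.\ monotonicity of $\bP_{\bK,\bL_l}$ and hence of $\bH_{\bK,\bL_l}$, which legitimizes the uniform stepsize $\eta\leq 1/(2\|\bH_{\bK,\bL_0}\|)$. Your Q-quadratic argument for GN via the Kleinman--Hewer recursion is also what the paper invokes (by citation to \cite{bu2019LQR,bu2020global}).

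Two differences deserve comment. First, for PG you invoke the ascent lemma with constant $\psi_{\bK,a}$, but that constant is only valid on $\cL_{\bK}(a)$, and you have not shown that the \emph{segment} $[\bL_l,\bL_{l+1}]$ lies in $\cL_{\bK}(a)$; this is precisely the circularity one must break, and ``establish monotone ascent so the iterates stay in $\cL_{\bK}(a)$'' presupposes what you need to prove. The paper resolves it with a Hausdorff-distance argument: since $\cL_{\bK}(\cG(\bK,\bL_0))$ and $\overline{\cL_{\bK}(a)^c}$ are disjoint closed sets (the former compact), there is a positive separation $\delta_a>0$; the full step $\eta\nabla_{\bL}\cG(\bK,\bL_l)$ is then subdivided into fictitious sub-steps of length at most $\delta_a$, each of which is guaranteed to land in $\cL_{\bK}(a)$ where $\psi_{\bK,a}$ applies, and the ascent lemma is applied to each sub-step in turn. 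Without this (or an equivalent device), your PG argument is incomplete.

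Second, the ``principal difficulty'' you anticipate---reconciling $\Sigma_{\bK,\bL_{l+1}}$ against $\Sigma_{\bK,\bL_l}$---does not arise in the paper's route. The paper traces $\bP_{\bK,\bL_{l+1}}-\bP_{\bK,\bL_l}$ against $\Sigma_0$ and uses only the crude bound $\bP_{\bK,\bL_{l+1}}-\bP_{\bK,\bL_l}\geq R_l$ to obtain, e.g.\ for NPG, $\cG(\bK,\bL_{l+1})-\cG(\bK,\bL_l)\geq \frac{\phi}{\|\bH_{\bK,\bL_0}\|}\tr(\bE_{\bK,\bL_l}^{\top}\bE_{\bK,\bL_l})$. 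It then closes the loop directly via the intermediate inequality \eqref{eqn:conv_proof_L}, namely $\cG(\bK,\bL(\bK))-\cG(\bK,\bL_l)\leq \frac{\|\Sigma_{\bK,\bL(\bK)}\|}{\lambda_{\min}(\bH_{\bK,\bL(\bK)})}\tr(\bE_{\bK,\bL_l}^{\top}\bE_{\bK,\bL_l})$, bypassing $\|\nabla_{\bL}\cG\|_F^2$ and any comparison of correlation matrices across iterates. Your route via the PL inequality on $\|\nabla_{\bL}\cG\|_F^2$ also works, but costs an extra factor from $\|\Sigma_{\bK,\bL}\|$ on $\cL_{\bK}(a)$; the paper's is cleaner and sidesteps the induction bookkeeping you flagged.
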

The proof of Theorem \ref{theorem:conv_L} is deferred to \S\ref{proof:conv_L}. For the outer loop, we require the iterates of $\bK$ to stay within $\cK$ in order for the solution to the associated inner-loop subproblem to be well defined. To meet this requirement, we introduce the \emph{implicit regularization} property for the NPG \eqref{eqn:npg_K} and GN \eqref{eqn:gn_K} updates in Theorem \ref{theorem:ir_K}, with its proof being provided in \S\ref{proof:ir_K}.   

\begin{theorem}(Implicit Regularization)\label{theorem:ir_K}
	{Let $\bK_0 \in \cK$ and let the stepsizes satisfy
	\begin{align*}
		\text{Natural PG: }  \alpha \leq 1/\|\bG_{\bK_0, \bL(\bK_0)}\|, \quad \text{Gauss-Newton: }\alpha &\leq 1.
	\end{align*}
	Then, the iterates $\bK_k \in\cK$ for all $k \geq 0$. In other words, the sequence of solutions to the Riccati equation \eqref{eqn:DARE_black_L}, $\{\bP_{\bK_k, \bL(\bK_k)}\}$, exists, and for all $k \geq 0$, $\bP_{\bK_k, \bL(\bK_k)}$ always satisfies the conditions in \eqref{eqn:set_ck}. Furthermore, the sequence $\{\bP_{\bK_k, \bL(\bK_k)}\}$ is monotonically non-increasing and bounded below by $\bP_{\bK^*, \bL(\bK^*)}$, in the p.s.d. sense.}
\end{theorem}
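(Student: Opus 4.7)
The plan is to prove Theorem \ref{theorem:ir_K} by induction on the outer iteration index $k$, with the induction hypothesis being that $\bK_k \in \cK$ together with the monotonicity bound $\bP_{\bK_k, \bL(\bK_k)} \leq \bP_{\bK_0, \bL(\bK_0)}$ in the positive semi-definite (p.s.d.) order. The heart of the argument is the inductive step, where, writing $\bK := \bK_k$ and $\bK' := \bK_{k+1}$, one update \eqref{eqn:npg_K} or \eqref{eqn:gn_K} with the prescribed stepsize must be shown to deliver $\bK' \in \cK$ and $\bP_{\bK', \bL(\bK')} \leq \bP_{\bK, \bL(\bK)}$. Once this is in hand, the strict inequality $\bR^w - \bD^{\top}\bP_{\bK', \bL(\bK')}\bD > 0$ defining $\cK$ in \eqref{eqn:set_ck} is automatic, because $\bR^w - \bD^{\top}\bP\bD$ is monotone decreasing in $\bP$ and $\bK_0 \in \cK$ satisfies this strictly; the uniform admissibility of the NPG stepsize $\alpha \leq 1/\|\bG_{\bK_0, \bL(\bK_0)}\|$ across all iterations similarly follows, since $\bG = \bR^u + \bB^{\top}\tilde{\bP}\bB$ is monotone in $\bP$.

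The key technical tool is a per-time-step monotonicity lemma for the map $\Gamma_t(P) := P + P D_t (R^w_t - D_t^{\top} P D_t)^{-1} D_t^{\top} P$ appearing in \eqref{eqn:tilde_pk}: if $0 \leq P_1 \leq P_2$ with both $R^w_t - D_t^{\top} P_i D_t > 0$, then $\Gamma_t(P_1) \leq \Gamma_t(P_2)$. I would prove this via the variational identity $v^{\top}\Gamma_t(P) v = \max_{\ell}\{v^{\top} P v - 2 v^{\top} P D_t \ell - \ell^{\top}(R^w_t - D_t^{\top} P D_t)\ell\}$ valid for each vector $v$, observing that the pointwise-in-$\ell$ difference of the two inner objectives reduces to $(v - D_t \ell)^{\top}(P_2 - P_1)(v - D_t \ell) \geq 0$. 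With this lemma I execute a backward induction on $t$ from $N$ down to $0$ to construct $\bP_{\bK', \bL(\bK')}$ block by block: the base case $P_{K'_N, L(K'_N)} = Q_N = P_{K_N, L(K_N)}$ is immediate, and given the inductive bound $P_{K'_{t+1}, L(K'_{t+1})} \leq P_{K_{t+1}, L(K_{t+1})}$ the lemma yields both $R^w_t - D_t^{\top} P_{K'_{t+1}, L(K'_{t+1})} D_t > 0$ (so that $P_{K'_t, L(K'_t)}$ is defined via \eqref{eqn:DARE_black_L}) and $\tilde{P}_{K'_{t+1}, L(K'_{t+1})} \leq \tilde{P}_{K_{t+1}, L(K_{t+1})}$. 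Replacing the smaller $\tilde{P}'_{t+1}$ by the larger $\tilde{P}_{t+1}$ in \eqref{eqn:DARE_black_L} gives an upper bound on $P_{K'_t, L(K'_t)}$; completing the square around $K^*_t := G_t^{-1} B_t^{\top}\tilde{P}_{K_{t+1}, L(K_{t+1})} A_t$ with $G_t := R^u_t + B_t^{\top}\tilde{P}_{K_{t+1}, L(K_{t+1})} B_t$ then casts both $P_{K_t, L(K_t)}$ and this upper bound as a common $K_t$-independent term plus $(\cdot - K^*_t)^{\top} G_t (\cdot - K^*_t)$. Substituting the explicit representations $K'_t - K^*_t = (I - 2\alpha G_t)(K_t - K^*_t)$ for NPG and $K'_t - K^*_t = (1 - 2\alpha)(K_t - K^*_t)$ for GN, the stepsize conditions $\alpha \leq 1/\|\bG_{\bK_0, \bL(\bK_0)}\|$ (which forces $\alpha G_t \leq I$ via the inductive hypothesis and the monotonicity of $\bG$ in $\bP$) and $\alpha \leq 1$ are precisely what ensure $(I - 2\alpha G_t) G_t (I - 2\alpha G_t) \leq G_t$ and $(1-2\alpha)^2 \leq 1$, respectively, closing the per-$t$ comparison.

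The main obstacle I anticipate is exactly this bookkeeping: the native expression for $P_{K'_t, L(K'_t)}$ contains $\tilde{P}_{K'_{t+1}, L(K'_{t+1})}$, whereas the minimizer $K^*_t$ used in the quadratic decomposition is defined with respect to the \emph{current} iterate's $\tilde{P}_{K_{t+1}, L(K_{t+1})}$, exactly matching the $\tilde{P}$ embedded in the update formulas \eqref{eqn:npg_K}--\eqref{eqn:gn_K}; without the two-stage argument (apply monotonicity of $\Gamma_t$ first, then perform the quadratic comparison in $K_t$) the descent does not align with the gradient-step geometry. Finally, the lower bound $\bP_{\bK_k, \bL(\bK_k)} \geq \bP_{\bK^*, \bL(\bK^*)}$ follows from an analogous backward induction in $t$: the GRDE \eqref{eqn:RDE_recursive} admits the equivalent characterization $P^*_t = \min_K \{Q_t + K^{\top} R^u_t K + (A_t - B_t K)^{\top}\tilde{P}^*_{t+1}(A_t - B_t K)\}$ with $\tilde{P}^*_{t+1} := \Gamma_t(P^*_{t+1})$, and given the inductive assumption $P_{K_{t+1}, L(K_{t+1})} \geq P^*_{t+1}$, the same monotonicity lemma yields $\tilde{P}_{K_{t+1}, L(K_{t+1})} \geq \tilde{P}^*_{t+1}$, so evaluating the above quadratic at the specific $K = K_t$ with the larger $\tilde{P}$ gives $P_{K_t, L(K_t)} \geq P^*_t$.
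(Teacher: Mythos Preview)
Your proposal is correct and follows the same overall strategy as the paper: a backward induction on $t$ from $N$ down to $0$ showing $P_{K'_t,L(K'_t)} \leq P_{K_t,L(K_t)}$ blockwise, which in turn yields $\bK'\in\cK$ and the monotone decrease of $\bP_{\bK_k,\bL(\bK_k)}$, after which the uniform stepsize bound via $\bG_{\bK_0,\bL(\bK_0)}$ follows from monotonicity of $\bG$ in $\bP$.

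The packaging differs slightly. The paper proves a single ``cost difference lemma'' (Lemma~\ref{lemma:out_cost_diff_recursive}) expressing $P_{K'_t,L(K'_t)} - P_{K_t,L(K_t)}$ as a propagation term plus $\cR_{K_t,K'_t}$ minus a nonnegative $\Xi$-quadratic; dropping the $\Xi$-term and using the inductive hypothesis on the propagation term reduces everything to showing $\cR_{K_t,K'_t}\leq 0$, which is the same quadratic-in-$K$ comparison you perform. Your two-stage argument---first establishing $\Gamma_t(P_1)\leq\Gamma_t(P_2)$ via the variational identity, then completing the square around $K_t^*=G_t^{-1}B_t^{\top}\tilde P_{K_{t+1},L(K_{t+1})}A_t$---is exactly the same content: your monotonicity lemma for $\Gamma_t$ is what makes the paper's $\Xi$-term nonnegative, and your completed-square difference equals the paper's $\cR_{K_t,K'_t}$. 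Your version is a bit more modular and reusable (the $\Gamma_t$-monotonicity lemma is a clean standalone fact), while the paper's cost-difference identity is more self-contained and makes the recursion in $t$ explicit. Either way the stepsize conditions $\alpha\leq 1/\|\bG_{\bK_0,\bL(\bK_0)}\|$ and $\alpha\leq 1$ arise identically from $(I-2\alpha G_t)G_t(I-2\alpha G_t)\leq G_t$ and $(1-2\alpha)^2\leq 1$.
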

We note that one key step of the proof for Theorem \ref{theorem:ir_K} is to ensure the \emph{existence} of a solution to \eqref{eqn:DARE_black_L} along the iterations, by carefully controlling the sizes of update steps along certain descent directions. We provide an illustration of the proof idea in Figure \ref{fig:proof1}. Specifically, the implicit regularization property holds  for NPG and GN directions because they can ensure \emph{matrix-wise} decrease of $\bP_{\bK_k,\bL(\bK_k)}$ (more than just that of the objective value $\cG(\bK,\bL(\bK))$), while other descent directions (e.g., vanilla PG) can only decrease  $\cG(\bK,\bL(\bK))$, which is a \emph{scalar-wise} decrease and is not sufficient  to ensure that the next iterate stays  in $\cK$. Note that the intuition for implicit regularization here is thus more explicit than that in \cite{zhang2019policymixed} for infinite-horizon LTI settings, where some linear matrix inequalities have to be delicately designed. We highlight the importance of the implicit regularization property as follows.
\begin{remark}(Preserving the Robustness of $\bK_0$)\label{remark:ir}
Suppose that the initial control gain matrix satisfies $\bK_0 \in \cK$. Then Lemma \ref{lemma:equivalent} shows that $\bK_0$ is the control gain matrix that attains a $\gamma$-level of disturbance attenuation. By the implicit regularization property in Theorem \ref{theorem:ir_K}, every iterate $\bK_k\in\cK$ for all $k\geq 0$ following the NPG \eqref{eqn:npg_K} or the GN \eqref{eqn:gn_K} update rules will thus preserve this $\gamma$-level of disturbance attenuation throughout the policy optimization (learning) process. Theorem \ref{theorem:ir_K} thus provides some provable robustness guarantees for two specific policy search directions, \eqref{eqn:npg_K} and \eqref{eqn:gn_K}, which is important for safety-critical control systems in the presence of adversarial disturbances, since otherwise, the system performance index can be driven to arbitrarily large values.  
\end{remark}	
Based on Theorem \ref{theorem:ir_K},  we now establish the convergence result for the outer loop.

\begin{theorem}(Outer-Loop Global Convergence)\label{theorem:Mbased_conv_K}
	Let $\bK_0 \in \cK$ and let the stepsizes satisfy
\begin{align*}
	\text{Natural PG: }  \alpha \leq 1/(2\|\bG_{\bK_0, \bL(\bK_0)}\|), \quad \text{Gauss-Newton: }  \alpha \leq 1/2,
\end{align*}
where $\bG_{\bK, \bL(\bK)}$ is as defined in \eqref{eqn:gh_def}. Then, the sequence of average natural gradient norm squares $\{k^{-1}\sum^{k-1}_{\kappa = 0}\|\bF_{\bK_\kappa, \bL(\bK_\kappa)}\|^2_F\}$, $k\geq 1$, converges to $\bm{0}$ with $\cO(1/k)$ rate. Moreover, this convergence is towards the unique Nash equilibrium. Lastly, the NPG \eqref{eqn:npg_K} and GN \eqref{eqn:gn_K} updates enjoy locally linear and Q-quadratic rates, respectively,  around the Nash equilibrium. 
\end{theorem}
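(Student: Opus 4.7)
The plan is to leverage the matrix-wise monotonicity from Theorem \ref{theorem:ir_K} to build a one-step descent inequality and then telescope. First, by Theorem \ref{theorem:ir_K}, both NPG \eqref{eqn:npg_K} and GN \eqref{eqn:gn_K} with the prescribed stepsizes keep $\bK_k \in \cK$, so $\bL(\bK_k)$ is well defined, the PG identity $\nabla_{\bK}\cG(\bK_k, \bL(\bK_k)) = 2\bF_{\bK_k, \bL(\bK_k)}\Sigma_{\bK_k, \bL(\bK_k)}$ from Lemma \ref{lemma:station} holds, and the Riccati solutions $\bP_{\bK_k, \bL(\bK_k)}$ form a p.s.d.-monotonically non-increasing sequence lower-bounded by $\bP_{\bK^*, \bL(\bK^*)}$. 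Building on the proof of Theorem \ref{theorem:ir_K}, I would sharpen this qualitative monotonicity into a quantitative matrix inequality of the form
\[
    \bP_{\bK_k, \bL(\bK_k)} - \bP_{\bK_{k+1}, \bL(\bK_{k+1})} \;\geq\; \alpha\, \bM_k,
\]
where $\bM_k$ is a p.s.d. matrix whose inner product with $\Sigma_0$ is bounded below by a constant multiple of $\|\bF_{\bK_k, \bL(\bK_k)}\|_F^2$. The mechanism is the Riccati-type expansion of $\bP_{\bK_{k+1}, \bL(\bK_{k+1})}$ under the updates $\bK_{k+1} = \bK_k - 2\alpha\,\bF_{\bK_k, \bL(\bK_k)}$ (NPG) and $\bK_{k+1} = \bK_k - 2\alpha\,\bG_{\bK_k, \bL(\bK_k)}^{-1}\bF_{\bK_k, \bL(\bK_k)}$ (GN); the stepsize bounds $\alpha \leq 1/(2\|\bG_{\bK_0, \bL(\bK_0)}\|)$ and $\alpha \leq 1/2$ are exactly what is needed to absorb second-order (quadratic-in-$\alpha$) terms so that the residual is matrix-dominated by $\bF_{\bK_k, \bL(\bK_k)}^{\top}(\cdot)\bF_{\bK_k, \bL(\bK_k)}$.

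Taking the inner product with $\Sigma_0 \geq \phi\,\bI$, using $\cG(\bK,\bL(\bK)) = \Tr(\bP_{\bK,\bL(\bK)}\Sigma_0)$, and telescoping yields
\[
    \sum_{\kappa=0}^{k-1}\,\Tr\!\bigl((\bP_{\bK_\kappa, \bL(\bK_\kappa)} - \bP_{\bK_{\kappa+1}, \bL(\bK_{\kappa+1})})\Sigma_0\bigr) \;\leq\; \cG(\bK_0, \bL(\bK_0)) - \cG(\bK^*, \bL(\bK^*)),
\]
which gives $\frac{1}{k}\sum_{\kappa=0}^{k-1}\|\bF_{\bK_\kappa, \bL(\bK_\kappa)}\|_F^2 = \cO(1/k)$, as claimed. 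For convergence of the iterates to the Nash equilibrium, I would combine three facts: (i) $\{\bP_{\bK_k, \bL(\bK_k)}\}$ is bounded and p.s.d.-monotone, hence has a limit $\bP_\infty$; (ii) the uniform upper bound $\bP_{\bK_k, \bL(\bK_k)} \leq \bP_{\bK_0, \bL(\bK_0)}$ together with the boundedness of $\bG_{\bK_k, \bL(\bK_k)}$ and $\bH_{\bK_k, \bL(\bK_k)}^{-1}$ confines $\{\bK_k\}$ to a compact subset of $\cK$; and (iii) any subsequential limit $\bK_\infty$ must satisfy $\bF_{\bK_\infty, \bL(\bK_\infty)} = \bm{0}$ by continuity and the Cesàro-average bound, so by Lemma \ref{lemma:landscape_K} it must equal the unique NE $\bK^*$, which upgrades subsequential to full-sequence convergence.

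For the local rates near $\bK^*$, I would exploit (a) a local PL-type inequality on the outer-loop objective, derivable from the smoothness of $\bP_{\bK,\bL(\bK)}$ in $\bK$ and the full-rankness of $\Sigma_0$, in the same spirit as Lemma \ref{lemma:landscape_L}; combined with the one-step descent, this yields the linear contraction of $\cG(\bK_k, \bL(\bK_k)) - \cG(\bK^*, \bL(\bK^*))$ for NPG. For GN with $\alpha = 1/2$, I would recognize the update as the Kleinman--Newton step for the Riccati equation \eqref{eqn:DARE_black_L} and invoke the standard Riccati-Newton contraction $\|\bP_{\bK_{k+1}, \bL(\bK_{k+1})} - \bP_{\bK^*, \bL(\bK^*)}\| \leq c\,\|\bP_{\bK_k, \bL(\bK_k)} - \bP_{\bK^*, \bL(\bK^*)}\|^2$ in a neighborhood of $\bK^*$, which transfers to Q-quadratic convergence of $\bK_k$.

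The main obstacle is the very first step: turning the qualitative p.s.d.-monotonicity of $\{\bP_{\bK_k, \bL(\bK_k)}\}$ from Theorem \ref{theorem:ir_K} into a quantitative matrix-wise decrease that certifies $\|\bF_{\bK_k, \bL(\bK_k)}\|_F^2$-level progress, while keeping the stepsize regime consistent with the one already required for feasibility. Because the outer-loop landscape is nonconvex and non-coercive on $\cK$ (Lemma \ref{lemma:landscape_K}), a naive scalar-wise descent on $\cG(\bK, \bL(\bK))$ is insufficient; the bookkeeping must be done matrix-wise so that the same argument simultaneously preserves $\bK_k \in \cK$ and quantifies gradient progress, which is also precisely why NPG and GN are analyzable here but vanilla PG \eqref{eqn:pg_K} is not.
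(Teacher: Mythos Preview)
Your proposal is correct and follows essentially the same route as the paper. The quantitative matrix-wise decrease you flag as the ``main obstacle'' is in fact already established in the proof of Theorem~\ref{theorem:ir_K} as the inequalities \eqref{eqn:npg_exact_rate} and \eqref{eqn:gn_exact_rate}, which give precisely $\bP_{\bK_{k+1},\bL(\bK_{k+1})}-\bP_{\bK_k,\bL(\bK_k)}\le -4\alpha\,\bF_{\bK_k,\bL(\bK_k)}^{\top}(\bI-\alpha\bG_{\bK_k,\bL(\bK_k)})\bF_{\bK_k,\bL(\bK_k)}$ (and its GN analogue); the paper simply traces both sides against $\Sigma_0\ge\phi\bI$ and telescopes, exactly as you outline. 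Two minor differences: the paper invokes Lemma~\ref{lemma:landscape_K} directly to identify the limit with the unique Nash equilibrium rather than your more explicit compactness/subsequence argument, and for the local rates it defers to the techniques of \cite{zhang2019policymixed} rather than spelling out a local PL/Kleinman--Newton analysis as you propose.
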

The proof of Theorem \ref{theorem:Mbased_conv_K} is deferred to \S\ref{proof:Mbased_conv_K}.  In the derivative-free setting where PGs are estimated through samples of system trajectories, if we can uniformly control the estimation bias using a fixed number of samples per iterate, then Theorems  \ref{theorem:conv_L} and \ref{theorem:Mbased_conv_K} together imply that the global convergence to the Nash equilibrium also holds. We will substantiate this in the next section.

\section{Derivative-Free Policy Gradient Methods}\label{sec:0order}
We present the sample complexity of our double-loop algorithm, when the exact PG is not accessible, and can only be estimated through samples of system trajectories. In particular, we propose a zeroth-order NPG (ZO-NPG) algorithm with a (zeroth-order) maximization oracle that approximately solves the inner-loop subproblem (cf. Algorithms \ref{alg:model_free_inner_PG} and \ref{alg:model_free_outer_NPG}). In the following Remark, we comment on how to construct Algorithms \ref{alg:model_free_inner_PG} and \ref{alg:model_free_outer_NPG} when explicit knowledge on the system parameters is not available.

\begin{remark}\label{remark:input}
	When solving LEQG and LQ disturbance attenuation (LQDA) problems using the proposed double-loop derivative-free PG methods (cf. Algorithms \ref{alg:model_free_inner_PG} and \ref{alg:model_free_outer_NPG}), we exploited the equivalence relationships in Lemma \ref{lemma:equivalent} to construct and solve an equivalent zero-sum LQ game. We comment on how to construct such an equivalent game problem in the model-free setting where only oracle-level accesses to the LEQG and LQDA models are available. 
	
	Suppose that one would like to solve the LQDA problem by solving the equivalent zero-sum game; then, the only information needed is oracle-level accesses to the LQDA model (cf. \S\ref{sec:mix}). In particular, for a fixed sequence of gains $K_t$ and any sequence of disturbances $w_t$, we assume that this LQDA oracle can return the $\|z\|$ as defined in \S\ref{sec:mix}. Then, the LQDA oracle with an additionally injected sequence of independent Gaussian noise (with any positive definite covariance matrix) suffices to serve as the oracle for our double-loop derivative-free PG algorithms (for the stochastic game). Therefore, no explicit knowledge on the system parameters ($A_t$, $B_t$, $D_t$, $C_t$, $E_t$, $\gamma$) is needed.
	
	However, if one would like to solve the LEQG problem by solving an equivalent zero-sum game, one will need knowledge of $W$ (to build-up the black-box sampling oracle/simulator, but still, the exact value of $W$ is not revealed to the learning agent) in addition to oracle-level accesses to the original LEQG model (cf. \S\ref{sec:leqg}). Also, explicit knowledge of the parameters ($A_t$, $B_t$, $Q_t$, $R_t$, $\beta$) is not required. We would like to note that the assumption on the knowledge (and/or the availability of the estimate) of $W$ is reasonable for a large family of control applications where system dynamics and disturbance can be studied separately. For example, consider the risk-sensitive control of a wind turbine. The turbine dynamics and the wind information can be gathered separately. One can identify $W$ by looking at the past wind data. Similar situations hold for many aerospace applications where the properties of process noise (e.g. wind gust) can be estimated beforehand.
\end{remark}

\subsection{Inner-Loop Maximization Oracle}\label{sec:max_oracle}
Sample complexities of zeroth-order PG algorithms for solving standard infinite-horizon LQR have been investigated in both discrete-time \cite{fazel2018global, malik2020derivative,mohammadi2020linear} and continuous-time \cite{mohammadi2019convergence} settings. Our inner-loop maximization oracle extends the sample complexity result to finite-horizon time-varying LQR with system noises and a possibly \emph{indefinite} state-weighting matrix. In particular, we show that zeroth-order PG and NPG with a one-point minibatch estimation scheme enjoy $\tilde{\cO}(\epsilon_1^{-2})$ sample complexities, where $\epsilon_1$ is the desired accuracy level in terms of inner-loop objective values, i.e., $\cG(\bK, \overline{\bL}(\bK)) \geq \cG(\bK, \bL(\bK))-\epsilon_1$ for the $\overline{\bL}(\bK)$ returned by the algorithm. The two specific zeroth-order PG updates are introduced as follows:
\begin{align}
	\text{Zeroth-Order PG: } \quad \bL_{l+1} &= \bL_l + \eta\overline\nabla_{\bL}\cG(\bK, \bL_l) \label{eqn:pg_free_L}, \\
	\text{Zeroth-Order Natural PG: } \quad \bL_{l+1} &= \bL_l + \eta\overline\nabla_{\bL}\cG(\bK, \bL_l)\overline{\Sigma}^{-1}_{\bK, \bL_l}, \label{eqn:npg_free_L} 
\end{align}
where $\eta > 0$ is the stepsize to be chosen, $l \in \{0, \cdots, L-1\}$ is the iteration index, $\overline{\nabla}_{\bL} \cG(\bK, \bL)$ and $\overline{\Sigma}_{\bK, \bL}$ are the noisy estimates of $\nabla_{\bL}\cG(\bK, \bL)$ and $\Sigma_{\bK, \bL}$, respectively, obtained through zeroth-order oracles. The procedure of our inner-loop maximization oracle is presented in Algorithm \ref{alg:model_free_inner_PG} and we formally establish its sample  complexity in the following theorems. 
 
\begin{algorithm}[t]
	\caption{~Inner-Loop Zeroth-Order Maximization Oracle} 
	\label{alg:model_free_inner_PG}
	\begin{algorithmic}[1]
		\STATE Input: gain matrices ($\bK$, $\bL_0$), iteration $L$, batchsize $M_{1}$, problem horizon $N$, distribution $\cD$, smoothing radius $r_1$, dimension $d_1 =mnN$, stepsize $\eta$. 
		\FOR{$l = 0, \cdots, L-1$}
		\FOR{$i = 0, \cdots, M_1-1$}
		\STATE Sample $\bL^i_l = \bL_l + r_1\bU^i_l$, where $\bU^i_l$ is uniformly drawn from $\cS(n,m,N)$ with $\|\bU^i_l\|_F=1$. 
		\STATE Simulate ($\bK, \bL^i_l$) and ($\bK, \bL_l$) for horizon $N$ starting from $x^{i, 0}_{l, 0}, x^{i, 1}_{l, 0}\sim \cD$, and collect the empirical estimates $\overline{\cG}(\bK, \bL^i_l) = \sum_{t=0}^N c^{i, 0}_{l, t}$, $\overline{\Sigma}^i_{\bK, \bL_l} = diag\big[x^{i, 1}_{l, 0}(x^{i, 1}_{l, 0})^{\top}, \cdots, x^{i, 1}_{l, N}(x^{i, 1}_{l, N})^{\top}\big]$, where $\big\{c^{i, 0}_{l, t}\big\}$ is the sequence of stage costs following the trajectory generated by $(\bK, \bL^{i}_l)$ and $\big\{x^{i, 1}_{l, t}\big\}$ is the sequence of states following the trajectory generated by $(\bK, \bL_l)$, for $t\in\{0,\cdots,N\}$, under independently sampled noises $\xi^{i, 0}_{l, t}, \xi^{i, 1}_{l, t} \sim \cD$ for all $t\in\{0,\cdots,N-1\}$.
		\ENDFOR
  \STATE Obtain the estimates: $~~\overline{\nabla}_{\bL}
  \cG(\bK, \bL_l) = \frac{1}{M_1} \sum^{M_1-1}_{i=0}\frac{d_1}{r_1}\overline{\cG}(\bK, \bL^i_l) \bU^i_l$, \quad $\overline{\Sigma}_{\bK, \bL_l} = \frac{1}{M_1}\sum^{M_1-1}_{i=0}\overline{\Sigma}^i_{\bK, \bL_l}$.\vspace{0.3em}
			\STATE Execute one-step update:~~ \emph{PG}:~~ $\bL_{l+1} = \bL_{l} + \eta\overline{\nabla}_{\bL}\cG(\bK, \bL_l)$, \quad \emph{NPG}:~~ $\bL_{l+1} = \bL_{l} + \eta\overline{\nabla}_{\bL}\cG(\bK, \bL_l)\overline{\Sigma}^{-1}_{\bK, \bL_l}$.
		\ENDFOR
		\STATE Return $\bL_L$.
	\end{algorithmic}
\end{algorithm}

\begin{theorem}\label{theorem:free_pg_L}(Inner-Loop Sample Complexity for PG)
	For a fixed $\bK\in \cK$ and an arbitrary $\bL_0$ that induces a finite $\cG(\bK, \bL_0)$, define a superlevel set $\cL_{\bK}(a)$ as in \eqref{eqn:levelset_L}, where $a \leq  \cG(\bK, \bL_0)$ is an arbitrary constant. Let  $\epsilon_1, \delta_1 \in (0, 1)$, and $M_1, r_1, \eta >0$ in Algorithm \ref{alg:model_free_inner_PG} satisfy
	\begin{align*}
		M_1 \geq \left(\frac{d_1}{r_1}\Big(\cG(\bK, \bL(\bK)) + \frac{l_{\bK, a}}{\rho_{\bK, a}}\Big)\sqrt{\log\Big(\frac{2d_1L}{\delta_1}\Big)}\right)^2\frac{1024}{\mu_{\bK}\epsilon_1},
		\end{align*}
		\vspace{-1.5em}
		\begin{align*}
		 r_1 \leq \min\bigg\{\frac{\theta_{\bK, a} \mu_{\bK}}{8\psi_{\bK, a}}\sqrt{\frac{\epsilon_1}{240}},~ \frac{1}{8\psi^2_{\bK,a}}\sqrt{\frac{\epsilon_{1}\mu_{\bK}}{30}},~\rho_{\bK, a}\bigg\}, \quad \eta \leq \min\left\{1, ~\frac{1}{8\psi_{\bK,a}}, ~\rho_{\bK, a}\cdot\Big[\frac{\sqrt{\mu_{\bK}}}{32} + \psi_{\bK, a} + l_{\bK, a}\Big]^{-1}\right\},
	\end{align*}
 	where $\theta_{\bK, a} = \min\big\{1/[2\psi_{\bK, a}], \rho_{\bK, a}/l_{\bK, a}\big\}$; $l_{\bK, a}, \psi_{\bK, a}, \mu_{\bK}$ are defined in Lemma \ref{lemma:landscape_L}; $\rho_{\bK, a}:=\min_{\bL\in\cL_{\bK}(a)}\rho_{\bK, \bL}>0$; and $d_1 = nmN$. Then, with probability at least $1 - \delta_1$ and a total number of iterations $L = \frac{8}{\eta \mu_{\bK}}\log(\frac{2}{\epsilon_1})$, the inner-loop ZO-PG update \eqref{eqn:pg_free_L} outputs some $\overline{\bL}(\bK) := \bL_L$ such that $\cG(\bK, \overline{\bL}(\bK)) \geq \cG(\bK, \bL(\bK)) - \epsilon_1$, and  $\|\bL(\bK) - \overline{\bL}(\bK)\|_F \leq \sqrt{\lambda^{-1}_{\min}(\bH_{\bK, \bL(\bK)}) \cdot \epsilon_1}$.
\end{theorem}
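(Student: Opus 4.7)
The plan is to establish linear convergence of the inexact zeroth-order PG update \eqref{eqn:pg_free_L} under the Polyak-\L{}ojasiewicz (PL) condition from Lemma~\ref{lemma:landscape_L}, while controlling the one-point gradient estimation error uniformly over the $L$ iterations and ensuring the iterates never leave the compact superlevel set $\cL_{\bK}(a)$ where the local Lipschitz/smoothness constants $l_{\bK,a}$, $\psi_{\bK,a}$ apply. First I would identify the deterministic smoothed surrogate: with $\bU$ uniform on the unit sphere of $\cS(n,m,N)$, the single-sample estimator $\tfrac{d_1}{r_1}\cG(\bK,\bL+r_1\bU)\bU$ is an unbiased gradient of the ball-averaged function $\cG_{r_1}(\bK,\bL):=\EE_{\bV\sim\text{Ball}}[\cG(\bK,\bL+r_1\bV)]$ by a Stokes/divergence-theorem computation, provided $\{\bL+r_1\bU\}\subset\cL_{\bK}(a)$ so that the integrand is well-defined. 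Using $\psi_{\bK,a}$-local smoothness, the bias satisfies $\|\nabla_{\bL}\cG_{r_1}(\bK,\bL)-\nabla_{\bL}\cG(\bK,\bL)\|_F\lesssim \psi_{\bK,a}\, r_1$, and the magnitude of the estimator is bounded by $\tfrac{d_1}{r_1}\cdot(\cG(\bK,\bL(\bK))+l_{\bK,a}/\rho_{\bK,a})$ when $\bL\in\cL_{\bK}(a)$ (using that $\cG(\bK,\cdot)$ is Lipschitz on $\cL_{\bK}(a)$ and hence bounded there).

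Next I would apply a matrix/vector Hoeffding-type concentration to the $M_1$-sample average $\overline{\nabla}_{\bL}\cG(\bK,\bL_l)$ to obtain, with probability at least $1-\delta_1/L$ per iteration, an estimation error $\|\overline{\nabla}_{\bL}\cG(\bK,\bL_l)-\nabla_{\bL}\cG(\bK,\bL_l)\|_F\leq \varepsilon$ where $\varepsilon$ is tuned to $\sqrt{\mu_{\bK}\epsilon_1/\text{const}}$ via the choice of $M_1$ and $r_1$ in the theorem statement. A union bound over $l=0,\dots,L-1$ then gives a uniform guarantee with probability $1-\delta_1$.

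The inductive heart of the argument is a one-step descent lemma: assuming $\bL_l\in\cL_{\bK}(a)$ and the gradient-error bound holds, I use $\psi_{\bK,a}$-smoothness to expand
\[
\cG(\bK,\bL_{l+1})\geq \cG(\bK,\bL_l)+\eta\langle\nabla_{\bL}\cG(\bK,\bL_l),\overline{\nabla}_{\bL}\cG(\bK,\bL_l)\rangle-\tfrac{\psi_{\bK,a}\eta^2}{2}\|\overline{\nabla}_{\bL}\cG(\bK,\bL_l)\|_F^2,
\]
valid as long as $\eta\|\overline{\nabla}_{\bL}\cG(\bK,\bL_l)\|_F\leq \rho_{\bK,a}$ (which the stepsize and error budgets are designed to enforce); splitting $\overline{\nabla}=\nabla+(\overline{\nabla}-\nabla)$ and invoking the PL inequality $\|\nabla_{\bL}\cG(\bK,\bL_l)\|_F^2\geq \mu_{\bK}(\cG(\bK,\bL(\bK))-\cG(\bK,\bL_l))$ yields a contraction of the form
\[
\cG(\bK,\bL(\bK))-\cG(\bK,\bL_{l+1})\leq \bigl(1-\tfrac{\eta\mu_{\bK}}{4}\bigr)\bigl(\cG(\bK,\bL(\bK))-\cG(\bK,\bL_l)\bigr)+\tfrac{C\eta}{\mu_{\bK}}(\text{bias}^2+\text{variance}^2).
\]
Crucially, this simultaneously proves $\bL_{l+1}\in\cL_{\bK}(a)$ (since the suboptimality does not increase past the initial gap when $a\leq\cG(\bK,\bL_0)$), closing the induction.

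Unrolling $L=\tfrac{8}{\eta\mu_{\bK}}\log(2/\epsilon_1)$ steps shrinks the initial suboptimality to $\epsilon_1/2$, and the accumulated noise term contributes at most $\epsilon_1/2$ under the stated choices of $M_1$ and $r_1$, giving the final accuracy $\cG(\bK,\bL(\bK))-\cG(\bK,\overline{\bL}(\bK))\leq \epsilon_1$. The bound $\|\bL(\bK)-\overline{\bL}(\bK)\|_F\leq \sqrt{\lambda_{\min}^{-1}(\bH_{\bK,\bL(\bK)})\cdot\epsilon_1}$ then follows from strong concavity of $\cG(\bK,\cdot)$ in a neighborhood of $\bL(\bK)$, with $-\bH_{\bK,\bL(\bK)}$ controlling the Hessian's negative-definiteness, an observation that parallels the quadratic growth analysis in Lemma~\ref{lemma:landscape_L}. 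I expect the main obstacle to be the tight interlocking of (i) keeping $\bL_l$ and the perturbations $\bL_l+r_1\bU_l^i$ inside $\cL_{\bK}(a)$ so local Lipschitz/smoothness applies, (ii) choosing $r_1$ small enough that the smoothing bias does not dominate the $\mu_{\bK}\epsilon_1$ tolerance, and (iii) choosing $M_1$ large enough that the $d_1/r_1$-blowup in the estimator norm is tamed by concentration; the theorem's thresholds are essentially the minimal ones reconciling these three demands.
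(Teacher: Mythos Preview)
Your proposal is correct and follows essentially the same route as the paper: the paper packages the one-step analysis into a helper lemma (concentration of the one-point estimator around $\nabla_{\bL}\cG_{r_1}$ via a Hoeffding-type bound, smoothing bias $\lesssim\psi_{\bK,a}r_1$, smoothness expansion plus PL to get a contraction with additive noise), then replaces the local constants by their uniform versions over $\cL_{\bK}(a)$, union-bounds over $L$ iterations, and unrolls to $\epsilon_1/2+\epsilon_1/2$. One small point: for the final gain-matrix bound the paper does not invoke local strong concavity but uses the exact cost-difference identity $\cG(\bK,\bL(\bK))-\cG(\bK,\bL)=\Tr\big[(\bL(\bK)-\bL)^{\top}\bH_{\bK,\bL(\bK)}(\bL(\bK)-\bL)\,\Sigma_{\bK,\bL}\big]$ (which holds globally because $\bE_{\bK,\bL(\bK)}=\bm 0$), yielding the quadratic-growth inequality directly; your ``strong concavity in a neighborhood'' phrasing points at the right phenomenon but the paper's argument is cleaner and avoids any neighborhood restriction.
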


The proof of Theorem \ref{theorem:free_pg_L} is deferred to \S\ref{proof:free_pg_L}. The total sample complexity of the inner-loop ZO-PG algorithm scales as $M_1\cdot L \sim \tilde{\cO}(\epsilon_1^{-2}\cdot\log(\delta_1^{-1}))$, where the logarithmic dependence  on $\epsilon_1$ is suppressed. Next, we present the sample complexity of the inner-loop ZO-NPG algorithm \eqref{eqn:npg_free_L}, with its proof being provided in \S\ref{proof:free_npg_L}. 

 \begin{theorem}\label{theorem:free_npg_L}(Inner-Loop Sample Complexity for NPG)
 For a fixed $\bK\in \cK$ and an arbitrary $\bL_0$ that induces a finite $\cG(\bK, \bL_0)$, define a superlevel set $\cL_{\bK}(a)$ as in \eqref{eqn:levelset_L}, where $a \leq  \cG(\bK, \bL_0)$ is an arbitrary constant. Let  $\epsilon_1, \delta_1 \in (0, 1)$, and $M_1, r_1, \eta >0$ in Algorithm \ref{alg:model_free_inner_PG} satisfy
	\begin{align*}
	M_1 \geq \max\left\{\Big(\cG(\bK, \bL(\bK)) + \frac{l_{\bK, a}}{\rho_{\bK, a}}\Big)^2\cdot\frac{64d_1^2(\overline{\varkappa}_a+1)^2}{\phi^2r_{1}^2\mu_{\bK}\epsilon_1}, \frac{2\overline{\varkappa}_a^2}{\phi^2}\right\}\cdot\log\Big(\frac{4L\max\{d_1, d_{\Sigma}\}}{\delta_1}\Big), 
	\end{align*}
	\vspace{-1.5em}
	\begin{align*}
	r_{1} \leq \min\left\{\frac{\phi\sqrt{\mu_{\bK}\epsilon_1}}{16\overline{\varkappa}_a\psi_{\bK, a}}, \frac{\phi\mu_{\bK}\theta_{\bK, a}\sqrt{\epsilon_1/2}}{32\overline{\varkappa}_a\psi_{\bK, a}}, \rho_{\bK, a}\right\}, ~ \eta \leq \min\bigg\{\frac{\phi^2}{32\psi_{\bK, a}\overline{\varkappa}_a}, ~\frac{1}{2\psi_{\bK, a}}, ~\rho_{\bK, a}\cdot\Big[\frac{\sqrt{\mu_{\bK}}}{4(\underline{\varkappa}_a+1)} + \frac{2\psi_{\bK, a}}{\phi} + l_{\bK, a} +\frac{\phi l_{\bK, a}}{2}\Big]^{-1}\bigg\},
\end{align*}
 	where $\theta_{\bK, a} = \min\big\{1/[2\psi_{\bK, a}], \rho_{\bK, a}/l_{\bK, a}\big\}$; $l_{\bK, a}, \psi_{\bK, a}, \mu_{\bK}$ are defined in Lemma \ref{lemma:landscape_L}; $\rho_{\bK, a}, \overline{\varkappa}_a, \underline{\varkappa}_a$ are uniform constants over $\cL_{\bK}(a)$ defined in \S\ref{proof:free_npg_L}; $d_{\Sigma} = m^2(N+1)$; and $d_1 = nmN$. Then, with probability at least $1 - \delta_1$ and a total number of iterations $L = \frac{8\overline{\varkappa}_a}{\eta \mu_{\bK}}\log(\frac{2}{\epsilon_1})$, the inner-loop ZO-NPG update \eqref{eqn:npg_free_L} outputs some $\overline{\bL}(\bK) := \bL_L$ such that $\cG(\bK, \overline{\bL}(\bK)) \geq \cG(\bK, \bL(\bK)) - \epsilon_1$, and  $\|\bL(\bK) - \overline{\bL}(\bK)\|_F \leq \sqrt{\lambda^{-1}_{\min}(\bH_{\bK, \bL(\bK)}) \cdot \epsilon_1}$.
\end{theorem}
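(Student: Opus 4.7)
The plan is to parallel the ZO-PG analysis in Theorem~\ref{theorem:free_pg_L}, with the new ingredient being the need to jointly control the estimation error of $\overline{\Sigma}_{\bK,\bL_l}$ along with that of $\overline{\nabla}_{\bL}\cG(\bK,\bL_l)$, since the NPG direction $\overline{\nabla}_{\bL}\cG(\bK,\bL_l)\overline{\Sigma}^{-1}_{\bK,\bL_l}$ mixes both. First I would decompose the zeroth-order gradient estimator into $\EE[\overline{\nabla}_{\bL}\cG] - \nabla_{\bL}\cG$, whose bias is of order $r_1\psi_{\bK,a}$ by $\psi_{\bK,a}$-smoothness on $\cL_{\bK}(a)$, and a zero-mean deviation controlled through a Hoeffding/Azuma-style concentration over the $M_1$ minibatch samples, whose scale is the one-point estimator variance proxy $d_1/r_1\cdot(\cG(\bK,\bL(\bK))+l_{\bK,a}/\rho_{\bK,a})$. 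Similarly, $\overline{\Sigma}_{\bK,\bL_l}$ is an empirical average of almost-surely bounded p.s.d.\ blocks (using $\|x_0\|,\|\xi_t\|\leq\vartheta$), and a matrix Hoeffding bound yields $\|\overline{\Sigma}_{\bK,\bL_l}-\Sigma_{\bK,\bL_l}\|_F\leq \phi/2$ once $M_1\gtrsim \overline{\varkappa}_a^2/\phi^2 \cdot \log(d_\Sigma L/\delta_1)$. This, together with $\lambda_{\min}(\Sigma_{\bK,\bL_l})\geq \phi$, makes $\overline{\Sigma}^{-1}_{\bK,\bL_l}$ well-defined and satisfies $\|\overline{\Sigma}^{-1}_{\bK,\bL_l}-\Sigma^{-1}_{\bK,\bL_l}\|_F \lesssim \phi^{-2}\|\overline{\Sigma}_{\bK,\bL_l}-\Sigma_{\bK,\bL_l}\|_F$ by the standard resolvent identity.

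Next I would run an induction on $l$ showing that every iterate stays in the compact superlevel set $\cL_{\bK}(a)$, which is the source of all uniform constants $l_{\bK,a},\psi_{\bK,a},\overline{\varkappa}_a,\underline{\varkappa}_a,\rho_{\bK,a}$. Assuming $\bL_l\in\cL_{\bK}(a)$, the stepsize bound in the theorem statement (with the $\rho_{\bK,a}$ factor) ensures $\|\bL_{l+1}-\bL_l\|_F\leq \rho_{\bK,a}$, so the local Lipschitz/smoothness bounds apply on the segment between $\bL_l$ and $\bL_{l+1}$. I then Taylor-expand $\cG(\bK,\cdot)$ at $\bL_l$ and plug in $\bL_{l+1}=\bL_l+\eta\overline{\nabla}_{\bL}\cG(\bK,\bL_l)\overline{\Sigma}^{-1}_{\bK,\bL_l}$. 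The dominant (exact-NPG) term gives, using $\nabla_{\bL}\cG=2\bE_{\bK,\bL}\Sigma_{\bK,\bL}$ from Lemma~\ref{lemma:station} and the spectral bounds $\phi\bI\preceq \Sigma_{\bK,\bL}\preceq \overline{\varkappa}_a\bI$, an ascent of at least $\tfrac{\eta}{\overline{\varkappa}_a}\|\nabla_{\bL}\cG(\bK,\bL_l)\|_F^2$, up to the smoothness quadratic $\tfrac{\psi_{\bK,a}\eta^2}{2}\|\cdot\|_F^2$, which is absorbed since $\eta\lesssim \phi^2/(\psi_{\bK,a}\overline{\varkappa}_a)$. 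The PL condition of Lemma~\ref{lemma:landscape_L} then converts this into $\cG(\bK,\bL(\bK))-\cG(\bK,\bL_{l+1})\leq (1-\eta\mu_{\bK}/\overline{\varkappa}_a)\bigl(\cG(\bK,\bL(\bK))-\cG(\bK,\bL_l)\bigr)$ for the ideal-NPG idealization.

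The perturbation terms from using $\overline{\nabla}$ and $\overline{\Sigma}^{-1}$ in place of the exact quantities contribute at most $\eta\cdot\mathrm{poly}(l_{\bK,a},\psi_{\bK,a},\phi^{-1})\cdot(\epsilon_g+\epsilon_\Sigma)$ to the decrement, where $\epsilon_g,\epsilon_\Sigma$ are the per-iteration concentration errors. Choosing $r_1$ and $M_1$ as in the statement forces $\epsilon_g,\epsilon_\Sigma\lesssim \phi\mu_{\bK}\epsilon_1/\overline{\varkappa}_a$, so the overall recursion becomes $\Delta_{l+1}\leq (1-\eta\mu_{\bK}/(2\overline{\varkappa}_a))\Delta_l + \eta\mu_{\bK}\epsilon_1/(4\overline{\varkappa}_a)$ for $\Delta_l:=\cG(\bK,\bL(\bK))-\cG(\bK,\bL_l)$. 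Unrolling this over $L=\tfrac{8\overline{\varkappa}_a}{\eta\mu_{\bK}}\log(2/\epsilon_1)$ steps yields $\Delta_L\leq \epsilon_1/2+\epsilon_1/2=\epsilon_1$, and a union bound over the $2L$ concentration events (one each for $\overline{\nabla}$ and $\overline{\Sigma}$ at each iteration) gives the overall failure probability $\delta_1$. The norm bound $\|\bL(\bK)-\overline{\bL}(\bK)\|_F\leq \sqrt{\lambda_{\min}^{-1}(\bH_{\bK,\bL(\bK)})\epsilon_1}$ is inherited from the local quadratic growth of $\cG(\bK,\cdot)$ near $\bL(\bK)$ with curvature governed by $\bH_{\bK,\bL(\bK)}\Sigma_{\bK,\bL(\bK)}$, exactly as in the PG case.

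The hard part will be the joint bookkeeping: the feasibility induction, the gradient concentration, and the covariance concentration must all hold simultaneously at every iteration, and their errors are coupled through the NPG update (an error in $\overline{\Sigma}^{-1}$ multiplies $\overline{\nabla}$). Matching all three to the PL contraction slack $\eta\mu_{\bK}/\overline{\varkappa}_a$ uniformly across $L$ steps, rather than just at a single step, forces the delicate joint tuning of $r_1,M_1,\eta$ seen in the theorem statement, and also requires an a priori bound on $\|\Sigma_{\bK,\bL}\|$ on the whole superlevel set (the constant $\overline{\varkappa}_a$) rather than at a single $\bL$. Absent global smoothness and coercivity, this confinement-to-$\cL_{\bK}(a)$ argument, bootstrapped through the PL-based descent, is what replaces the barrier-function property exploited in the standard LQR literature.
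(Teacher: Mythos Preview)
Your proposal is correct and follows essentially the same route as the paper: a one-step smoothness-plus-PL analysis combining a Hoeffding-type concentration for $\overline{\nabla}_{\bL}\cG$ around the smoothed gradient, a matrix Hoeffding bound for $\overline{\Sigma}_{\bK,\bL}$ (together with a resolvent/perturbation bound for its inverse), a stepsize-controlled confinement of iterates to $\cL_{\bK}(a)$ so that all constants are uniform, and finally the contraction-plus-additive-error recursion unrolled over $L$ steps with a union bound. The paper packages the one-step argument as a separate helper lemma (Lemma~\ref{lemma:ZO_NPG_help}) with local constants and then uniformizes over $\cL_{\bK}(a)$, whereas you state the whole induction in one pass, but the logical content and the way the three error sources are balanced against the PL slack $\eta\mu_{\bK}/\overline{\varkappa}_a$ are identical up to numerical constants.
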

Similar to \eqref{eqn:pg_free_L}, the ZO-NPG algorithm \eqref{eqn:npg_free_L} also has a $\tilde{\cO}(\epsilon_1^{-2}\cdot\log(\delta_1^{-1}))$ total sample complexity. Different from \cite{fazel2018global}, our Algorithm \ref{alg:model_free_inner_PG} uses an \emph{unperturbed} pair of gain matrices to generate the state sequence for estimating the correlation matrix $\Sigma_{\bK, \bL}$. This modification avoids the estimation bias induced by the perturbations on the gain matrix, while only adding a constant factor of $2$ to the total sample complexity.

\subsection{Outer-Loop ZO-NPG}
With the approximate inner-loop solution $\overline{\bL}(\bK)$ obtained from Algorithm \ref{alg:model_free_inner_PG}, the outer-loop ZO-NPG algorithm approximately solves the constrained minimization problem $\min_{\bK \in \cK}\cG(\bK, \overline{\bL}(\bK))$, with the following update rule:
\begin{align}
	\text{Zeroth-Order Natural PG: } \quad \bK_{k+1} &= \bK_k - \alpha \overline{\nabla}_{\bK}
   \cG(\bK_k,\overline{\bL}(\bK_k))\overline{\Sigma}^{-1}_{\bK_k, \overline{\bL}(\bK_k)},\label{eqn:npg_free_K} 
\end{align}
where $\alpha>0$ is the stepsize, $k \geq 0$ is the iteration index, and $\overline{\nabla}_{\bK} \cG(\bK,\overline{\bL}(\bK))$ and $\overline{\Sigma}_{\bK, \overline{\bL}(\bK)}$ are the estimated PG and state correlation matrices, obtained from Algorithm \ref{alg:model_free_outer_NPG}. Similar to the modification in \S\ref{sec:max_oracle}, we estimate the correlation matrix $\overline{\Sigma}(\bK, \bL(\bK))$ using the state sequence generated by the \emph{unperturbed} gain matrices, $(\bK, \overline{\bL}(\bK))$, to avoid the estimation bias induced by the perturbations on the gain matrices. In the following theorem, whose proof is deferred to \S\ref{proof:free_IR} and a short sketch is provided below, we first prove that the implicit regularization studied in Theorem \ref{theorem:ir_K} also holds in the derivative-free setting, with high probability. 

\begin{algorithm}[t]
	\caption{~Outer-Loop ZO-NPG with Maximization Oracle} 
	\label{alg:model_free_outer_NPG}
	\begin{algorithmic}[1]
		\STATE Input: initial gain matrix $\bK_0 \in \cK$, number of iterates $K$, batchsize $M_2$, problem horizon $N$, distribution $\cD$, smoothing radius $r_2$, dimension $d_2=mdN$, stepsize $\alpha$. 
		\FOR{$k = 0, \cdots, K-1$}
		\STATE Find $\overline{\bL}(\bK_k)$ such that $\cG(\bK_k, \overline{\bL}(\bK_k)) \geq \cG(\bK_k, \bL(\bK_k)) - \epsilon_1$.
		\FOR{$j = 0, \cdots, M_2-1$}
		\STATE Sample $\bK^j_k = \bK_k+r_2\bV^j_k$, where $\bV^j_k$ is uniformly drawn from $\cS(d, m, N)$ with $\|\bV^j_k\|_F = 1$.
        \STATE Find $\overline{\bL}(\bK^j_k)$ such that $\cG(\bK^j_k, \overline{\bL}(\bK^j_k)) \geq \cG(\bK^j_k, \bL(\bK^j_k)) - \epsilon_1$ using Algorithm \ref{alg:model_free_inner_PG}. 
		\STATE Simulate $(\bK^j_k, \overline{\bL}(\bK^j_k))$ and $(\bK_k, \overline{\bL}(\bK_k))$ for horizon $N$ starting from $x^{j, 0}_{k, 0}, x^{j, 1}_{k, 0}\sim \cD$, and collect $\overline{\cG}(\bK^j_k, \overline{\bL}(\bK^j_k)) = \sum_{t=0}^N c^{j, 0}_{k, t}$, $\overline{\Sigma}^j_{\bK_k, \overline{\bL}(\bK_k)} = diag\big[x^{j, 1}_{k, 0}(x^{j, 1}_{k, 0})^{\top}, \cdots, x^{j, 1}_{k, N}(x^{j, 1}_{k, N})^{\top}\big]$, where $\big\{c^{j, 0}_{k, t}\big\}$ is the sequence of stage costs following the trajectory generated by $(\bK^j_k, \overline{\bL}(\bK^j_k))$ and $\big\{x^{j, 1}_{k, t}\big\}$ is the sequence of states following the trajectory generated by $(\bK_k, \overline{\bL}(\bK_k))$, for $t\in\{0,\cdots,N\}$, under independently sampled noises $\xi^{j, 0}_{k, t}, \xi^{j, 1}_{k, t} \sim \cD$ for all $t \in\{0,\cdots,N-1\}$. 
		\ENDFOR
		\STATE Obtain the estimates: ~~$\overline{\nabla}_{\bK}
   \cG(\bK_k,\overline{\bL}(\bK_k)) = \frac{1}{M_2} \sum^{M_2-1}_{j=0}\frac{d_2}{r_2}\overline{\cG}(\bK^j_k, \overline{\bL}(\bK^j_k)) \bV^j_k$, \quad $\overline{\Sigma}_{\bK_k, \overline{\bL}(\bK_k)} = \frac{1}{M_2}\sum^{M_2-1}_{j=0}\overline{\Sigma}^j_{\bK_k, \overline{\bL}(\bK_k)}$.\vspace{0.3em}
\STATE Execute one step \emph{NPG} update:~~ 
		$\bK_{k+1} = \bK_k -\alpha \overline{\nabla}_{\bK}
   \cG(\bK_k,\overline{\bL}(\bK_k))\overline{\Sigma}^{-1}_{\bK_k, \overline{\bL}(\bK_k)}$.
		\ENDFOR
		\STATE Return $\bK_{K}$.	
		\end{algorithmic}
\end{algorithm}

\begin{theorem}\label{theorem:free_IR}(Implicit Regularization: Derivative-Free Setting)
	For any $\bK_0 \in \cK$ and defining $\zeta:=\lambda_{\min}(\bH_{\bK_0, \bL(\bK_0)}) >0$,  introduce the following set $\widehat{\cK}$:
	\begin{align}\label{eqn:widehatck_main}
 	\widehat{\cK} := \Big\{\bK \mid \eqref{eqn:DARE_black_L} \text{ admits a solution } \bP_{\bK, \bL(\bK)}\geq 0, \text{~and } \bP_{\bK, \bL(\bK)} \leq \bP_{\bK_0, \bL(\bK_0)} + \frac{\zeta}{2\|\bD\|^2}\cdot\bI\Big\} \subset \cK. 
 \end{align}
 Let $\delta_2 \in (0, 1)$ and other parameters in Algorithm \ref{alg:model_free_outer_NPG} satisfy $\delta_1 \leq \delta_2/[6M_2K]$ and
  \small
  \begin{align}
    M_2  &\geq \max\left\{\frac{128d_2^2\alpha^2c_0^2\widehat{\cG}(\bK, \bL(\bK))^2}{r_2^2\phi^4\varpi^2}, \frac{512d_2^2\alpha^2(\widehat{\cG}(\bK, \bL(\bK)) + r_2\cB_{\bP}c_0)^2}{r_2^2\phi^2\varpi^2}, \frac{32\alpha^2(\widehat{c}_{5, \bK})^2(\widehat{c}_{\Sigma_{\bK, \bL(\bK)}})^2}{\phi^2\varpi^2}, \frac{8(\widehat{c}_{\Sigma_{\bK, \bL(\bK)}})^2}{\phi^2}\right\} \cdot\log\bigg(\frac{12K\max\{d_2, d_{\Sigma}\}}{\delta_2}\bigg), \nonumber\\
    \epsilon_1 &\leq \min\left\{\frac{\phi\varpi r_2}{16\alpha d_2}, \frac{\widehat{\cB}_{1, \bL(\bK)}^2\zeta}{2}, \frac{\phi^2\varpi^2 \zeta}{128\alpha^2(\widehat{c}_{5, \bK})^2\widehat{\cB}_{\Sigma, \bL(\bK)}^2}, \frac{\phi^2 \zeta}{32\widehat{\cB}_{\Sigma, \bL(\bK)}^2}\right\}, ~~~ r_2 \leq \min\Big\{\varpi, \frac{\phi\varpi}{64\alpha  \widehat{c}_{5, \bK}(\widehat{c}_{\Sigma_{\bK, \bL(\bK)}} +\widehat{\cB}_{\Sigma, \bK})}, \frac{\phi\varpi}{64\alpha \widehat{c}_{2, \bK}\widehat{\cB}_{\Sigma, \bK}}\Big\}, \nonumber\\
    \alpha &\leq \frac{1}{2}\cdot\big\|\bR^u + \bB(\overline{\bP} + \overline{\bP}\bD(\bR^w - \bD^{\top}\overline{\bP}\bD)^{-1}\bD^{\top}\overline{\bP})\bB\big\|^{-1}, \qquad \quad   \overline{\bP} := \bP_{\bK_0, \bL(\bK_0)} + \frac{\zeta}{2\|\bD\|^2}\cdot\bI, \label{eqn:ir_free_req}
\end{align}
\normalsize
where $\widehat{\cG}(\bK, \bL(\bK)), \widehat{c}_{2, \bK}, \widehat{c}_{5, \bK}, \widehat{c}_{\Sigma_{\bK, \bL(\bK)}}, \widehat{\cB}_{\Sigma, \bK}, \widehat{\cB}_{1, \bL(\bK)}, \widehat{\cB}_{\Sigma, \bL(\bK)}, \varpi >0$ are uniform constants over $\widehat{\cK}$ defined in \S\ref{proof:free_IR}. Then, it holds with probability at least $1-\delta_2$ that $\bK_k \in \widehat{\cK} \subset \cK$ for all $k \in \{1, \cdots, K\}$.
\end{theorem}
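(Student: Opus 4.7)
The plan is to proceed by induction on $k$, establishing with high probability that $\bK_k\in\widehat{\cK}$ for all $k\in\{0,\dots,K\}$; the base case $\bK_0\in\widehat{\cK}$ is immediate. Before the inductive step, I would collect uniform constants over $\widehat{\cK}$. The key observation is that $\widehat{\cK}$ is defined so that $\bP_{\bK,\bL(\bK)}\leq \overline{\bP}$, which in turn implies $\bH_{\bK,\bL(\bK)}=\bR^w-\bD^{\top}\bP_{\bK,\bL(\bK)}\bD\geq (\zeta/2)\bI$ by the choice $\zeta=\lambda_{\min}(\bH_{\bK_0,\bL(\bK_0)})$; from this one derives uniform spectral bounds on $\Sigma_{\bK,\bL(\bK)}$, on $\bG_{\bK,\bL(\bK)}$, on the inner-loop smoothness/PL/Lipschitz parameters supplied by Lemma~\ref{lemma:landscape_L}, and on the cost $\cG(\bK,\bL(\bK))$. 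These uniform bounds are exactly the constants $\widehat{\cG}, \widehat{c}_{2,\bK}, \widehat{c}_{5,\bK}, \widehat{c}_{\Sigma_{\bK,\bL(\bK)}}, \widehat{\cB}_{\Sigma,\bK}, \widehat{\cB}_{1,\bL(\bK)}, \widehat{\cB}_{\Sigma,\bL(\bK)}, \varpi$ appearing in the hypothesis, and they are what turn per-iterate concentration statements into a trajectory-wise guarantee.

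For the inductive step, assume $\bK_\kappa\in\widehat{\cK}$ for $\kappa\leq k$. I would write the direction $\overline{\nabla}_{\bK}\cG(\bK_k,\overline{\bL}(\bK_k))\overline{\Sigma}^{-1}_{\bK_k,\overline{\bL}(\bK_k)}$ used by Algorithm~\ref{alg:model_free_outer_NPG} as the exact NPG direction $\nabla_{\bK}\cG(\bK_k,\bL(\bK_k))\Sigma^{-1}_{\bK_k,\bL(\bK_k)}$ plus four error components: (i) inner-loop inexactness from replacing $\bL(\bK)$ by $\overline{\bL}(\bK)$, bounded via the accuracy guarantee $\cG(\bK,\overline{\bL}(\bK))\geq\cG(\bK,\bL(\bK))-\epsilon_1$ from Theorems~\ref{theorem:free_pg_L}/\ref{theorem:free_npg_L} together with the Lipschitz sensitivity of $\bK\mapsto(\nabla_\bK\cG, \bF_{\bK,\bL(\bK)}, \bP_{\bK,\bL(\bK)})$ on $\widehat{\cK}$; (ii) the smoothing bias of the one-point zeroth-order estimator, bounded by $r_2$ times the local smoothness of $\bK\mapsto\cG(\bK,\bL(\bK))$; (iii) the sampling variance of the cost-based gradient estimator, controlled by a vector Hoeffding/Bernstein bound and the batch size $M_2$; and (iv) the estimation error in $\overline{\Sigma}$ and its propagation through matrix inversion, which requires the high-probability event $\overline{\Sigma}_{\bK_k,\overline{\bL}(\bK_k)}\succeq (\phi/2)\bI$, itself a matrix-concentration event. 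Each of these is made failure-probability $\delta_1$ or $\delta_2/(6KM_2)$, and the total failure probability is bounded by $\delta_2$ via union bound over the $K$ outer iterations and the $M_2$ inner-loop calls per outer iteration.

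The last step, and the main obstacle, is to promote ``the preconditioned update lies close to the exact NPG update'' into the matrix inequality $\bP_{\bK_{k+1},\bL(\bK_{k+1})}\leq \overline{\bP}$. Here I would replay the backward-induction/perturbation argument from Theorem~\ref{theorem:ir_K} and Figure~\ref{fig:proof1}, but with a stochastic slack: the exact NPG step gives a strict matrix-wise descent $\bP_{\bK_{k+1},\bL(\bK_{k+1})}\leq \bP_{\bK_k,\bL(\bK_k)}$, and the perturbation due to the estimation errors can inflate the Riccati solution by a matrix whose norm is bounded by the total direction error times a horizon-dependent propagation constant obtained by unrolling \eqref{eqn:DARE_black_L}. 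The parameter prescriptions on $M_2, r_2, \epsilon_1, \alpha$ are designed so that this inflation is strictly smaller than the slack $(\zeta/(2\|\bD\|^2))\bI$ baked into $\widehat{\cK}$, so $\bK_{k+1}$ cannot exit $\widehat{\cK}$. The delicate point is avoiding a circular dependence: the constants governing the propagation depend on $\bK_{k+1}$ itself, so one must first establish a tentative upper bound via a continuity/bootstrapping argument (showing the Riccati solution cannot jump past $\overline{\bP}$ under a small perturbation of $\bK$), then invoke the uniform constants over $\widehat{\cK}$ to close the induction.
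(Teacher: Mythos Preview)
Your overall strategy matches the paper's: decompose the zeroth-order direction into the exact NPG plus controllable errors (the paper packages these into Lemmas~\ref{lemma:grad_est} and~\ref{lemma:sigma_est}), then argue that the Riccati matrix stays below $\overline{\bP}$ by combining the exact-case descent of Theorem~\ref{theorem:ir_K} with a perturbation bound on $\bP$ (Lemma~\ref{lemma:outer_perturb}). The circularity you flag is real, and the paper handles it by first proving $\widehat{\cK}$ is \emph{compact} and then taking uniform versions $\cB_1,\cB_2,\cB_\bP$ of the local constants from Lemmas~\ref{lemma:outer_perturb} and~\ref{lemma:safe_perturb} over $\widehat{\cK}$; you will need that compactness argument explicitly.

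There is, however, a genuine gap in your induction. Your invariant is only ``$\bK_k\in\widehat{\cK}$'', i.e.\ $\bP_{\bK_k,\bL(\bK_k)}\le\overline{\bP}$. From this, the exact NPG step gives $\bP_{\tilde{\bK}_{k+1},\bL(\tilde{\bK}_{k+1})}\le \bP_{\bK_k,\bL(\bK_k)}\le\overline{\bP}$, but that descent is \emph{not strict} (it vanishes when $\bF_{\bK_k,\bL(\bK_k)}=\bm{0}$), so any nonzero per-step inflation yields only $\bP_{\bK_{k+1},\bL(\bK_{k+1})}\le\overline{\bP}+\text{(error)}\cdot\bI$, which is outside $\widehat{\cK}$. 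Requiring the per-step inflation to be ``strictly smaller than the slack $\zeta/(2\|\bD\|^2)$'' does not rescue this: the full slack can be consumed already at $k=1$, and the induction stalls at $k=2$. The paper's fix is to carry the stronger invariant
\[
\bP_{\bK_k,\bL(\bK_k)}\ \le\ \bP_{\bK_0,\bL(\bK_0)}\ +\ \tfrac{k}{K}\cdot\tfrac{\zeta}{2\|\bD\|^2}\,\bI,
\]
which is strictly stronger than $\bK_k\in\widehat{\cK}$ for $k<K$, and to force the per-step inflation to be at most $\tfrac{\zeta}{2K\|\bD\|^2}$, a $1/K$ share of the total slack. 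This is precisely the role of $\varpi$: it is not a ``uniform bound'' on some landscape quantity as you describe, but the designed accuracy target $\varpi=\min\{\cB_1,\cB_2,\tfrac{\zeta}{2\cB_\bP K\|\bD\|^2}\}$ for $\|\bK_{k+1}-\tilde{\bK}_{k+1}\|_F$, with the crucial $1/K$ factor ensuring the errors accumulate to at most the full slack after $K$ iterations.
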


\begin{SCfigure}
 \centering
\includegraphics[width=0.45\textwidth]{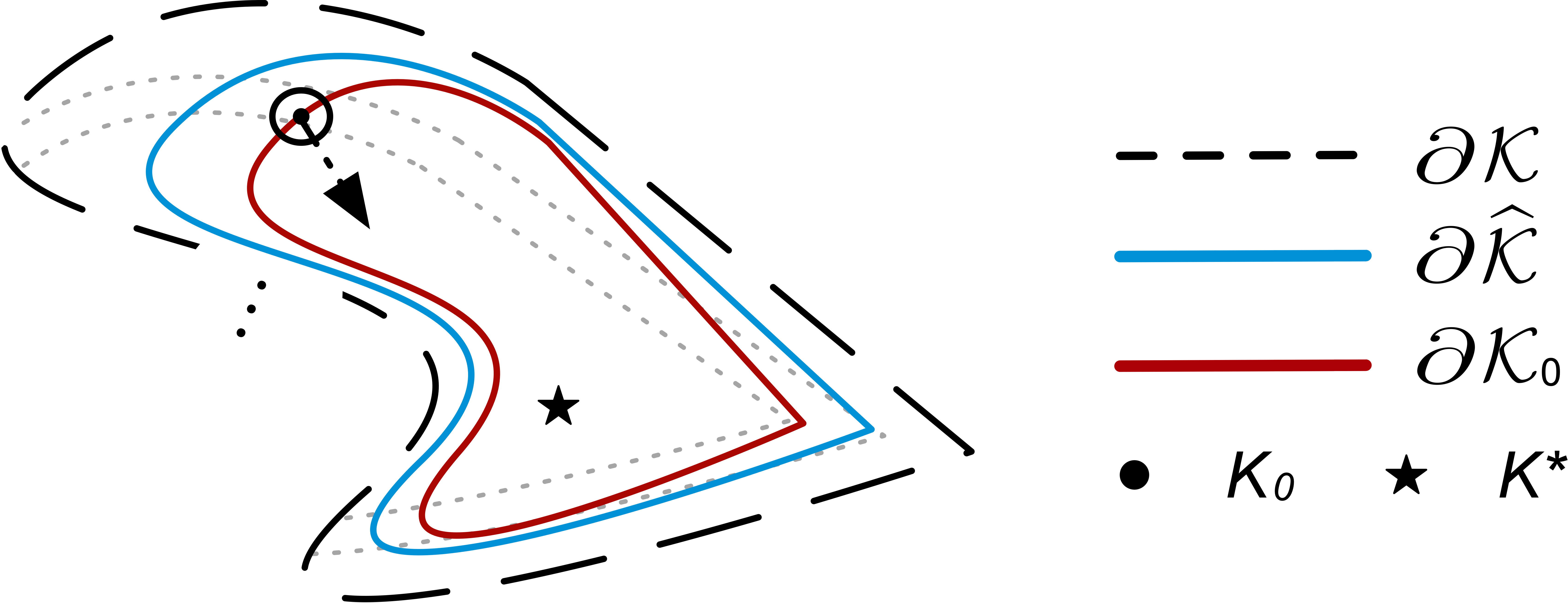}
\hspace{1em}
\caption{Illustrating the proof idea for Theorem \ref{theorem:free_IR}. Starting with any $\bK_0\hspace{-0.1em}\in\hspace{-0.1em}\cK$, we can construct two compact sets $\widehat{\cK}$ and $\cK_0$ as shown in blue and red, respectively, that are independent of the contour lines of the objective function. Our analysis proves that the iterates following \eqref{eqn:npg_free_K} stay within $\widehat{\cK}$ with high probability, thus also uniformly separated from $\partial\cK$, with high probability.}
\label{fig:proof2}
\end{SCfigure}

\noindent \textit{Proof Sketch.} Due to lack of coercivity, the objective function of the outer loop can no longer act as a barrier function to guarantee that the iterates of PG updates stay in $\cK$ (cf. Figure \ref{fig:landscape}), and thus new candidates are needed. To this end, we construct two compact sets $\widehat{\cK}$ and $\cK_0$ for a given $\bK_0 \hspace{-0.15em}\in\hspace{-0.15em}\cK$, that are shown in blue and red in Figure \ref{fig:proof2}, respectively, where $\cK_0 \hspace{-0.1em}:=\hspace{-0.1em} \big\{\bK \hspace{-0.1em}\mid\hspace{-0.1em} \bK \in \cK, \text{~and } \bP_{\bK, \bL(\bK)} \hspace{-0.1em}\leq \hspace{-0.1em}\bP_{\bK_0, \bL(\bK_0)}\hspace{-0.1em}\big\}\hspace{-0.1em}\hspace{-0.1em}$. Clearly, $\bK_0 \hspace{-0.1em}\in\hspace{-0.1em} \partial \cK_0$ and a sequence of iterates $\{\bK_k\}$ that stays in $\widehat{\cK}$ or $\cK_0$ is also uniformly separated from $\partial\cK$ (dashed lines in Figure \ref{fig:proof2}). Denote the gain matrix after one step of the exact NPG update \eqref{eqn:npg_K} as $\tilde{\bK}_1$. Then, under the appropriate stepsize, the IR property in Theorem \ref{theorem:ir_K} demonstrates that $\bP_{\tilde{\bK}_1, \bL(\tilde{\bK}_1)}\hspace{-0.1em}\geq\hspace{-0.1em} 0$ exists and satisfies $\bP_{\tilde{\bK}_1, \bL(\tilde{\bK}_1)} \hspace{-0.1em}\leq \hspace{-0.1em}\bP_{\bK_0, \bL(\bK_0)}$ almost surely, which also means that $\tilde{\bK_1} \hspace{-0.1em}\in\hspace{-0.1em}\cK_0$ almost surely. In contrast, when the model is not known, one step of the ZO-NPG update \eqref{eqn:npg_free_K} using estimated gradients sampled through system trajectories could drive the gain matrix outside of $\cK_0$ (even worse, outside of $\cK$) due to the induced statistical errors. Moreover, these errors accumulate over all the iterations, raising significant challenges to find a uniform ``margin'' for safely selecting the parameters of Algorithm \ref{alg:model_free_outer_NPG}.

To overcome this challenge, we establish some argument stronger than that in Theorem \ref{theorem:ir_K}, i.e., $\bK_k$ stays in $\widehat{\cK}$, for all $k$, with high probability. We first show that with a finite number of samples, the estimated NPG could be accurate enough such that $\bK_1$, the iterate after applying one step of \eqref{eqn:npg_free_K}, is close to $\tilde{\bK}_1$, and thus stays in $\widehat{\cK}$, with high probability. The same arguments could be iteratively applied to all future iterations, because starting from any $\bK_k \in \cK$ and choosing an appropriate stepsize, Theorem \ref{theorem:ir_K} guarantees that 
the iterates following the exact NPG direction are monotonically moving toward the interior of $\cK$. Also, there exist parameters of Algorithm \ref{alg:model_free_outer_NPG} such that the NPG estimates could be arbitrarily close to the exact ones. These two properties together imply that we can control, with high probability, the rate under which the iterates following  \eqref{eqn:npg_free_K} is moving ``outward'', i.e. toward $\partial\cK$. Therefore, we manage to demonstrate that even in the worse case, the iterates of the ZO-NPG update will not travel beyond $\partial\widehat{\cK}$ (the blue line), with high probability. Since $\widehat{\cK}$ is compact, we can then safely choose the parameters of Algorithm \ref{alg:model_free_outer_NPG} when analyzing the convergence rate of \eqref{eqn:npg_free_K}. This completes the proof. \hfill$\blacksquare$

Theorem \ref{theorem:free_IR} appears to be the first IR result of PO in robust control in the derivative-free setting, with previous work in the literature \cite{zhang2019policymixed} focusing only on the case with exact PG accesses. Also, \cite{zhang2019policymixed} studied the infinite-horizon LTI setting (mixed $\cH_2/\cH_{\infty}$ design problem), which has a more complicated constraint set (see \cite{zhang2019policymixed}; Lemma 2.7) compared to our $\cK$. Thus, it is still open whether implicit regularization property can be proved for derivative-free methods in this setting. In contrast, our finite-horizon time-varying setting leads to a simpler form of the robust controller set $\cK$. However, we note that a delicate control of the iterates is still needed when proving Theorem \ref{theorem:free_IR}. Specifically, in the  derivative-free setting, the iterates $\bK_k$ need to be \emph{uniformly} separated from the boundary of $\cK$, so that the algorithm parameters can be chosen appropriately, without driving $\bK_k$ out of $\cK$.  To this end, we manage to establish some argument stronger than that in Theorem \ref{theorem:ir_K}, i.e., $\bK_k$ stays in a \emph{strict subset} of $\cK$, denoted by $\widehat{\cK}$, with high probability. The exact-case  implicit regularization in Theorem \ref{theorem:ir_K} is important in this proof, which has to be carefully  adapted to account for the estimation error when sampling  the NPG direction using trajectories. Finally,  with Theorem \ref{theorem:free_IR} in hand, we present the sample complexity of the outer-loop ZO-NPG algorithm \eqref{eqn:npg_free_K}, deferring its proof to \S\ref{proof:free_npg_K}.

\begin{theorem}\label{theorem:free_npg_K}(Outer-Loop Sample Complexity for NPG) For any $\bK_0 \in \cK$, let $\epsilon_2 \leq \phi/2$, $\delta_2, \epsilon_1, \delta_1, M_2, r_2, \alpha$ in Algorithm \ref{alg:model_free_outer_NPG} satisfy the requirements in Theorem \ref{theorem:free_IR}, but with $\varpi$ therein replaced by $\varphi := \min\left\{\varpi, \alpha\phi\epsilon_2/[\sqrt{s}c_0\cB_{\bP}]\right\}$. Then, it holds with probability at least $1-\delta_2$ that the sequence $\{\bK_k\}$, $k \in \{0, \cdots, K\}$, converges with $\cO(1/K)$ rate such that $K^{-1}\sum^{K-1}_{k = 0} \|\bF_{\bK_k, \bL(\bK_k)}\|^2_F \\ \leq \epsilon_2$  with $K = \tr(\bP_{\bK_0, \bL(\bK_0)} - \bP_{\bK^*, \bL(\bK^*)})/[\alpha\phi\epsilon_2]$, which is also towards the unique Nash equilibrium.
 \end{theorem}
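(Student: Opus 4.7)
The plan is to promote the exact-gradient descent argument underlying Theorem \ref{theorem:Mbased_conv_K} into a high-probability statement that accounts for the three sources of statistical error in Algorithm \ref{alg:model_free_outer_NPG}: (i) the inner-loop inexactness $\overline{\bL}(\bK)\neq \bL(\bK)$ produced by Algorithm \ref{alg:model_free_inner_PG}, which by Theorems \ref{theorem:free_pg_L}--\ref{theorem:free_npg_L} is controlled by $\epsilon_1$; (ii) the zeroth-order PG estimation error $\overline{\nabla}_{\bK}\cG - \nabla_{\bK}\cG(\bK,\bL(\bK))$, controlled jointly by $r_2$ and $M_2$; and (iii) the empirical correlation deviation $\overline{\Sigma}_{\bK_k,\overline{\bL}(\bK_k)} - \Sigma_{\bK_k,\bL(\bK_k)}$, controlled by $M_2$. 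First I would condition on the event (of probability at least $1-\delta_2$) supplied by Theorem \ref{theorem:free_IR} that $\bK_k\in\widehat{\cK}$ for every $k\leq K$. Since $\widehat{\cK}$ is compact, all structural quantities appearing in the analysis---$\|\bP_{\bK_k,\bL(\bK_k)}\|$, $\|\bG_{\bK_k,\bL(\bK_k)}\|$, $\lambda_{\min}(\bH_{\bK_k,\bL(\bK_k)})$, $\|\Sigma_{\bK_k,\bL(\bK_k)}\|$, and the local Lipschitz/smoothness constants of the map $\bK\mapsto \bP_{\bK,\bL(\bK)}$---admit uniform bounds along the iteration, which is what lets the fixed per-iteration sample sizes prescribed by $M_2$ suffice.

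Next I would establish a one-step descent inequality on the Lyapunov quantity $\tr(\bP_{\bK_k,\bL(\bK_k)})$, mirroring the template used in the proof of Theorem \ref{theorem:Mbased_conv_K}. In the exact case, substituting $\bK_{k+1}$ into the Riccati equation \eqref{eqn:DARE_black_L} and using that $\bK_{k+1}-\bK_k$ is proportional to $\bF_{\bK_k,\bL(\bK_k)}$ along the NPG direction yields, after multiplying by $\Sigma_0$ and taking traces, a descent of the form $\tr(\bP_{\bK_{k+1},\bL(\bK_{k+1})}) \leq \tr(\bP_{\bK_k,\bL(\bK_k)}) - c\cdot\alpha\phi\,\|\bF_{\bK_k,\bL(\bK_k)}\|_F^2$ for an absolute constant $c>0$, provided $\alpha$ is below the threshold of Theorem \ref{theorem:Mbased_conv_K}. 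In the derivative-free case I would redo this computation with the perturbed search direction $\overline{\nabla}_{\bK}\cG(\bK_k,\overline{\bL}(\bK_k))\overline{\Sigma}_{\bK_k,\overline{\bL}(\bK_k)}^{-1}$, picking up a residual proportional to $\alpha\|\bF_{\bK_k,\bL(\bK_k)}\|_F\,e_k + \alpha^2 e_k^2$, where $e_k$ denotes the total deviation of the empirical natural PG from the exact one, decomposed according to (i)--(iii). The purpose of replacing $\varpi$ by $\varphi=\min\{\varpi,\alpha\phi\epsilon_2/(\sqrt{s}c_0\cB_{\bP})\}$ is precisely to force this residual to be of order $\alpha\phi\epsilon_2$ per iteration, uniformly for $k\leq K$; the inner-loop tolerance $\delta_1\leq \delta_2/(6M_2 K)$ then absorbs the union bound over all oracle calls, since Algorithm \ref{alg:model_free_outer_NPG} invokes Algorithm \ref{alg:model_free_inner_PG} $M_2+1$ times per outer iteration.

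Telescoping the one-step descent over $k=0,\ldots,K-1$ and using $\bP_{\bK_K,\bL(\bK_K)}\geq \bP_{\bK^*,\bL(\bK^*)}$ (which holds on the IR event because every $\bK_k\in\widehat{\cK}\subset\cK$ and $\bP_{\bK^*,\bL(\bK^*)}$ is the p.s.d.-minimum of $\bP_{\bK,\bL(\bK)}$ over $\cK$ by Lemma \ref{lemma:assumption_L}) gives $\alpha\phi\cdot K^{-1}\sum_{k=0}^{K-1}\|\bF_{\bK_k,\bL(\bK_k)}\|_F^2 \leq K^{-1}\tr\big(\bP_{\bK_0,\bL(\bK_0)}-\bP_{\bK^*,\bL(\bK^*)}\big) + \alpha\phi\epsilon_2/2$. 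Choosing $K = \tr(\bP_{\bK_0,\bL(\bK_0)}-\bP_{\bK^*,\bL(\bK^*)})/(\alpha\phi\epsilon_2)$ then yields the advertised $\cO(1/K)$ rate with final accuracy $\epsilon_2$, and convergence to the unique Nash equilibrium follows from Lemma \ref{lemma:station}, since on the IR event every limit point satisfies $\bK\in\cK$ and $\bR^w-\bD^{\top}\bP_{\bK,\bL(\bK)}\bD>0$. The main obstacle I anticipate is the uniform control of $e_k$ on the event $\{\bK_k\in\widehat{\cK}\}$: the inner-loop bias corrupts $\overline{\nabla}_{\bK}\cG$ not only through $\overline{\bL}(\bK_k)$ but through each perturbed $\overline{\bL}(\bK_k^j)$ for $j=0,\ldots,M_2-1$, and these errors enter nonlinearly through the empirical inverse $\overline{\Sigma}_{\bK_k,\overline{\bL}(\bK_k)}^{-1}$; calibrating all of these simultaneously so as to recover the slope $\alpha\phi$ of the exact-gradient descent is exactly what pins down the $\varphi$-tuning, while orchestrating the union bound across all $K$ iterations without inflating $M_2$ beyond $\tilde{\cO}(K)$ is the delicate bookkeeping step.
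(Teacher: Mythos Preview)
Your high-level plan---condition on the implicit-regularization event of Theorem \ref{theorem:free_IR}, establish a one-step descent on $\Tr(\bP_{\bK_k,\bL(\bK_k)}\Sigma_0)$ that absorbs the estimation error, then telescope---matches the paper. The one technical point of departure is how you obtain the perturbed one-step descent: you propose to re-derive \eqref{eqn:npg_exact_rate} by substituting the empirical direction $\overline{\nabla}_{\bK}\cG(\bK_k,\overline{\bL}(\bK_k))\overline{\Sigma}_{\bK_k,\overline{\bL}(\bK_k)}^{-1}$ directly into the cost-difference recursion, extracting residuals of the form $\alpha\|\bF_{\bK_k}\|_F e_k + \alpha^2 e_k^2$. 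The paper instead introduces the intermediate \emph{exact} NPG iterate $\tilde{\bK}_{k+1}$, splits $\Tr\bigl((\bP_{\bK_{k+1}}-\bP_{\bK_k})\Sigma_0\bigr)$ as $\Tr\bigl((\bP_{\bK_{k+1}}-\bP_{\tilde{\bK}_{k+1}})\Sigma_0\bigr)+\Tr\bigl((\bP_{\tilde{\bK}_{k+1}}-\bP_{\bK_k})\Sigma_0\bigr)$, bounds the first piece via the Lipschitz constant $\cB_{\bP}$ of Lemma \ref{lemma:outer_perturb}, and applies the already-proved exact inequality \eqref{eqn:npg_seq} to the second. This is precisely why $\cB_{\bP}$ appears in $\varphi$: requiring $\|\bK_{k+1}-\tilde{\bK}_{k+1}\|_F\leq \alpha\phi\epsilon_2/(\sqrt{s}c_0\cB_{\bP})$ forces the first piece to be at most $\alpha\phi\epsilon_2$, and since the exact descent contributes $-2\alpha\phi\|\bF_{\bK_k}\|_F^2$, the net is $\leq -\alpha\phi\|\bF_{\bK_k}\|_F^2$ whenever $\|\bF_{\bK_k}\|_F^2\geq\epsilon_2$. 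Your direct route would also close, but the residual $\alpha\|\bF_{\bK_k}\|_F e_k$ carries an $\|\bF\|$-dependence that needs a separate uniform bound over $\widehat{\cK}$, and the resulting calibration does not naturally yield the $\cB_{\bP}$-form of $\varphi$ stated in the theorem. The paper's route is more modular (it recycles \eqref{eqn:npg_seq} and Lemma \ref{lemma:outer_perturb} verbatim rather than reopening the block-recursive argument of \S\ref{proof:ir_K}) and makes the provenance of each constant in $\varphi$ transparent.
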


 Hence, ZO-NPG converges to the $\epsilon_2$-neighborhood of the Nash equilibrium with probability at least $1-\delta_2$, after using $M_2\cdot K \sim \tilde{\cO}(\epsilon_2^{-5}\cdot\log(\delta_2^{-1}))$ samples. Moreover, the convergent gain matrix $\bK_{K}$ also solves the LEQG and the LQ disturbance attenuation problems (cf. Lemma \ref{lemma:equivalent}).
 
 Compared with the $\tilde{\cO}(\epsilon^{-2})$ rate of PG methods for solving LQR \cite{malik2020derivative} and our inner-loop (cf. Theorem \ref{theorem:free_pg_L}), the $\tilde{\cO}(\epsilon^{-5})$ rate of ZO-NPG for solving our outer-loop is less efficient, due to the much richer landscape presented in Lemma \ref{lemma:landscape_K}. In particular, the objective function of LQR is coercive (leading to the ``compactness” of sub-level sets and the ``smoothness” over the sub-level sets, and enabling the use of “any” descent direction of the objective to ensure feasibility) and PL. In contrast, none of these properties hold true for the objective function of our outer loop, and a descent direction of the objective value may still drive the iterates out of $\cK$, which will lead to unbounded/undefined value.
 
 To guarantee that the ZO-NPG iterates will stay within $\cK$ (which is necessary for the iterates to be well-defined), we need to use large number of samples ($\tilde{\cO}(\epsilon^{-5})$ in $\epsilon$) to obtain a very accurate approximation of the exact NPG update \eqref{eqn:npg_K}, similar to what has been done in \cite{fazel2018global}, to ensure that $\bP_{\bK, \bL(\bK)}$ is monotonically non-increasing in the p.s.d. sense with a high probability along iterations. For LQR, any descent direction of the objective value, which is a scalar, suffices to guarantee that the PG iterates will stay in the feasible set. Therefore, for LQR, the $\tilde{\cO}(\epsilon^{-2})$ rate is expected for the smooth (since the iterates will not leave the feasible set, which is compact) and PL objective functions; while for our robust control setting, due to the more stringent requirements on the estimation accuracy and the non-PL objective function, the $\tilde{\cO}(\epsilon^{-5})$ rate is reasonable. In fact, to our knowledge, there is no sample complexity lower-bound applicable to our outer loop (stochastic nonconvex optimization with no global smoothness nor coercivity nor PL condition, with an inner-loop oracle).

\section{Simulations}\label{sec:simulations}
In this section, we provide simulation results to complement our theories. In particular, we present convergence (divergence) results in the following four scenarios where we apply PG updates in Algorithms \ref{alg:model_free_inner_PG} and \ref{alg:model_free_outer_NPG} to solve a finite-horizon zero-sum LQ dynamic game with time-invariant system parameters: (i) double-loop update scheme with both the inner- and outer-loop solved exactly; (ii) double-loop scheme with the inner-loop solved in the derivative-free scheme by sampling system trajectories while the outer-loop solved exactly; (iii) double-loop scheme with the inner-loop solved exactly while the outer-loop solved in the derivative-free scheme by sampling system trajectories; (iv) descent-multi-step-ascent/AGDA/$\tau$-GDA with exact gradient accesses diverge. Subsequently, we extend our numerical experiments to a setting with \emph{time-varying} system parameters in \S\ref{sec:time_varying}, and demonstrate the convergence results of double-loop PG updates.

\textbf{Simulation Setup.} All the experiments are executed on a desktop computer equipped with a 3.7 GHz Hexa-Core Intel Core i7-8700K processor with Matlab R2019b. The device also has two 8GB 3000MHz DDR4 memories and a NVIDIA GeForce GTX 1080 8GB GDDR5X graphic card. In all settings except the one used in \S\ref{sec:time_varying}, we set the horizon of the problem to $N=5$ and test a linear time-invariant system with the set of system matrices being $A_t = A$, $B_t=B$, $D_t=D$, $Q_t =Q$, $R^u_t=R^u$, and $R^w_t = R^w$, where $R^w = 5\cdot \bI$ and
\small
\begin{align}\label{parameter}
	A = \begin{bmatrix} 1 & 0 & -5 \\ -1 & 1 & 0 \\ 0 & 0 & 1\end{bmatrix},
	~\ B = \begin{bmatrix} 1 & -10 & 0 \\ 0 & 3 & 1 \\ -1 & 0 & 2\end{bmatrix},
	~\ D = \begin{bmatrix} 0.5 & 0 & 0 \\ 0 & 0.2 & 0 \\ 0 & 0 & 0.2\end{bmatrix},
	~\ Q = \begin{bmatrix} 2 & -1 & 0 \\ -1 & 2 & -1 \\ 0 & -1 & 2\end{bmatrix},
	~\ R^u = \begin{bmatrix} 4 & -1 & 0 \\ -1 & 4 & -2 \\ 0 & -2 & 3\end{bmatrix}.
\end{align}
\normalsize

\subsection{Exact Double-Loop Updates}\label{sec:exact1}
We set $\Sigma_0 = \bI$. Then, we present the convergence result following two different initializations, denoted as $\bK_{0}^1$ and $\bK_{0}^2$, where $\bK_{0}^2$ is closer to the boundary of $\cK$. Specifically, $\bK_{0}^i = \big[ diag((K_{0}^i)^5) ~~~ \bm{0}_{15\times 3}  \big]$, for $i \in \{1, 2\}$, where \small$K_{0}^1 =  \begin{bmatrix} -0.12 & -0.01 & 0.62 \\ -0.21 & 0.14 & 0.15 \\ -0.06 & 0.05 & 0.42\end{bmatrix}$ \normalsize and \small$K_{0}^2 =  \begin{bmatrix} -0.14 & -0.04 & 0.62 \\ -0.21 & 0.14 & 0.15 \\ -0.06 & 0.05 & 0.42\end{bmatrix}$\normalsize. Then, we can verify that $\lambda_{\min}(\bH_{\bK_{0}^1, \bL(\bK_{0}^1)}) = 0.5041$ and $\lambda_{\min}(\bH_{\bK_{0}^2, \bL(\bK_{0}^2)}) = 0.0199$. Moreover, we initialize $\bL = \bm{0}$ in both cases.

 \begin{figure}[t]
 \vspace{2em}
\begin{center}
\includegraphics[width=0.95\textwidth]{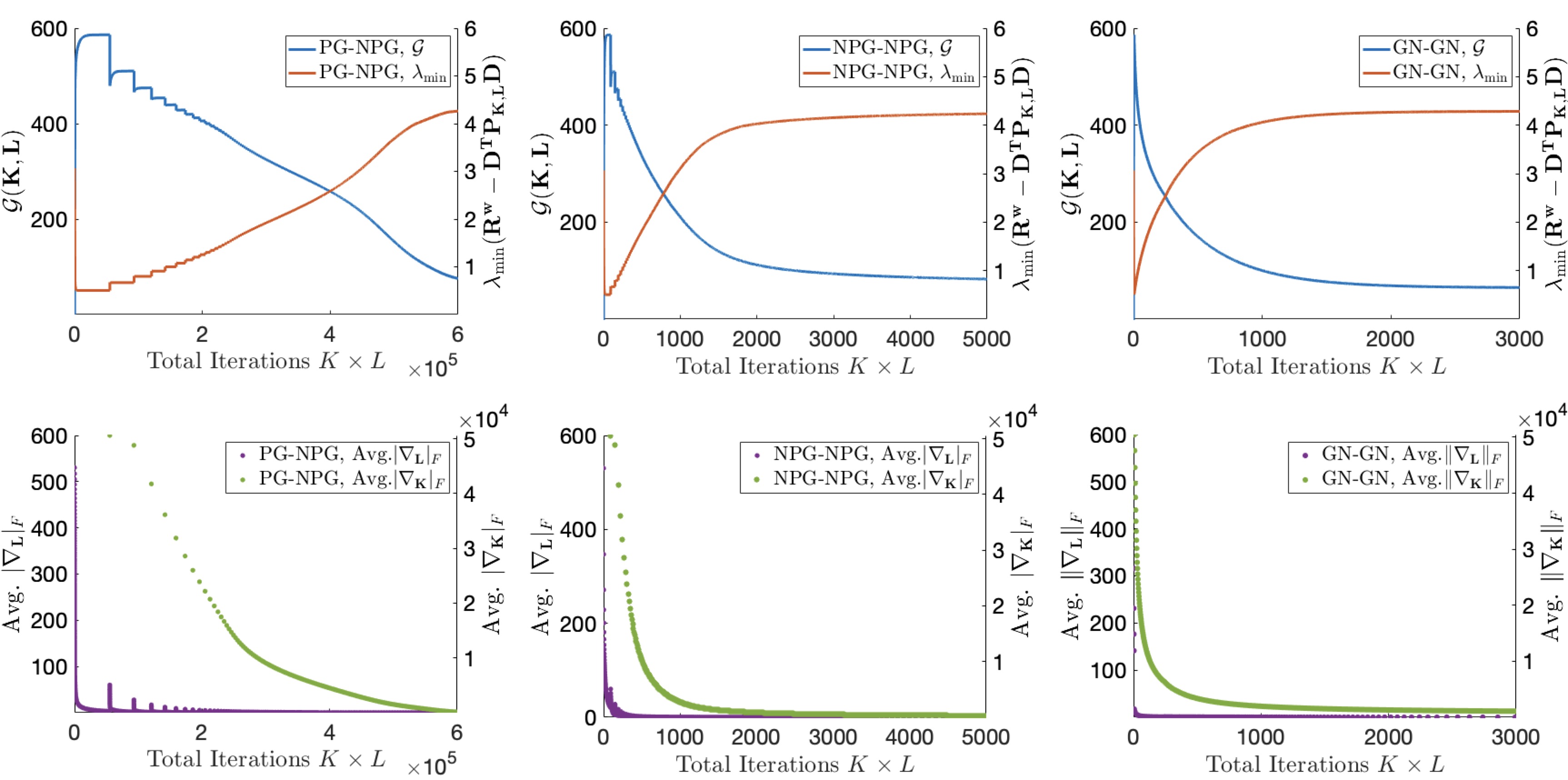}
\caption{Three exact double-loop PG updates with initial gain matrix being $\bK_{0}^1$. \emph{Top:} Convergence of $\cG(\bK, \bL)$ and $\lambda_{\min}(\bH_{\bK, \bL})$. \emph{Bottom:} Convergence of \small$\{l^{-1}\sum_{\iota=0}^{l-1}\|\nabla_{\bL}(\bK, \bL_\iota)\|_F\}_{l\geq 1}$\normalsize and \small$\{k^{-1}\sum_{\kappa=0}^{k-1}\|\nabla_{\bK}\cG(\bK_\kappa, \overline{\bL}(\bK_\kappa))\|_F\}_{k\geq 1}$\normalsize. $\overline{\bL}(\bK)$ is the approximate inner-loop solution with $\epsilon_1 = 10^{-3}$. }
\label{fig:exact_1}
\end{center}
\end{figure}

\begin{figure}[t]
\begin{center}
\includegraphics[width=1\textwidth]{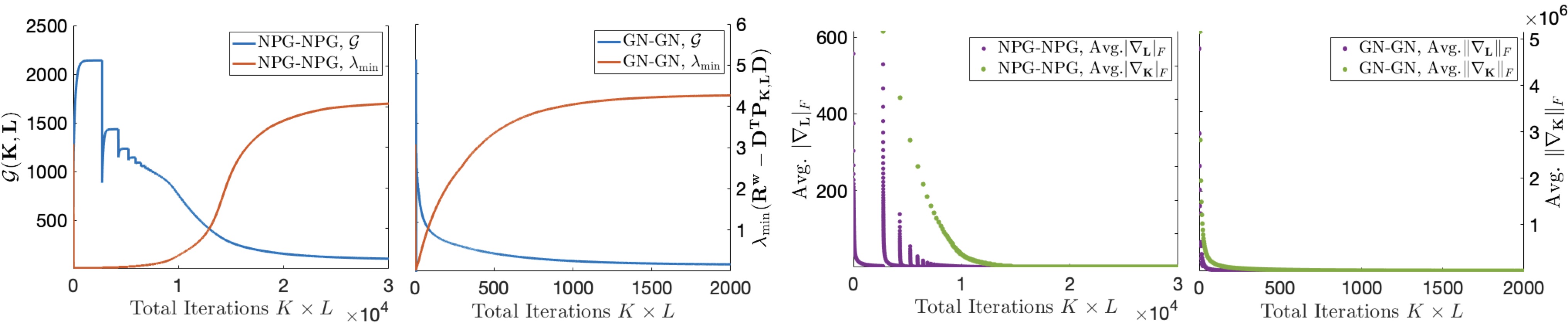}
\end{center}
\caption{Two exact double-loop PG updates with initial gain matrix being $\bK_{0}^2$. \emph{Left:} Convergence of of $\cG(\bK, \bL)$ and $\lambda_{\min}(\bH_{\bK, \bL(\bK)})$. \emph{Right:} Convergence of \small$\{l^{-1}\sum_{\iota=0}^{l-1}\|\nabla_{\bL}(\bK, \bL_\iota)\|_F\}_{l\geq 1}$\normalsize and \small$\{k^{-1}\sum_{\kappa=0}^{k-1}\|\nabla_{\bK}\cG(\bK_\kappa, \overline{\bL}(\bK_\kappa))\|_F\}_{k\geq 1}$\normalsize. $\overline{\bL}(\bK)$ is the approximate inner-loop solution with $\epsilon_1 = 10^{-3}$. }
\label{fig:exact_2}
\end{figure}

 \vspace{0.6em}
\noindent\textbf{Case 1: } We demonstrate in Figure \ref{fig:exact_1} the convergence of three update combinations, namely PG-NPG, NPG-NPG, and GN-GN, for the inner and outer loop, respectively. The stepsizes are chosen to be $(\eta, \alpha) = (1\times 10^{-4}, 3\times 10^{-6})$ for PG-NPG, $(\eta, \alpha) = (0.0635, 3\times 10^{-6})$ for NPG-NPG, and $(\eta, \alpha) = (0.5, 5\times 10^{-4})$ for GN-GN. Also, we require the approximate inner-loop solution to have the accuracy of $\epsilon_1 = 0.001$. As shown in the top of Figure \ref{fig:exact_1}, the double-loop algorithm with all three update combinations successfully converges to the unique Nash equilibrium of the game. Also, the sequence $\{\lambda_{\min}(\bH_{\bK_{k}, \bL(\bK_{k})})\}_{k\geq 0}$ is monotonically non-decreasing, which matches the implicit regularization property we introduced in Theorem \ref{theorem:ir_K} for the outer-loop NPG and GN updates. As $\bK_{0}^1 \in \cK$, we can guarantee that all future iterates will stay inside $\cK$. The convergences of the average gradient norms with respect to both $\bL$ and $\bK$ are also presented in the bottom of Figure \ref{fig:exact_1}.

 \vspace{0.6em}
 \noindent\textbf{Case 2: }When the initial gain matrix $\bK_{0}^2$ is closer to the boundary of $\cK$, one non-judicious update could easily drive the gain matrix outside of $\cK$. In this case, we present two combinations of updates, namely NPG-NPG and GN-GN, for the inner and outer loop, respectively. We use the stepsizes $(\eta, \alpha) = (0.0635, 2.48\times 10^{-7})$ for NPG-NPG and $(\eta, \alpha) = (0.5, 2.5\times 10^{-4})$ for GN-GN. Similarly, we set $\epsilon_1 = 0.001$. The convergence patterns presented in Figure \ref{fig:exact_2} are similar to those of \textbf{Case 1}.

\subsection{Derivative-Free Inner-Loop Oracles}\label{sec:zo-fo}
We provide two sets of experiments to validate Theorems \ref{theorem:free_pg_L} and \ref{theorem:free_npg_L}. We consider the same example as the one in \S\ref{sec:exact1}, but using derivative-free updates \eqref{eqn:pg_free_L}-\eqref{eqn:npg_free_L} by sampling system trajectories to approximately solve the inner-loop subproblem, as proposed in \S\ref{sec:max_oracle}. Note that the inner-loop subproblem is essentially an indefinite LQR, as introduced in \S\ref{sec:1st}. The outer-loop problem is solved using the exact NPG update \eqref{eqn:npg_K}. Note that according to Theorem \ref{theorem:free_npg_K}, one can also use the outer-loop ZO-NPG \eqref{eqn:npg_free_K} together with the inner-loop derivative-free updates to achieve the same convergence pattern. However, we use an exact outer-loop update, as it suffices to illustrate our idea here and also has better computational efficiency. In contrast to the example in \S\ref{sec:exact1}, we set $\Sigma_0 = 0.1\cdot\bI$ and initialize $\bK_{0}^3 = \big[ diag((K_{0}^3)^5) ~~~ \bm{0}_{15\times 3}  \big]$ with \small$K_{0}^3 =  \begin{bmatrix} -0.04 & -0.01 & 0.61 \\ -0.21 & 0.15 & 0.15 \\ -0.06 & 0.05 & 0.42\end{bmatrix}$\normalsize, where one can verify that $\lambda_{\min}(\bH_{\bK_{0}^3, \bL(\bK_{0}^3)}) = 1.8673>0$. That is, $\bK_{0}^3 \in \cK$.
 
\begin{figure}[t]
\begin{center}
\includegraphics[width=0.95\textwidth]{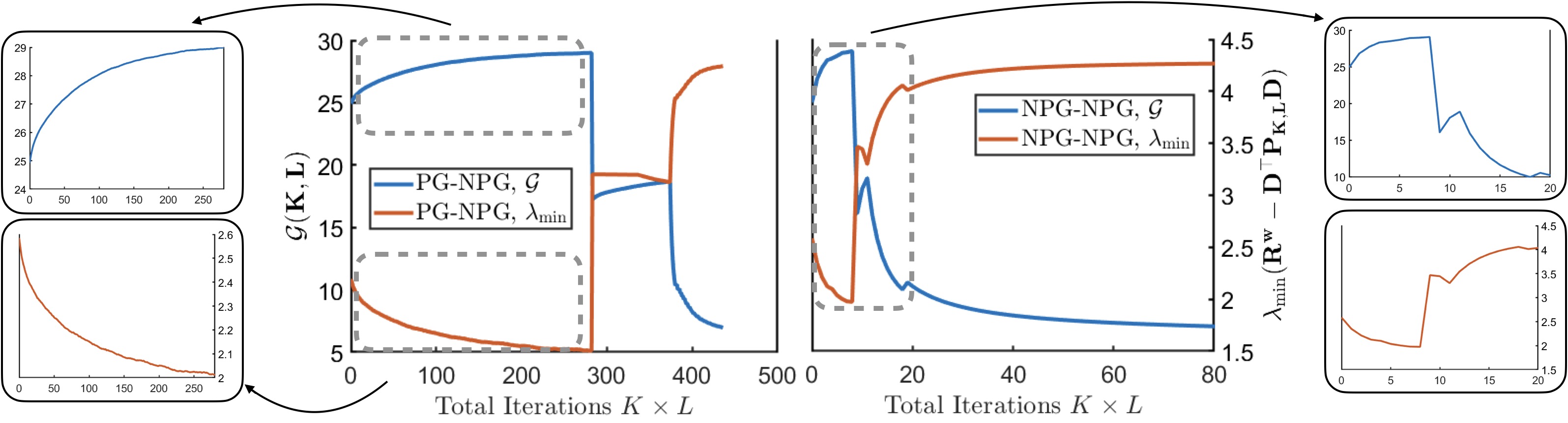}
\end{center}
\caption{Convergence of $\cG(\bK, \bL)$ and $\lambda_{\min}(\bH_{\bK, \bL(\bK)})$ with the initial gain matrix being $\bK_{0}^3$. \emph{Left:}  The inner loop follows the ZO-PG update \eqref{eqn:pg_free_L} and the outer loop follows the exact NPG update \eqref{eqn:npg_K}. \emph{Right:} The inner loop follows the ZO-NPG update \eqref{eqn:npg_free_L} and the outer loop follows the exact NPG update \eqref{eqn:npg_K}. }
\label{fig:exact_3}
\end{figure}

 \vspace{0.6em}
 \noindent\textbf{Inner-Loop ZO-PG: }When we use the inner-loop ZO-PG update \eqref{eqn:pg_free_L} in Algorithm \ref{alg:model_free_inner_PG}, we choose $M_1 = 10^6$ and $r_1 = 1$. Moreover, we set the stepsizes of the inner-loop ZO-PG and the outer-loop exact NPG to be $(\eta, \alpha) = (8\times 10^{-3}, 4.5756\times 10^{-4})$, respectively. The desired accuracy level of the double-loop algorithm is picked to be $(\epsilon_1, \epsilon_2) = (0.8, 0.5)$. As shown in the left of Figure \ref{fig:exact_3}, the inner-loop ZO-PG \eqref{eqn:npg_free_L} successfully converges for every fixed outer-loop update, validating our results in Theorem \ref{theorem:free_pg_L}. Also, the double-loop algorithm converges to the unique Nash equilibrium of the game. 
  
  \vspace{0.6em}
 \noindent\textbf{Inner-Loop ZO-NPG: }When the inner-loop subproblem is solved via the ZO-NPG update \eqref{eqn:npg_free_L}, we choose the same $M_1, r_1, \alpha, \epsilon_1, \epsilon_2$ as the ones used in the inner-loop ZO-PG update but in contrast we set $\eta = 5 \times 10^{-2}$. Similar convergence patterns can be observed in the right of Figure \ref{fig:exact_3}.

\subsection{Derivative-Free Outer-Loop NPG}
\begin{figure}[t]
\begin{center}
\includegraphics[width=0.95\textwidth]{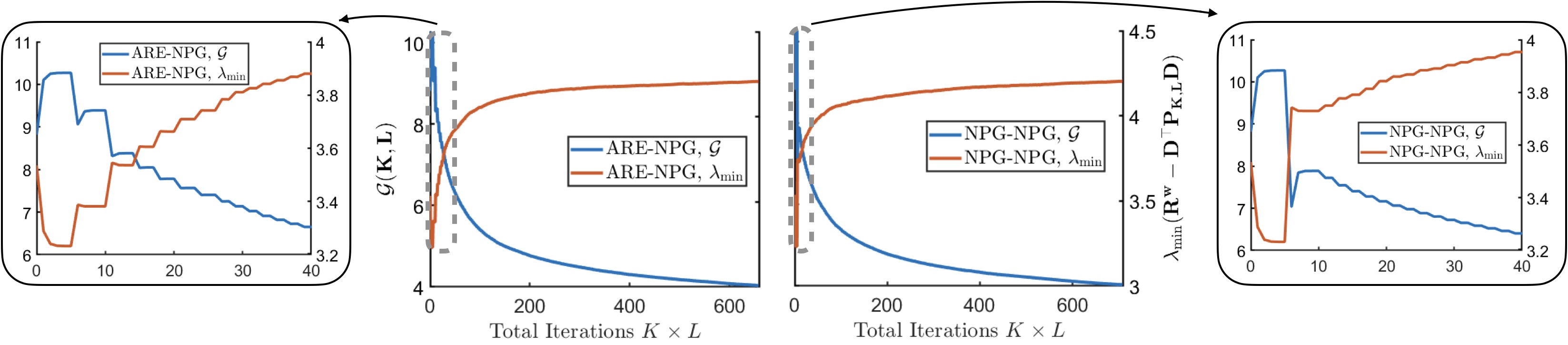}
\end{center}
\caption{Convergence of $\cG(\bK, \bL)$ and $\lambda_{\min}(\bH_{\bK, \bL})$ when the outer loop follows the ZO-NPG update \eqref{eqn:npg_free_K}. \emph{Left: } The inner-loop solutions are computed by \eqref{eqn:l_k}. \emph{Right:} The inner-loop solutions are approximated by the exact NPG update \eqref{eqn:npg_L} with an accuracy level of $\epsilon_1 = 10^{-4}$. }
\label{fig:main_plot}
\end{figure}

We set $\Sigma_0 = 0.05\cdot\bI$. We present convergence results following the initial gain matrix $\bK_{0} = \big[ diag(K^5) ~~~ \bm{0}_{15\times 3} \big]$, where \small$K =  \begin{bmatrix}
	-0.08 & 0.35 & 0.62 \\ -0.21 & 0.19 & 0.32 \\ -0.06 & 0.10 & 0.41\end{bmatrix}$ \normalsize and \small$\lambda_{\min}(\bH_{\bK_0, \bL(\bK_0)}) = 3.2325$\normalsize. The convergence of Algorithm \ref{alg:model_free_outer_NPG} is implemented for two cases, with the inner-loop oracle being implemented using (i) the exact solution as computed by \eqref{eqn:l_k}; and (ii) the approximate inner-loop solution following the exact NPG update \eqref{eqn:npg_L}. As proved by Theorems \ref{theorem:free_pg_L} and \ref{theorem:free_npg_L}, the inner loop can also be solved approximately using either ZO-PG or ZO-NPG updates, which has been verified via the simulation results presented in \S\ref{sec:zo-fo}. The parameters of Algorithm \ref{alg:model_free_outer_NPG} in both cases are set to $M_2 = 5\times 10^5$, ~ $r_2 = 0.08$, ~$(\eta, \alpha) = (0.1, 4.67\times 10^{-5})$, and $ (\epsilon_1, \epsilon_2) = (10^{-4}, 0.8)$. Figure \ref{fig:main_plot} illustrates the behaviors of the objective function and the smallest eigenvalue of $\bH_{\bK, \bL}$ following the double-loop update scheme. It is shown that iterates of the outer-loop gain matrix $\{\bK_k\}_{k\geq 0}$ stay within the feasible set $\cK$ along with the double-loop update, which preserves a certain disturbance attenuation level in the view of Remark \ref{remark:ir}.  Moreover, in both cases, we have Algorithm \ref{alg:model_free_outer_NPG} converging sublinearly to the unique Nash equilibrium.

\subsection{Divergent Cases}\label{sec:alternate}

\begin{figure}[t]
\begin{center}
\includegraphics[width=0.95\textwidth]{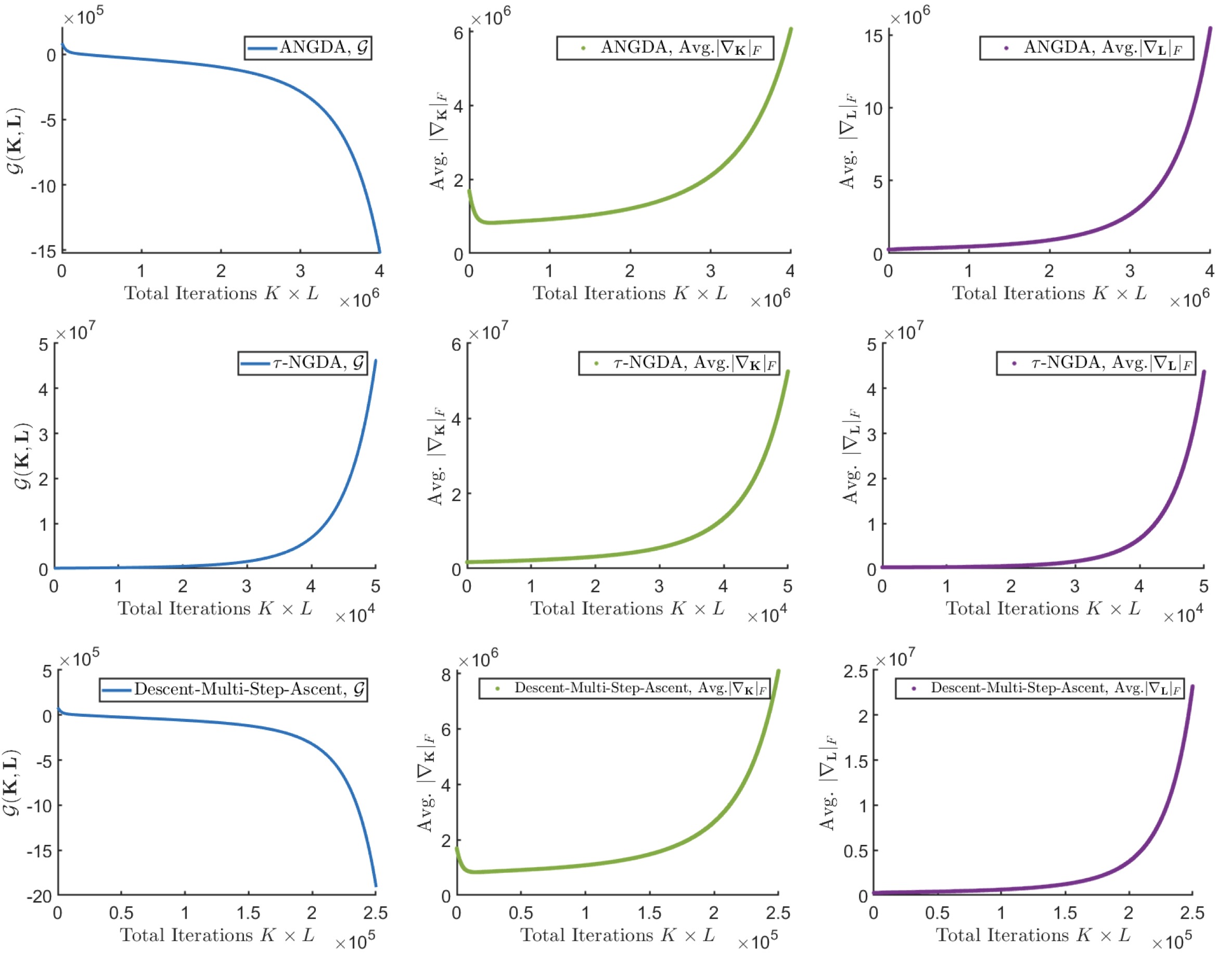}
\end{center}
\caption{Divergence of $\cG(\bK, \bL)$, \small$\{l^{-1}\sum_{\iota=0}^{l-1}\|\nabla_{\bL}(\bK, \bL_\iota)\|_F\}_{l\geq 1}$\normalsize, and \small$\{k^{-1}\sum_{\kappa=0}^{k-1}\|\nabla_{\bK}\cG(\bK_\kappa, \overline{\bL}(\bK_\kappa))\|_F\}_{k\geq 1}$\normalsize. \emph{Top: } ANGDA with stepsizes $\eta = \alpha = 1.7319\times 10^{-11}$. \emph{Middle:} $\tau$-NGDA with stepsizes $\alpha = 1.7319\times 10^{-11}$ and $\eta = 10^3\cdot \alpha$. \emph{Bottom: } Descent-multi-step-ascent with update rules \eqref{eqn:npg_L} and \eqref{eqn:npg_K} and stepsizes $\eta = \alpha = 1.7319\times 10^{-9}$. For each iteration of the outer-loop update \eqref{eqn:npg_K}, we run 10 iterations of the inner-loop update \eqref{eqn:npg_L}.}
\label{fig:gda}
\end{figure}

As noted in Remark \ref{remark:double}, it is unclear yet if descent-multi-step-ascent, AGDA, or $\tau$-GDA, where $\tau = \eta/\alpha$, can converge globally to the Nash equilibrium in our setting.  In this section, we present some scenarios where descent-multi-step-ascent, AGDA, and $\tau$-GDA diverges even with infinitesimal stepsizes. We use the same example as \S\ref{sec:exact1} and initialize $\bK_{0}^4, \bL_{0}^4$ to be time-invariant such that \small$K^4_{0, t} = K_{0}^4 = \begin{bmatrix} -0.1362 &   0.0934 &   0.6458 \\ -0.2717 &  -0.1134 &  -0.4534 \\ -0.6961 &  -0.9279 &  -0.6620 \end{bmatrix}$ \normalsize and \small$L_{0, t}^4 = L_{0}^4 = \begin{bmatrix}0.2887  & -0.2286 &   0.4588 \\ -0.7849 &  -0.1089 &  -0.3755 \\   -0.2935 &   0.9541 &   0.7895\end{bmatrix}$ \normalsize for all $t$. Note that in descent-multi-step-ascent/AGDA/$\tau$-GDA, the maximizing problem with respect to $\bL$ for a fixed $\bK$ is no longer solved to a high accuracy for each iteration of the updates on $\bK$. Thus, we can relax the constraint $\bK \in \cK$ since the maximizing player will not drive the cost to $\infty$ in such iterates under descent-multi-step-ascent/AGDA/$\tau$-GDA. Figure \ref{fig:gda} illustrates the behaviors of the objective and the gradient norms with respect to $\bK$ and $\bL$ when applying alternating natural GDA (ANGDA), $\tau$-natural GDA ($\tau$-NGDA), and descent-multi-step-ascent with updates \eqref{eqn:npg_L} and \eqref{eqn:npg_K} to our problem. The stepsizes for the ANGDA are chosen to be infinitesimal such that $\eta = \alpha = 1.7319\times 10^{-11}$. However, we can still observe the diverging patterns from the top row of Figure \ref{fig:gda}, even with such tiny stepsizes. Further, we test the same example with the same initialization but use $\tau$-NGDA with $\tau = 10^3$. That is, $\eta = 10^3\cdot\alpha$. In such case, it is shown in the middle row of Figure \ref{fig:gda} that diverging behaviors still appear. Whether there exists a finite timescale separation $\tau^*$, similar to the one proved in \cite{fiez2020gradient}, such that $\tau$-GDA type algorithms with $\tau \in (\tau^*, \infty)$ provably converge to the unique Nash equilibrium in our setting requires further investigation, and is left as our future work. Lastly, we present a case where descent-multi-step-ascent with updates following \eqref{eqn:npg_L} and \eqref{eqn:npg_K} diverges. The stepsizes of the inner-loop and the outer-loop updates are chosen to be $\eta = \alpha = 1.7319\times 10^{-9}$ and we run 10 iterations of \eqref{eqn:npg_L} for each iteration of \eqref{eqn:npg_K}. The bottom row of Figure \ref{fig:gda} demonstrates the diverging behaviors of this update scheme.

\subsection{Time-Varying Systems}\label{sec:time_varying}
\begin{figure}[t]
	\centering
	\includegraphics[width=0.62\textwidth]{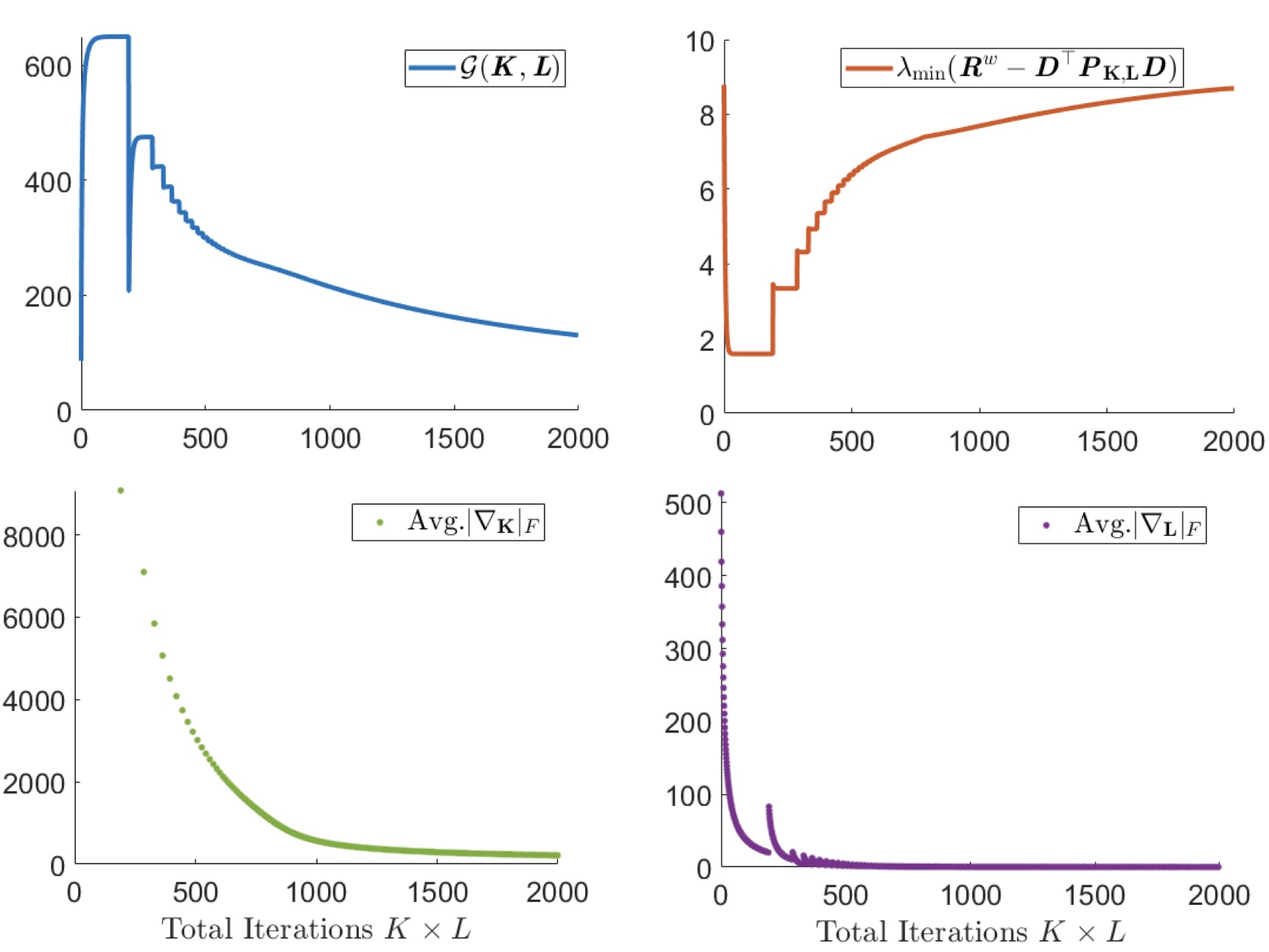}
	\caption{Convergence of the exact double-loop NPG updates with initial gain matrix being $\bK_{0}^5$. \emph{Top:} Convergence of $\cG(\bK, \bL)$ and $\lambda_{\min}(\bH_{\bK, \bL})$. \emph{Bottom:} Convergence of $\{k^{-1}\sum_{\kappa=0}^{k-1}\|\nabla_{\bK}\cG(\bK_\kappa, \overline{\bL}(\bK_\kappa))\|_F\}_{k\geq 1}$ and $\{l^{-1}\sum_{\iota=0}^{l-1}\|\nabla_{\bL}(\bK, \bL_\iota)\|_F\}_{l\geq 1}$. The stepsizes of the inner-loop and the outer-loop NPG updates are chosen to be 0.0097 and  $3.0372\times 10^{-5}$, respectively. For each iteration of the outer-loop NPG update, we solve the inner-loop subproblem to the accuracy of $\epsilon_1 = 0.001$.}
	\label{fig:time_vary}
\end{figure}

Instead of setting $A_t = A$, $B_t = B$, and $D_t = D$ for all $t$, we now choose
\begin{align*}
	A_t = A + \frac{(-1)^t tA}{10}, \quad  B_t = B + \frac{(-1)^t tB}{10}, \quad D_t = D + \frac{(-1)^t tD}{10},
\end{align*} 
and set $R^w = 10\cdot\bI$. Rest of the parameters are set to the same as in \eqref{parameter}. The horizon of the problem is again set to $N=5$ and we initialize our algorithm using \small$K_{0, t}^5 = K_{0}^5 = \begin{bmatrix} -0.0984 & -0.7158 & -0.1460\\ -0.1405 & 0.0039 & 0.4544 \\ -0.1559 & -0.7595 & 0.7403\end{bmatrix}$. \normalsize

We demonstrate in Figure \ref{fig:time_vary} the convergence of exact double-loop NPG updates. The stepsizes of the inner-loop and the outer-loop updates are chosen to be $(\eta, \alpha) = (0.0097, 3.0372\times 10^{-5})$. Also, we require the approximate inner-loop solution to have the accuracy of $\epsilon_1 = 0.001$. As shown at the top of Figure \ref{fig:time_vary}, the double-loop NPG updates successfully converge to the unique Nash equilibrium of the game. Also, the minimum eigenvalue of $\bR^w - \bD^{\top}\bP_{\bK, \bL}\bD$ is monotonically non-decreasing along the iterations of the outer-loop NPG update, which matches the implicit regularization property we introduced in Theorem \ref{theorem:ir_K}. This guarantees that all future iterates of $\bK$ will stay in the interior of $\cK$ given that the initial $\bK$ does. The convergences of the average gradient norms with respect to both $\bL$ and $\bK$ are presented at the bottom of Figure \ref{fig:time_vary}.

\section{Concluding Remarks}
In this paper, we have investigated  derivative-free policy optimization methods for solving a class of risk-sensitive and robust control problems, covering three fundamental settings: LEQG, LQ disturbance attenuation, and zero-sum LQ dynamic games. This work aims towards combining two lines of research, robust control theory, and policy-based model-free RL methods. Several ongoing/future research directions include studying (i) nonlinear systems under nonquadratic performance indices; (ii) systems with delayed state information at the controller; (iii) the global convergence and sample complexity of PG methods for continuous-time and/or output-feedback risk-sensitive and robust control problems; (iv) the convergence properties of simultaneous or alternating descent-ascent type PG methods for zero-sum LQ dynamic games; and (v) other model-based approaches that may yield an improved sample complexity bound.

\section*{Acknowledgements}
K. Zhang, X. Zhang, and T. Ba\c{s}ar were supported in part by the US Army Research Laboratory (ARL) Cooperative Agreement W911NF-17-2-0196, and in part by the Office of Naval Research (ONR) MURI Grant N00014-16-1-2710. B. Hu was supported in part by the National Science Foundation (NSF) award CAREER-2048168, and in part by an Amazon Research Award. The authors would like to thank Dhruv Malik and Na Li for helpful discussions and feedback. 

\small
\bibliographystyle{ieeetr}
\bibliography{main}
\normalsize

\newpage
\appendix
\section{Proofs of Main Results}

\subsection{Compact Formulations}\label{sec:compact_forms}
In this section, we re-derive the game formulations in \S\ref{sec:game}, with linear state-feedback policies for the two players, using the following compact notations:
\small
\begin{align*}
&\bx = \big[x^{\top}_0, \cdots, x^{\top}_{N}\big]^{\top}, ~~~\ \bu = \big[u^{\top}_0, \cdots, u^{\top}_{N-1}\big]^{\top}, ~~~\ \bw = \big[w^{\top}_0, \cdots, w^{\top}_{N-1}\big]^{\top}, ~~~\ \bm{\xi} = \big[x^{\top}_0, ~\xi^{\top}_0, \cdots, \xi^{\top}_{N-1}\big]^{\top}, ~~~\ \bQ = \big[diag(Q_0, \cdots, Q_N)\big],\\
&\bA = \begin{bmatrix} \bm{0}_{m\times mN} &  \bm{0}_{m\times m} \\ diag(A_0, \cdots, A_{N-1}) & \bm{0}_{mN\times m}\end{bmatrix}, \
\bB = \begin{bmatrix} \bm{0}_{m\times dN} \\ diag(B_0, \cdots, B_{N-1}) \end{bmatrix}, \ \bD = \begin{bmatrix} \bm{0}_{m\times nN} \\ diag(D_0, \cdots, D_{N-1}) \end{bmatrix}, \  \bR^u = \big[diag(R^u_0, \cdots, R^u_{N-1})\big], \\
&\bR^w = \big[diag(R^w_0, \cdots, R^w_{N-1})\big], \qquad \bK = \begin{bmatrix} diag(K_0, \cdots, K_{N-1}) & \bm{0}_{dN\times m} \end{bmatrix}, \qquad \bL = \begin{bmatrix} diag(L_0, \cdots, L_{N-1}) & \bm{0}_{nN\times m} \end{bmatrix}.
\end{align*}
\normalsize
With the above definitions, the game in \S\ref{sec:game} is characterized by the transition dynamics 
\begin{align}\label{eqn:sto_game_dynamics_blk}
	\bx = \bA\bx + \bB\bu + \bD\bw + \bm{\xi} = (\bA-\bB\bK-\bD\bL)\bx + \bm{\xi},
\end{align}
where $\bx$ is the concatenated system state, $\bu = -\bK\bx$ (respectively, $\bw = -\bL\bx$) is the linear state-feedback controller of the minimizing (resp., maximizing) player, and $\bA, \bB, \bD$ are the system matrices in their corresponding compact forms. Note that $\bA$ is essentially a nilpotent matrix with degree of $N+1$ (i.e. a lower triangular matrix with zeros along the main diagonal) such that $\bA^{N+1} = \bm{0}$. Also, $\bm{\xi}$ is a vector that concatenates the independently sampled initial state $x_0 \in \cD$ and process noises $\xi_t\in \cD$, $t \in \{0, \cdots, N-1\}$. The objective of the minimizing (resp. maximizing) player is to minimize (resp. maximize) the objective function re-written in terms of the compact notation, that is to solve  
\begin{align}\label{eqn:game_value_blk}
	\inf_{\bK} ~\sup_{\bL}~ \cG(\bK, \bL) := \EE_{\bm{\xi}}~\Big[\bx^{\top}\big(\bQ + \bK^{\top}\bR^u\bK - \bL^{\top}\bR^w\bL\big)\bx\Big]
\end{align}
subject to the transition dynamics \eqref{eqn:sto_game_dynamics_blk}.

\vspace{6pt}
\noindent\textbf{An Illustrative Example: }Let us now consider a scalar example with $N, n, m, d = 1$ with the following concatenated notations:
\small
\begin{align*}
	&\bx = \begin{bmatrix}x_0 \\ x_{1}\end{bmatrix}, ~~~ \bm{\xi} = \begin{bmatrix}x_0 \\ \xi_0\end{bmatrix}, ~~~(\bu, \bw, \bR^u, \bR^w) = (u_0, w_0, R^u_0, R^w_0), ~~~\bA = \begin{bmatrix} 0 &  0 \\ A_0 & 0\end{bmatrix}, ~~~\bB = \begin{bmatrix} 0 \\ B_0 \end{bmatrix}, ~~~\bD = \begin{bmatrix} 0 \\ D_0 \end{bmatrix}, \\
	&\bQ = \begin{bmatrix} Q_0 &  0 \\ 0 & Q_1\end{bmatrix}, ~~~\bK = \begin{bmatrix} K_0 & 0 \end{bmatrix}, ~~~ \bL = \begin{bmatrix} L_0 & 0 \end{bmatrix}. 
\end{align*}
\normalsize
Then, the system dynamics of the game and the linear state-feedback controllers are represented as
\begin{align*}
\bx = \bA\bx + \bB\bu + \bD\bw + \bm{\xi} &\ \Longleftrightarrow\ \begin{bmatrix}x_0 \\ x_{1}\end{bmatrix} = \begin{bmatrix} 0 &  0 \\ A_0 & 0\end{bmatrix}\cdot\begin{bmatrix}x_0 \\ x_1\end{bmatrix} + \begin{bmatrix} 0 \\ B_0 \end{bmatrix}\cdot u_0 +  \begin{bmatrix} 0 \\ D_0 \end{bmatrix}\cdot w_0 + \begin{bmatrix}x_0 \\ \xi_0\end{bmatrix} = \begin{bmatrix} x_0 \\ A_0x_0 + B_0u_0 + D_0w_0 + \xi_0\end{bmatrix} \\
 \bu = -\bK\bx &\ \Longleftrightarrow\  u_0 = -\begin{bmatrix} K_0 & 0 \end{bmatrix}\cdot\begin{bmatrix}x_0 \\ x_{1}\end{bmatrix}, \qquad \bw = -\bL\bx \ \Longleftrightarrow\ w_0 = -\begin{bmatrix} L_0 & 0 \end{bmatrix}\cdot\begin{bmatrix}x_0 \\ x_{1}\end{bmatrix} \\
 \bx = (\bA-\bB\bK-\bD\bL)\bx + \bm{\xi} &\ \Longleftrightarrow\ \begin{bmatrix}x_0 \\ x_{1}\end{bmatrix} = \Bigg(\begin{bmatrix} 0 &  0 \\ A_0 & 0\end{bmatrix}-\begin{bmatrix} 0 \\ B_0 \end{bmatrix}\cdot\begin{bmatrix} K_0 & 0 \end{bmatrix} - \begin{bmatrix} 0 \\ D_0 \end{bmatrix}\cdot\begin{bmatrix} L_0 & 0 \end{bmatrix}\Bigg)\cdot\begin{bmatrix}x_0 \\ x_1\end{bmatrix} + \begin{bmatrix}x_0 \\ \xi_0\end{bmatrix} \\
 &\ \Longleftrightarrow\ \begin{bmatrix}x_0 \\ x_{1}\end{bmatrix} = \begin{bmatrix} 0 &  0 \\ A_0-B_0K_0-D_0L_0 & 0\end{bmatrix}\cdot\begin{bmatrix}x_0 \\ x_1\end{bmatrix} + \begin{bmatrix}x_0 \\ \xi_0\end{bmatrix} = \begin{bmatrix} x_0 \\ (A_0-B_0K_0-D_0L_0)x_0 + \xi_0\end{bmatrix}.
\end{align*}
Moreover, we define $A_{K_0, L_0} := A_0 - B_0K_0-D_0L_0$ and establish the equivalence between the recursive Lyapunov equation \eqref{eqn:pkl_recursive} and its compact form \eqref{eqn:bpkl} such that  
\begin{align*}
	&\bP_{\bK, \bL} = (\bA -\bB\bK -\bD\bL)^{\top}\bP_{\bK, \bL}(\bA-\bB\bK-\bD\bL) + \bQ + \bK^{\top}\bR^u\bK -\bL^{\top}\bR^w\bL \\
	\Longleftrightarrow & \begin{bmatrix} P_{K_0, L_0} &  0 \\ 0 & P_{K_1, L_1}\end{bmatrix} = \begin{bmatrix} 0 &  0 \\ A_{K_0, L_0} & 0\end{bmatrix}^{\top}\begin{bmatrix} P_{K_0, L_0} &  0 \\ 0 & P_{K_1, L_1}\end{bmatrix}\begin{bmatrix} 0 &  0 \\ A_{K_0, L_0} & 0\end{bmatrix} + \begin{bmatrix} Q_0+K_0^{\top}R^u_0K_0 - L_0R^w_0L_0 &  0 \\ 0 & Q_1\end{bmatrix} \\
	\Longleftrightarrow & \begin{bmatrix} P_{K_0, L_0} &  0 \\ 0 & P_{K_1, L_1}\end{bmatrix} = \begin{bmatrix} A_{K_0, L_0}^{\top}P_{K_1, L_1}A_{K_0, L_0} + Q_0+K_0^{\top}R^u_0K_0 - L_0R^w_0L_0 &  0 \\ 0 & Q_1\end{bmatrix}.
\end{align*}
Next, we note that the solution to \eqref{eqn:sigmaKL}, $\Sigma_{\bK, \bL}$, is also a recursive formula in the compact form, such that
\begin{align*}
	&\Sigma_{\bK, \bL} = (\bA -\bB\bK-\bD\bL)\Sigma_{\bK, \bL}(\bA -\bB\bK-\bD\bL)^{\top} + \Sigma_0\\
	\Longleftrightarrow &~\EE \begin{bmatrix} x_0x_0^{\top} &  0 \\ 0 & x_1x_1^{\top}\end{bmatrix} = \begin{bmatrix} 0 &  0 \\ A_{K_0, L_0} & 0\end{bmatrix}\EE \begin{bmatrix} x_0x_0^{\top} &  0 \\ 0 & x_1x_1^{\top}\end{bmatrix}\begin{bmatrix} 0 &  0 \\ A_{K_0, L_0} & 0\end{bmatrix}^{\top} + \EE \begin{bmatrix} x_0x_0^{\top} &  0 \\ 0 & \xi_0\xi_0^{\top}\end{bmatrix} \\
	\Longleftrightarrow &~\EE \begin{bmatrix} x_0x_0^{\top} &  0 \\ 0 & x_1x_1^{\top}\end{bmatrix} = \EE \begin{bmatrix} x_0x_0^{\top} &  0 \\ 0 & A_{K_0, L_0}x_0x_0^{\top}A_{K_0, L_0}^{\top} + \xi_0\xi_0^{\top}\end{bmatrix}.
\end{align*}
This completes the illustrative example.

\subsection{Proof of Lemma \ref{lemma:pg_leqg}}\label{proof:pg_leqg}
\begin{proof}
	Since $x_0 \sim \cN(\bm{0}, X_0)$, $w_0, \cdots, w_{N-1} \sim \cN(\bm{0}, W)$, $P_{K_t} \geq 0$ for all $t$, $X_0^{-1}-\beta P_{K_0} > 0$, and $W^{-1} -\beta P_{K_t} >0$, for all $t \in \{1, \cdots, N\}$, we have by Lemma C.1 of \cite{zhang2019policymixed} that the objective function of LEQG can be represented by \eqref{eqn:LEQG_obj_closed}. Also, the conditions that $X_0^{-1}-\beta P_{K_0} > 0$ and $W^{-1}-\beta P_{K_{t}} >0$, for all $t \in \{0, \cdots, N\}$, ensure the existence of the expression $\Sigma_{K_t}$, which can be verified through applying matrix inversion lemma as in \eqref{eqn:inverse_sigma_leqg}. Thus, the existence of the expression $\nabla_{K_t}\cJ\big(\{K_t\}\big)$ is also ensured, for all $t \in \{0, \cdots, N-1\}$. Let us now use $\nabla_{K_{t}}$ to denote the derivative w.r.t. $K_t$. Then, for $t \in \{1, \cdots, N-1\}$:
	\small
	\begin{align*}
		\nabla_{K_{t}}\cJ\big(\{K_t\}\big) &= -\frac{1}{\beta}\Tr\Big\{(\bI - \beta P_{K_0}X_0)^{-\top}\cdot\big[\nabla_{K_{t}}(\bI - \beta P_{K_\tau}X_0)\big]^{\top}\Big\} -\frac{1}{\beta}\sum_{\tau = 1}^{t}\Tr\Big\{(\bI - \beta P_{K_\tau}W)^{-\top}\cdot\big[\nabla_{K_{t}}(\bI - \beta P_{K_\tau}W)\big]^{\top}\Big\} \\
		&= \Tr\big[(\bI - \beta P_{K_0}X_0)^{-1}\cdot\nabla_{K_{t}}(P_{K_0}X_0)\big] +  \sum_{\tau = 1}^{t}\Tr\big[(\bI - \beta P_{K_\tau}W)^{-1}\cdot\nabla_{K_{t}}(P_{K_\tau}W)\big] \\
		&=\Tr\big[\nabla_{K_{t}}P_{K_0}\cdot X_0(\bI - \beta P_{K_0}X_0)^{-1}\big] + \sum_{\tau = 1}^{t}\Tr\big[\nabla_{K_{t}}P_{K_\tau}\cdot W(\bI - \beta P_{K_\tau}W)^{-1}\big] \\
		&=\Tr\big[\nabla_{K_{t}}P_{K_0} \cdot X_0^{\frac{1}{2}}(\bI - \beta X_0^\frac{1}{2}P_{K_0}X_0^{\frac{1}{2}})^{-1}X_0^{\frac{1}{2}}\big] + \sum_{\tau = 1}^{t}\Tr\big[\nabla_{K_{t}}P_{K_\tau} \cdot W^{\frac{1}{2}}(\bI - \beta W^\frac{1}{2}P_{K_\tau}W^{\frac{1}{2}})^{-1}W^{\frac{1}{2}}\big],
	\end{align*}
	\normalsize
	where the first equality is due to $\nabla_X \log\det X = X^{-\top}$, the chain rule, and the fact that $P_{K_{\tau}}$, for all $\tau \in \{t+1, \cdots, N\}$, is independent of $K_t$. The second and third equalities are due to the cyclic property of matrix trace, $\Tr(A^{\top}B^{\top}) = \Tr(AB)$, and that $X_0, W$ are independent of $K_t$. The last equality uses matrix inversion lemma, such that for $V \in \{X_0, W\}$:
	\begin{align}\label{eqn:inverse_sigma_leqg}
		V\cdot(\bI - \beta P_{K}V)^{-1} = V^{\frac{1}{2}}(\bI - \beta V^\frac{1}{2}P_{K}V^{\frac{1}{2}})^{-1}V^{\frac{1}{2}}
	\end{align}
	Then, by the definition of the RDE in \eqref{eqn:RDE_LEQG} and defining $M_0:=X_0^{\frac{1}{2}}(\bI - \beta X_0^\frac{1}{2}P_{K_0}X_0^{\frac{1}{2}})^{-1}X_0^{\frac{1}{2}}$, $M_t := W^{\frac{1}{2}}(\bI - \beta W^\frac{1}{2}P_{K_t}W^{\frac{1}{2}})^{-1}W^{\frac{1}{2}}$ for all $t \in \{1, \cdots, N-1\}$, we have:
	\begin{align*}
		\forall t \in \{0, \cdots, N-1\}, \quad \nabla_{K_{t}}\cJ\big(\{K_t\}\big) & = \sum_{\tau=0}^{t}\Tr\big[\nabla_{K_{t}}P_{K_\tau}\cdot M_{\tau}\big]\\
		&\hspace{-11em}= \sum_{\tau = 0}^{t-1}\Tr\big[\nabla_{K_{t}}P_{K_\tau}\cdot M_{\tau}\big] + 2\big[(R_t + B_t^{\top}\tilde{P}_{K_{t+1}}B_t)K_t - B_t^{\top}\tilde{P}_{K_{t+1}}A_tM_t\big] + \Tr\big[(A_t-B_tK_t)^{\top}\cdot \nabla_{K_{t}}\tilde{P}_{K_{t+1}}\cdot (A_t-B_tK_t)M_t\big]\\
		&\hspace{-11em}= \sum_{\tau = 0}^{t-1}\Tr\big[\nabla_{K_{t}}P_{K_\tau}\cdot M_{\tau}\big] + 2\big[(R_t + B_t^{\top}\tilde{P}_{K_{t+1}}B_t)K_t - B_t^{\top}\tilde{P}_{K_{t+1}}A_tM_t\big],
	\end{align*}
where the last equality is due to $\tilde{P}_{K_{t+1}}$ not depending on $K_t$ and thus $\nabla_{K_{t}}\tilde{P}_{K_{t+1}} = \bm{0}$. Then, for all $\tau \in \{1, \cdots, t-1\}$, we first recall the definition that $\tilde{P}_{K_\tau}= P_{K_{\tau}} + \beta P_{K_{\tau}}(W^{-1}- \beta P_{K_{\tau}})^{-1}P_{K_{\tau}}$, and then take the derivatives on both sides with respect to $K_{t}$ to get
\begin{align*}
	\nabla_{K_{t}} \tilde{P}_{K_\tau} &= \nabla_{K_{t}}\Big[P_{K_{\tau}} + \beta P_{K_{\tau}}(W^{-1}- \beta P_{K_{\tau}})^{-1}P_{K_{\tau}}\Big] = \nabla_{K_{t}}\Big[(\bI - \beta P_{K_{\tau}}W)^{-1}P_{K_{\tau}}\big]\\
	&= (\bI-\beta P_{K_{\tau}}W)^{-1}\cdot \nabla_{K_{t}} P_{K_\tau} \cdot \beta W (\bI-\beta P_{K_{\tau}}W)^{-1}P_{K_{\tau}} +  (\bI-\beta P_{K_{\tau}}W)^{-1}\cdot \nabla_{K_{t}} P_{K_\tau}\\
	&= (\bI-\beta P_{K_{\tau}}W)^{-1}\cdot \nabla_{K_{t}} P_{K_\tau} \cdot \big[W^{\frac{1}{2}}(\beta^{-1}\bI - W^{\frac{1}{2}}P_{K_{\tau}}W^{\frac{1}{2}})^{-1}W^{\frac{1}{2}}P_{K_{\tau}} + \bI\big] \\
	&=(\bI-\beta P_{K_{\tau}}W)^{-1}\cdot \nabla_{K_{t}} P_{K_\tau} \cdot (\bI-\beta WP_{K_{\tau}})^{-1} = (\bI-\beta P_{K_{\tau}}W)^{-1}\cdot \nabla_{K_{t}} P_{K_{\tau}}\cdot(\bI-\beta P_{K_{\tau}}W)^{-\top},
\end{align*}
where the second and the fifth equalities use matrix inversion lemma, the third equality uses $\nabla_X(P^{-1}) = -P^{-1}\cdot \nabla_XP\cdot P^{-1}$, and the fourth equality uses \eqref{eqn:inverse_sigma_leqg}. Now, for any $\tau \in \{0, \cdots, t-1\}$, we can iteratively unroll the RDE \eqref{eqn:RDE_LEQG} to obtain that
\begin{align*}
	\Tr\big[\nabla_{K_{t}}P_{K_\tau}\cdot M_{\tau}\big] &= 2\big[(R_t + B_t^{\top}\tilde{P}_{K_{t+1}}B_t)K_t - B_t^{\top}\tilde{P}_{K_{t+1}}A_t\big]\\
	&\hspace{1em}\cdot \prod^{t-1}_{i=\tau}\big[(\bI-\beta P_{K_{i+1}}W)^{-\top}(A_{i} -B_{i}K_{i})\big]\cdot M_{\tau} \cdot\prod^{t-1}_{i=\tau}\big[(A_{i} -B_{i}K_{i})^{\top}(\bI-\beta P_{K_{i+1}}W)^{-1}\big].
\end{align*}
Taking a summation proves the PG expression in \eqref{eqn:LEQG_PG}.
\end{proof}

\subsection{Proof of Lemma \ref{lemma:pg_mix}}\label{proof:pg_mix}
\begin{proof}
	We first prove the policy gradient of \eqref{eqn:DA_obj_1}. By the conditions that $\gamma^2\bI- D_t^{\top}P_{K_{t+1}}D_t >0$, for all $t \in \{0, \cdots, N-1\}$, and $\gamma^2\bI - P_{K_0} > 0$, the expression $\Sigma_{K_t}$ exists and thus the expression $\nabla_{K_t}\overline{\cJ}\big(\{K_t\}\big)$ exists, for all $t \in \{0, \cdots, N-1\}$. Then, the PG expression of \eqref{eqn:DA_obj_1} follows from the proof of Lemma \ref{lemma:pg_leqg} by replacing $\beta^{-1}$ and $W$ therein with $\gamma^2$ and $D_tD_t^{\top}$, respectively, and also replacing the $X_0$ in the expression of $\Sigma_{K_t}$ by $\bI$. Next, we prove the PG of \eqref{eqn:DA_obj_2} w.r.t. $K_t$. Let us now use $\nabla_{K_{t}}$ to denote the derivative w.r.t. $K_t$. Then, for $t \in \{0, \cdots, N-1\}$, we have
	\small
	\begin{align*}
		\nabla_{K_{t}}\overline{\cJ}\big(\{K_t\}\big) &= \Tr\big[\nabla_{K_{t}} P_{K_0}\big] + \sum_{\tau = 1}^{t}\Tr\big[\nabla_{K_{t}}(P_{K_\tau}D_{\tau-1}D^{\top}_{\tau-1})\big] = \Tr\big[\nabla_{K_{t}} P_{K_0}\big] + \sum_{\tau = 1}^{t}\Tr\big[D_{\tau-1}^{\top}\cdot\nabla_{K_{t}}P_{K_\tau}\cdot D_{\tau-1}\big],
	\end{align*}
	\normalsize
	where the first equality is due to that $P_{K_{\tau}}$, for all $\tau \in \{t+1, \cdots, N\}$, does not depend on $K_t$. Recalling the RDE in \eqref{eqn:RDE_DA}, we can unroll the recursive formula to obtain
	\small
	\begin{align*}
		 \Tr\big[\nabla_{K_{t}} P_{K_0}\big] &= 2\big[(R_t+B^{\top}_t\tilde{P}_{K_{t+1}}B_t)K_t - B_t^{\top}\tilde{P}_{K_{t+1}}A_t\big] \\
		 &\hspace{5em}\cdot \prod^{t-1}_{i=0}\big[(\bI- \gamma^{-2}P_{K_{i+1}}D_iD_i^{\top})^{-\top}(A_{i} -B_{i}K_{i})(A_{i} -B_{i}K_{i})^{\top}(\bI-\gamma^{-2} P_{K_{i+1}}D_iD_i^{\top})^{-1}\big].
	\end{align*}
	\normalsize
	Moreover, for any $\tau \in \{1, \cdots, t\}$, we have
	\small
	\begin{align*}
		 \Tr\big[D_{\tau-1}^{\top}\cdot\nabla_{K_{t}}P_{K_\tau}\cdot D_{\tau-1}\big] &= 2\big[(R_t+B^{\top}_t\tilde{P}_{K_{t+1}}B_t)K_t - B_t^{\top}\tilde{P}_{K_{t+1}}A_t\big]\\
		 &\hspace{-8em}\cdot \prod^{t-1}_{i=\tau}\big[(\bI-\gamma^{-2} P_{K_{i+1}}D_iD_i^{\top})^{-\top}(A_{i} -B_{i}K_{i})\big]\cdot D_{\tau-1}D_{\tau-1}^{\top}\cdot\prod^{t-1}_{i=\tau}\big[(A_{i} -B_{i}K_{i})^{\top}(\bI-\gamma^{-2} P_{K_{i+1}}D_iD_i^{\top})^{-1}\big].
	\end{align*}
	\normalsize
	Taking a summation proves the expression of the policy gradient in \eqref{eqn:DA_PG}.
\end{proof}

\subsection{Proof of Lemma \ref{lemma:equivalent}}\label{proof:equivalent}
\begin{proof}
	The equivalence relationships are proved in three parts. Firstly, by Chapter 6.4, page 306 of \cite{bacsar1998dynamic}, the discrete-time zero-sum LQ dynamic game with additive stochastic disturbance (zero-mean, independent sequence), and with closed-loop perfect-state information pattern, admits the same saddle-point gain matrices for the minimizer and the maximizer as the deterministic version of the game (that is, without the stochastic driving term). 
	
	Subsequently, by Theorem 4.1 of \cite{basar1991dynamic}, we can introduce a corresponding (deterministic) zero-sum LQ dynamic game with $R^w_t = \gamma^2\bI$ for any fixed $\gamma > \gamma^*$ in the LQ disturbance attenuation problem. Then, if we replace $R^w_t, R^u_t, Q_t$ in the deterministic version of the game presented in \S\ref{sec:game} by $\gamma^2\bI, R_t, C_t^{\top}C_t$ in \S\ref{sec:mix}, respectively, for all $t \in \{0, \cdots, N-1\}$, and also set other shared parameters in \S\ref{sec:game} and \S\ref{sec:mix} to be the same, the saddle-point gain matrix for the minimizing player in that game is equivalent to the gain matrix in the disturbance attenuation problem.
	
	Lastly, by \S VI.B of \cite{jacobson1973optimal} and if we replace $R^w_t, R^u_t, Q_t, D_tD_t^{\top}$ in the deterministic version of the game presented in \S\ref{sec:game} by $\beta^{-1}\bI, R_t, C_t^{\top}C_t, W$ in \S\ref{sec:leqg} for all $t \in \{0, \cdots, N-1\}$, and also set other shared parameters in \S\ref{sec:game} to be the same as the ones in \S\ref{sec:leqg}, then the deterministic zero-sum LQ dynamic game is equivalent to the LEQG problem with perfect state measurements, in the sense that the optimal gain matrix in the latter is the same as the saddle-point gain matrix for the minimizing player in the former. This completes the proof.
\end{proof}

\subsection{Proof of Lemma \ref{lemma:nonconvex_nonconcave}}\label{proof:nonconvex_nonconcave}
\begin{proof}
	We first prove that for a fixed $\bK \in \cK$, the optimization problem with respect to $\bL$ can be nonconcave, by establishing an example of the game. Here, $\cK$ is the feasible set as defined in \eqref{eqn:set_ck}, which ensures that for a fixed $\bK$, the solution to the maximization problem with respect to $\bL$ is well defined. Specifically, we consider a 3-dimensional linear time-invariant system with $A_t = A$, $B_t=B$, $D_t=D$, $Q_t =Q$, $R^u_t=R^u$, and $R^w_t = R^w$ for all $t$, where
\begin{align*}
	A = \begin{bmatrix} 1 & 0 & -5 \\ -1 & 1 & 0 \\ 0 & 0 & 1\end{bmatrix},
	\quad B = \begin{bmatrix} 1 & -10 & 0 \\ 0 & 3 & 1 \\ -1 & 0 & 2\end{bmatrix},
	\quad D = \begin{bmatrix} 0.5 & 0 & 0 \\ 0 & 0.2 & 0 \\ 0 & 0 & 0.2\end{bmatrix},
	\quad Q = \begin{bmatrix} 2 & -1 & 0 \\ -1 & 2 & -1 \\ 0 & -1 & 2\end{bmatrix},
	\quad R^u = \begin{bmatrix} 4 & -1 & 0 \\ -1 & 4 & -2 \\ 0 & -2 & 3\end{bmatrix},
\end{align*}
and $R^w = 5\cdot \bI$. Also, we set $\Sigma_0 = \bI$, $N=5$,  and choose the time-invariant control gain matrices $K_t = K$, $L^1_t = L^1$, $L^2_t = L^2$, $L^3_t = L^3$ for all $t$, where
\begin{align*}
	K = \begin{bmatrix} -0.12 & -0.01 & 0.62 \\ -0.21 & 0.14 & 0.15\\ -0.06 & 0.05 & 0.42\end{bmatrix},
	\quad L^1 = \begin{bmatrix} -0.86 & 0.97 & 0.14\\ -0.82 & 0.36 & 0.51\\ 0.98 & 0.08  & -0.20\end{bmatrix},
	\quad L^2 = \begin{bmatrix} -0.70 &  -0.37 &  0.09 \\ -0.54 & -0.28 & 0.23 \\ 0.74  & 0.62 &  -0.51\end{bmatrix}, 
	\quad L^3 = \frac{L^1 + L^2}{2}.
\end{align*}
The concatenated matrices $\bA, \bB, \bD, \bQ, \bR^u, \bR^w, \bK, \bL^1, \bL^2, \bL^3$ are generated following the definitions in  \S\ref{sec:1st}. Then, we first note that $\bK \in \cK$ holds, as $\bP_{\bK, \bL(\bK)} \geq 0$ exists and one can calculate that $\lambda_{\min}(\bR^w - \bD^{\top}\bP_{\bK, \bL(\bK)}\bD) = 0.5041>0$. Subsequently, we can verify that $\big(\cG(\bK, \bL^1) + \cG(\bK, \bL^2)\big)/2 - \cG(\bK, \bL^3) = 6.7437 >0$. Thus, we can conclude that there exists a $\bK \in \cK$ such that the objective function is nonconcave with respect to $\bL$. Next, we prove the other argument by choosing the following time-invariant control gain matrices $L_t = L$, $K^1_t = K^1$, $K^2_t = K^2$, $K^3_t = K^3$ for all $t$, where
\small
\begin{align*}
	L = \bm{0},
	\quad K^1 = \begin{bmatrix} 1.44 & 0.31 & -1.18 \\ 0.03 & -0.13 & -0.39\\ 0.36 & -1.71 & 0.24 \end{bmatrix},
	\quad K^2 = \begin{bmatrix} -0.08 & -0.16 & -1.96\\ -0.13 & -1.12 & 1.28\\ 1.67 & -0.91 & 1.71\end{bmatrix}, 
	\quad K^3 = \frac{K^1 + K^2}{2}.
\end{align*}
\normalsize
Following similar steps, we can verify that $\big(\cG(\bK^1, \bL) + \cG(\bK^2, \bL)\big)/2 - \cG(\bK^3, \bL) = -1.2277\times 10^{5} <0$. Therefore, there exists $\bL$ such that the objective function is nonconvex with respect to $\bK$.
\end{proof}

\subsection{Proof of Lemma \ref{lemma:station}}\label{proof:station}
\begin{proof}
For $t\in\{0,\cdots,N-1\}$, we can define the cost-to-go function starting from time $t$ to $N$ as $\cG_t$. Recalling the definition of $P_{K_t, L_t}$ in \eqref{eqn:pkl_recursive}, we can write $\cG_t$ as
\begin{align*} 
 \cG_t &= \EE_{x_t, \xi_t, \dots, \xi_{N-1}}\big\{x_t^{\top}P_{K_t, L_t}x_t + \sum^{N-1}_{\tau = t}\xi^{\top}_{\tau}P_{K_{\tau+1}, L_{\tau+1}}\xi_{\tau} \big\} = \EE_{x_t} \big\{x_t^{\top}P_{K_t, L_t}x_t\big\} + \EE_{\xi_t, \dots, \xi_{N-1}} \big\{\sum^{N-1}_{\tau = t}\xi^{\top}_{\tau}P_{K_{\tau+1}, L_{\tau+1}}\xi_{\tau}\big\}\\
 &= \EE_{x_t} \big\{ x_t^{\top}(Q_t + K_t^{\top}R^u_tK_t - L_t^{\top}R^w_tL_t)x_t + x^{\top}_t(A_t -B_tK_t-D_tL_t)^{\top}P_{K_{t+1}, L_{t+1}}(A_t -B_tK_t-D_tL_t)x_t \big\} \\
 &\hspace{1em}+\EE_{\xi_t, \dots, \xi_{N-1}} \big\{\sum^{N-1}_{\tau = t}\xi^{\top}_{\tau}P_{K_{\tau+1}, L_{\tau+1}}\xi_{\tau}\big\}.
\end{align*}
We denote $\nabla_{L_t}\cG_t := \frac{\partial \cG_t}{\partial L_t}$ for notational simplicity, and note that the last term of the above equation does not depend on $L_t$. Then, we can show that $\nabla_{L_t}\cG_{t} = 2 \big[(-R^w_t + D_t^{\top}P_{K_{t+1}, L_{t+1}}D_t)L_t - D_t^{\top}P_{K_{t+1}, L_{t+1}}(A_t -B_tK_t)\big]~\EE_{x_t} \{x_tx_t^{\top} \}$. By writing the gradient of each time step compactly and taking expectations over $x_0$ and the additive noises $\{\xi_0, \dots, \xi_{N-1}\}$ along the trajectory, we can derive the exact PG of $\cG(\bK, \bL)$ with respect to $\bL$ as $\nabla_{\bL}\cG(\bK, \bL) = 2\big[(-\bR^w + \bD^{\top}\bP_{\bK, \bL}\bD)\bL-\bD^{\top}\bP_{\bK, \bL}(\bA-\bB\bK)\big]\Sigma_{\bK, \bL}$, where $\Sigma_{\bK, \bL}$ is the solution to \eqref{eqn:sigmaKL}. This proves \eqref{eqn:nabla_L}. Using similar techniques, we can derive, for a fixed $\bL$,  the PG of $\cG(\bK, \bL)$ with respect to $\bK$ to be $\nabla_{\bK}\cG(\bK, \bL) = 2\big[(\bR^u + \bB^{\top}\bP_{\bK, \bL}\bB)\bK-\bB^{\top}\bP_{\bK, \bL}(\bA-\bD\bL)\big]\Sigma_{\bK, \bL}$. Since $\Sigma_0$ being full-rank ensures that $\Sigma_{\bK, \bL}$ is also full-rank and we have $\nabla_{\bK}\cG(\bK, \bL) = \nabla_{\bL}\cG(\bK, \bL) = 0$ at the stationary points, the PG forms derived above imply
	\begin{align}
		\bK = (\bR^u + \bB^{\top}\bP_{\bK, \bL}\bB)^{-1}\bB^{\top}\bP_{\bK, \bL}(\bA-\bD\bL), \qquad \bL =  (-\bR^w + \bD^{\top}\bP_{\bK, \bL}\bD)^{-1}\bD^{\top}\bP_{\bK, \bL}(\bA-\bB\bK),\label{eqn:station_KL}
	\end{align}
	where by assumption, we have $\bR^w - \bD^{\top}\bP_{\bK, \bL}\bD > 0$, and $\bR^u + \bB^{\top}\bP_{\bK, \bL}\bB > 0$ is further implied by $\bP_{\bK, \bL}\geq 0$  (and thus they are both invertible). Therefore, we can solve \eqref{eqn:station_KL} to obtain 
	\small
	\begin{align*}
		\bK &= (\bR^u)^{-1}\bB^{\top}\bP_{\bK, \bL}\big[\bI + \big(\bB(\bR^u)^{-1}\bB^{\top} - \bD(\bR^w)^{-1}\bD^{\top}\big)\bP_{\bK, \bL}\big]^{-1}\bA, \quad \bL = -(\bR^w)^{-1}\bD^{\top}\bP_{\bK, \bL}\big[\bI + \big(\bB(\bR^u)^{-1}\bB^{\top} - \bD(\bR^w)^{-1}\bD^{\top}\big)\bP_{\bK, \bL}\big]^{-1}\bA,
	\end{align*}
	\normalsize
	which are \eqref{eqn:optimalK_r} and \eqref{eqn:optimalL_r} in their corresponding compact forms. Moreover, at the Nash equilibrium, the solution to \eqref{eqn:RDE_recursive} in its compact form, denoted as $\bP^*$, is also the solution to the Lyapunov equation
	\begin{align*}
		\bP^* = \bQ + (\bK^*)^{\top}\bR^u\bK^* - (\bL^*)^{\top}\bR^w\bL^* + (\bA-\bB\bK^*-\bD\bL^*)^{\top}\bP^*(\bA-\bB\bK^*-\bD\bL^*).
	\end{align*}
	Lastly, under Assumption \ref{assumption_game_recursive}, the solution to the RDE \eqref{eqn:RDE_recursive} is uniquely computed.  As a result, the stationary point of $\cG(\bK, \bL)$ is also the unique Nash equilibrium of the game. This completes the proof.
\end{proof}

\subsection{Proof of Lemma \ref{lemma:assumption_L}}\label{proof:assumption_L}

\begin{proof}
Let the sequence of outer-loop control gains $\{K_t\}$, $t\in \{0, \cdots, N-1\}$, be fixed. To facilitate the analysis, we first construct a sequence of auxiliary zero-sum LQ dynamic games, denoted as $\{\Gamma_{d_t}\}$, $t\in \{0, \cdots, N-1\}$, where each $\Gamma_{d_t}$ has closed-loop state-feedback information structure. Specifically, for any $t$, $\Gamma_{d_t}$ starts from some arbitrary $\overline{x}_0$ and follows a deterministic system dynamics $\overline{x}_{\tau+1} = \overline{A}_\tau\overline{x}_\tau + \overline{B}_\tau\overline{u}_\tau + \overline{D}_\tau\overline{w}_\tau$ for a finite horizon of $N-t$, where
\begin{align*}
	\overline{A}_\tau := A_{t+\tau}-B_{t+\tau}K_{t+\tau}, \quad \overline{B}_\tau := \bm{0}, \quad \overline{D}_{\tau} := D_{t+\tau}, \quad \overline{u}_\tau = -\overline{K}_\tau\overline{x}_\tau = -K_{t+\tau}\overline{x}_\tau, \quad \overline{w}_\tau = -\overline{L}_\tau\overline{x}_\tau.
\end{align*} 
The weighting matrices of $\Gamma_{d_t}$ are chosen to be $\overline{Q}_\tau := Q_{t+\tau}$, $\overline{R}^u_\tau := R^u_{t+\tau}$, and $\overline{R}^w_\tau := R^w_{t+\tau}$ for all $\tau \in\{0, \cdots, N-t-1\}$ and $\overline{Q}_{N-t} = Q_N$. Moreover, for any sequence of control gains $\{\overline{L}_\tau\}$, $\tau \in \{0, \cdots, N-t-1\}$ in $\Gamma_{d_t}$, we define $\{\overline{P}_{\overline{L}_\tau}\}$, $\tau \in \{0, \cdots, N-t\}$, as the sequence of solutions generated by the recursive Lyapunov equation
\begin{align}\label{equ:immed_lyap}
	\overline{P}_{\overline{L}_\tau} = \overline{Q}_\tau + \overline{K}_{\tau}^{\top}\overline{R}^u_\tau\overline{K}_{\tau}- \overline{L}_\tau^{\top}\overline{R}^w_\tau\overline{L}_\tau + (\overline{A}_\tau -\overline{D}_\tau\overline{L}_\tau)^{\top}\overline{P}_{\overline{L}_{\tau+1}}(\overline{A}_\tau -\overline{D}_\tau\overline{L}_\tau), \quad \overline{P}_{\overline{L}_{N-t}} = \overline{Q}_{N-t}.
\end{align} 
Then, for any $t \in \{0, \cdots, N-1\}$, Theorem 3.2 of \cite{bacsar2008h} suggests that $\Gamma_{d_t}$ admits a unique feedback saddle-point if the sequence of p.s.d. solutions $\{\overline{P}_\tau\}$, $\tau \in \{0, \cdots, N-t\}$,  computed by the recursive Riccati equation 
\begin{align}\label{eqn:RE_axu_game}
	\overline{P}_\tau = \overline{Q}_\tau + \overline{K}_{\tau}^{\top}\overline{R}^u_\tau\overline{K}_{\tau} + \overline{A}_\tau^{\top}\overline{P}_{\tau+1}\overline{A}_\tau + \overline{A}_\tau^{\top}\overline{P}_{\tau+1}\overline{D}_\tau(\overline{R}^w_\tau - \overline{D}^{\top}_\tau\overline{P}_{\tau+1}\overline{D}_\tau)^{-1}\overline{D}^{\top}_\tau\overline{P}_{\tau+1}\overline{A}_\tau, \quad \overline{P}_{N-t} = \overline{Q}_{N-t}.
\end{align}
 exists and satisfies 
 \begin{align}\label{eqn:rcpc}
 	\overline{R}^w_\tau - \overline{D}^{\top}_\tau\overline{P}_{\tau+1}\overline{D}_\tau > 0, \quad \tau \in \{0, \cdots, N-t-1\}.
 \end{align}
 Whenever exist, the saddle-point solutions at time $\tau$, for any $\tau \in \{0, \cdots, N-t-1\}$, are given by 
\begin{align}\label{eqn:saddle_point_policy}
	\overline{u}^*_\tau = \bm{0}, \quad \overline{w}^*_\tau = -\overline{L}^*_{\tau}\overline{x}_{\tau} = (\overline{R}^w_{\tau} - \overline{D}^{\top}_{\tau}\overline{P}_{\tau+1}\overline{D}_{\tau})^{-1}\overline{D}^{\top}_{\tau}\overline{P}_{\tau+1}\overline{A}_{\tau}\overline{x}_{\tau}.
\end{align}
Moreover, the saddle-point value of $\Gamma_{d_t}$ is $\overline{x}_0^{\top}\overline{P}_0\overline{x}_0$ and the upper value of $\Gamma_{d_t}$ becomes unbounded when the matrix $\overline{R}^w_\tau - \overline{D}^{\top}_\tau\overline{P}_{\tau+1}\overline{D}_\tau$ has at least one negative eigenvalue for at least one $\tau \in \{0, \cdots, N-t-1\}$.  Further, since the choice of $\overline{x}_0$ was arbitrary, it holds that $\overline{P}_0 \geq \overline{P}_{\overline{L}_0}$ for any sequence of $\{\overline{L}_{\tau}\}$, $\tau \in \{0, \cdots, N-t-1\}$. Because the above properties hold for all $\Gamma_{d_t}$, $t \in \{0, \cdots, N-1\}$, we can generate a compact matrix $\bP_{\bK, \bL(\bK)} \in \RR^{m(N+1)\times m(N+1)}$ by putting $\overline{P}_0$ for each $\Gamma_{d_t}$, $t \in \{0, \cdots, N-1\}$, sequentially on the diagonal and putting $Q_N$ at the end of the diagonal. The resulting $\bP_{\bK, \bL(\bK)}$ then solves  \eqref{eqn:DARE_black_L}, and satisfies $\bP_{\bK, \bL(\bK)} \geq \bP_{\bK, \bL}$, for all $\bL\in \cS(n, m, N)$, where $\bP_{\bK, \bL}$ is computed by \eqref{eqn:bpkl}. 

Subsequently, we construct an auxiliary zero-sum LQ dynamic game $\Gamma_s$ with closed-loop state-feedback information pattern, using compact notations. In particular, $\Gamma_s$ has a horizon of $N$ and a (stochastic) system dynamics being $\overline{\bx} = \overline{\bA}\overline{\bx} + \overline{\bB}\overline{\bu} + \overline{\bD}\overline{\bw} +\overline{\bm{\xi}}$, where the initial state is some arbitrary $\overline{x}_0$ and 
\begin{align*}
	\overline{\bA} := \bA-\bB\bK, \quad \overline{\bB} := \bm{0}, \quad \overline{\bD} := \bD, \quad \overline{\bm{\xi}} := \big[x^{\top}_0, \xi^{\top}_0, \cdots, \xi^{\top}_{N-1}\big]^{\top}, \quad \overline{\bu} = -\overline{\bK}\overline{\bx}, \quad \overline{\bw} = -\overline{\bL}\overline{\bx}.
\end{align*}
We assume that $x_0, \xi_0, \cdots, \xi_{N-1} \sim \cD$ are zero-mean and independent random variables. The weighting matrices of $\Gamma_s$ are chosen to be $\overline{\bQ} := \bQ$, $\overline{\bR}^u := \bR^u$, and $\overline{\bR}^w := \bR^w$. By Corollary 6.4 of \cite{bacsar1998dynamic}, $\Gamma_s$ is equivalent to $\Gamma_{d_0}$ above in that the Riccati equation \eqref{eqn:RE_axu_game} and the saddle-point gain matrices \eqref{eqn:saddle_point_policy} are the same for $\Gamma_s$ and $\Gamma_{d_0}$. 

Note that for a fixed outer-loop control policy $\bu = -\bK\bx$, the inner-loop subproblem is the same as $\Gamma_s$, in that $\Gamma_s$ essentially absorbs the fixed outer-loop control input into system dynamics. Therefore, the properties of $\Gamma_s$, which are equivalent to those of $\Gamma_{d_0}$, apply to our inner-loop subproblem. Specifically, \eqref{eqn:DARE_black_L} is a compact representation of \eqref{eqn:RE_axu_game} and \eqref{eqn:set_ck} is the set of $\bK$ such that \eqref{eqn:RE_axu_game} admits a sequence of p.s.d. solutions satisfying \eqref{eqn:rcpc}. Therefore, $\bK\in \cK$ is a sufficient and almost necessary condition for the solution to the inner-loop to be well defined, in that if $\bK \notin \overline{\cK}$, the inner-loop objective $\cG(\bK, \cdot)$ can be driven to arbitrarily large values. Whenever the solution to the inner-loop exists, it is unique and takes the form of \eqref{eqn:l_k}. This completes the proof.
\end{proof}
	 
\subsection{Proof of Lemma \ref{lemma:landscape_L}}\label{proof:landscape_L}
\begin{proof}
	Let $\bK \in \cK$ be fixed. First, the nonconcavity of the inner-loop objective $\cG(\bK, \cdot)$ follows directly from Lemma \ref{lemma:nonconvex_nonconcave}. Then, we have by \eqref{eqn:set_ck} and the definitions of the compact matrices that $P_{K_t, L(K_t)} \geq 0$ exists and satisfies $R^w_t - D^{\top}_tP_{K_{t+1}, L(K_{t+1})}D_t > 0$, for all $t \in \{0, \cdots, N-1\}$. Also, for any sequence of $\{L_t\}$, the sequence of solutions to the recursive Lyapunov equation \eqref{eqn:pkl_recursive}, $\{P_{K_t, L_t}\}$, always exists and is unique: by Lemma \ref{lemma:assumption_L}, it holds that $P_{K_t, L(K_t)} \geq P_{K_t, L_t}$ and thus $R^w_t - D^{\top}_tP_{K_{t+1}, L_{t+1}}D_t > 0$ for all $t \in \{0, \cdots, N-1\}$. Now, for any $t \in \{0, \cdots, N-1\}$, we have by \eqref{eqn:pkl_recursive} that
	\begin{align}
		P_{K_t, L_t} &= (A_t -B_tK_t -D_tL_t)^{\top}P_{K_{t+1}, L_{t+1}}(A_t-B_tK_t-D_tL_t) + Q_t + K_t^{\top}R^u_tK_t -L_t^{\top}R^w_tL_t \nonumber\\
		&= (A_t -B_tK_t)^{\top}P_{K_{t+1}, L_{t+1}}(A_t-B_tK_t) + Q_t + K_t^{\top}R^u_tK_t - (A_t -B_tK_t)^{\top}P_{K_{t+1}, L_{t+1}}D_tL_t \nonumber\\
		&\hspace{1em} - L_t^{\top}D^{\top}_tP_{K_{t+1}, L_{t+1}}(A_t-B_tK_t) - L^{\top}_t(R^w_t - D^{\top}_tP_{K_{t+1}, L_{t+1}}D_t)L_t. \label{eqn:pkl_coercive}
	\end{align}
	Note that as $\|L_t\| \rightarrow +\infty$, the quadratic term in \eqref{eqn:pkl_coercive}, with leading matrix $-(R^w_t - D^{\top}_tP_{K_{t+1}, L_{t+1}}D_t)<0$, dominates other terms. Thus, as $\|L_t\| \rightarrow +\infty$ for some $t \in \{0, \cdots, N-1\}$, $\lambda_{\min}(P_{K_t, L_t}) \rightarrow -\infty$, which further makes $\lambda_{\min}(\bP_{\bK, \bL}) \rightarrow -\infty$. In compact forms, this means that $\lambda_{\min}(\bP_{\bK, \bL}) \rightarrow -\infty$ as $\|\bL\| \rightarrow +\infty$. By \eqref{eqn:ckl}, the inner-loop objective for a fixed $\bK \in \cK$ and any $\bL\in \cS(n, m, N)$ has the form of $\cG(\bK, \bL) = \Tr(\bP_{\bK, \bL}\Sigma_0)$, where $\Sigma_0 >0$ is full-rank. This proves the coercivity of the inner-loop objective $\cG(\bK, \cdot)$.
	
	Moreover, for a fixed $\bK\in \cK$, as $\bP_{\bK,\bL}$ is a polynomial of $\bL$, $\cG(\bK,\cdot)$ is continuous in $\bL$. Combined with the coercivity property and the upper-boundedness of $\cG(\bK,\cdot)$ for $\bK\in\cK$, we can conclude the compactness of the superlevel set \eqref{eqn:levelset_L}. The local Lipschitz and smoothness properties of $\cG(\bK, \cdot)$ follow from Lemmas 4 and 5 of \cite{malik2020derivative}, with their $A, B, Q, R, K$ matrices replaced by our $\bA-\bB\bK, \bD, \bQ+\bK^\top \bR^u \bK, -\bR^w, \bL$, respectively. Note that the property $\|\Sigma_{K}\| \leq \frac{\cC(\bK)}{\sigma_{\min}(Q)}$ in the proof of Lemma 16 in \cite{malik2020derivative} does not hold in our setting as we only assume $\bQ \geq 0$ and $\bQ+\bK^\top \bR^u \bK$ may not be full-rank. Instead, we utilize the fact that $\cL_{\bK}(a)$ is compact, and thus there exists a uniform constant $c_{\Sigma, a}:=\max_{\bL\in\cL_{\bK}(a)}\|\Sigma_{\bK, \bL}\|$ such that $\|\Sigma_{\bK, \bL}\| \leq c_{\Sigma, a}$ for all $\bL \in \cL_{\bK}(a)$. Subsequently, we note that for all $\bL \in \cL_{\bK}(a)$, the operator norms of the PG \eqref{eqn:pg_L} and the Hessian, which are continuous functions of $\bL$, can be uniformly bounded by some constants $l_{\bK, a} > 0$ and $\psi_{\bK, a} >0$, respectively. That is, 
	\begin{align*}
		l_{\bK, a} := \max_{\bL \in \cL_{\bK}(a)} ~\big\|\nabla_{\bL}\cG(\bK, \bL)\big\|, \quad \psi_{\bK, a} := \max_{\bL \in \cL_{\bK}(a)} ~\big\|\nabla^2_{\bL}\cG(\bK, \bL)\big\| = \max_{\bL \in \cL_{\bK}(a)} \sup_{\|\bX\|_F =1}\big\|\nabla^2_{\bL}\cG(\bK, \bL)[\bX, \bX]\big\|,
	\end{align*}
	where $\nabla^2_{\bL}\cG(\bK, \bL)[\bX, \bX]$ denotes the action of the Hessian on a matrix $\bX \in \cS(n, m, N)$. The expression of $\nabla^2_{\bL}\cG(\bK, \bL)[\bX, \bX]$ follows from Proposition 3.10 of \cite{bu2019LQR}, with their $R, B, X, Y, A_K$ being replaced by our $-\bR^w, \bD, \bP_{\bK, \bL}, \Sigma_{\bK, \bL}, \bA-\bB\bK-\bD\bL$, respectively. This proves the global Lipschitzness and smoothness of $\cG(\bK, \cdot)$ over the compact superlevel set $\cL_{\bK}(a)$.
	
	To prove the PL condition, we first characterize the difference between $\bP_{\bK, \bL}$ and $\bP_{\bK, \bL(\bK)}$ as 
\begin{align*}
	\bP_{\bK, \bL(\bK)} - \bP_{\bK, \bL} &= \bA_{\bK, \bL(\bK)}^{\top}(\bP_{\bK, \bL(\bK)} - \bP_{\bK, \bL})\bA_{\bK, \bL(\bK)} + (\bL(\bK) - \bL)^{\top}\bE_{\bK, \bL} + \bE^{\top}_{\bK, \bL}(\bL(\bK) - \bL) - (\bL(\bK) - \bL)^{\top}\bH_{\bK, \bL}(\bL(\bK) - \bL),
\end{align*}
where $\bA_{\bK, \bL(\bK)} := \bA - \bB\bK -\bD\bL(\bK)$, $\bE_{\bK, \bL}$ and $\bH_{\bK, \bL}$ are as defined in \eqref{eqn:nabla_L} and \eqref{eqn:gh_def}, respectively. By Lemma \ref{lemma:assumption_L} and $\bK \in \cK$, we can conclude that for all $\bL$, the inequality $\bH_{\bK, \bL} > 0$ holds because $\bP_{\bK, \bL(\bK)}\geq \bP_{\bK, \bL}$ for all $\bL\in\cS(n, m, N)$. Then, by Proposition 2.1(b) of \cite{bu2019LQR}, we have that for every $\varphi > 0$, 
\begin{align*}
	(\bL(\bK) - \bL)^{\top}\bE_{\bK, \bL} + \bE^{\top}_{\bK, \bL}(\bL(\bK) - \bL)  \leq  \frac{1}{\varphi}(\bL(\bK) - \bL)^{\top}(\bL(\bK) - \bL) + \varphi\bE^{\top}_{\bK, \bL}\bE_{\bK, \bL}.
\end{align*}
Choosing $\varphi = 1/\lambda_{\min}(\bH_{\bK, \bL})$ yields
\small
\begin{align}
	(\bL(\bK) - \bL)^{\top}\bE_{\bK, \bL} &+ \bE^{\top}_{\bK, \bL}(\bL(\bK) - \bL) - (\bL(\bK) - \bL)^{\top}\bH_{\bK, \bL}(\bL(\bK) - \bL)  \leq \frac{\bE^{\top}_{\bK, \bL}\bE_{\bK, \bL}}{\lambda_{\min}(\bH_{\bK, \bL})} \leq  \frac{\bE^{\top}_{\bK, \bL}\bE_{\bK, \bL}}{\lambda_{\min}(\bH_{\bK, \bL(\bK)})},  \label{eqn:lya_bound_L}
\end{align}
\normalsize
where the last inequality is due to $\bH_{\bK, \bL(\bK)} \leq \bH_{\bK, \bL}, \forall\bL\in\cS(n, m, N)$. Let $\bY$ be the solution to $\bY = \bA_{\bK, \bL(\bK)}^{\top}\bY\bA_{\bK, \bL(\bK)} + \frac{\bE^{\top}_{\bK, \bL}\bE_{\bK, \bL}}{\lambda_{\min}(\bH_{\bK, \bL(\bK)})}$. From \eqref{eqn:lya_bound_L}, we further know that $\bP_{\bK, \bL(\bK)} - \bP_{\bK, \bL} \leq \bY$ and $\bY = \sum^{N}_{t = 0}  \big[\bA_{\bK, \bL(\bK)}^{\top}\big]^{t} \frac{\bE^{\top}_{\bK, \bL}\bE_{\bK, \bL}}{\lambda_{\min}(\bH_{\bK, \bL(\bK)})} \big[\bA_{\bK, \bL(\bK)}\big]^{t}$. Therefore, letting $\Sigma_{\bK, \bL(\bK)} = \sum^{N}_{t = 0}  [\bA_{\bK, \bL(\bK)}]^{t} \Sigma_{0}[\bA^{\top}_{\bK, \bL(\bK)}]^{t}$, we have
\begin{align}
	\cG &\left(\bK, \bL(\bK)\right) - \cG(\bK, \bL) = \Tr\left(\Sigma_{0}(\bP_{\bK, \bL(\bK)} - \bP_{\bK, \bL})\right) \leq \Tr\left(\Sigma_0\bY\right) \nonumber\\
	&= \Tr\bigg(\Sigma_{0}\Big(\sum^{N}_{t = 0}  \big[\bA_{\bK, \bL(\bK)}^{\top}\big]^{t} \frac{\bE^{\top}_{\bK, \bL}\bE_{\bK, \bL}}{\lambda_{\min}(\bH_{\bK, \bL(\bK)})} \big[\bA_{\bK, \bL(\bK)}\big]^{t} \Big) \bigg) \leq \frac{\|\Sigma_{\bK, \bL(\bK)}\|}{\lambda_{\min}(\bH_{\bK, \bL(\bK)})}\Tr(\bE^{\top}_{\bK, \bL}\bE_{\bK, \bL}) \label{eqn:conv_proof_L} \\
	&\leq \frac{\|\Sigma_{\bK, \bL(\bK)}\|}{\lambda_{\min}(\bH_{\bK, \bL(\bK)}) \lambda^2_{\min}(\Sigma_{\bK, \bL})}\Tr(\Sigma^{\top}_{\bK, \bL} \bE^{\top}_{\bK, \bL}\bE_{\bK, \bL}\Sigma_{\bK, \bL}) \leq \frac{\|\Sigma_{\bK, \bL(\bK)}\|}{4\lambda_{\min}(\bH_{\bK, \bL(\bK)})\phi^2}\Tr(\nabla_{\bL}\cG(\bK, \bL)^{\top}\nabla_{\bL}\cG(\bK, \bL)). \nonumber
\end{align}
Inequality \eqref{eqn:conv_proof_L} follows from the cyclic property of matrix trace and the last inequality is due to \eqref{eqn:nabla_L} and $\Sigma_{0}\leq\Sigma_{\bK, \bL}$. Hence, 
$$\cG(\bK, \bL(\bK)) - \cG(\bK, \bL) \hspace{-0.1em}\leq \hspace{-0.1em}\frac{\|\Sigma_{\bK, \bL(\bK)}\|}{4\lambda_{\min}(\bH_{\bK, \bL(\bK)})\phi^2}\Tr(\nabla_{\bL}\cG(\bK, \bL)^{\top}\nabla_{\bL}\cG(\bK, \bL)).
$$ 
Substituting in $\mu_{\bK} := 4\lambda_{\min}(\bH_{\bK, \bL(\bK)})\phi^2/\|\Sigma_{\bK, \bL(\bK)}\|$ completes the proof.
\end{proof}

\subsection{Proof of Lemma \ref{lemma:landscape_K}}\label{proof:landscape_K} 
\begin{proof}
The nonconvexity proof is done by constructing a time-invariant example, which chooses $\Sigma_0 = \bI$ and $N =5$. We further choose the system matrices to be $A_t = A$, $B_t=B$, $D_t=D$, $Q_t =Q$, $R^u_t=R^u$, and $R^w_t = R^w$ for all $t$, where
\begin{align*}
	A = \begin{bmatrix} 1 & 0 & -5 \\ -1 & 1 & 0 \\ 0 & 0 & 1\end{bmatrix},
	\quad B = \begin{bmatrix} 1 & -10 & 0 \\ 0 & 3 & 1 \\ -1 & 0 & 2\end{bmatrix},
	\quad D = \begin{bmatrix} 0.5 & 0 & 0 \\ 0 & 0.5 & 0 \\ 0 & 0 & 0.5\end{bmatrix},
	\quad Q = \begin{bmatrix} 3 & -1 & 0 \\ -1 & 2 & -1 \\ 0 & -1 & 1\end{bmatrix},
	\quad R^u = \begin{bmatrix} 2 & 1 & 1 \\ 1 & 3 & -1 \\ 1 & -1 & 3\end{bmatrix},
\end{align*}
and $R^w = 7.22543\cdot \bI$. We also choose the time-invariant gain matrices $K^1_t = K^1$, $K^2_t = K^2$, and $K^3_t =K^3$, where 
\begin{align*}
	K^1 = \begin{bmatrix} -0.8750 & 1.2500 & -2.5000 \\ -0.1875 & 0.1250 & 0.2500\\ -0.4375 & 0.6250 &  -0.7500\end{bmatrix}, 
	\quad K^2 = \begin{bmatrix} -0.8786 & 1.2407 & -2.4715\\ -0.1878 & 0.1237 & 0.2548\\ -0.4439 &  0.5820 & -0.7212\end{bmatrix}, 
	\quad K^3 = \frac{K^1+K^2}{2}.
\end{align*} 
The concatenated matrices $\bA, \bB, \bD, \bQ, \bR^u, \bR^w, \bK^{1}, \bK^2, \bK^3$ are generated following the definitions in  \S\ref{sec:1st}. Subsequently, we can prove that $\bK^{1}, \bK^2, \bK^3 \in \cK$ by verifying that the recursive Riccati equation \eqref{eqn:DARE_black_L} yields p.s.d. solutions for $\bK^{1}, \bK^2, \bK^3$, respectively, and $\lambda_{\min}(\bH_{\bK^1, \bL(\bK^1)}) = 4.3496\times 10^{-6}, ~ \lambda_{\min}(\bH_{\bK^2, \bL(\bK^2)}) = 0.1844, ~ \lambda_{\min}(\bH_{\bK^3, \bL(\bK^3)}) = 0.0926$, where recall the definition of $\bH_{\bK, \bL}$ in \eqref{eqn:gh_def}. Then, we can compute that $\big(\cG(\bK^1, \bL(\bK^1)) + \cG(\bK^2, \bL(\bK^2))\big)/2 - \cG(\bK^3, \bL(\bK^3)) = -0.0224 < 0$. Thus,  we can conclude that $\cG(\bK, \bL(\bK))$ is nonconvex in $\bK$. Subsequently, we show that the outer loop is noncoercive on $\cK$ by a scalar example with $N =2$ and time-invariant system matrices being $A_t = 2$, $B_t = D_t = Q_t = R^u_t = 1$, and $R^w_t = 5$, for all $t$. Then, we consider the gain matrix $\bK^{\epsilon} = \begin{bmatrix}
		2-\epsilon & 0 & 0 \\
		0 & 2-\epsilon & 0
	\end{bmatrix} \in \cK, ~ \forall \epsilon \in \Big(0, \frac{16}{9}\Big)$. It can be observed that $\lim_{\epsilon\rightarrow 0+}\cG(\bK^{\epsilon}, \bL(\bK^{\epsilon})) = 11 < \infty$, while $\lim_{\epsilon\rightarrow 0+}\bK^{\epsilon} \in \partial \cK$, for $\epsilon \in (0, \frac{16}{9})$. Therefore, $\cG(\bK, \bL(\bK))$ is not coercive. Lastly, by Lemma \ref{lemma:station}, the stationary point of the outer loop in $\cK$, denoted as $(\bK^*, \bL(\bK^*))$, is unique and constitutes the unique Nash equilibrium of the game. 
\end{proof}

\subsection{Proof of Theorem \ref{theorem:conv_L}}\label{proof:conv_L}
\begin{proof}
We prove the global convergence of three inner-loop PG updates as follows:
 
\vspace{7pt}\noindent\textbf{PG: }For a fixed $\bK\in \cK$ and an arbitrary $\bL_0$ that induces a finite $\cG(\bK, \bL_0)$, we first define superlevel sets $\cL_{\bK}(\cG(\bK, \bL_0))$ and $\cL_{\bK}(a)$ as in \eqref{eqn:levelset_L}, where $a < \cG(\bK, \bL_0)$ is an arbitrary constant. Clearly, it holds that $\bL_0 \in \cL_{\bK}(\cG(\bK, \bL_0)) \subset \cL_{\bK}(a)$ and thus $\cL_{\bK}(\cG(\bK, \bL_0))$ is nonempty as well as compact (as shown before in Lemma \ref{lemma:landscape_L}). Next, denote the closure of the complement of $\cL_{\bK}(a)$ as $ \overline{(\cL_{\bK}(a))^c}$, which is again nonempty and also disjoint with $\cL_{\bK}(\cG(\bK, \bL_0))$, i.e., $\cL_{\bK}(\cG(\bK, \bL_0)) \cap \overline{(\cL_{\bK}(a))^c} = \varnothing$, due to $a$ being strictly less than $\cG(\bK, \bL_0)$.  Hence, one can deduce (see for example Lemma A.1 of \cite{danishcomplex}) that, there exists a Hausdorff distance $\delta_{a}>0$ between $\cL_{\bK}(\cG(\bK, \bL_0))$ and $\overline{(\cL_{\bK}(a))^c}$ such that for a given $\bL \in \cL_{\bK}(\cG(\bK, \bL_0))$, all $\bL'$ satisfying $\|\bL'-\bL\|_F \leq \delta_{a}$ also satisfy $\bL' \in \cL_{\bK}(a)$.

Now, since $\bL_0 \in \cL_{\bK}(\cG(\bK, \bL_0))$, we have $\|\nabla_{\bL}\cG(\bK, \bL_0)\|_F \leq l_{\bK, \cG(\bK, \bL_0)}$, where $l_{\bK, \cG(\bK, \bL_0)}$ is the global Lipschitz constant over $\cL_{\bK}(\cG(\bK, \bL_0))$. Therefore, it suffices to choose $\eta_0 \leq \frac{\delta_{a}}{l_{\bK, \cG(\bK, \bL_0)}}$ to ensure that the one-step ``fictitious'' PG update satisfies $\bL_0 + \eta_0\nabla_{\bL}\cG(\bK, \bL_0) \in \cL_{\bK}(a)$. By Lemma \ref{lemma:landscape_L}, we can apply the descent lemma (for minimization problems) \cite{boyd2004convex} to derive:
	\begin{align*}
		\cG(\bK, \bL_0) - \cG(\bK, \bL_0 + \eta_0\nabla_{\bL}\cG(\bK, \bL_0)) \leq  -\eta_0 \big\langle \nabla_{\bL}\cG(\bK, \bL_0), \nabla_{\bL}\cG(\bK, \bL_0) \big\rangle  + \frac{\eta^2_0\psi_{\bK, a}}{2}\|\nabla_{\bL}\cG(\bK, \bL_0)\|^2_F.
	\end{align*}
	Thus, we can additionally require $\eta_0 \leq 1/\psi_{\bK, a}$ to guarantee that the objective is non-decreasing (i.e., $\bL_0 + \eta_0\nabla_{\bL}\cG(\bK, \bL_0) \in \cL_{\bK}(\cG(\bK, \bL_0))$). This also implies that starting from $\bL_0 + \eta_0\nabla_{\bL}\cG(\bK, \bL_0)$, taking another ``fictitious'' PG update step of $\eta_0\nabla_{\bL}\cG(\bK, \bL_0)$ with $\eta_0 \leq \min\{\delta_{a}/l_{\bK, \cG(\bK, \bL_0)}, 1/\psi_{\bK, a}\}$ ensures that $\bL_0 + 2\eta_0\nabla_{\bL}\cG(\bK, \bL_0) \in \cL_{\bK}(\cG(\bK, \bL_0))$. Applying the same argument iteratively certifies that we can actually take a much larger stepsize $\tilde{\eta}_0$, with the only requirement being $\tilde{\eta}_0 \leq 1/\psi_{\bK, a}$, which guarantees that after one-step of the ``actual'' PG update, $\bL_1 = \bL_0 + \tilde{\eta}_0\nabla_{\bL}\cG(\bK, \bL_0)$, it holds that $\bL_1 \in \cL_{\bK}(\cG(\bK, \bL_0))$. This is because  $1/\psi_{\bK, a}$ can be covered by a finite times of ${\delta_{a}}/{l_{\bK, \cG(\bK, \bL_0)}}>0$. Now, since $\bL_1 \in \cL_{\bK}(\cG(\bK, \bL_0))$, we can repeat the above arguments for all future iterations and show that with a fixed stepsize $\eta$ satisfying $\eta \leq 1/\psi_{\bK, a}$, the iterates of the PG update \eqref{eqn:pg_L} satisfies for all $l \geq 0$ that
	\begin{align*}
		\cG(\bK, \bL_l) - \cG(\bK, \bL_{l+1}) \leq  -\eta \big\langle \nabla_{\bL}\cG(\bK, \bL_l), \nabla_{\bL}\cG(\bK, \bL_l) \big\rangle  + \frac{\eta^2\psi_{\bK, a}}{2}\|\nabla_{\bL}\cG(\bK, \bL_l)\|^2_F \leq 0.
	\end{align*}
	By the choice of $\eta$, we characterize the convergence rate of the PG update \eqref{eqn:pg_L} by exploiting the PL condition from Lemma \ref{lemma:landscape_L}, such that
	\begin{align*}
	\cG(\bK, \bL_l) - \cG(\bK, \bL_{l+1}) \leq - \frac{1}{2\psi_{\bK, a}} \|\nabla_{\bL}\cG(\bK, \bL_l)\|^2_F \leq -\frac{\mu_{\bK}}{2\psi_{\bK, a}} \big(\cG(\bK, \bL(\bK)) - \cG(\bK, \bL_l)\big),
	\end{align*}
	where $\mu_{\bK}$ is the global PL constant for a given $\bK\in\cK$. Thus, $\cG(\bK, \bL(\bK)) -\cG(\bK, \bL_{l+1}) \leq \big(1-\frac{\mu_{\bK}}{2\psi_{\bK, a}}\big) \big(\cG(\bK, \bL(\bK)) - \cG(\bK, \bL_l)\big)$. This completes the proof  for the linear convergence of the objective. Next, we show the convergence of the control gain matrix to $\bL(\bK)$. We let $q:= 1-\frac{\mu_{\bK}}{2\psi_{\bK, a}}$ and present the following comparison
	\begin{align*}
		\cG(\bK, \bL(\bK)) - \cG(\bK, \bL_l) &= \Tr\big[(\bP_{\bK, \bL(\bK)} - \bP_{\bK, \bL_l})\Sigma_0\big] = \Tr\Big[\sum^{N}_{t=0}(\bA_{\bK, \bL_l}^{\top})^{t}\big((\bL(\bK) - \bL_l)^{\top}\bH_{\bK, \bL(\bK)}(\bL(\bK) - \bL_l)\big)(\bA_{\bK, \bL_l})^{t}\Sigma_0\Big] \\
		&=\Tr\Big[(\bL(\bK) - \bL_l)^{\top}\bH_{\bK, \bL(\bK)}(\bL(\bK) - \bL_l)\Sigma_{\bK, \bL_l}\Big] \geq \phi\cdot\lambda_{\min}(\bH_{\bK, \bL(\bK)})\cdot\|\bL(\bK) - \bL_l\|^2_F,
	\end{align*}
	where the second equality is due to that $\bE_{\bK, \bL(\bK)} = \bm{0}$ from the property of stationary points, and the last inequality is due to $\Sigma_{\bK, \bL_l} \geq \Sigma_0, \forall l \geq 0$. Then, we can conclude that 
	\begin{align}\label{eqn:l_convergence}
		\|\bL(\bK) - \bL_l\|_F \leq \sqrt{\lambda^{-1}_{\min}(\bH_{\bK, \bL(\bK)})\cdot\big(\cG(\bK, \bL(\bK)) - \cG(\bK, \bL_l)\big)} \leq  q^\frac{l}{2}\cdot\sqrt{{\lambda^{-1}_{\min}(\bH_{\bK, \bL(\bK)})\cdot\big(\cG(\bK, \bL(\bK)) - \cG(\bK, \bL_0)\big)}}.
	\end{align}
This completes the convergence proof of the gain matrix.

\vspace{7pt}\noindent\textbf{NPG: } 
We argue that for a fixed $\bK \in \cK$, the inner-loop iterates following the NPG update \eqref{eqn:npg_L} with a certain stepsize choice satisfy $\bP_{\bK, \bL_{l+1}} \geq \bP_{\bK, \bL_l}$ in the p.s.d. sense, for all $l \geq 0$. By the definition of $\bP_{\bK, \bL}$ in \eqref{eqn:bpkl}, we can derive the following comparison between $\bP_{\bK, \bL_l}$ and $\bP_{\bK, \bL_{l+1}}$:
\begin{align*}
	\bP_{\bK, \bL_{l+1}} - \bP_{\bK, \bL_l} &= \bA^{\top}_{\bK, \bL_{l+1}}(\bP_{\bK, \bL_{l+1}} - \bP_{\bK, \bL_l})\bA_{\bK, \bL_{l+1}} + 4\eta\bE^{\top}_{\bK, \bL_l}\bE_{\bK, \bL_l} - 4\eta^2\bE^{\top}_{\bK, \bL_l}\bH_{\bK, \bL_l}\bE_{\bK, \bL_l} \\
	&= \bA^{\top}_{\bK, \bL_{l+1}}(\bP_{\bK, \bL_{l+1}} - \bP_{\bK, \bL_l})\bA_{\bK, \bL_{l+1}} + 4\eta\bE^{\top}_{\bK, \bL_l}(\bI - \eta\bH_{\bK, \bL_l})\bE_{\bK, \bL_l}, 
\end{align*}
where $\bA_{\bK, \bL} := \bA-\bB\bK-\bD\bL$. As a result, we can require $\eta \leq \frac{1}{\|\bH_{\bK, \bL_l}\|}$ to guarantee that $\bP_{\bK, \bL_{l+1}} \geq \bP_{\bK, \bL_l}$. Moreover, Lemma \ref{lemma:assumption_L} suggests that for a fixed $\bK \in \cK$, it holds that $\bP_{\bK, \bL} \leq \bP_{\bK, \bL(\bK)}$ in the p.s.d. sense for all $\bL$. Thus, if we require $\eta\leq \min_l\frac{1}{\|\bH_{\bK, \bL_l}\|}$, then $\{\bP_{\bK, \bL_l}\}_{l \geq 0}$ constitutes a monotonically non-decreasing sequence in the p.s.d. sense. Moreover, since $\bK \in \cK$ and $\{\bP_{\bK, \bL_l}\}_{l \geq 0}$ monotonically non-decreasing, the sequence $\{\bH_{\bK, \bL_l}\}_{l \geq 0}$ is monotonically non-increasing (in the p.s.d. sense) and each $\bH_{\bK, \bL_l}$ is both symmetric and positive-definite. Therefore, we can choose a uniform stepsize $\eta = \frac{1}{2\|\bH_{\bK, \bL_0}\|} \leq \min_l\frac{1}{\|\bH_{\bK, \bL_l}\|}$ and show the convergence rate of the NPG update \eqref{eqn:npg_L} as follows:
	\begin{align*}
		\cG(\bK, \bL_{l+1})-\cG(\bK, \bL_l) &= \Tr\big((\bP_{\bK, \bL_{l+1}} - \bP_{\bK, \bL_l})\Sigma_0\big) \geq \Tr\big((4\eta\bE^{\top}_{\bK, \bL_l}\bE_{\bK, \bL_l} - 4\eta^2\bE^{\top}_{\bK, \bL_l}\bH_{\bK, \bL_l}\bE_{\bK, \bL_l})\Sigma_0\big)\\
		&\geq \frac{\phi}{\|\bH_{\bK, \bL_0}\|}\Tr(\bE^{\top}_{\bK, \bL_l}\bE_{\bK, \bL_l}) \geq \frac{\phi\lambda_{\min}(\bH_{\bK, \bL(\bK)})}{\|\bH_{\bK, \bL_0}\|\|\Sigma_{\bK, \bL(\bK)}\|}  \big(\cG(\bK, \bL(\bK)) - \cG(\bK, \bL_l)\big),
	\end{align*}
	where the last inequality follows from \eqref{eqn:conv_proof_L} . Thus, $\cG(\bK, \bL(\bK)) - \cG(\bK, \bL_{l+1}) \leq \big(1- \frac{\phi\lambda_{\min}(\bH_{\bK, \bL(\bK)})}{\|\bH_{\bK, \bL_0}\|\|\Sigma_{\bK, \bL(\bK)}\|}\big) (\cG(\bK, \bL(\bK)) - \cG(\bK, \bL_l))$.
	This implies the globally linear convergence of the objective. The convergence proof of the gain matrix is similar to the one presented for the vanilla PG update. The only difference is to have $q := 1- \frac{\phi\lambda_{\min}(\bH_{\bK, \bL(\bK)})}{\|\bH_{\bK, \bL_0}\|\|\Sigma_{\bK, \bL(\bK)}\|}$ instead.  
	
\vspace{7pt}

\noindent	 
\textbf{GN: }Similar to the proof for NPG, we first find a stepsize such that $\{\bP_{\bK, \bL_l}\}_{l \geq 0}$ constitutes a monotonically non-decreasing sequence bounded above by $\bP_{\bK, \bL(\bK)}$ in the p.s.d. sense (based on Lemma \ref{lemma:assumption_L}). Taking the difference between $\bP_{\bK, \bL_{l}}$ and $\bP_{\bK, \bL_{l+1}}$ and substituting in \eqref{eqn:gn_L} yield
\begin{align*}
	\bP_{\bK, \bL_{l+1}} - \bP_{\bK, \bL_l} &= \bA^{\top}_{\bK, \bL_{l+1}}(\bP_{\bK, \bL_{l+1}} - \bP_{\bK, \bL_l})\bA_{\bK, \bL_{l+1}} + 4\eta\bE^{\top}_{\bK, \bL_l}\bH_{\bK, \bL_l}^{-1}\bE_{\bK, \bL_l} - 4\eta^2\bE^{\top}_{\bK, \bL_l}\bH_{\bK, \bL_l}^{-1}\bE_{\bK, \bL_l} \\
	&= \bA^{\top}_{\bK, \bL_{l+1}}(\bP_{\bK, \bL_{l+1}} - \bP_{\bK, \bL_l})\bA_{\bK, \bL_{l+1}} + 4\eta\bE^{\top}_{\bK, \bL_l}(\bH_{\bK, \bL_l}^{-1}- \eta\bH_{\bK, \bL_l}^{-1})\bE_{\bK, \bL_l}. 
\end{align*} 
Therefore, $\bP_{\bK, \bL_{l+1}} \geq \bP_{\bK, \bL_l}$ can be ensured by choosing $\eta \leq 1$. Subsequently, we characterize the convergence rate of the GN update with $\eta \leq 1/2$ as follows
	\begin{align*}
		\cG(\bK, \bL_{l+1})-\cG(\bK, \bL_l) &= \Tr\big((\bP_{\bK, \bL_{l+1}} - \bP_{\bK, \bL_l})\Sigma_0\big) \geq \Tr\big((4\eta\bE^{\top}_{\bK, \bL_l}\bH^{-1}_{\bK, \bL_l}\bE_{\bK, \bL_l} - 4\eta^2\bE^{\top}_{\bK, \bL_l}\bH^{-1}_{\bK, \bL_l}\bE_{\bK, \bL_l})\Sigma_0\big)\\
		&\geq \frac{\phi}{\|\bH_{\bK, \bL_l}\|}\Tr(\bE^{\top}_{\bK, \bL_l}\bE_{\bK, \bL_l}) \geq  \frac{\phi\lambda_{\min}(\bH_{\bK, \bL(\bK)})}{\|\bH_{\bK, \bL_0}\|\|\Sigma_{\bK, \bL(\bK)}\|}  \big(\cG(\bK, \bL(\bK)) - \cG(\bK, \bL_l)\big),
	\end{align*}
	where the last inequality follows from \eqref{eqn:conv_proof_L} and the fact that $\{\bH_{\bK, \bL_l}\}_{l \geq 0}$ is a monotonically non-increasing sequence lower bounded by $\bH_{\bK, \bL(\bK)} >0$, in the p.s.d. sense. Thus, $\cG(\bK, \bL(\bK)) - \cG(\bK, \bL_{l+1}) \leq \big(1- \frac{\phi\lambda_{\min}(\bH_{\bK, \bL(\bK)})}{\|\bH_{\bK, \bL_0}\|\|\Sigma_{\bK, \bL(\bK)}\|}\big) (\cG(\bK, \bL(\bK)) - \cG(\bK, \bL_l))$ and the globally linear convergence rate of the objective is proved. Lastly, the locally Q-quadratic convergence rate for the GN update directly follows from \cite{bu2019LQR, bu2020global} and the convergence proof of the gain matrix is the same as the one for the NPG update. This completes the proof.
\end{proof}

\subsection{Proof of Theorem \ref{theorem:ir_K}}\label{proof:ir_K}
\begin{proof} We first introduce the following cost difference lemma for the outer-loop problem, whose proof is deferred to \S\ref{sec:cost_diff_recursive_proof}.

\begin{lemma}
	(Cost Difference Lemma for Outer-Loop)\label{lemma:out_cost_diff_recursive}
Suppose that for two sequences of control gain matrices $\{K_t\}$ and $\{K'_t\}$ and a given $t \in \{0, \cdots, N-1\}$, there exist p.s.d. solutions to \eqref{eqn:pkl_recursive} at time $t+1$, denoted as $P_{K_{t+1}, L(K_{t+1})}$ and $P_{K'_{t+1}, L(K'_{t+1})}$, respectively. Also, suppose that the following inequalities are satisfied:
 \begin{align*}
 	R^w_t - D^{\top}_tP_{K_{t+1}, L(K_{t+1})}D_t > 0, \quad R^w_t - D^{\top}_tP_{K'_{t+1}, L(K'_{t+1})}D_t > 0.
 \end{align*}
 Then, there exist p.s.d. solutions to \eqref{eqn:pkl_recursive} at time $t$, denoted as $P_{K_t, L(K_t)}$ and $P_{K'_t, L(K'_t)}$, and their difference can be quantified as
 \small 
 \begin{align}
 	P_{K'_t, L(K'_t)} - P_{K_t, L(K_t)} &= A^{\top}_{K'_t, L(K'_t)}(P_{K'_{t+1}, L(K'_{t+1})} - P_{K_{t+1}, L(K_{t+1})})A_{K'_t, L(K'_t)}  + \cR_{K_t, K'_t} - \Xi^{\top}_{K_t, K'_t}(R^w_t - D^{\top}_tP_{K_{t+1}, L(K_{t+1})}D_{t})^{-1}\Xi_{K_t, K'_t}, \label{eqn:pk_diff_recursive}
 \end{align}	
 \normalsize
 where 
 \begin{align}
 \tilde{P}_{K_{t+1}, L(K_{t+1})} &:= P_{K_{t+1}, L(K_{t+1})} + P_{K_{t+1}, L(K_{t+1})}D_t(R^w_t - D_t^{\top}P_{K_{t+1}, L(K_{t+1})}D_t)^{-1}D^{\top}_tP_{K_{t+1}, L(K_{t+1})}\label{eqn:tildepk_recursive} \\
 F_{K_t, L(K_t)} &:= (R^u_t + B^{\top}_t\tilde{P}_{K_{t+1}, L(K_{t+1})}B_t)K_t - B^{\top}_t\tilde{P}_{K_{t+1}, L(K_{t+1})}A_t\nonumber \\
 L(K_t) &:= (-R^w_t + D^{\top}_tP_{K_{t+1}, L(K_{t+1})}D_t)^{-1}D^{\top}_tP_{K_{t+1}, L(K_{t+1})}(A_t-B_tK_t) \nonumber\\
 	\Xi_{K_t, K'_t} &:= -(R^w_t - D^{\top}_tP_{K_{t+1}, L(K_{t+1})}D_{t})L(K'_t) - D^{\top}_tP_{K_{t+1}, L(K_{t+1})}(A_t -B_tK'_t)\nonumber\\
 	\cR_{K_t, K'_t} &:= (A_t-B_tK'_t)^{\top}\tilde{P}_{K_{t+1}, L(K_{t+1})}(A_t-B_tK'_t) - P_{K_t, L(K_t)} + Q_t + (K'_t)^{\top}R^u_t(K'_t) \nonumber\\
 	&\hspace{0.3em}= (K'_t-K_t)^{\top}F_{K_t, L(K_t)} + F_{K_t, L(K_t)}^{\top}(K'_t-K_t) + (K'_t-K_t)^{\top}(R^u_t + B^{\top}_t\tilde{P}_{K_{t+1}, L(K_{t+1})}B_t)(K'_t-K_t).\label{eqn:rkk}
 \end{align}
\end{lemma}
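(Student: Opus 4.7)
\textbf{Proof plan for Lemma \ref{lemma:out_cost_diff_recursive}.} The plan is to use the standard ``completing-the-square'' trick that is by now well established for Riccati-type cost-difference identities in zero-sum LQ games. I will exploit two different representations of $P_{K'_t,L(K'_t)}$: the Lyapunov-style closed-loop form (useful for expanding the new $P$ into the sum of an old $P$ plus a correction), and the maximized Riccati form (useful for absorbing the inner maximization over $L$). Matching the two representations will expose exactly the terms $\Xi_{K_t,K'_t}$, $\cR_{K_t,K'_t}$, and the propagated Riccati gap.

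\textbf{Step 1: Existence of $P_{K_t,L(K_t)}$ and $P_{K'_t,L(K'_t)}$.} Given p.s.d.\ solutions at time $t+1$ for both sequences, together with $R^w_t - D_t^\top P_{K_{t+1},L(K_{t+1})}D_t>0$ and the analogous inequality for the primed sequence, the one-step Riccati update \eqref{eqn:DARE_black_L} restricted to time $t$ is well defined by completing the square in $L_t$. This yields $L(K_t)$ as in the statement and the Riccati form $P_{K_t,L(K_t)} = Q_t + K_t^\top R^u_t K_t + (A_t-B_tK_t)^\top \tilde{P}_{K_{t+1},L(K_{t+1})}(A_t-B_tK_t)\geq 0$, with the analogous identity for the primed sequence.

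\textbf{Step 2: Expand $P_{K'_t,L(K'_t)}$ via the closed-loop Lyapunov form.} For the primed sequence I use the alternative, equivalent representation
\begin{align*}
P_{K'_t,L(K'_t)} = A^\top_{K'_t,L(K'_t)}P_{K'_{t+1},L(K'_{t+1})}A_{K'_t,L(K'_t)} + Q_t + (K'_t)^\top R^u_t K'_t - L(K'_t)^\top R^w_t L(K'_t),
\end{align*}
where $A_{K'_t,L(K'_t)} := A_t - B_tK'_t - D_tL(K'_t)$. Writing $P_{K'_{t+1},L(K'_{t+1})} = P_{K_{t+1},L(K_{t+1})} + (P_{K'_{t+1},L(K'_{t+1})} - P_{K_{t+1},L(K_{t+1})})$ immediately produces the first term on the right-hand side of \eqref{eqn:pk_diff_recursive}, and leaves me with an ``aged'' expression evaluated at $K'_t,L(K'_t)$ but with the \emph{old} $P_{K_{t+1},L(K_{t+1})}$ inside.

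\textbf{Step 3: Complete the square in $L(K'_t)$ using $P_{K_{t+1},L(K_{t+1})}$.} Grouping the terms that are quadratic and linear in $L(K'_t)$ gives
\[
-L(K'_t)^\top \bigl(R^w_t - D_t^\top P_{K_{t+1},L(K_{t+1})}D_t\bigr)L(K'_t) - L(K'_t)^\top D_t^\top P_{K_{t+1},L(K_{t+1})}(A_t-B_tK'_t) + \text{h.c.},
\]
and completing the square in $L(K'_t)$ produces exactly the negative-definite residual $-\Xi^\top_{K_t,K'_t}\bigl(R^w_t - D_t^\top P_{K_{t+1},L(K_{t+1})}D_t\bigr)^{-1}\Xi_{K_t,K'_t}$ plus a compensating quadratic in $A_t-B_tK'_t$ that, combined with the already-present $(A_t-B_tK'_t)^\top P_{K_{t+1},L(K_{t+1})}(A_t-B_tK'_t)$, assembles into $(A_t-B_tK'_t)^\top \tilde{P}_{K_{t+1},L(K_{t+1})}(A_t-B_tK'_t)$. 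Subtracting $P_{K_t,L(K_t)}$ from what remains then yields precisely $\cR_{K_t,K'_t}$ by its definition, proving \eqref{eqn:pk_diff_recursive}.

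\textbf{Step 4: Verify the second form of $\cR_{K_t,K'_t}$ in \eqref{eqn:rkk}.} Using the Riccati representation of $P_{K_t,L(K_t)}$ from Step 1 and expanding $(A_t-B_tK'_t) = (A_t-B_tK_t) - B_t(K'_t-K_t)$, the quadratic terms in $A_t-B_tK_t$ cancel, the remaining cross terms collect into $(K'_t-K_t)^\top\bigl[(R^u_t + B_t^\top \tilde{P}_{K_{t+1},L(K_{t+1})}B_t)K_t - B_t^\top \tilde{P}_{K_{t+1},L(K_{t+1})}A_t\bigr]$ plus Hermitian conjugate, which is $(K'_t-K_t)^\top F_{K_t,L(K_t)} + F^\top_{K_t,L(K_t)}(K'_t-K_t)$, while the quadratic term assembles into $(K'_t-K_t)^\top(R^u_t + B_t^\top \tilde{P}_{K_{t+1},L(K_{t+1})}B_t)(K'_t-K_t)$.

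The computation is entirely algebraic and the only subtlety is bookkeeping: the main hazard is choosing the right ``pivot'' $P$ in Step 3 (it must be the \emph{old} $P_{K_{t+1},L(K_{t+1})}$, not the primed one) so that the $\Xi$ term lines up with the definition stated in the lemma and the residual aggregates cleanly into $\cR_{K_t,K'_t}$.
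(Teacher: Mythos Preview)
Your proposal is correct and follows essentially the same route as the paper's proof: start from the closed-loop Lyapunov form of $P_{K'_t,L(K'_t)}$, split off the propagated gap $A^\top_{K'_t,L(K'_t)}(P_{K'_{t+1},L(K'_{t+1})}-P_{K_{t+1},L(K_{t+1})})A_{K'_t,L(K'_t)}$, complete the square in $L(K'_t)$ using the \emph{old} $P_{K_{t+1},L(K_{t+1})}$ to produce the $\Xi$-term and assemble $(A_t-B_tK'_t)^\top\tilde P_{K_{t+1},L(K_{t+1})}(A_t-B_tK'_t)$, and then verify \eqref{eqn:rkk} by substituting the Riccati form of $P_{K_t,L(K_t)}$ and expanding $(A_t-B_tK'_t)=(A_t-B_tK_t)-B_t(K'_t-K_t)$. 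Your remark about the correct pivot in Step~3 is exactly the point the paper's derivation hinges on.
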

Subsequently, we prove that starting from any $\bK \in \cK$, the next iterate $\bK'$ is guaranteed to satisfy $\bK' \in \cK$ following the NPG update  \eqref{eqn:npg_K} or the GN update \eqref{eqn:gn_K} with proper stepsizes. Note that \eqref{eqn:DARE_black_L} is the compact form of the following recursive Riccati equation 
\begin{align}\label{eqn:iterative_re}
	P_{K_t, L(K_t)} &= Q_t + K_t^{\top}R^u_tK_t + (A_t-B_tK_t)^{\top}\tilde{P}_{K_{t+1}, L(K_{t+1})}(A_t-B_tK_t), \quad t \in \{0, \cdots, N-1\}
\end{align}
where $\tilde{P}_{K_{t+1}, L(K_{t+1})}$ is as defined in \eqref{eqn:tildepk_recursive} and $P_{K_N, L(K_N)} = Q_N$. Thus, $\bK \in \cK$ is equivalent to that \eqref{eqn:iterative_re} admits a solution $P_{K_{t+1}, L(K_{t+1})} \geq 0$ and $R^w_t - D_t^{\top}P_{K_{t+1}, L(K_{t+1})}D_t > 0$, for all $t \in \{0, \cdots, N-1\}$. Since $\bK \in \cK$ and $P_{K_N, L(K_N)} = P_{K'_N, L(K'_N)} = Q_N$ for any $\bK'$, we have 
\begin{align*}
	R^w_{N-1}-D^{\top}_{N-1}P_{K'_{N}, L(K'_{N})}D_{N-1} = R^w_{N-1}-D^{\top}_{N-1}P_{K_{N}, L(K_{N})}D_{N-1} = R^w_{N-1}-D^{\top}_{N-1}Q_ND_{N-1} > 0.
\end{align*} 
That is, $R^w_{N-1}-D^{\top}_{N-1}P_{K'_{N}, L(K'_{N})}D_{N-1}$ is invertible and a solution to \eqref{eqn:iterative_re} with $t = N-1$, denoted as $P_{K'_{N-1}, L(K'_{N-1})}$, exists and is both p.s.d. and unique. 

Subsequently, we invoke Lemma \ref{lemma:out_cost_diff_recursive} to find a stepsize such that a solution to \eqref{eqn:iterative_re} with $t=N-2$ also exists after one-step NPG update \eqref{eqn:npg_K}. By \eqref{eqn:iterative_re}, it suffices to ensure $R^w_{N-2}-D^{\top}_{N-2}P_{K'_{N-1}, L(K'_{N-1})}D_{N-2}>0$ (thus invertible) in order to guarantee the existence of $P_{K'_{N-2}, L'(K_{N-2})}$. By Lemma \ref{lemma:out_cost_diff_recursive} and $P_{K_N, L(K_N)} = P_{K'_N, L(K'_N)} = Q_N$, we have 
\begin{align*}
	P_{K'_{N-1}, L(K'_{N-1})} - P_{K_{N-1}, L(K_{N-1})} &= \cR_{K_{N-1}, K'_{N-1}} - \Xi^{\top}_{K_{N-1}, K'_{N-1}}(R^w_{N-1} - D^{\top}_{N-1}P_{K_{N}, L(K_{N})}D_{N-1})^{-1}\Xi_{K_{N-1}, K'_{N-1}} \leq \cR_{K_{N-1}, K'_{N-1}},
\end{align*}
where the last inequality is due to the fact that $R^w_{N-1}-D^{\top}_{N-1}P_{K_{N}, L(K_{N})}D_{N-1} > 0$. Then, we substitute the NPG update rule $K'_{N-1} = K_{N-1} - 2\alpha_{K_{N-1}}F_{K_{N-1}, L(K_{N-1})}$ for $t=N-1$ into \eqref{eqn:rkk} to get 
\begin{align*}
	&P_{K'_{N-1}, L(K'_{N-1})} - P_{K_{N-1}, L(K_{N-1})} \leq (K'_{N-1}-K_{N-1})^{\top}F_{K_{N-1}, L(K_{N-1})} + F_{K_{N-1}, L(K_{N-1})}^{\top}(K'_{N-1}-K_{N-1}) \\
	&\hspace{13em}+ (K'_{N-1}-K_{N-1})^{\top}(R^u_{N-1} + B^{\top}_{N-1}\tilde{P}_{K_{N}, L(K_{N})}B_{N-1})(K'_{N-1}-K_{N-1}) \\
	&\hspace{5em}= -4\alpha_{K_{N-1}}F_{K_{N-1}, L(K_{N-1})}^{\top}F_{K_{N-1}, L(K_{N-1})} + 4\alpha^2_{K_{N-1}}F_{K_{N-1}, L(K_{N-1})}^{\top}(R^u_{N-1} + B^{\top}_{N-1}\tilde{P}_{K_{N}, L(K_{N})}B_{N-1})F_{K_{N-1}, L(K_{N-1})} \\
	&\hspace{5em}=-4\alpha_{K_{N-1}}F_{K_{N-1}, L(K_{N-1})}^{\top} \big(\bI-\alpha_{K_{N-1}}(R^u_{N-1} + B^{\top}_{N-1}\tilde{P}_{K_{N}, L(K_{N})}B_{N-1})\big)F_{K_{N-1}, L(K_{N-1})}.
\end{align*}
Therefore, choosing $\alpha_{K_{N-1}} \in [0, 1/\|R^u_{N-1} + B^{\top}_{N-1}\tilde{P}_{K_{N}, L(K_{N})}B_{N-1}\|]$ suffices to ensure that $P_{K'_{N-1}, L(K'_{N-1})} \leq P_{K_{N-1}, L(K_{N-1})}$. Hence,  
\begin{align*}
	R^w_{N-2}-D^{\top}_{N-2}P_{K'_{N-1}, L(K'_{N-1})}D_{N-2} \geq R^w_{N-2}-D^{\top}_{N-2}P_{K_{N-1}, L(K_{N-1})}D_{N-2} > 0,
\end{align*}
where the last inequality comes from $\bK \in \cK$. Therefore, the existence of $P_{K'_{N-2}, L'(K_{N-2})} \geq 0$ is proved if the above requirement on $\alpha_{K_{N-1}}$ is satisfied. We can apply this argument iteratively backward since for all $t \in \{1, \cdots, N\}$, $P_{K'_{t}, L(K'_{t})} \leq P_{K_{t}, L(K_{t})}$ implies $R^w_{t-1}-D^{\top}_{t-1}P_{K'_{t}, L(K'_{t})}D_{t-1} \geq R^w_{t-1}-D^{\top}_{t-1}P_{K_{t}, L(K_{t})}D_{t-1} > 0$ and thus Lemma \ref{lemma:out_cost_diff_recursive} can be applied for all $t$. Moreover, $P_{K'_{t}, L(K'_{t})} \leq P_{K_{t}, L(K_{t})}$ is guaranteed to hold for all $t\in\{1, \cdots, N\}$ if we require the stepsize of the NPG update to satisfy 
\begin{align*}
	\alpha_{K_{t}} \in [0, 1/\|R^u_{t} + B^{\top}_{t}\tilde{P}_{K_{t+1}, L(K_{t+1})}B_{t}\|], \quad \forall t\in\{0, \cdots, N-1\}.
\end{align*}
Equivalently, the above conditions can also be represented using the compact forms introduced in \S\ref{sec:1st}. In particular, $\bG_{\bK, \bL(\bK)}$ in \eqref{eqn:gh_def} is a concatenated matrix with blocks of $R^u_{t} + B^{\top}_{t}\tilde{P}_{K_{t+1}, L(K_{t+1})}B_{t}$, $t \in \{0, \cdots, N-1\}$, on the diagonal. Thus, we have $\|\bG_{\bK, \bL(\bK)}\| \geq \|R^u_{t} + B^{\top}_{t}\tilde{P}_{K_{t+1}, L(K_{t+1})}B_{t}\|$, for all $t \in \{0, \cdots, N-1\}$. Adopting the compact notations, we have for any $\bK \in \cK$, if the stepsize $\alpha_{\bK} \in [0, 1/\|\bG_{\bK, \bL(\bK)}\|]$, then
\begin{align}
&\bP_{\bK', \bL(\bK')} - \bP_{\bK, \bL(\bK)} \leq (\bK'-\bK)^{\top}\bF_{\bK, \bL(\bK)} + \bF_{\bK, \bL(\bK)}^{\top}(\bK'-\bK) + (\bK'-\bK)^{\top}\bG_{\bK, \bL(\bK)}(\bK'-\bK)\nonumber\\
 	&\hspace{1em}= -4\alpha_{\bK}\bF_{\bK, \bL(\bK)}^{\top}\bF_{\bK, \bL(\bK)} + 4\alpha^2_{\bK}\bF_{\bK, \bL(\bK)}^{\top}\bG_{\bK, \bL(\bK)}\bF_{\bK, \bL(\bK)} =-4\alpha_{\bK}\bF_{\bK, \bL(\bK)}^{\top} \big(\bI-\alpha_{\bK}\bG_{\bK, \bL(\bK)}\big)\bF_{\bK, \bL(\bK)} \leq 0. \label{eqn:npg_exact_rate}
 \end{align}
 Since we have already shown that $P_{K'_t, L(K'_t)} \geq 0$ exists for all $t \in \{0, \cdots, N-1\}$ and $\bP_{\bK', \bL(\bK')}$ is a concatenation of blocks of $P_{K'_t, L(K'_t)}$, we can conclude that $\bK' \in \cK$. Applying the above analysis iteratively proves that the sequence $\{\bP_{\bK_k, \bL(\bK_k)}\}_{k \geq 0}$ following the NPG update \eqref{eqn:npg_K} with a given $\bK_0\in\cK$ and a constant stepsize $\alpha \leq \min_{k \geq 0}\frac{1}{\|\bG_{\bK_k, \bL(\bK_k)}\|}$ is monotonically non-increasing in the p.s.d. sense, and thus $\bK_k \in \cK$ for all $k \geq 0$ given a $\bK_0 \in \cK$. Furthermore, we can choose $\alpha \leq  \frac{1}{\|\bG_{\bK_0, \bL(\bK_0)}\|} \leq \min_{k \geq 0}\frac{1}{\|\bG_{\bK_k, \bL(\bK_k)}\|}$ because $\{\bG_{\bK_k, \bL(\bK_k)}\}$ is monotonically non-increasing and each $\bG_{\bK_k, \bL(\bK_k)}$ is symmetric and positive-definite. This completes the proof for the NPG update \eqref{eqn:npg_K}.

For the GN update \eqref{eqn:gn_K}, all our arguments for the proof above still hold but instead we invoke Lemma \ref{lemma:out_cost_diff_recursive} with the recursive update rule being $K'_{t} = K_{t} - 2\alpha_{K_{t}}(R^u_t + B_t^{\top}\tilde{P}_{K_{t+1}, L(K_{t+1})}B_t)^{-1}F_{K_{t}, L(K_{t})}$, for all $t \in \{0, \cdots, N-1\}$. In particular, we have the matrix difference at time $t=N-1$, denoted as $P_{K'_{N-1}, L(K'_{N-1})} - P_{K_{N-1}, L(K_{N-1})}$,  being
 \begin{align*}
	P_{K'_{N-1}, L(K'_{N-1})} - P_{K_{N-1}, L(K_{N-1})} &\leq (K'_{N-1}-K_{N-1})^{\top}F_{K_{N-1}, L(K_{N-1})} + F_{K_{N-1}, L(K_{N-1})}^{\top}(K'_{N-1}-K_{N-1}) \\
	&\hspace{1em}+ (K'_{N-1}-K_{N-1})^{\top}(R^u_{N-1} + B^{\top}_{N-1}\tilde{P}_{K_{N}, L(K_{N})}B_{N-1})(K'_{N-1}-K_{N-1}) \\
	&=-4\alpha_{K_{N-1}}F_{K_{N-1}, L(K_{N-1})}^{\top} \big((1-\alpha_{K_{N-1}})\cdot(R^u_{N-1} + B^{\top}_{N-1}\tilde{P}_{K_{N}, L(K_{N})}B_{N-1})^{-1}\big)F_{K_{N-1}, L(K_{N-1})}.
\end{align*}
Therefore, choosing $\alpha_{K_{N-1}} \in [0, 1]$ suffices to ensure that $P_{K'_{N-1}, L(K'_{N-1})} \leq P_{K_{N-1}, L(K_{N-1})}$. Applying the iterative arguments for the proof of the NPG update and using the compact notations yields that for any fixed $\bK\in\cK$, if the stepsize satisfies $\alpha_{\bK} \in [0, 1]$, then $\bP_{\bK', \bL(\bK')} \geq 0$ exists and
\begin{align}
&\bP_{\bK', \bL(\bK')} - \bP_{\bK, \bL(\bK)} \leq (\bK'-\bK)^{\top}\bF_{\bK, \bL(\bK)} + \bF_{\bK, \bL(\bK)}^{\top}(\bK'-\bK) + (\bK'-\bK)^{\top}\bG_{\bK, \bL(\bK)}(\bK'-\bK)\nonumber\\
 	&= -4\alpha_{\bK}\bF_{\bK, \bL(\bK)}^{\top}\bG_{\bK, \bL(\bK)}^{-1}\bF_{\bK, \bL(\bK)} + 4\alpha_{\bK}^2\bF_{\bK, \bL(\bK)}^{\top}\bG_{\bK, \bL(\bK)}^{-1}\bF_{\bK, \bL(\bK)} =-4\alpha_{\bK}\bF_{\bK, \bL(\bK)}^{\top} \big((1-\alpha_{\bK})\cdot\bG_{\bK, \bL(\bK)}^{-1}\big)\bF_{\bK, \bL(\bK)} \leq 0.\label{eqn:gn_exact_rate}
 \end{align}
 This proves that $\bK' \in \cK$. Now, we can apply the above analysis iteratively to prove that the sequence $\{\bP_{\bK_k, \bL(\bK_k)}\}_{k \geq 0}$ following the GN update \eqref{eqn:gn_K} with a given $\bK_0 \in \cK$ and a constant stepsize $\alpha \in [0, 1]$ is monotonically non-increasing in the p.s.d. sense. Thus. we have $\bK_k \in \cK$ for all $k \geq 0$ given a $\bK_0 \in \cK$. This completes the proof.
\end{proof}

\subsection{Proof of Theorem \ref{theorem:Mbased_conv_K}}\label{proof:Mbased_conv_K}
\begin{proof}
 For the NPG update \eqref{eqn:npg_K}, let $\alpha \leq \frac{1}{2\|\bG_{\bK_0, \bL(\bK_0)}\|}$ and suppose $\bK_0 \in \cK$. Theorem \ref{theorem:ir_K} suggests that $\bP_{\bK_k, \bL(\bK_k)} \geq \bP_{\bK^*, \bL(\bK^*)} \geq 0$ exists for all $k \geq 0$ and the sequence $\{\bP_{\bK_k, \bL(\bK_k)}\}_{k \geq 0}$  is non-increasing in the p.s.d. sense. As a result, we have by  \eqref{eqn:npg_exact_rate} that
 \small
	\begin{align}\label{eqn:npg_seq}
		\Tr\big((\bP_{\bK_{k+1}, \bL(\bK_{k+1})} - \bP_{\bK_k, \bL(\bK_k)})\Sigma_0\big)  &\leq -\frac{\phi}{\|\bG_{\bK_0, \bL(\bK_0)}\|}\Tr(\bF^{\top}_{\bK_k, \bL(\bK_k)}\bF_{\bK_k, \bL(\bK_k)}) \leq -2\alpha\phi\Tr(\bF^{\top}_{\bK_k, \bL(\bK_k)}\bF_{\bK_k, \bL(\bK_k)}).
	\end{align}
	\normalsize
	Therefore, from iterations $k = 0$ to $K-1$, summing over both sides of \eqref{eqn:npg_seq} and dividing by $K$ yield,
	\begin{align}\label{eqn:outer_npg_sq}
		\frac{1}{K}\sum^{K-1}_{k=0}\Tr(\bF^{\top}_{\bK_k, \bL(\bK_k)}\bF_{\bK_k, \bL(\bK_k)}) \leq \frac{\Tr(\bP_{\bK_0, \bL(\bK_0)} - \bP_{\bK^*, \bL(\bK^*)})}{2\alpha\phi\cdot K}.
	\end{align}
	Namely, the sequence of natural gradient norm square $\{\Tr(\bF^{\top}_{\bK_k, \bL(\bK_k)}\bF_{\bK_k, \bL(\bK_k)})\}_{k\geq 0}$ converges on average with a globally $\cO(1/K)$ rate. The convergent stationary point is also the unique Nash equilibrium of the game, by Lemma \ref{lemma:landscape_K}. This completes the convergence proof of the NPG update. Similarly, for the GN update with $\alpha \leq 1/2$ and suppose $\bK_0 \in \cK$, we can obtain from \eqref{eqn:gn_exact_rate} that
	\begin{align}\label{eqn:gn_seq}
		\Tr\big((\bP_{\bK_{k+1}, \bL(\bK_{k+1})} - \bP_{\bK_k, \bL(\bK_k)})\Sigma_0\big) \leq -\frac{2\alpha\phi}{\|\bG_{\bK_0, \bL(\bK_0)}\|}\Tr(\bF^{\top}_{\bK_k, \bL(\bK_k)}\bF_{\bK_k, \bL(\bK_k)}).
	\end{align}
	As before, we sum up \eqref{eqn:gn_seq} from $k = 0$ to $K-1$ and divide both sides by $K$ to obtain
	\begin{align*}
		\frac{1}{K}\sum^{K-1}_{k =0}\Tr(\bF^{\top}_{\bK_k, \bL(\bK_k)}\bF_{\bK_k, \bL(\bK_k)}) &\leq \frac{\|\bG_{\bK_0, \bL(\bK_0)}\|~\Tr(\bP_{\bK_0, \bL(\bK_0)} - \bP_{\bK^*, \bL(\bK^*)})}{2\alpha\phi\cdot K}. 
	\end{align*}
	This proves that the sequence of natural gradient norm squares, $\{\Tr(\bF^{\top}_{\bK_k, \bL(\bK_k)}\bF_{\bK_k, \bL(\bK_k)})\}_{k\geq 0}$, converges on average with a globally $\cO(1/K)$ rate. Also, The convergent stationary point is the unique Nash equilibrium. Finally, faster local rates can be shown by following the techniques presented in Theorem 4.6 of \cite{zhang2019policymixed}. 
\end{proof}

\subsection{Proof of Theorem \ref{theorem:free_pg_L}}\label{proof:free_pg_L}

\begin{proof}
We first introduce a helper lemma and defer its proof to \S\ref{proof:ZO_PG_help}.
\begin{lemma}\label{lemma:ZO_PG_help}
	For a given $\bK \in \cK$ and a given $\bL$, let $\epsilon_1, \delta_1 \in (0, 1)$ and let the batchsize $M_{1, \bL}>0$ and the smoothing radius $r_{1, \bL}>0$ of a $M_{1, \bL}$-sample one-point minibatch gradient estimator satisfy
	\begin{align*}
		M_{1, \bL} \geq \left(\frac{d_1}{r_{1, \bL}}\Big(\cG(\bK, \bL) + \frac{l_{\bK, \bL}}{\rho_{\bK, \bL}}\Big)\sqrt{\log\Big(\frac{2d_1}{\delta_1}\Big)}\right)^2\frac{1024}{\mu_{\bK}\epsilon_1}, \quad r_{1, \bL} \leq \min\Big\{\frac{\theta_{\bK, \bL}\mu_{\bK}}{8\psi_{\bK, \bL}}\sqrt{\frac{\epsilon_1}{240}},~ \frac{1}{8\psi^2_{\bK, \bL}}\sqrt{\frac{\epsilon_{1}\mu_{\bK}}{30}},~\rho_{\bK, \bL}\Big\}, 
	\end{align*}
	where $\theta_{\bK, \bL} =  \min\big\{\frac{1}{2\psi_{\bK, \bL}}, \frac{\rho_{\bK, \bL}}{l_{\bK, \bL}}\big\}$; $l_{\bK, \bL}, \psi_{\bK, \bL}, \rho_{\bK, \bL}$ are the local curvature parameters in Lemma \ref{lemma:landscape_L}; and $d_1 = nmN$. Also, let the stepsize $\eta_{\bL} >0$ satisfy 
	\begin{align*}
		\eta_{\bL} \leq \min\bigg\{1, \frac{1}{8\psi_{\bK,\bL}}, \rho_{\bK, \bL}\cdot\Big[\frac{\sqrt{\mu_{\bK}}}{32} + \psi_{\bK, \bL} + l_{\bK, \bL}\Big]\bigg\}.
	\end{align*}
	Then, the gain matrix after applying one step of \eqref{eqn:pg_free_L} on $\bL$, denoted as $\bL'$, satisfies with probability at least $1-\delta_1$ that $\cG(\bK, \bL) \leq \cG(\bK, \bL')$ and
	\begin{align*}
		\cG(\bK, \bL(\bK)) - \cG(\bK, \bL') \leq \left(1-\frac{\eta_{\bL}\mu_{\bK}}{8}\right)\cdot\left(\cG(\bK, \bL(\bK)) - \cG(\bK, \bL)\right) + \frac{\eta_{\bL}\mu_{\bK}\epsilon_1}{16}.
	\end{align*}
\end{lemma}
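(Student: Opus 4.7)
The plan is to establish one-step progress of the ZO-PG update \eqref{eqn:pg_free_L} at a single point $\bL$ by decomposing the error of the one-point gradient estimator into a deterministic smoothing bias (driven by $r_{1,\bL}$) and a stochastic sampling error (driven by $M_{1,\bL}$), controlling each under the local curvature constants from Lemma~\ref{lemma:landscape_L}, and then combining the resulting approximate-gradient descent inequality with the PL condition on $\cG(\bK,\cdot)$. Since the hypothesis forces $r_{1,\bL}\le \rho_{\bK,\bL}$, every perturbed point $\bL+r_{1,\bL}\bU^i$ lies in the local $\rho_{\bK,\bL}$-neighborhood where the local Lipschitz constant $l_{\bK,\bL}$ and local smoothness constant $\psi_{\bK,\bL}$ are valid, which is what makes the local-curvature arguments applicable uniformly across all $M_{1,\bL}$ perturbations.

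First I would use the standard spherical-smoothing identity to write $\EE[\tfrac{d_1}{r_{1,\bL}}\cG(\bK,\bL+r_{1,\bL}\bU)\bU]=\nabla_{\bL}\cG_{r_{1,\bL}}(\bK,\bL)$, the gradient of the sphere-smoothed surrogate. The local smoothness yields the deterministic bias bound $\|\nabla_{\bL}\cG_{r_{1,\bL}}(\bK,\bL)-\nabla_{\bL}\cG(\bK,\bL)\|_F\le \psi_{\bK,\bL}\,r_{1,\bL}$, which the hypothesis on $r_{1,\bL}$ renders $O(\sqrt{\mu_{\bK}\epsilon_1})$. For the sampling error, each summand $\tfrac{d_1}{r_{1,\bL}}\cG(\bK,\bL^i)\bU^i$ is uniformly bounded in Frobenius norm by $\tfrac{d_1}{r_{1,\bL}}(\cG(\bK,\bL(\bK))+l_{\bK,\bL}/\rho_{\bK,\bL}\cdot r_{1,\bL})$ via local Lipschitzness and $\cG(\bK,\bL)\le \cG(\bK,\bL(\bK))$, so a vector Hoeffding (matrix Bernstein) inequality gives, with probability at least $1-\delta_1$,
\[
\|\overline\nabla_{\bL}\cG(\bK,\bL)-\nabla_{\bL}\cG_{r_{1,\bL}}(\bK,\bL)\|_F \;\le\; \frac{d_1}{r_{1,\bL}}\Big(\cG(\bK,\bL(\bK))+\frac{l_{\bK,\bL}}{\rho_{\bK,\bL}}\Big)\sqrt{\frac{\log(2d_1/\delta_1)}{M_{1,\bL}}}.
\]
Plugging in the stated lower bound on $M_{1,\bL}$ makes this $\le \sqrt{\mu_{\bK}\epsilon_1}/32$, and combining with the bias bound yields $\|\overline\nabla_{\bL}\cG(\bK,\bL)-\nabla_{\bL}\cG(\bK,\bL)\|_F^2 \le \mu_{\bK}\epsilon_1/256$ on this high-probability event.

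Next I would apply a local smoothness descent (ascent) inequality. The stepsize cap $\eta_{\bL}\le \rho_{\bK,\bL}/[\sqrt{\mu_{\bK}}/32+\psi_{\bK,\bL}+l_{\bK,\bL}]$, together with the high-probability bound on the estimated gradient norm, ensures $\|\bL'-\bL\|_F=\eta_{\bL}\|\overline\nabla_{\bL}\cG(\bK,\bL)\|_F\le \rho_{\bK,\bL}$, so the local smoothness is legal on the entire segment $[\bL,\bL']$ and gives
\[
\cG(\bK,\bL')\;\ge\; \cG(\bK,\bL)+\eta_{\bL}\langle\nabla_{\bL}\cG(\bK,\bL),\overline\nabla_{\bL}\cG(\bK,\bL)\rangle-\tfrac{\psi_{\bK,\bL}\eta_{\bL}^2}{2}\|\overline\nabla_{\bL}\cG(\bK,\bL)\|_F^2.
\]
Writing $\overline\nabla=\nabla+\Delta$, applying Young's inequality, and using $\eta_{\bL}\le 1/(8\psi_{\bK,\bL})$ leads to an inequality of the form $\cG(\bK,\bL')-\cG(\bK,\bL)\ge \tfrac{\eta_{\bL}}{4}\|\nabla_{\bL}\cG(\bK,\bL)\|_F^2 - 2\eta_{\bL}\|\Delta\|_F^2$. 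Invoking the $\mu_{\bK}$-PL inequality from Lemma~\ref{lemma:landscape_L} converts the positive term into $\tfrac{\eta_{\bL}\mu_{\bK}}{4}(\cG(\bK,\bL(\bK))-\cG(\bK,\bL))$, and the above bound $\|\Delta\|_F^2\le \mu_{\bK}\epsilon_1/256$ keeps the error term below $\eta_{\bL}\mu_{\bK}\epsilon_1/16$; after splitting half of the PL gain into the contraction factor $1-\eta_{\bL}\mu_{\bK}/8$, rearranging yields exactly the claimed inequality. The monotonicity $\cG(\bK,\bL)\le \cG(\bK,\bL')$ falls out of the same descent inequality once the noise floor is dominated by the PL progress term, which is what the second bound $r_{1,\bL}\le \tfrac{1}{8\psi_{\bK,\bL}^2}\sqrt{\epsilon_1\mu_{\bK}/30}$ enforces.

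The main obstacle will be the bookkeeping of constants: the stepsize cap has three clauses, the smoothing-radius cap has three clauses, and the batchsize lower bound has a $1024/\mu_{\bK}\epsilon_1$ factor, all of which are tuned so that (i) the bias contribution $\psi_{\bK,\bL}^2 r_{1,\bL}^2$ and the variance contribution $(d_1/r_{1,\bL})^2(\cdot)^2\log(2d_1/\delta_1)/M_{1,\bL}$ each sit below $\mu_{\bK}\epsilon_1/C$ for appropriate $C$, (ii) the realized step $\eta_{\bL}\|\overline\nabla_{\bL}\cG(\bK,\bL)\|_F$ lies within the local ball of radius $\rho_{\bK,\bL}$ on the good event, and (iii) the specific constants $1/8$ and $1/16$ in the final contraction match exactly. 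Carefully balancing these three budgets against the $\eta_{\bL}\mu_{\bK}/4$ PL progress is the delicate part of the calculation.
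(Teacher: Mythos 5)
Your overall architecture matches the paper's: spherical smoothing plus a one-point concentration bound (the paper cites Lemma~23 of \cite{malik2020derivative} for exactly the variance estimate you write down), then a local-smoothness ascent inequality, a step-size containment argument to keep $\bL'$ inside the $\rho_{\bK,\bL}$-ball, and finally the PL condition to convert gradient progress into optimality-gap contraction. The variance half of your error budget, the containment argument, and the PL step are all in line with the paper.

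The step that does not close as written is your folding of the smoothing bias into a single squared error budget $\|\overline\nabla_{\bL}\cG(\bK,\bL)-\nabla_{\bL}\cG(\bK,\bL)\|_F^2\le\mu_{\bK}\epsilon_1/256$. This requires $\psi_{\bK,\bL}\,r_{1,\bL}\lesssim\sqrt{\mu_{\bK}\epsilon_1}$, but the stated radius caps only give $\psi_{\bK,\bL}r_{1,\bL}\le\frac{\theta_{\bK,\bL}\mu_{\bK}}{8}\sqrt{\epsilon_1/240}$ and $\psi_{\bK,\bL}r_{1,\bL}\le\frac{1}{8\psi_{\bK,\bL}}\sqrt{\epsilon_1\mu_{\bK}/30}$; neither is $O(\sqrt{\mu_{\bK}\epsilon_1})$ uniformly in $\psi_{\bK,\bL},\theta_{\bK,\bL},\mu_{\bK}$ (e.g., when $\psi_{\bK,\bL}$ is small the second bound blows up relative to $\sqrt{\mu_{\bK}\epsilon_1}$). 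The caps are tuned for a different budgeting, which is why $\theta_{\bK,\bL}$ appears in the hypotheses but never in your argument: the paper keeps the bias as a \emph{linear} cross term $\eta_{\bL}\psi_{\bK,\bL}r_{1,\bL}\|\nabla_{\bL}\cG(\bK,\bL)\|_F$, bounds the gradient norm by $\Delta^{1/2}$ via the one-local-step gradient-domination inequality $\big(\theta_{\bK,\bL}-\tfrac{\theta_{\bK,\bL}^2\psi_{\bK,\bL}}{2}\big)\|\nabla_{\bL}\cG(\bK,\bL)\|_F^2\le\cG(\bK,\bL(\bK))-\cG(\bK,\bL)$, and then absorbs the resulting $\frac{2\eta_{\bL}\psi_{\bK,\bL}r_{1,\bL}}{\theta_{\bK,\bL}}\Delta^{1/2}$ term into half of the PL progress by Young's inequality, leaving a remainder $\frac{8\eta_{\bL}\psi_{\bK,\bL}^2r_{1,\bL}^2}{\mu_{\bK}\theta_{\bK,\bL}^2}$ that the first radius cap controls by $\eta_{\bL}\mu_{\bK}\epsilon_1/1920$. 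You need this mechanism (or an equivalent one) to make the stated constants work. A smaller point: the final monotonicity $\cG(\bK,\bL)\le\cG(\bK,\bL')$ is not enforced by the second radius cap as you claim; it follows from $\Delta'-\Delta\le-\frac{\eta_{\bL}\mu_{\bK}}{8}\Delta+\frac{\eta_{\bL}\mu_{\bK}\epsilon_1}{16}$ together with the implicit assumption $\epsilon_1\le\Delta$ (i.e., the current iterate has not already converged to accuracy $\epsilon_1$).
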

We now prove the sample complexity result for a given $\bL_0 \in \cL_{\bK}(a)$. First, we use $\Delta_l := \cG(\bK, \bL(\bK)) - \cG(\bK, \bL_l)$ to denote the optimality gap at iteration $l$, where $l \in \{0, \cdots, L\}$. By Lemma \ref{lemma:ZO_PG_help}, if we require $\epsilon_1, \delta_1 \in (0, 1)$, the parameters $M_{1, \bL_0}, r_{1, \bL_0}, \eta_{\bL_0} >0$ in Algorithm \ref{alg:model_free_inner_PG} to satisfy 
	\begin{align*}
		M_{1, \bL_0} \geq \left(\frac{d_1}{r_1}\Big(\cG(\bK, \bL_0) + \frac{l_{\bK, \bL_0}}{\rho_{\bK, \bL_0}}\Big)\sqrt{\log\Big(\frac{2d_1L}{\delta_1}\Big)}\right)^2\frac{1024}{\mu_{\bK}\epsilon_1}, 
	\end{align*}
	\vspace{-1.5em}
\begin{align*}
	r_{1, \bL_0} \leq \min\Big\{\frac{\theta_{\bK, \bL_0} \mu_{\bK}}{8\psi_{\bK, \bL_0}}\sqrt{\frac{\epsilon_1}{240}},~ \frac{1}{8\psi^2_{\bK,\bL_0}}\sqrt{\frac{\epsilon_{1}\mu_{\bK}}{30}},~\rho_{\bK, \bL_0}\Big\}, \quad \eta_{\bL_0} \leq \min\Big\{1, \frac{1}{8\psi_{\bK,\bL_0}}, \frac{\rho_{\bK, \bL_0}}{\frac{\sqrt{\mu_{\bK}}}{32} + \psi_{\bK, \bL_0} + l_{\bK, \bL_0}}\Big\},
\end{align*}
then we ensure with probability at least $1-\delta_1/L$ that $\cG(\bK, \bL_0) \leq \cG(\bK, \bL_1)$, i.e., $\bL_1 \in \cL_{\bK}(a)$. Moreover, for any $\bL_l$, where $l \in\{0, \cdots, L-1\}$, there exist $M_{1, \bL_l}, r_{1, \bL_l}, \eta_{\bL_l} > 0$ as defined in Lemma  \ref{lemma:ZO_PG_help} that guarantee $\cG(\bK, \bL_l) \leq \cG(\bK, \bL_{l+1})$ with probability at least $1-\delta_1/L$. Now, we choose uniform constants $M_1, r_1, \eta >0$ such that 
\vspace{-0.5em}
\begin{align*}
	M_1 \geq \left(\frac{d_1}{r_1}\Big(\cG(\bK, \bL(\bK)) + \frac{l_{\bK, a}}{\rho_{\bK, a}}\Big)\sqrt{\log\Big(\frac{2d_1L}{\delta_1}\Big)}\right)^2\frac{1024}{\mu_{\bK}\epsilon_1} \geq \max_{l \in \{0, \cdots, L-1\}}~M_{1, \bL_l},
	\end{align*}
	\vspace{-1.5em}
	\begin{align*}
	r_{1} \leq \min\left\{\frac{\theta_{\bK, a} \mu_{\bK}}{8\psi_{\bK, a}}\sqrt{\frac{\epsilon_1}{240}},~ \frac{1}{8\psi^2_{\bK, a}}\sqrt{\frac{\epsilon_{1}\mu_{\bK}}{30}},~\rho_{\bK, a}\right\} \leq \min_{l \in \{0, \cdots, L-1\}} r_{1, \bL_l}, ~~ \eta  \leq \min\left\{1, \frac{1}{8\psi_{\bK,a}}, \frac{\rho_{\bK, a}}{\frac{\sqrt{\mu_{\bK}}}{32} + \psi_{\bK, a} + l_{\bK, a}}\right\} \leq \min_{l \in \{0, \cdots, L-1\}}\eta_{\bL_l},
\end{align*}
where $\theta_{\bK, a} = \min\big\{1/[2\psi_{\bK, a}], \rho_{\bK, a}/l_{\bK, a}\big\}$, $\rho_{\bK, a}= \min_{\bL \in \cL_{\bK}(a)}\rho_{\bK, \bL} >0$ is due to the compactness of $\cL_{\bK}(a)$, and $l_{\bK, a}, \psi_{\bK, a} < \infty$ are defined in Lemma \ref{lemma:landscape_L} that satisfy 
\begin{align*}
	l_{\bK, a} \geq \max_{\bL \in \cL_{\bK}(a)}l_{\bK, \bL}, \quad \psi_{\bK, a} \geq \max_{\bL \in \cL_{\bK}(a)}\psi_{\bK, \bL}.
\end{align*}
Then, we can guarantee with probability at least $1-\delta_1$ that the value of the objective function, following the ZO-PG update \eqref{eqn:pg_free_L}, is monotonically non-decreasing. That is, we have with probability at least $1-\delta_1$ that $\bL_l \in \cL_{\bK}(a)$, for all $l \in \{0, \cdots, L\}$, when a $\bL_0 \in \cL_{\bK}(a)$ is given. By Lemma \ref{lemma:ZO_PG_help} and the above choices of $M_1, r_1, \eta$, we also ensure with probability at least $1-\delta_1$ that $\Delta_l \leq \left(1-\frac{\eta\mu_{\bK}}{8}\right)\cdot\Delta_{l-1} + \frac{\eta\mu_{\bK}\epsilon_1}{16}$, for all $l \in \{1, \cdots, L-1\}$. Thus, we can show with probability at least $1-\delta_1$ that
\begin{align*}
	\Delta_{L} &\leq \left(1 - \frac{\eta \mu_{\bK}}{8}\right)\cdot\Delta_{L-1} + \eta\frac{\mu_{\bK} \epsilon_1}{16} \leq \left(1 - \frac{\eta \mu_{\bK}}{8}\right)^{L}\cdot\Delta_{0} + \sum^{L-1}_{i=1} \left(1-\frac{\eta \mu_{\bK}}{8}\right)^i\eta\frac{\mu_{\bK}\epsilon_1}{16} \leq \left(1 - \frac{\eta \mu_{\bK}}{8}\right)^{L}\Delta_{0} + \frac{\epsilon_{1}}{2}.
\end{align*}
As a result, when $L = \frac{8}{\eta \mu_{\bK}}\log(\frac{2}{\epsilon_1})$, the inequality $\cG(\bK, \bL_L) \leq \cG(\bK, \bL(\bK))-\epsilon_1$ holds with probability at least $1-\delta_1$. This proves the convergence of the generated values of the objective function for the ZO-PG update \eqref{eqn:pg_free_L}. Lastly, we demonstrate the convergence of the gain matrix to $\bL(\bK)$ by our results in \S\ref{proof:conv_L}. Based on \eqref{eqn:l_convergence} and the convergence of the objective function values, we have with probability at least $1-\delta_1$ that
	\begin{align}\label{eqn:zo_pg_gain_conv}
		\|\bL(\bK) - \bL_L\|_F \leq \sqrt{\lambda^{-1}_{\min}(\bH_{\bK, \bL(\bK)}) \cdot \big(\cG(\bK, \bL(\bK)) - \cG(\bK, \bL_L)\big)} \leq \sqrt{\lambda^{-1}_{\min}(\bH_{\bK, \bL(\bK)}) \cdot \epsilon_1}.
	\end{align} 
	This completes the proof.
\end{proof}
\subsection{Proof of Theorem \ref{theorem:free_npg_L}}\label{proof:free_npg_L}
\begin{proof}
We first introduce a helper lemma, whose proof is deferred to \S\ref{proof:ZO_NPG_help}.

\begin{lemma}\label{lemma:ZO_NPG_help}
For a given $\bK \in \cK$ and a given $\bL$, let $\epsilon_1, \delta_1 \in (0, 1)$ and let the batchsize $M_{1, \bL}>0$ and the smoothing radius $r_{1, \bL}>0$ of a $M_{1, \bL}$-sample one-point minibatch gradient estimator satisfy
\small
	\begin{align*}
		M_{1, \bL} \geq \max\left\{\Big(\cG(\bK, \bL(\bK)) + \frac{l_{\bK, \bL}}{\rho_{\bK, \bL}}\Big)^2\cdot\frac{64d_1^2(\varkappa+1)^2}{\phi^2r_{1, \bL}^2\mu_{\bK}\epsilon_1}, \frac{2\varkappa^2}{\phi^2}\right\}\cdot\log\Big(\frac{4\max\{d_1, d_{\Sigma}\}}{\delta_1}\Big), ~~~ r_{1, \bL} \leq \min\left\{\frac{\phi\sqrt{\mu_{\bK}\epsilon_1}}{16\varkappa\psi_{\bK, \bL}}, \frac{\phi\mu_{\bK}\theta_{\bK, \bL}\sqrt{\epsilon_1/2}}{32\varkappa\psi_{\bK, \bL}}, \rho_{\bK, \bL}\right\}, 
	\end{align*}
	\normalsize
	where $\theta_{\bK, \bL} =  \min\big\{1/[2\psi_{\bK, \bL}], \rho_{\bK, \bL}/l_{\bK, \bL}\big\}$;  $l_{\bK, \bL}, \psi_{\bK, \bL}, \rho_{\bK, \bL}$ are the local curvature parameters in Lemma \ref{lemma:landscape_L}; $d_1 = nmN$; $d_{\Sigma} = m^2(N+1)$; $\varkappa:= c_{\Sigma_{\bK, \bL}}+\frac{\phi}{2}$; and $c_{\Sigma_{\bK, \bL}}$ is a polynomial of $\|\bA\|_F$, $\|\bB\|_F$, $\|\bD\|_F$, $\|\bK\|_F$, $\|\bL\|_F$ that is linear in $c_0$, defined in Lemma \ref{lemma:sigma_L_estimation}. Also, let the stepsize $\eta_{\bL} >0$ satisfy 
	\begin{align*}
		\eta_{\bL} \leq \min\bigg\{\frac{\phi^2}{32\psi_{\bK, \bL}\varkappa}, ~\frac{1}{2\psi_{\bK, \bL}}, ~\rho_{\bK, \bL}\cdot\Big[\frac{\sqrt{\mu_{\bK}}}{4(\varkappa+1)} + \frac{2\psi_{\bK, \bL}}{\phi} + l_{\bK, \bL} +\frac{\phi l_{\bK, \bL}}{2}\Big]^{-1}\bigg\}.
	\end{align*}
	Then, the gain matrix after applying one step of \eqref{eqn:npg_free_L} on $\bL$, denoted as $\bL'$,  satisfies with probability at least $1-\delta_1$ that $\cG(\bK, \bL) \leq \cG(\bK, \bL')$ and
	\begin{align*}
		\cG(\bK, \bL(\bK)) - \cG(\bK, \bL') \leq \left(1-\frac{\eta_{\bL}\mu_{\bK}}{8\varkappa}\right)\cdot\left(\cG(\bK, \bL(\bK)) - \cG(\bK, \bL)\right) + \frac{\eta_{\bL}\mu_{\bK}\epsilon_1}{16\varkappa}.
	\end{align*}
\end{lemma}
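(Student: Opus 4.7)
The plan is to proceed by paralleling the proof of Lemma \ref{lemma:ZO_PG_help}, but carefully accounting for the additional statistical error introduced by the preconditioner $\overline{\Sigma}_{\bK,\bL}^{-1}$. In the vanilla ZO-PG analysis, only the gradient estimate must be controlled; for ZO-NPG, I will simultaneously need (i) a Frobenius-norm bound on $\overline{\nabla}_{\bL}\cG(\bK,\bL) - \nabla_{\bL}\cG(\bK,\bL)$ that splits into a sampling-concentration piece and a Gaussian-smoothing bias piece, and (ii) a spectral-norm bound on $\overline{\Sigma}_{\bK,\bL}^{-1} - \Sigma_{\bK,\bL}^{-1}$ that hinges on showing $\overline{\Sigma}_{\bK,\bL}$ stays invertible and well-conditioned. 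Together these two bounds will let me replace the oracle NPG direction by the zeroth-order one in a descent argument, incurring only a controllable perturbation.

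First I would handle the correlation matrix. Because $\Sigma_{\bK,\bL}\succeq \Sigma_0\succeq\phi\bI$ and the per-sample summand $\mathrm{diag}(x_0x_0^\top,\ldots,x_Nx_N^\top)$ is almost surely bounded, a matrix Hoeffding/Bernstein inequality applied to the $M_{1,\bL}$ i.i.d.\ rollouts shows that with probability at least $1-\delta_1/2$, $\|\overline{\Sigma}_{\bK,\bL}-\Sigma_{\bK,\bL}\|\le\phi/2$ once $M_{1,\bL}\ge 2\varkappa^2/\phi^2\cdot\log(4d_{\Sigma}/\delta_1)$. This immediately yields $\overline{\Sigma}_{\bK,\bL}\succeq(\phi/2)\bI$ and hence $\|\overline{\Sigma}_{\bK,\bL}^{-1}\|\le 2/\phi$, and by the identity $\overline{\Sigma}^{-1}-\Sigma^{-1}=\overline{\Sigma}^{-1}(\Sigma-\overline{\Sigma})\Sigma^{-1}$ it gives $\|\overline{\Sigma}_{\bK,\bL}^{-1}-\Sigma_{\bK,\bL}^{-1}\|\le 2\|\overline{\Sigma}_{\bK,\bL}-\Sigma_{\bK,\bL}\|/\phi^2$. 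Here $\varkappa$ enters as the uniform bound $\|\overline{\Sigma}_{\bK,\bL}\|\le\|\Sigma_{\bK,\bL}\|+\phi/2\le c_{\Sigma_{\bK,\bL}}+\phi/2=\varkappa$ on the good event. In parallel, a one-point zeroth-order concentration argument (as in the ZO-PG proof) combined with local $(\psi_{\bK,\bL},\rho_{\bK,\bL})$-smoothness from Lemma \ref{lemma:landscape_L} yields, with probability at least $1-\delta_1/2$, $\|\overline{\nabla}_{\bL}\cG(\bK,\bL)-\nabla_{\bL}\cG(\bK,\bL)\|_F\le (d_1/r_{1,\bL})\cdot\bigl(\cG(\bK,\bL(\bK))+l_{\bK,\bL}/\rho_{\bK,\bL}\bigr)\sqrt{\log(4d_1/\delta_1)/M_{1,\bL}}+r_{1,\bL}\psi_{\bK,\bL}$, valid as long as $r_{1,\bL}\le\rho_{\bK,\bL}$.

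Next, on the intersection of the two high-probability events, I would express the realized update direction as $\overline{\nabla}_{\bL}\cG\,\overline{\Sigma}^{-1}=\nabla_{\bL}\cG\,\Sigma^{-1}+\bE$ where $\bE$ decomposes into a product-of-errors term whose Frobenius norm is bounded by $(2/\phi)\|\overline{\nabla}_{\bL}\cG-\nabla_{\bL}\cG\|_F+(2/\phi^2)\|\nabla_{\bL}\cG\|_F\cdot\|\overline{\Sigma}-\Sigma\|$. The key quantitative step is choosing $r_{1,\bL},M_{1,\bL}$ so that $\|\bE\|_F\le \sqrt{\mu_{\bK}\epsilon_1}/8$, which by the PL inequality $\mathrm{tr}(\nabla_{\bL}\cG^\top\nabla_{\bL}\cG)\ge\mu_{\bK}(\cG(\bK,\bL(\bK))-\cG(\bK,\bL))$ balances the residual error against the linear contraction rate. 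I will then invoke the local quadratic upper bound for $-\cG(\bK,\cdot)$ on the ball of radius $\rho_{\bK,\bL}$, provided the step is kept inside this ball; the stepsize ceiling $\eta_{\bL}\le\rho_{\bK,\bL}/[\sqrt{\mu_{\bK}}/(4(\varkappa+1))+2\psi_{\bK,\bL}/\phi+l_{\bK,\bL}+\phi l_{\bK,\bL}/2]$ is engineered precisely so that $\|\bL'-\bL\|_F\le\rho_{\bK,\bL}$ under the high-probability bounds on $\overline{\nabla}_{\bL}\cG$ and $\overline{\Sigma}^{-1}$.

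Finally, with all perturbations under control, a standard descent-lemma expansion $\cG(\bK,\bL')\ge\cG(\bK,\bL)+\langle\nabla_{\bL}\cG,\eta_{\bL}\overline{\nabla}_{\bL}\cG\,\overline{\Sigma}^{-1}\rangle-(\psi_{\bK,\bL}/2)\eta_{\bL}^2\|\overline{\nabla}_{\bL}\cG\,\overline{\Sigma}^{-1}\|_F^2$ yields, after substituting the $\bE$-decomposition, $\cG(\bK,\bL')\ge\cG(\bK,\bL)+(\eta_{\bL}/\varkappa)\|\nabla_{\bL}\cG\|_F^2/4-\text{(lower-order error)}$, where the factor $1/\varkappa$ comes from $\langle\nabla_{\bL}\cG,\nabla_{\bL}\cG\,\Sigma^{-1}\rangle\ge\|\nabla_{\bL}\cG\|_F^2/\|\Sigma\|\ge\|\nabla_{\bL}\cG\|_F^2/\varkappa$. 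Applying the PL bound transforms this into the claimed contraction $\cG(\bK,\bL(\bK))-\cG(\bK,\bL')\le(1-\eta_{\bL}\mu_{\bK}/(8\varkappa))(\cG(\bK,\bL(\bK))-\cG(\bK,\bL))+\eta_{\bL}\mu_{\bK}\epsilon_1/(16\varkappa)$, and monotonicity $\cG(\bK,\bL)\le\cG(\bK,\bL')$ follows from the same expansion once the error term is dominated by the progress term. The main obstacle I anticipate is the bookkeeping of constants in step (iii): the choice of $r_{1,\bL},M_{1,\bL},\eta_{\bL}$ must simultaneously (a) keep $\overline{\Sigma}_{\bK,\bL}$ uniformly well-conditioned, (b) keep $\|\bL'-\bL\|_F\le\rho_{\bK,\bL}$ so the local smoothness expansion is valid, and (c) ensure the $\bE$-induced perturbation is absorbed into the $\epsilon_1$ residual without spoiling the $1/\varkappa$ contraction factor; matching all three simultaneously is what forces the particular coupling of thresholds stated in the lemma.
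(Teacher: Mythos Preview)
Your overall plan---concentration for the gradient estimate, concentration for $\overline{\Sigma}_{\bK,\bL}$, a step-size choice keeping $\bL'$ in the $\rho_{\bK,\bL}$-ball, and then a descent-lemma expansion combined with PL---matches the paper's argument. However, the specific decomposition $\overline{\nabla}_{\bL}\cG\,\overline{\Sigma}_{\bK,\bL}^{-1}=\nabla_{\bL}\cG\,\Sigma_{\bK,\bL}^{-1}+\bE$ together with the target $\|\bE\|_F\le\sqrt{\mu_{\bK}\epsilon_1}/8$ does not close under the stated parameter budget. The second piece of $\bE$, namely $\nabla_{\bL}\cG\,(\overline{\Sigma}_{\bK,\bL}^{-1}-\Sigma_{\bK,\bL}^{-1})$, has Frobenius norm of order $\|\nabla_{\bL}\cG\|_F/\phi$ once you only enforce $\|\overline{\Sigma}_{\bK,\bL}-\Sigma_{\bK,\bL}\|\le\phi/2$, and $\|\nabla_{\bL}\cG\|_F$ is \emph{not} small on the regime of interest (by PL, $\|\nabla_{\bL}\cG\|_F\ge\sqrt{\mu_{\bK}\Delta}$ with $\Delta\ge\epsilon_1$). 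Driving $\|\bE\|_F$ down to $O(\sqrt{\epsilon_1})$ would require $\|\overline{\Sigma}_{\bK,\bL}-\Sigma_{\bK,\bL}\|$ itself to scale with $\sqrt{\epsilon_1}/\|\nabla_{\bL}\cG\|_F$, which the second branch $M_{1,\bL}\ge 2\varkappa^2/\phi^2\cdot\log(\cdots)$ of the lemma's batchsize requirement does not supply.

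The paper avoids this by a different split: it writes $\langle\nabla_{\bL}\cG,\overline{\nabla}_{\bL}\cG\,\overline{\Sigma}_{\bK,\bL}^{-1}\rangle=\langle\nabla_{\bL}\cG,\nabla_{\bL}\cG\,\overline{\Sigma}_{\bK,\bL}^{-1}\rangle+\langle\nabla_{\bL}\cG,(\overline{\nabla}_{\bL}\cG-\nabla_{\bL}\cG)\overline{\Sigma}_{\bK,\bL}^{-1}\rangle$, keeping the \emph{empirical} preconditioner $\overline{\Sigma}_{\bK,\bL}^{-1}$ in the signal term. Since $\overline{\Sigma}_{\bK,\bL}\succ 0$ on the good event, one gets $\langle\nabla_{\bL}\cG,\nabla_{\bL}\cG\,\overline{\Sigma}_{\bK,\bL}^{-1}\rangle\ge\lambda_{\min}(\overline{\Sigma}_{\bK,\bL}^{-1})\|\nabla_{\bL}\cG\|_F^2\ge\|\nabla_{\bL}\cG\|_F^2/\varkappa$, and this is exactly where $\varkappa=c_{\Sigma_{\bK,\bL}}+\phi/2$ (an upper bound on $\|\overline{\Sigma}_{\bK,\bL}\|$) enters. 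The correlation-matrix error thus only degrades the contraction constant and never appears as an additive residual multiplied by $\|\nabla_{\bL}\cG\|_F$; the only additive error is $(\overline{\nabla}_{\bL}\cG-\nabla_{\bL}\cG)\overline{\Sigma}_{\bK,\bL}^{-1}$, whose norm the first branch of the $M_{1,\bL}$ requirement does control at the $O(\sqrt{\mu_{\bK}\epsilon_1})$ level. Your proof can be repaired by adopting this decomposition; the remaining steps you outline (Young-type splitting of the cross term, step-size bookkeeping, and the final PL substitution) are correct and coincide with the paper's.
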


\noindent Based on Lemma \ref{lemma:ZO_NPG_help}, the rest of the proof mostly follows the proof of Theorem \ref{theorem:free_pg_L} in \S\ref{proof:free_pg_L}. Specifically, we can choose uniform constants $M_1, r_1, \eta >0$ such that
\begin{align*}
	M_1 &\geq \max\left\{\Big(\cG(\bK, \bL(\bK)) + \frac{l_{\bK, a}}{\rho_{\bK, a}}\Big)^2\cdot\frac{64d_1^2(\overline{\varkappa}_a+1)^2}{\phi^2r_{1}^2\mu_{\bK}\epsilon_1}, \frac{2\overline{\varkappa}_a^2}{\phi^2}\right\}\cdot\log\Big(\frac{4L\max\{d_1, d_{\Sigma}\}}{\delta_1}\Big) \geq \max_{l \in \{0, \cdots, L-1\}}~M_{1, \bL_l}, \\
	r_{1} &\leq \min\left\{\frac{\phi\sqrt{\mu_{\bK}\epsilon_1}}{16\overline{\varkappa}_a\psi_{\bK, a}}, \frac{\phi\mu_{\bK}\theta_{\bK, a}\sqrt{\epsilon_1/2}}{32\overline{\varkappa}_a\psi_{\bK, a}}, \rho_{\bK, a}\right\} \leq \min_{l \in \{0, \cdots, L-1\}} ~r_{1, \bL_l}, \\
	\eta  &\leq \min\bigg\{\frac{\phi^2}{32\psi_{\bK, a}\overline{\varkappa}_a}, ~\frac{1}{2\psi_{\bK, a}}, ~\rho_{\bK, a}\cdot\Big[\frac{\sqrt{\mu_{\bK}}}{4(\underline{\varkappa}_a+1)} + \frac{2\psi_{\bK, a}}{\phi} + l_{\bK, a} +\frac{\phi l_{\bK, a}}{2}\Big]^{-1}\bigg\} \leq \min_{l \in \{0, \cdots, L-1\}}~\eta_{\bL_l},
\end{align*}
where $\theta_{\bK, a} = \min\big\{1/[2\psi_{\bK, a}], \rho_{\bK, a}/l_{\bK, a}\big\}$ and 
\begin{align*}
	\overline{\varkappa}_a:=\max_{\bL \in \cL_{\bK}(a)}\varkappa < \infty, \quad \underline{\varkappa}_a:=\min_{\bL \in \cL_{\bK}(a)}\varkappa > 0, \quad \rho_{\bK, a}= \min_{\bL \in \cL_{\bK}(a)}\rho_{\bK, \bL} >0, \quad l_{\bK, a} \geq \max_{\bL \in \cL_{\bK}(a)}l_{\bK, \bL}, \quad \psi_{\bK, a} \geq \max_{\bL \in \cL_{\bK}(a)}\psi_{\bK, \bL}.
\end{align*}
Then, we can guarantee with probability at least $1-\delta_1$ that the value of the objective function, following the ZO-NPG update \eqref{eqn:npg_free_L}, is monotonically non-decreasing. That is, we have with probability at least $1-\delta_1$ that $\bL_l \in \cL_{\bK}(a)$, for all $l \in \{0, \cdots, L\}$, when an $\bL_0 \in \cL_{\bK}(a)$ is given. By Lemma \ref{lemma:ZO_PG_help} and the above choices of $M_1, r_1, \eta$, we also ensure with probability at least $1-\delta_1$ that $\Delta_l \leq \left(1-\frac{\eta\mu_{\bK}}{8\overline{\varkappa}_a}\right)\cdot\Delta_{l-1} + \frac{\eta\mu_{\bK}\epsilon_1}{16\overline{\varkappa}_a}$, for all $l \in \{1, \cdots, L-1\}$. Thus, we can show with probability at least $1-\delta_1$ that
\begin{align*}
	\Delta_{L} &\leq \left(1 - \frac{\eta \mu_{\bK}}{8\overline{\varkappa}_a}\right)\cdot\Delta_{L-1} + \frac{\eta\mu_{\bK} \epsilon_1}{16\overline{\varkappa}_a} \leq \left(1 - \frac{\eta \mu_{\bK}}{8\overline{\varkappa}_a}\right)^{L}\cdot\Delta_{0} + \sum^{L-1}_{i=1} \left(1-\frac{\eta \mu_{\bK}}{8\overline{\varkappa}_a}\right)^i\frac{\eta\mu_{\bK}\epsilon_1}{16\overline{\varkappa}_a} \leq \left(1 - \frac{\eta \mu_{\bK}}{8\overline{\varkappa}_a}\right)^{L}\Delta_{0} + \frac{\epsilon_{1}}{2}.
\end{align*}
As a result, when $L = \frac{8\overline{\varkappa}_a}{\eta \mu_{\bK}}\log(\frac{2}{\epsilon_1})$, the inequality $\cG(\bK, \bL_L) \leq \cG(\bK, \bL(\bK))-\epsilon_1$ holds with probability at least $1-\delta_1$. This proves the convergence of the generated values of the objective function for the ZO-NPG update \eqref{eqn:npg_free_L}. The convergence of the gain matrix to $\bL(\bK)$ follows from the proof of Theorem \ref{theorem:free_pg_L} in \S\ref{proof:free_pg_L}. This completes the proof.
\end{proof}

\subsection{Proof of Theorem \ref{theorem:free_IR}}\label{proof:free_IR}
\begin{proof}
We first present a few useful lemmas, whose proofs are deferred to \S\ref{proof:outer_perturb}-\S\ref{proof:sigma_est_K}.

\begin{lemma}\label{lemma:outer_perturb}
 	For any $\bK, \bK' \in \cK$, there exist some $\cB_{1, \bK}, \cB_{\bP, \bK}, \cB_{\bL(\bK), \bK}, \cB_{\Sigma, \bK} > 0$ that are continuous functions of $\bK$ such that all $\bK'$ satisfying $\|\bK' - \bK\|_F \leq \cB_{1, \bK}$ satisfy 
 	\begin{align}\label{eqn:p_perturb}
 		\|\bP_{\bK', \bL(\bK')} - \bP_{\bK, \bL(\bK)}\|_F &\leq \cB_{\bP, \bK} \cdot \|\bK'-\bK\|_F\\
 		\label{eqn:lk_perturb}
 		\|\bL(\bK') - \bL(\bK)\|_F &\leq \cB_{\bL(\bK), \bK} \cdot \|\bK'-\bK\|_F\\
 		\label{eqn:sigmak_perturb}
 		\|\Sigma_{\bK', \bL(\bK')} - \Sigma_{\bK, \bL(\bK)}\|_F &\leq  \cB_{\Sigma, \bK}\cdot \|\bK' - \bK\|_F.
 	\end{align}
\end{lemma}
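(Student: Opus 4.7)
My plan is to prove the three bounds in sequence, exploiting the block-recursive structure and closed-form expressions derived earlier. Throughout, I will pick $\cB_{1,\bK}$ small enough that $\bK'$ remains in $\cK$ (using openness of $\cK$, implicit in the strict positivity $\bR^w - \bD^\top \bP_{\bK, \bL(\bK)} \bD > 0$) and that all auxiliary quantities evaluated at $\bK'$ can be controlled by continuous functions of $\bK$ alone.

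For \eqref{eqn:p_perturb}, the Riccati equation \eqref{eqn:DARE_black_L} unfolds into the time-wise recursion \eqref{eqn:iterative_re} with terminal condition $P_{K_N, L(K_N)} = Q_N$ independent of $\bK$. Setting $\Delta P_t := P_{K'_t, L(K'_t)} - P_{K_t, L(K_t)}$, I would proceed by backward induction on $t$. The base case $\Delta P_N = \bm{0}$ is immediate. For the inductive step, I would apply the cost-difference identity of Lemma \ref{lemma:out_cost_diff_recursive}, i.e., \eqref{eqn:pk_diff_recursive}: the remainder $\cR_{K_t, K'_t}$ in \eqref{eqn:rkk} is linear in $\Delta K_t$ to leading order, while the remaining terms are controlled by $\Delta P_{t+1}$ through the perturbation of $(R^w_t - D_t^\top P_{K_{t+1}, L(K_{t+1})} D_t)^{-1}$. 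Strict positivity of this matrix at $\bK \in \cK$ guarantees, by continuity, that it remains positive and well-conditioned at $\bK'$ provided $\cB_{1,\bK}$ is small. This yields a recursion of the form $\|\Delta P_t\|_F \leq \alpha_t(\bK)\|\Delta P_{t+1}\|_F + \beta_t(\bK)\|K'_t - K_t\|_F$, whose coefficients are continuous in $\bK$; unrolling from $t = N$ down to $t = 0$ and reassembling the block-diagonal $\bP$ delivers \eqref{eqn:p_perturb} with $\cB_{\bP, \bK}$ continuous in $\bK$.

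For \eqref{eqn:lk_perturb}, I would substitute \eqref{eqn:p_perturb} into the closed-form \eqref{eqn:l_k}. The map $(\bK, \bP) \mapsto (-\bR^w + \bD^\top \bP \bD)^{-1} \bD^\top \bP (\bA - \bB\bK)$ is continuously differentiable at $(\bK, \bP_{\bK, \bL(\bK)})$ by strict definiteness of $-\bR^w + \bD^\top \bP_{\bK, \bL(\bK)} \bD$, so a standard linearization in Frobenius norm, combining the direct dependence on $\bK$ via $\bB\bK$ with the indirect dependence through $\bP$, delivers \eqref{eqn:lk_perturb} with $\cB_{\bL(\bK), \bK}$ continuous in $\bK$ (after possibly shrinking $\cB_{1,\bK}$). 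For \eqref{eqn:sigmak_perturb}, the key observation is that the block pattern of $\bA, \bB, \bD, \bK, \bL$ from \S\ref{sec:compact_forms} makes $\bA - \bB\bK - \bD\bL(\bK)$ a block shift, and hence nilpotent of degree $N+1$, so the Lyapunov equation \eqref{eqn:sigmaKL} terminates as a finite sum $\Sigma_{\bK, \bL(\bK)} = \sum_{t=0}^{N} (\bA - \bB\bK - \bD\bL(\bK))^t \Sigma_0 ((\bA - \bB\bK - \bD\bL(\bK))^\top)^t$, polynomial in the entries of $\bK$ and $\bL(\bK)$. Combined with \eqref{eqn:lk_perturb}, this gives \eqref{eqn:sigmak_perturb} with $\cB_{\Sigma, \bK}$ continuous in $\bK$.

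The main obstacle I anticipate is the bookkeeping of constants: naive bounds from the cost-difference identity involve quantities evaluated at the perturbed $\bK'$ (for instance $\|A_{K'_t, L(K'_t)}\|$ and the perturbed Riccati inverse), whereas we want constants depending continuously on $\bK$ alone. The resolution is a self-consistency argument: first pick a crude preliminary bound on $\|\Delta P_{t+1}\|_F$, use it to uniformly control the perturbed norms on a ball of radius $\cB_{1,\bK}$ around $\bK$, and then produce the refined linear bound on $\|\Delta P_t\|_F$. Openness of $\cK$ is what makes this self-consistency close, and continuous dependence of $\bP_{\bK, \bL(\bK)}$ on $\bK$ (a byproduct of Step~1) ultimately ensures that all of $\cB_{1,\bK}, \cB_{\bP,\bK}, \cB_{\bL(\bK),\bK}, \cB_{\Sigma,\bK}$ are continuous in $\bK$.
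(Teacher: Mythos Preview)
Your proposal is correct and takes a genuinely different route from the paper's own proof. The paper does not argue from first principles at all: it simply cites Proposition~B.1, Lemma~C.2, and Lemma~B.8 of \cite{zhang2019policy} (with the inner/outer roles interchanged) for \eqref{eqn:p_perturb}, \eqref{eqn:lk_perturb}, and \eqref{eqn:sigmak_perturb} respectively, and then remarks that because no compact feasible set is available here, the constants come out as continuous functions of $\bK$ rather than uniform absolute constants. Your approach, by contrast, is self-contained within the present paper: you exploit the finite-horizon block structure directly, running backward induction via the cost-difference identity of Lemma~\ref{lemma:out_cost_diff_recursive} for the Riccati perturbation, then linearizing the closed form \eqref{eqn:l_k}, and finally using the nilpotency of $\bA-\bB\bK-\bD\bL(\bK)$ to reduce \eqref{eqn:sigmaKL} to a finite polynomial sum. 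The self-consistency step you describe (crude bound first, then refine so that all perturbed quantities are controlled by continuous functions of $\bK$ on a ball of radius $\cB_{1,\bK}$) is exactly what is needed to handle the appearance of $A_{K'_t,L(K'_t)}$ and the perturbed inverse in \eqref{eqn:pk_diff_recursive}, and it also delivers the continuity of the constants that the paper obtains only by appeal to the quotient-of-continuous-functions remark. What your route buys is a proof that does not leave the paper and that makes transparent why the finite horizon (nilpotency, terminating recursion) is what makes these Lipschitz estimates elementary; what the paper's route buys is brevity, at the cost of depending on machinery developed elsewhere for a somewhat different setting.
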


\begin{lemma}\label{lemma:grad_perturb}
	For any $\bK, \bK' \in \cK$, there exist some $\cB_{1, \bK}, \cB_{\Sigma, \bK} >0$ as defined in Lemma \ref{lemma:outer_perturb} such that if $\bK'$ satisfies
	\begin{align*}
 		\|\bK' - \bK\|_F \leq \Big\{\cB_{1, \bK}, \frac{\epsilon_2}{4c_{5, \bK}(c_{\Sigma_{\bK, \bL(\bK)}} +\cB_{\Sigma, \bK})}, \frac{\epsilon_{2}}{4c_{2, \bK}\cB_{\Sigma, \bK}}\Big\},
 	\end{align*}
 	where $c_{\Sigma_{\bK, \bL(\bK)}}$ is a polynomial of $\|\bA\|_F$, $\|\bB\|_F$, $\|\bD\|_F$, $\|\bK\|_F$, and $c_0$, and $c_{2, \bK}, c_{5, \bK}$ are defined in \S\ref{sec:axu}, then it holds that $\|\nabla_{\bK}\cG(\bK', \bL(\bK')) - \nabla_{\bK}\cG(\bK, \bL(\bK))\|_F \leq \epsilon_2$.
\end{lemma}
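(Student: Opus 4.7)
\textbf{Proof Proposal for Lemma \ref{lemma:grad_perturb}.} The plan is to split the gradient difference multiplicatively using the product form $\nabla_{\bK}\cG(\bK, \bL(\bK)) = 2\bF_{\bK, \bL(\bK)}\Sigma_{\bK, \bL(\bK)}$ recorded in \eqref{eqn:tilde_pk}. Adding and subtracting $2\bF_{\bK, \bL(\bK)}\Sigma_{\bK', \bL(\bK')}$ I obtain the telescoping decomposition
\begin{align*}
\nabla_{\bK}\cG(\bK', \bL(\bK')) - \nabla_{\bK}\cG(\bK, \bL(\bK)) = 2[\bF_{\bK', \bL(\bK')} - \bF_{\bK, \bL(\bK)}]\Sigma_{\bK', \bL(\bK')} + 2\bF_{\bK, \bL(\bK)}[\Sigma_{\bK', \bL(\bK')} - \Sigma_{\bK, \bL(\bK)}].
\end{align*}
By the triangle inequality and sub-multiplicativity of the Frobenius norm, it suffices to force each of the two summands to be at most $\epsilon_2/2$.

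The second summand is the routine one. Treating $\|\bF_{\bK, \bL(\bK)}\|_F$ as the constant $c_{2,\bK}$ from \S\ref{sec:axu}, Lemma \ref{lemma:outer_perturb} immediately gives $\|\Sigma_{\bK', \bL(\bK')} - \Sigma_{\bK, \bL(\bK)}\|_F \leq \cB_{\Sigma, \bK}\|\bK' - \bK\|_F$ whenever $\|\bK' - \bK\|_F \leq \cB_{1, \bK}$, so the assumption $\|\bK' - \bK\|_F \leq \epsilon_2/(4 c_{2, \bK}\cB_{\Sigma, \bK})$ handles this piece. For the first summand, Lemma \ref{lemma:outer_perturb} also yields the uniform bound $\|\Sigma_{\bK', \bL(\bK')}\|_F \leq c_{\Sigma_{\bK, \bL(\bK)}} + \cB_{\Sigma, \bK}\|\bK'-\bK\|_F$, which stays inside $c_{\Sigma_{\bK, \bL(\bK)}} + \cB_{\Sigma, \bK}$ once $\|\bK'-\bK\|_F \leq 1$. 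The remaining task is therefore a local Lipschitz estimate for $\bK \mapsto \bF_{\bK, \bL(\bK)}$ packaged in the constant $c_{5, \bK}$.

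To produce that Lipschitz estimate, I would substitute the definition of $\bF$ from \eqref{eqn:tilde_pk} and expand
\begin{align*}
\bF_{\bK', \bL(\bK')} - \bF_{\bK, \bL(\bK)} = \bR^u(\bK'-\bK) + \bB^{\top}\tilde{\bP}_{\bK', \bL(\bK')}\bB(\bK'-\bK) + \bB^{\top}[\tilde{\bP}_{\bK', \bL(\bK')} - \tilde{\bP}_{\bK, \bL(\bK)}](\bA - \bB\bK),
\end{align*}
then bound each term by triangle inequality. The first two terms are obviously linear in $\|\bK'-\bK\|_F$. The third term is the crux: I need continuity of $\bK \mapsto \tilde{\bP}_{\bK, \bL(\bK)} = \bP + \bP\bD(\bR^w - \bD^{\top}\bP\bD)^{-1}\bD^{\top}\bP$. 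I would apply the identity $X^{-1}-Y^{-1} = X^{-1}(Y-X)Y^{-1}$ to the inverse $(\bR^w - \bD^{\top}\bP_{\bK, \bL(\bK)}\bD)^{-1}$, combine with the Lipschitz bound on $\bP$ from \eqref{eqn:p_perturb}, and use the fact that on $\cK$ the matrix $\bH_{\bK, \bL(\bK)}$ is positive definite; restricting to the shrunk neighborhood $\|\bK'-\bK\|_F \leq \cB_{1, \bK}$ keeps $\lambda_{\min}(\bH_{\bK', \bL(\bK')})$ uniformly bounded away from zero so that the matrix-inversion perturbation is controlled. These estimates assemble into $\|\bF_{\bK', \bL(\bK')} - \bF_{\bK, \bL(\bK)}\|_F \leq c_{5, \bK}\|\bK'-\bK\|_F$.

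Plugging both pieces back into the decomposition and enforcing the three-term minimum in the hypothesis gives $\|\nabla_{\bK}\cG(\bK', \bL(\bK')) - \nabla_{\bK}\cG(\bK, \bL(\bK))\|_F \leq \epsilon_2/2 + \epsilon_2/2 = \epsilon_2$. The main obstacle is the continuity argument for $\tilde{\bP}_{\bK, \bL(\bK)}$: it is the only step whose uniformity genuinely depends on the positive definiteness of $\bH_{\bK, \bL(\bK)}$ being preserved on the $\cB_{1, \bK}$-ball around $\bK$, and extracting an explicit constant $c_{5, \bK}$ requires carrying the matrix-inversion perturbation bound through the quadratic expression defining $\tilde{\bP}$. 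Everything else is triangle-inequality bookkeeping on top of Lemma \ref{lemma:outer_perturb}.
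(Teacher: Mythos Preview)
Your decomposition and the handling of both summands match the paper's proof exactly: the paper splits $\nabla_{\bK}\cG = 2\bF\Sigma$ the same way, bounds $\|\Sigma_{\bK',\bL(\bK')}\|_F \leq c_{\Sigma_{\bK,\bL(\bK)}} + \cB_{\Sigma,\bK}$ via Lemma~\ref{lemma:outer_perturb}, uses $\|\bF_{\bK,\bL(\bK)}\|_F \leq c_{2,\bK}$ from \eqref{bound:fk}, and then invokes a separate Lipschitz estimate $\|\bF_{\bK',\bL(\bK')}-\bF_{\bK,\bL(\bK)}\|_F \leq c_{5,\bK}\|\bK'-\bK\|_F$ (their Lemma~\ref{lemma:subfk}).

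The one genuine difference is in how that last Lipschitz estimate is obtained. You work with the representation $\bF_{\bK,\bL(\bK)} = \bG_{\bK,\bL(\bK)}\bK - \bB^{\top}\tilde{\bP}_{\bK,\bL(\bK)}\bA$ from \eqref{eqn:tilde_pk} and therefore need continuity of $\tilde{\bP}$, which forces you through the matrix-inversion perturbation on $(\bR^w - \bD^{\top}\bP\bD)^{-1}$. The paper instead uses the equivalent representation $\bF_{\bK,\bL} = (\bR^u + \bB^{\top}\bP_{\bK,\bL}\bB)\bK - \bB^{\top}\bP_{\bK,\bL}(\bA-\bD\bL)$ from \eqref{eqn:nabla_K} evaluated at $\bL = \bL(\bK)$; this avoids $\tilde{\bP}$ entirely and reduces the task to the perturbation bounds on $\bP_{\bK,\bL(\bK)}$ and $\bL(\bK)$ already supplied by \eqref{eqn:p_perturb} and \eqref{eqn:lk_perturb}. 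Both routes are correct and yield a $c_{5,\bK}$ of the same qualitative shape; the paper's is a bit cleaner because all needed perturbation ingredients are pre-packaged in Lemma~\ref{lemma:outer_perturb}, whereas your route must additionally verify that $\lambda_{\min}(\bH_{\bK',\bL(\bK')})$ stays bounded below on the $\cB_{1,\bK}$-ball (which does follow from \eqref{eqn:p_perturb}, but is an extra step). Minor remark: your displayed expansion of $\bF_{\bK',\bL(\bK')}-\bF_{\bK,\bL(\bK)}$ has a sign flip on the last term, though this is harmless once norms are taken.
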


\begin{lemma}\label{lemma:safe_perturb}
	For any $\bK \in \cK$, there exists some $\cB_{2, \bK} > 0$ such that all $\bK'$ satisfying $\|\bK'-\bK\|_F \leq \cB_{2, \bK}$ satisfy $\bK' \in \cK$.
 \end{lemma}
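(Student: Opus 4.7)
The goal is to show that $\cK$ is open in $\cS(d,m,N)$: every $\bK\in\cK$ has a neighborhood contained in $\cK$. Recall that $\bK\in\cK$ is characterized by two conditions on the backward recursion \eqref{eqn:iterative_re}: (i) existence of a p.s.d.\ sequence $\{P_{K_t,L(K_t)}\}_{t=0}^{N}$, and (ii) $R^w_t - D_t^{\top}P_{K_{t+1},L(K_{t+1})}D_t > 0$ for all $t\in\{0,\ldots,N-1\}$. Both are \emph{open} conditions in the matrix data, so a continuity-based argument along the recursion should suffice. The plan is a \emph{backward induction on $t$}, in the same spirit as the proof of Theorem \ref{theorem:ir_K} (cf.\ Figure \ref{fig:proof1}), producing an explicit threshold $\cB_{2,\bK}$.

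I would proceed as follows. At the terminal time, $P_{K_N,L(K_N)}=P_{K'_N,L(K'_N)}=Q_N$ independently of the gain, so there is no perturbation to control. Inductively, suppose that $P_{K'_{t+1},L(K'_{t+1})}$ exists, is p.s.d., and satisfies $\|P_{K'_{t+1},L(K'_{t+1})}-P_{K_{t+1},L(K_{t+1})}\|_F\le \varepsilon_{t+1}$ for some $\varepsilon_{t+1}$ that I will tie to $\|\bK'-\bK\|_F$. Denote $\mu_t := \lambda_{\min}\bigl(R^w_t - D_t^{\top}P_{K_{t+1},L(K_{t+1})}D_t\bigr)>0$, which is strictly positive because $\bK\in\cK$. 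Provided $\varepsilon_{t+1} < \mu_t/\|D_t\|^2$, the matrix $R^w_t - D_t^{\top}P_{K'_{t+1},L(K'_{t+1})}D_t$ remains positive-definite, so the auxiliary $\tilde P_{K'_{t+1},L(K'_{t+1})}$ in the sense of \eqref{eqn:tildepk_recursive} is well-defined, and plugging it into \eqref{eqn:iterative_re} produces a p.s.d.\ $P_{K'_t,L(K'_t)}$. A standard first-order matrix perturbation expansion (Taylor-expanding the inverse $\bigl(R^w_t - D_t^{\top}P_{K'_{t+1},L(K'_{t+1})}D_t\bigr)^{-1}$ around its value at $\bK$, together with the identity $(A-BK')^{\top}M(A-BK') - (A-BK)^{\top}M(A-BK)$ in $K'-K$) then gives a Lipschitz-type bound $\varepsilon_t \le c_t\,\|\bK'-\bK\|_F + c'_t\,\varepsilon_{t+1}$, with coefficients $c_t,c'_t$ polynomial in the problem data and in $\bK$. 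Unrolling this linear recursion from $t=N$ down to $t=0$ yields $\max_t\varepsilon_t \le C_{\bK}\,\|\bK'-\bK\|_F$ for a finite $C_{\bK}>0$; setting
\[
\cB_{2,\bK} := \frac{1}{2C_{\bK}}\cdot\min_{0\le t\le N-1}\frac{\mu_t}{\|D_t\|^2}
\]
then guarantees that all conditions in the definition \eqref{eqn:set_ck} of $\cK$ remain satisfied, so $\bK'\in\cK$.

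The subtle point, and the one I expect to be the main obstacle, is that a direct appeal to the perturbation bound \eqref{eqn:p_perturb} of Lemma \ref{lemma:outer_perturb} is \emph{circular}: that lemma is stated for $\bK,\bK'\in\cK$, whereas here the membership $\bK'\in\cK$ is precisely what I am trying to establish. The fix is to rerun the backward perturbation estimate \emph{inside} the induction: at each time $t$, invertibility of $R^w_t - D_t^{\top}P_{K'_{t+1},L(K'_{t+1})}D_t$ (which is what allows $P_{K'_t,L(K'_t)}$ to be defined in the first place) is ensured by the inductive control of $\varepsilon_{t+1}$, not imported from Lemma \ref{lemma:outer_perturb}. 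Finiteness of the horizon $N$ is crucial, since it turns the recursive accumulation of error into a bounded constant $C_{\bK}$ and forces $\min_t\mu_t>0$; if $N=\infty$, this local-openness argument would require a significantly more delicate uniform treatment.
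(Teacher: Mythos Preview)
Your argument is correct and complete: the backward-in-time continuity induction on the Riccati recursion \eqref{eqn:iterative_re} is a valid proof that $\cK$ is open, and you correctly identify (and resolve) the circularity issue with invoking Lemma \ref{lemma:outer_perturb} a priori. The explicit constant you write down is sound, modulo the standard bootstrapping that the local Lipschitz constants $c_t,c'_t$ for the inverse $(R^w_t - D_t^\top P D_t)^{-1}$ are uniform only once $\varepsilon_{t+1}$ is already confined to, say, $\mu_t/(2\|D_t\|^2)$; this is handled by shrinking $\cB_{2,\bK}$ one time step at a time, which your inductive structure accommodates.

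The paper's proof proceeds quite differently. Rather than tracking the recursion, it fixes $\underline{\bK}:=\bK$, introduces the matrix-sublevel set $\underline{\cK}=\{\bK':\bP_{\bK',\bL(\bK')}\le \bP_{\underline{\bK},\bL(\underline{\bK})}\}$, and proves this set is \emph{compact} (boundedness from $\bK^\top\bR^u\bK\le\bP_{\bK,\bL(\bK)}$, closedness via Bolzano--Weierstrass on the $\bP$-sequence). Since every element of $\underline{\cK}$ satisfies $\bR^w-\bD^\top\bP_{\bK',\bL(\bK')}\bD\ge\bR^w-\bD^\top\bP_{\underline{\bK},\bL(\underline{\bK})}\bD>0$, the compact set $\underline{\cK}$ is disjoint from $\overline{\cK^c}$, and a Hausdorff-distance lemma then yields $\cB_{2,\bK}>0$. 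Your approach is more elementary and more explicit; the paper's approach is more topological but has the advantage that the same compactness machinery (for the analogous set $\widehat{\cK}$ in \eqref{eqn:widehatck_main}) is directly reused in the proof of Theorem \ref{theorem:free_IR}, where uniform constants over a compact subset of $\cK$ are needed.
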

 
\begin{lemma}\label{lemma:grad_est} For any $\bK \in \cK$, let the batchsize $M_2$, smoothing radius $r_2$, inner-loop parameters $\epsilon_1$ and $\delta_1 \in (0, 1)$ satisfy
\begin{align*}
    M_2 &\geq \max\Big\{\frac{8d_2^2c_0^2\cG(\bK, \bL(\bK))^2}{r_2^2\phi^2\epsilon_2^2}, \frac{32d_2^2(\cG(\bK, \bL(\bK)) + r_2\cB_{\bP, \bK}c_0)^2}{r_2^2\epsilon_2^2}\Big\} \cdot\log\Big(\frac{6d_2}{\delta_2}\Big), \quad \epsilon_1 \leq \frac{\epsilon_2r_2}{4d_2}\\
	r_2 &\leq \min\Big\{\cB_{1, \bK}, \cB_{2, \bK}, \frac{\epsilon_2}{16c_{5, \bK}(c_{\Sigma_{\bK, \bL(\bK)}} +\cB_{\Sigma, \bK})}, \frac{\epsilon_2}{16c_{2, \bK}\cB_{\Sigma, \bK}}\Big\}, \quad \delta_1 \leq  \frac{\delta_2}{3M_2}.	
\end{align*}
where $\cB_{1, \bK}, \cB_{2, \bK}, \cB_{\Sigma, \bK}, c_{\Sigma_{\bK, \bL(\bK)}}>0 $ are defined in Lemmas \ref{lemma:outer_perturb}, \ref{lemma:grad_perturb}, and \ref{lemma:safe_perturb}, and $d_2 = dmN$. Then, we have with probability at least $1-\delta_2$ that $\big\|\overline{\nabla}_{\bK}\cG(\bK, \overline{\bL}(\bK)) - \nabla_{\bK}\cG(\bK, \bL(\bK))\big\|_F  \leq \epsilon_2$, where $\overline{\nabla}_{\bK}\cG(\bK, \overline{\bL}(\bK))$ is the estimated PG from Algorithm \ref{alg:model_free_outer_NPG}.
\end{lemma}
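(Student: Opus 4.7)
The plan is to decompose the estimation error into three contributions: (i) the smoothing bias between $\nabla_{\bK}\cG(\bK,\bL(\bK))$ and the gradient of a ball-smoothed surrogate $\widehat{\cG}_{r_2}(\bK,\bL(\bK))$; (ii) the bias introduced by replacing each ideal inner-loop solution $\bL(\bK^j)$ by Algorithm~\ref{alg:model_free_inner_PG}'s output $\overline{\bL}(\bK^j)$ together with using a single-trajectory cost $\overline{\cG}(\bK^j,\overline{\bL}(\bK^j))$ in place of its conditional mean; and (iii) the Monte Carlo variance of the $M_2$ i.i.d.\ perturbation samples. The natural starting point is the sphere-sampling identity $\EE\big[(d_2/r_2)\,\cG(\bK+r_2\bV,\bL(\bK+r_2\bV))\,\bV\big] = \nabla_{\bK}\widehat{\cG}_{r_2}(\bK,\bL(\bK))$, which is only meaningful if every perturbed $\bK^j:=\bK+r_2\bV^j$ lies in $\cK$ so the inner-loop oracle is well-defined; this is enforced by Lemma~\ref{lemma:safe_perturb} via the hypothesis $r_2\le\cB_{2,\bK}$.

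For (i), the smoothing bias is at most $\sup_{\|\bK'-\bK\|_F\le r_2}\|\nabla_{\bK}\cG(\bK',\bL(\bK'))-\nabla_{\bK}\cG(\bK,\bL(\bK))\|_F$, which by Lemma~\ref{lemma:grad_perturb} is bounded by $\epsilon_2/4$ under the three ratio conditions on $r_2$ in the hypothesis (note the factor $16$ in those thresholds supplies the extra $4$ for this sub-error). For (ii), I would first take a union bound over the $M_2$ inner-loop calls, each failing with probability at most $\delta_1\le\delta_2/(3M_2)$, contributing at most $\delta_2/3$ to the total failure probability; conditional on all succeeding, Algorithm~\ref{alg:model_free_inner_PG}'s guarantee gives $|\cG(\bK^j,\overline{\bL}(\bK^j))-\cG(\bK^j,\bL(\bK^j))|\le\epsilon_1$, and $\|x_0\|,\|\xi_t\|\le\vartheta$ makes $\overline{\cG}(\bK^j,\overline{\bL}(\bK^j))$ a bounded estimator of its mean. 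Scaled by $d_2/r_2$, these two sources yield a bias at most $\epsilon_2/4$ under $\epsilon_1\le\epsilon_2 r_2/(4 d_2)$ together with the first lower bound on $M_2$, which controls the stage-cost averaging deviation at the scale $\epsilon_2 r_2/(d_2 c_0)$.

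For (iii), each summand $\tfrac{d_2}{r_2}\overline{\cG}(\bK^j,\overline{\bL}(\bK^j))\bV^j$ has Frobenius norm at most $(d_2/r_2)(\cG(\bK,\bL(\bK))+r_2\cB_{\bP,\bK}c_0)$ by Lemma~\ref{lemma:outer_perturb}, so a vector Hoeffding bound applied to the mean-zero centered sum with the second prescribed $M_2$ threshold yields a Monte Carlo deviation of at most $\epsilon_2/2$ with probability at least $1-\delta_2/3$. A final union bound over the inner-loop event, the stage-cost concentration event, and the Monte Carlo event yields total failure at most $\delta_2$ and total error at most $\epsilon_2/4+\epsilon_2/4+\epsilon_2/2=\epsilon_2$.

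The main obstacle will be the careful bookkeeping of coupled high-probability events across the inner-loop and outer-loop randomness: one must establish $\bK^j\in\cK$ \emph{before} invoking the inner-loop sample complexity so Algorithm~\ref{alg:model_free_inner_PG}'s guarantee applies, then propagate the approximate $\overline{\bL}(\bK^j)$ through the smoothing identity without inflating the variance bound, and finally match the polynomial dependencies on $r_2$, $d_2$, $\cG(\bK,\bL(\bK))$, $\cB_{\bP,\bK}$, and $c_0$ with the explicit thresholds stated in the hypothesis. The three-way $\epsilon_2$-budget split is what dictates the appearance of the factors $16$ in the $r_2$ threshold, $4$ in the $\epsilon_1$ threshold, and $32$ in the $M_2$ threshold, and lining these constants up with the intermediate bounds from Lemmas~\ref{lemma:outer_perturb}--\ref{lemma:safe_perturb} is the bulk of the computation.
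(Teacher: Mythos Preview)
Your approach is essentially the paper's: it introduces the intermediate quantities $\check{\nabla}:=\overline{\nabla}_{\bK}\cG(\bK,\overline{\bL}(\bK))$, $\overline{\nabla}:=\overline{\nabla}_{\bK}\cG(\bK,\bL(\bK))$, $\widehat{\nabla}:=\EE_{\bm{\xi}}[\overline{\nabla}]$, $\tilde{\nabla}:=\nabla_{\bK}\cG_{r_2}(\bK,\bL(\bK))$, $\nabla:=\nabla_{\bK}\cG(\bK,\bL(\bK))$ and bounds $\|\check{\nabla}-\overline{\nabla}\|_F$ via the inner-loop union bound, $\|\overline{\nabla}-\widehat{\nabla}\|_F$ via Hoeffding over $\bm{\xi}$, $\|\widehat{\nabla}-\tilde{\nabla}\|_F$ via Hoeffding over the $\bV^j$, and $\|\tilde{\nabla}-\nabla\|_F$ via Lemma~\ref{lemma:grad_perturb}, exactly as you outline. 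The only slip is your $\epsilon_2$-budget: the paper uses a \emph{four}-term split with $\epsilon_2/4$ each, so under your three-way grouping the correct allocation is $\epsilon_2/4+\epsilon_2/2+\epsilon_2/4$ rather than $1/4+1/4+1/2$---the factor $8$ in the first $M_2$ bound yields an $\epsilon_2/4$ noise deviation that \emph{adds} to the $\epsilon_2/4$ inner-loop bias inside your (ii), while the factor $32$ in the second $M_2$ bound gives $\epsilon_2/4$ (not $\epsilon_2/2$) for your (iii).
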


\begin{lemma}\label{lemma:sigma_est}
	For any $\bK \in \cK$, let the batchsize $M_2$, inner-loop parameters $\epsilon_1$ and $\delta_1 \in (0, 1)$ satisfy
	\begin{align*}
		&M_2 \geq \frac{2c^2_{\Sigma_{\bK, \bL(\bK)}}}{\epsilon^{2}_2}\cdot\log\Big(\frac{4d_{\Sigma}}{\delta_2}\Big), \quad \epsilon_1 \leq \min\Big\{\cB_{1, \bL(\bK)}^2\lambda_{\min}(\bH_{\bK, \bL(\bK)}), \frac{\epsilon_2^2 \lambda_{\min}(\bH_{\bK, \bL(\bK)})}{4\cB_{\Sigma, \bL(\bK)}^2}\Big\}, \quad \delta_1 \leq \frac{\delta_2}{2},
	\end{align*}
	where $\cB_{1, \bL(\bK)}, \cB_{\Sigma, \bL(\bK)}$ are defined in Lemma \ref{lemma:sigma_L_perturbation} and $c_{\Sigma_{\bK, \bL(\bK)}}$ is a polynomial of $\|\bA\|_F$, $\|\bB\|_F$, $\|\bD\|_F$, $\|\bK\|_F$, and $c_0$. Then, we have with probability at least $1-\delta_2$ that $\|\overline{\Sigma}_{\bK, \overline{\bL}(\bK)}-\Sigma_{\bK, \bL(\bK)}\|_F \leq \epsilon_2$, with $\overline{\Sigma}_{\bK, \overline{\bL}(\bK)}$ being the estimated correlation matrix from Algorithm \ref{alg:model_free_outer_NPG}. Moreover, it holds with probability at least $1-\delta_2$ that $\lambda_{\min}(\overline{\Sigma}_{\bK, \overline{\bL}(\bK)}) \geq \phi/2$ if $\epsilon_2 \leq \phi/2$.
\end{lemma}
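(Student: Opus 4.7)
\textbf{Proof proposal for Lemma \ref{lemma:sigma_est}.} The plan is to split the error $\|\overline{\Sigma}_{\bK, \overline{\bL}(\bK)}-\Sigma_{\bK, \bL(\bK)}\|_F$ into a statistical piece and a perturbation piece via the triangle inequality:
\begin{align*}
\|\overline{\Sigma}_{\bK, \overline{\bL}(\bK)}-\Sigma_{\bK, \bL(\bK)}\|_F \leq \underbrace{\|\overline{\Sigma}_{\bK, \overline{\bL}(\bK)}-\Sigma_{\bK, \overline{\bL}(\bK)}\|_F}_{\text{(I) concentration}} + \underbrace{\|\Sigma_{\bK, \overline{\bL}(\bK)}-\Sigma_{\bK, \bL(\bK)}\|_F}_{\text{(II) perturbation}},
\end{align*}
and control each piece by roughly $\epsilon_2/2$, then combine through a union bound. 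For the minimum-eigenvalue claim, the plan is to use Weyl's inequality together with the fact that $\Sigma_{\bK, \bL(\bK)} \geq \Sigma_0$ in the p.s.d.\ sense, so $\lambda_{\min}(\Sigma_{\bK, \bL(\bK)}) \geq \phi$, giving $\lambda_{\min}(\overline{\Sigma}) \geq \phi - \epsilon_2 \geq \phi/2$ whenever $\epsilon_2 \leq \phi/2$.

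For piece (I), the $M_2$ sample matrices $\overline{\Sigma}^j_{\bK, \overline{\bL}(\bK)} = \mathrm{diag}\big(x^{j,1}_{k,0}(x^{j,1}_{k,0})^\top,\ldots,x^{j,1}_{k,N}(x^{j,1}_{k,N})^\top\big)$ are i.i.d., have mean $\Sigma_{\bK, \overline{\bL}(\bK)}$, and are bounded in Frobenius norm by some polynomial $c_{\Sigma_{\bK, \bL(\bK)}}$ in $\|\bA\|_F,\|\bB\|_F,\|\bD\|_F,\|\bK\|_F,c_0$, since $\|x_0\|, \|\xi_t\| \leq \vartheta$ almost surely and the closed-loop transition matrix $\bA-\bB\bK-\bD\overline{\bL}(\bK)$ has bounded norm (the bound on $\|\overline{\bL}(\bK)-\bL(\bK)\|_F$ from Theorems~\ref{theorem:free_pg_L}/\ref{theorem:free_npg_L} together with continuity in $\bL$ gives a uniform constant). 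I would then invoke a matrix Hoeffding-type concentration bound on block-diagonal symmetric matrices of dimension $d_{\Sigma} = m^2(N+1)$, yielding that with probability at least $1-\delta_2/2$,
\begin{align*}
\|\overline{\Sigma}_{\bK, \overline{\bL}(\bK)}-\Sigma_{\bK, \overline{\bL}(\bK)}\|_F \leq c_{\Sigma_{\bK, \bL(\bK)}}\sqrt{\tfrac{2}{M_2}\log(4d_{\Sigma}/\delta_2)} \leq \epsilon_2/2,
\end{align*}
under the stated lower bound on $M_2$.

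For piece (II), the plan is to apply the perturbation bound from Lemma~\ref{lemma:sigma_L_perturbation} (the inner-loop analogue of Lemma~\ref{lemma:outer_perturb}), which states that if $\|\overline{\bL}(\bK)-\bL(\bK)\|_F \leq \cB_{1, \bL(\bK)}$, then $\|\Sigma_{\bK, \overline{\bL}(\bK)}-\Sigma_{\bK, \bL(\bK)}\|_F \leq \cB_{\Sigma, \bL(\bK)} \cdot \|\overline{\bL}(\bK)-\bL(\bK)\|_F$. From Theorems~\ref{theorem:free_pg_L} and \ref{theorem:free_npg_L}, with probability at least $1-\delta_1$ the inner-loop oracle returns $\overline{\bL}(\bK)$ satisfying $\|\overline{\bL}(\bK)-\bL(\bK)\|_F \leq \sqrt{\lambda_{\min}^{-1}(\bH_{\bK, \bL(\bK)}) \cdot \epsilon_1}$. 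The requirement $\epsilon_1 \leq \cB_{1, \bL(\bK)}^2 \lambda_{\min}(\bH_{\bK, \bL(\bK)})$ ensures the perturbation lemma applies, and the requirement $\epsilon_1 \leq \epsilon_2^2 \lambda_{\min}(\bH_{\bK, \bL(\bK)})/(4\cB_{\Sigma, \bL(\bK)}^2)$ then yields (II) $\leq \epsilon_2/2$.

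Combining pieces via the union bound over the single statistical event (probability $\geq 1-\delta_2/2$) and the single inner-oracle event (probability $\geq 1-\delta_1 \geq 1-\delta_2/2$) gives the total bound with probability at least $1-\delta_2$. The main technical obstacle I anticipate is identifying a clean uniform constant $c_{\Sigma_{\bK, \bL(\bK)}}$ that bounds each sample matrix almost surely: one must ensure that the closed-loop matrix associated with $(\bK, \overline{\bL}(\bK))$ is uniformly controlled despite $\overline{\bL}(\bK)$ being a random quantity. The remedy is to first enforce that $\overline{\bL}(\bK)$ lies in a fixed neighborhood of $\bL(\bK)$ (through the choice $\epsilon_1 \leq \cB_{1, \bL(\bK)}^2 \lambda_{\min}(\bH_{\bK, \bL(\bK)})$ above), after which all samples are deterministically bounded on that event, making the matrix Hoeffding argument legitimate.
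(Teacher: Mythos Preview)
Your proposal is correct and follows essentially the same route as the paper's proof: the same triangle-inequality decomposition into a concentration piece (handled by matrix Hoeffding with the almost-sure bound $c_{\Sigma_{\bK,\bL(\bK)}}$) and a perturbation piece (handled via Lemma~\ref{lemma:sigma_L_perturbation} combined with the inner-oracle guarantee $\|\overline{\bL}(\bK)-\bL(\bK)\|_F \leq \sqrt{\epsilon_1/\lambda_{\min}(\bH_{\bK,\bL(\bK)})}$), followed by a union bound and Weyl's theorem for the eigenvalue claim. Your explicit flagging of the need to first pin $\overline{\bL}(\bK)$ into a deterministic neighborhood of $\bL(\bK)$ before applying Hoeffding is, if anything, more careful than the paper's somewhat abbreviated treatment of that point.
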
	
Based on the above lemmas, we prove the implicit regularization property of the outer-loop ZO-NPG update \eqref{eqn:npg_free_K}. Define $\zeta := \lambda_{\min}(\bH_{\bK_0, \bL(\bK_0)})>0$ and the following set $\widehat{\cK}$:
	\begin{align*}
 	\widehat{\cK} := \Big\{\bK \mid \eqref{eqn:DARE_black_L} \text{ admits a solution } \bP_{\bK, \bL(\bK)}\geq 0, \text{~and } \bP_{\bK, \bL(\bK)} \leq \bP_{\bK_0, \bL(\bK_0)} + \frac{\zeta}{2\|\bD\|^2}\cdot\bI\Big\} \subset \cK. 
 \end{align*}
It is a strict subset of $\cK$ as  all $\bK \in \widehat{\cK}$ satisfy $\lambda_{\min}(\bH_{\bK, \bL(\bK)}) \geq \frac{\zeta}{2} > 0$. Then, we prove the compactness of $\widehat{\cK}$ by first proving its boundedness. Specifically, for any $\bK \in \widehat{\cK} \subset \cK$, we have by \eqref{eqn:DARE_black_L} that $\bP_{\bK, \bL(\bK)} \geq 0$ solves 
 	\begin{align}\label{eqn:widehatK}
 		\bP_{\bK, \bL(\bK)} = \bQ + \bK^{\top}\bR^u\bK + (\bA-\bB\bK)^{\top}\big(\bP_{\bK, \bL(\bK)}+\bP_{\bK, \bL(\bK)}\bD(\bR^w - \bD^{\top}\bP_{\bK, \bL(\bK)}\bD)^{-1}\bD^{\top}\bP_{\bK, \bL(\bK)}\big)(\bA-\bB\bK),
 	\end{align}
 	where the second term on the RHS of \eqref{eqn:widehatK} is p.s.d. and thus $\bQ + \bK^{\top}\bR^u\bK \leq \bP_{\bK, \bL(\bK)}$ with $\bQ\geq 0$ and $\bR^u > 0$. Since $\bP_{\bK, \bL(\bK)}, \bP_{\bK_0, \bL(\bK_0)}$ are symmetric and p.s.d.,  all $\bK \in \widehat{\cK}$ satisfy $\|\bP_{\bK, \bL(\bK)}\|_F \leq \big\|\bP_{\bK_0, \bL(\bK_0)} + \frac{\zeta}{2\|\bD\|^2}\cdot\bI\big\|_F$. These arguments together imply that for all $\bK\in\widehat{\cK}$, $\|\bK\|_F\leq \sqrt{\|\bP_{\bK_0, \bL(\bK_0)} + \frac{\zeta}{2\|\bD\|^2}\cdot\bI\|_F/\lambda_{\min}(\bR^u)}$, proving the boundedness of $\widehat{\cK}$. 
 	
Next, we take an arbitrary sequence $\{\bK_n\} \in \widehat{\cK}$ and note that $\|\bK_n\|_F$ is bounded for all $n$. Applying Bolzano-Weierstrass theorem implies that the set of limit points of $\{\bK_n\}$, denoted as $\widehat{\cK}_{\lim}$, is nonempty. Then, for any $\bK_{\lim} \in \widehat{\cK}_{\lim}$, we can find a subsequence $\{\bK_{\tau_n}\} \in \widehat{\cK}$ that converges to $\bK_{\lim}$. We denote the corresponding sequence of solutions to \eqref{eqn:widehatK} as $\{\bP_{\bK_{\tau_n}, \bL(\bK_{\tau_n})}\}$, where $0 \leq \bP_{\bK_{\tau_n}, \bL(\bK_{\tau_n})} \leq \bP_{\bK_0, \bL(\bK_0)} + \frac{\zeta}{2\|\bD\|^2}\cdot\bI$ for all $n$. By Bolzano-Weierstrass theorem, the boundedness of $\{\bP_{\bK_{\tau_n}, \bL(\bK_{\tau_n})}\}$, and the continuity of \eqref{eqn:widehatK} with respect to $\bK$, we have the set of limit points of $\{\bP_{\bK_{\tau_n}, \bL(\bK_{\tau_n})}\}$, denoted as $\widehat{\cP}_{\lim}$, is nonempty. Then, for any $\bP_{\lim} \in \widehat{\cP}_{\lim}$, we can again find a subsequence $\{\bP_{\bK_{\kappa_{\tau_n}}, \bL(\bK_{\kappa_{\tau_n}})}\}$ that converges to $\bP_{\lim}$. Since $\bP_{\bK_{\kappa_{\tau_n}}, \bL(\bK_{\kappa_{\tau_n}})}$ is a p.s.d. solution to \eqref{eqn:widehatK} satisfying $0 \leq \bP_{\bK_{\kappa_{\tau_n}}, \bL(\bK_{\kappa_{\tau_n}})} \leq \bP_{\bK_0, \bL(\bK_0)} + \frac{\zeta}{2\|\bD\|^2}\cdot\bI$ for all $n$ and \eqref{eqn:widehatK} is continuous in $\bK$, $\bP_{\lim}$ must solve \eqref{eqn:widehatK} and satisfy $0 \leq \bP_{\lim} \leq \bP_{\bK_0, \bL(\bK_0)} + \frac{\zeta}{2\|\bD\|^2}\cdot\bI$, which implies $\bK_{\lim} \in \widehat{\cK}$. Note that the above arguments work for any sequence $\{\bK_n\} \in \widehat{\cK}$ and any limit points $\bK_{\lim}$ and $\bP_{\lim}$, which proves the closedness of  $\widehat{\cK}$. Together with the boundedness, $\widehat{\cK}$ is thus compact.

Now, denote the iterates after one-step of the outer-loop ZO-NPG update \eqref{eqn:npg_free_K} and the exact NPG update  \eqref{eqn:npg_K} from $\bK_{k}$ as $\bK_{k +1}$ and $\tilde{\bK}_{k +1}$, respectively, for $k \in \{0, \cdots, K-1\}$. Clearly, it holds that $\bK_0 \in \widehat{\cK}$. Additionally, we require the stepsize $\alpha > 0$ to satisfy 
\begin{align}\label{eqn:free_alpha}
	\alpha \leq \frac{1}{2}\cdot\big\|\bR^u + \bB(\overline{\bP} + \overline{\bP}\bD(\bR^w - \bD^{\top}\overline{\bP}\bD)^{-1}\bD^{\top}\overline{\bP})\bB\big\|^{-1}, \quad \text{where} \quad  \overline{\bP} := \bP_{\bK_0, \bL(\bK_0)} + \frac{\zeta}{2\|\bD\|^2}\cdot\bI,
\end{align}
which is stricter than the requirement in Theorem \ref{theorem:ir_K}. Given our stepsize choice, Theorem \ref{theorem:ir_K} guarantees that $\bP_{\tilde{\bK}_1, \bL(\tilde{\bK}_1)} \geq 0$ exists and satisfies $\bP_{\tilde{\bK}_1, \bL(\tilde{\bK}_1)} \leq \bP_{\bK_0, \bL(\bK_0)}$ almost surely. By requiring $\|\bK_1- \tilde{\bK}_1\|_F \leq \min\big\{\cB_{1, \tilde{\bK}_1}, \cB_{2, \tilde{\bK}_1}, \frac{\zeta}{2\cB_{\bP, \tilde{\bK}_1}K\|\bD\|^2}\big\}$ with probability at least $1-\frac{\delta_2}{K}$, we have by Lemma \ref{lemma:safe_perturb} that $\bK_1 \in \cK$ with probability at least $1-\frac{\delta_2}{K}$. Thus, we invoke the recursive arguments in the proof of Theorem \ref{theorem:ir_K} (see \S\ref{proof:ir_K}) to show that $\bP_{\bK_1, \bL(\bK_1)} \geq 0$ exists and invoke Lemma \ref{lemma:outer_perturb} to obtain that 
 \begin{align}\label{eqn:ir_1st_step}
 	\|\bP_{\bK_1, \bL(\bK_1)} - \bP_{\tilde{\bK}_1, \bL(\tilde{\bK}_1)}\|  \leq \frac{\zeta}{2K\|\bD\|^2}  \Longrightarrow  \bP_{\bK_1, \bL(\bK_1)} \leq \bP_{\tilde{\bK}_1, \bL(\tilde{\bK}_1)} + \frac{\zeta}{2K\|\bD\|^2}\cdot\bI \leq \bP_{\bK_0, \bL(\bK_0)} + \frac{\zeta}{2K\|\bD\|^2}\cdot\bI.
 \end{align}
 In other words, it holds that $\bK_1 \in \widehat{\cK}$ with probability at least $1-\frac{\delta_2}{K}$. Subsequently, we can invoke Theorem \ref{theorem:ir_K} again to get that taking another step of the exact outer-loop NPG update starting from $\bK_1$, the updated gain matrix, denoted as $\tilde{\bK}_2$, satisfies $\bP_{\tilde{\bK}_2, \bL(\tilde{\bK}_2)} \leq \bP_{\bK_1, \bL(\bK_1)}$. Similarly, by requiring $\|\bK_2 - \tilde{\bK}_2\|_F \leq \min\big\{\cB_{1, \tilde{\bK}_2}, \cB_{2, \tilde{\bK}_2}, \frac{\zeta}{2\cB_{\bP, \tilde{\bK}_2}K\|\bD\|^2}\big\}$, we can guarantee with probability at least $1-\frac{\delta_2}{K}$ that $\bK_2 \in \cK$, conditioned on $\bK_1\in\widehat{\cK}$. Conditioning on \eqref{eqn:ir_1st_step}, we apply Theorem \ref{theorem:ir_K} and Lemma \ref{lemma:outer_perturb} again to get with probability at least $1-\frac{\delta_2}{K}$ that $\bP_{\bK_2, \bL(\bK_2)} \geq 0$ exists and satisfies
 \begin{align*}
 	 \bP_{\bK_2, \bL(\bK_2)} \leq \bP_{\tilde{\bK}_2, \bL(\tilde{\bK}_2)} + \frac{\zeta}{2K\|\bD\|^2}\cdot\bI \leq \bP_{\bK_1, \bL(\bK_1)} + \frac{\zeta}{2K\|\bD\|^2}\cdot\bI \leq \bP_{\bK_0, \bL(\bK_0)} + \frac{\zeta}{K\|\bD\|^2}\cdot\bI.
 \end{align*}
 That is, $\bK_2 \in \widehat{\cK}$ with probability at least $1-\frac{\delta_2}{K}$. Applying above arguments iteratively for all iterations and taking a union bound yield that if the estimation accuracy, for all $k \in \{1, \cdots, K\}$, satisfies with probability at least $1-\frac{\delta_2}{K}$ that
  \begin{align}\label{eqn:ir_req_free}
 	\|\bK_k - \tilde{\bK}_k\|_F \leq \min\left\{\cB_{1, \tilde{\bK}_k}, \cB_{2, \tilde{\bK}_k}, \frac{\zeta}{2\cB_{\bP, \tilde{\bK}_k}K\|\bD\|^2}\right\},
 \end{align}
 then it holds with probability at least $1-\delta_2$ that $\bK_{K} \in \widehat{\cK} \subset \cK$, where $\bK_K$ is the gain matrix after $K$ steps of the ZO-NPG update \eqref{eqn:npg_free_K} starting from $\bK_0$ and with a constant stepsize $\alpha$ satisfying \eqref{eqn:free_alpha}.
 
 Lastly, we provide precise choices of the parameters for Algorithm \ref{alg:model_free_outer_NPG} such that \eqref{eqn:ir_req_free} can be satisfied. Since $\widehat{\cK}$ is compact, there exist some uniform constants over $\widehat{\cK}$
\begin{align*}
 \cB_1 := \min_{\bK \in \widehat{\cK}}~\cB_{1, \bK} > 0, \quad \cB_{\bP}:=\max_{\bK \in \widehat{\cK}}~\cB_{\bP, \bK} < \infty.
\end{align*}
Moreover, let us denote the closure of the complement of $\cK$ as $\overline{\cK^c}$. Since $\widehat{\cK} \subset \cK$ is compact and all $\bK \in \widehat{\cK}$ satisfy: i) $\bP_{\bK, \bL(\bK)} \geq 0$ exists; and (ii) $\lambda_{\min}(\bH_{\bK, \bL(\bK)}) \geq \frac{\zeta}{2} > 0$, $\widehat{\cK}$ is disjoint from $\overline{\cK^c}$, i.e., $\widehat{\cK}\cap\overline{\cK^c} = \varnothing$. Hence, one can deduce (see for example Lemma A.1 of \cite{danishcomplex}) that, there exists a distance $\cB_{2}>0$ between $\widehat{\cK}$ and $\overline{\cK^c}$ such that $\cB_{2} \leq \min_{\bK \in \widehat{\cK}}\cB_{2, \bK}$. As a result, we now aim to enforce for all $k \in \{1, \cdots, K\}$, that the following inequality holds with probability at least $1-\frac{\delta_2}{K}$:
\begin{align}\label{eqn:varpi}
\|\bK_k - \tilde{\bK}_k\|_F \leq \varpi := \min\left\{\cB_{1}, \cB_{2}, \frac{\zeta}{2\cB_{\bP}K\|\bD\|^2}\right\}.
\end{align}
Note that compared to \eqref{eqn:ir_req_free}, \eqref{eqn:varpi} is independent of the iteration index $k$, and is more stringent. Thus, \eqref{eqn:varpi} also ensures $\bK_k \in \widehat{\cK}$ for all $k \in \{1, \cdots, K\}$ with probability at least $1-\delta_2$, as \eqref{eqn:ir_req_free} does. Condition \eqref{eqn:varpi} can be achieved by Lemmas \ref{lemma:grad_est} and \ref{lemma:sigma_est}. Specifically, we start from any gain matrix $\bK$ from the set of $\{\bK_0, \cdots, \bK_{K-1}\}$ and use $\bK'$ and $\tilde{\bK}'$ to denote the gain matrices after one step of the ZO-NPG \eqref{eqn:npg_free_K} and the exact NPG \eqref{eqn:npg_K} updates, respectively. Now, suppose that the parameters of Algorithm \ref{alg:model_free_outer_NPG} satisfy $\delta_1 \leq \delta_2/[6M_2K]$ and
\small
\begin{align*}
     M_2  &\geq \max\left\{\frac{128d_2^2\alpha^2c_0^2\widehat{\cG}(\bK, \bL(\bK))^2}{r_2^2\phi^4\varpi^2}, \frac{512d_2^2\alpha^2(\widehat{\cG}(\bK, \bL(\bK)) + r_2\cB_{\bP}c_0)^2}{r_2^2\phi^2\varpi^2}, \frac{32\alpha^2(\widehat{c}_{5, \bK})^2(\widehat{c}_{\Sigma_{\bK, \bL(\bK)}})^2}{\phi^2\varpi^2}, \frac{8(\widehat{c}_{\Sigma_{\bK, \bL(\bK)}})^2}{\phi^2}\right\} \cdot\log\bigg(\frac{12K\max\{d_2, d_{\Sigma}\}}{\delta_2}\bigg),\\
    \epsilon_1 &\leq \min\left\{\frac{\phi\varpi r_2}{16\alpha d_2}, \frac{\widehat{\cB}_{1, \bL(\bK)}^2\zeta}{2}, \frac{\phi^2\varpi^2 \zeta}{128\alpha^2(\widehat{c}_{5, \bK})^2\widehat{\cB}_{\Sigma, \bL(\bK)}^2}, \frac{\phi^2 \zeta}{32\widehat{\cB}_{\Sigma, \bL(\bK)}^2}\right\}, ~~~ r_2 \leq \min\Big\{\varpi, \frac{\phi\varpi}{64\alpha  \widehat{c}_{5, \bK}(\widehat{c}_{\Sigma_{\bK, \bL(\bK)}} +\widehat{\cB}_{\Sigma, \bK})}, \frac{\phi\varpi}{64\alpha \widehat{c}_{2, \bK}\widehat{\cB}_{\Sigma, \bK}}\Big\},
\end{align*}
\normalsize
with the requirement on $\epsilon_1$ uses the fact that all $\bK \in \widehat{\cK}$ satisfy $\lambda_{\min}(\bH_{\bK, \bL(\bK)}) \geq \frac{\zeta}{2}$. Also, $\widehat{\cG}(\bK, \bL(\bK)), ~\widehat{c}_{2, \bK}, ~\widehat{c}_{5, \bK}, ~\widehat{c}_{\Sigma_{\bK, \bL(\bK)}}, ~\widehat{\cB}_{\Sigma, \bK}, \\ \widehat{\cB}_{1, \bL(\bK)}, ~\widehat{\cB}_{\Sigma, \bL(\bK)} >0$ are uniform constants over $\widehat{\cK}$ such that
\begin{align*}
\widehat{\cG}(\bK, \bL(\bK)) &:= \max_{\bK \in\widehat{\cK}}~\cG(\bK, \bL(\bK)), \quad \widehat{\cB}_{\Sigma, \bK} := \max_{\bK \in\widehat{\cK}}~\cB_{\Sigma, \bK}, \quad \widehat{\cB}_{1, \bL(\bK)}:= \min_{\bK\in\widehat{\cK}} ~\cB_{1, \bL(\bK)}, \\
	\widehat{\cB}_{\Sigma, \bL(\bK)} &:= \max_{\bK\in\widehat{\cK}} ~ \cB_{\Sigma, \bL(\bK)}, \quad \widehat{c}_{2, \bK} := \max_{\bK \in\widehat{\cK}}~c_{2, \bK}, \quad \widehat{c}_{5, \bK} := \max_{\bK \in\widehat{\cK}}~c_{5, \bK}, \quad \widehat{c}_{\Sigma_{\bK, \bL(\bK)}} := \max_{\bK \in\widehat{\cK}}~c_{\Sigma_{\bK, \bL(\bK)}},
\end{align*}
where $\cB_{\Sigma, \bK}$ is from Lemma \ref{lemma:outer_perturb}, $\cB_{1, \bL(\bK)}, \cB_{\Sigma, \bL(\bK)}$ are from Lemma \ref{lemma:sigma_L_perturbation}, $c_{2, \bK}, c_{5, \bK}$ are defined in \S\ref{sec:axu}, and $c_{\Sigma_{\bK, \bL(\bK)}}$ is from Lemma \ref{lemma:grad_perturb}.  Then, Lemma \ref{lemma:grad_est} proves that the following inequality holds with probability at least $1-\frac{\delta_2}{2K}$:
\begin{align}\label{eqn:nabla_K_accurate}
	\big\|\overline{\nabla}_{\bK}\cG(\bK, \overline{\bL}(\bK)) - \nabla_{\bK}\cG(\bK, \bL(\bK))\big\|_F \leq \frac{\phi\varpi}{4\alpha}.
\end{align}
Moreover, we invoke Lemma \ref{lemma:sigma_est} to get with probability at least $1-\frac{\delta_2}{2K}$ that 
\begin{align*}
		\left\|\overline{\Sigma}_{\bK, \overline{\bL}(\bK)} - \Sigma_{\bK, \bL(\bK)}\right\|_F \leq \min\left\{\frac{\phi\varpi}{4\alpha \widehat{c}_{5, \bK}},~ \frac{\phi}{2}\right\} \leq \min\left\{\frac{\phi\varpi}{4\alpha c_{5, \bK}},~ \frac{\phi}{2}\right\}.
\end{align*}
By matrix perturbation theory (see for example Theorem 35 of \cite{fazel2018global}) and noting that $\overline{\Sigma}_{\bK, \overline{\bL}(\bK)} = \Sigma_{\bK, \bL(\bK)} + (\overline{\Sigma}_{\bK, \overline{\bL}(\bK)} - \Sigma_{\bK, \bL(\bK)})$, since $\Sigma_{\bK, \bL(\bK)} \geq \Sigma_0$ and $\|\overline{\Sigma}_{\bK, \overline{\bL}(\bK)} - \Sigma_{\bK, \bL(\bK)}\| _F\leq \frac{\phi}{2}$, we have with probability at least $1-\frac{\delta_2}{2K}$ that
\small
	\begin{align}\label{eqn:sigma_K_accurate}
		\hspace{-0.9em}\big\|\overline{\Sigma}_{\bK, \overline{\bL}(\bK)}^{-1} - \Sigma^{-1}_{\bK, \bL(\bK)}\big\|_F \leq \frac{2\|\overline{\Sigma}_{\bK, \overline{\bL}(\bK)} - \Sigma_{\bK, \bL(\bK)}\|_F}{\phi} \leq \frac{\varpi}{2\alpha c_{5, \bK}}.
	\end{align}
	\normalsize
	Lastly, by Lemma \ref{lemma:sigma_est} and $\|\overline{\Sigma}_{\bK, \overline{\bL}(\bK)} - \Sigma_{\bK, \bL(\bK)}\|_F \leq \frac{\phi}{2}$, we can ensure that $\lambda_{\min}(\overline{\Sigma}_{\bK, \overline{\bL}(\bK)}) \geq \frac{\phi}{2}$ and thus $\big\|\overline{\Sigma}^{-1}_{\bK, \overline{\bL}(\bK)}\big\|_F \leq \frac{2}{\phi}$, all with probability at least $1-\frac{\delta_2}{2K}$. Then, we combine \eqref{eqn:nabla_K_accurate} and \eqref{eqn:sigma_K_accurate} to show that with probability at least $1-\frac{\delta_2}{K}$, 
	\begin{align*}
		\|\tilde{\bK}' - \bK'\|_F &= \alpha\big\|\overline{\nabla}_{\bK}\cG(\bK, \overline{\bL}(\bK))\overline{\Sigma}_{\bK, \overline{\bL}(\bK)}^{-1} - \nabla_{\bK}\cG(\bK, \bL(\bK))\Sigma_{\bK, \bL(\bK)}^{-1}\big\|_F \\
		&\hspace{-5em}\leq \alpha\big\|\overline{\nabla}_{\bK}\cG(\bK, \overline{\bL}(\bK)) - \nabla_{\bK}\cG(\bK, \bL(\bK))\big\|_F\big\|\overline{\Sigma}^{-1}_{\bK, \overline{\bL}(\bK)}\big\|_F + \alpha\big\|\nabla_{\bK}\cG(\bK, \bL(\bK))\big\|_F\big\|\overline{\Sigma}_{\bK, \overline{\bL}(\bK)}^{-1} - \Sigma^{-1}_{\bK, \bL(\bK)}\big\|_F \\
		&\hspace{-5em} \leq \frac{2\alpha}{\phi}\cdot\big\|\overline{\nabla}_{\bK}\cG(\bK, \overline{\bL}(\bK)) - \nabla_{\bK}\cG(\bK, \bL(\bK))\big\|_F  + \alpha c_{5, \bK} \cdot \big\|\overline{\Sigma}_{\bK, \overline{\bL}(\bK)}^{-1} - \Sigma^{-1}_{\bK, \bL(\bK)}\big\|_F  \leq \varpi, 
	\end{align*}
	where the second inequality uses \eqref{bound:grad_K}. Therefore, the above choices of parameters fulfill the requirements in \eqref{eqn:varpi}, and thus guarantee $\bK_k \in \widehat{\cK}$ for all $k \in \{1, \cdots, K\}$ with probability at least $1-\delta_2$. This completes the proof.
\end{proof}

\subsection{Proof of Theorem \ref{theorem:free_npg_K}}\label{proof:free_npg_K}
 \begin{proof}  
We first require the parameters in Algorithm \ref{alg:model_free_outer_NPG} to satisfy \eqref{eqn:ir_free_req}. Then, Theorem \ref{theorem:free_IR} proves with probability at least $1-\delta_2$ that $\bK_k \in \widehat{\cK}$, for all $k \in \{1, \cdots, K\}$. Subsequently, we characterize the convergence rate of the outer-loop ZO-NPG update \eqref{eqn:npg_free_K}, conditioned on $\bK_k \in \widehat{\cK}$, for all $k \in \{1, \cdots, K\}$. Once again, we start from $\bK_0$ and characterize the one-step progress of \eqref{eqn:npg_free_K} from $\bK_0$ to $\bK_1$. In addition to the requirement in \eqref{eqn:ir_req_free}, we now require $\|\bK_1 -\tilde{\bK}_1\|_F \leq \frac{\alpha\phi\epsilon_2}{\sqrt{s}c_0\cB_{\bP, \tilde{\bK}_1}}$ with probability at least $1-\frac{\delta_2}{K}$, where $\tilde{\bK}_1$ is the gain matrix after one step of \eqref{eqn:npg_K} starting from $\bK_0$ and with a constant stepsize $\alpha$ satisfying \eqref{eqn:free_alpha}. Invoking Lemma \ref{lemma:outer_perturb} yields with probability at least $1-\frac{\delta_2}{K}$ that
	\begin{align} \label{eqn:pk_diff}
		\Tr(\bP_{\bK_1, \bL(\bK_1)} - \bP_{\tilde{\bK_1}, \bL(\tilde{\bK_1})}) \leq \sqrt{s}\|\bP_{\bK_1, \bL(\bK_1)}-\bP_{\tilde{\bK}_1, \bL(\tilde{\bK_1})}\|_F \leq \sqrt{s}\cB_{\bP, \tilde{\bK}_1}\cdot\|\bK_1-\tilde{\bK}_1\|_F \leq  \frac{\alpha\phi\epsilon_2}{c_0},
	\end{align}
	where $\bP_{\bK_1, \bL(\bK_1)} - \bP_{\tilde{\bK_1}, \bL(\tilde{\bK_1})} \in \RR^{s\times s}$ is symmetric. By \eqref{eqn:npg_seq}, we have with probability at least $1-\frac{\delta_2}{K}$ that
	\begin{align*}
		\Tr\left((\bP_{\bK_1, \bL(\bK_1)} - \bP_{\bK_0, \bL(\bK_0)})\Sigma_0\right) &=  \Tr\left((\bP_{\bK_1, \bL(\bK_1)} - \bP_{\tilde{\bK}_1, \bL(\tilde{\bK}_1)})\Sigma_0\right) + \Tr\left((\bP_{\tilde{\bK}_1, \bL(\tilde{\bK}_1)} -\bP_{\bK_0, \bL(\bK_0)})\Sigma_0\right) \\
		&\leq \frac{\alpha\phi\epsilon_2}{c_0}\cdot c_0-2\alpha\phi\Tr(\bF^{\top}_{\bK_0, \bL(\bK_0)}\bF_{\bK_0, \bL(\bK_0)}) \leq -\alpha\phi\Tr(\bF^{\top}_{\bK_0, \bL(\bK_0)}\bF_{\bK_0, \bL(\bK_0)}),
	\end{align*}
	where the first inequality is due to $\|\Sigma_0\|_F \leq c_0$ almost surely and the last inequality follows from our assumption that $\Tr(\bF^{\top}_{\bK_0, \bL(\bK_0)}\bF_{\bK_0, \bL(\bK_0)}) \geq \epsilon_2$. Similarly, for all future iterations $k \in \{2, \cdots, K\}$, we can require $\|\bK_k - \tilde{\bK}_k\|_F \leq \frac{\alpha\phi\epsilon_2}{\sqrt{s}c_0\cB_{\bP, \tilde{\bK}_k}}$ with probability at least $1-\frac{\delta_2}{K}$ to obtain $\Tr\left((\bP_{\bK_k, \bL(\bK_k)} - \bP_{\bK_{k-1}, \bL(\bK_{k-1})})\Sigma_0\right) \leq -\alpha\phi\Tr(\bF^{\top}_{\bK_{k-1}, \bL(\bK_{k-1})}\bF_{\bK_{k-1}, \bL(\bK_{k-1})})$ until the update converges to the $\epsilon_2$-stationary point of the outer-loop, which is also the unique Nash equilibrium. Summing up all iterations and taking a union bound yield with probability at least $1-\delta_2$ the sublinear convergence rate, in that 
	\begin{align}\label{eqn:grad_norm2_conv}
		\frac{1}{K}\sum^{K-1}_{k=0}\Tr(\bF^{\top}_{\bK_k, \bL(\bK_k)}\bF_{\bK_k, \bL(\bK_k)}) \leq \frac{\Tr(\bP_{\bK_0, \bL(\bK_0)} - \bP_{\bK^*, \bL(\bK^*)})}{\alpha\phi\cdot K}.
	\end{align}
	That is, when $K \hspace{-0.15em}= \hspace{-0.15em}\Tr(\bP_{\bK_0, \bL(\bK_0)} - \bP_{\bK^*, \bL(\bK^*)})/[\alpha\phi\epsilon_2]$,   it satisfies with probability at least $1-\delta_2$ that $K^{-1}\hspace{-0.1em}\sum^{K-1}_{k = 0}\|\bF_{\bK_k, \bL(\bK_k)}\|^2_F \hspace{-0.1em}\leq\hspace{-0.1em} \epsilon_2$. 

Lastly, the precise choices of the parameters for Algorithm \ref{alg:model_free_outer_NPG} such that the above requirements on the estimation accuracy can be satisfied follow from the proof of Theorem \ref{theorem:free_IR} in \S\ref{proof:free_IR}, with additionally $\epsilon_2 \leq \phi/2$, and $\varpi$ in \eqref{eqn:varpi} replaced by 
\begin{align}\label{eqn:varphi}
\varphi := \min\left\{\varpi, \frac{\alpha\phi\epsilon_2}{\sqrt{s}c_0\cB_{\bP}}\right\},
\end{align}
where $\cB_{\bP}=\max_{\bK \in \widehat{\cK}}~\cB_{\bP, \bK}$ is a uniform constant. Note that \eqref{eqn:varphi} is again independent of the iteration index $k$, and is more stringent than \eqref{eqn:varpi}. Thus, \eqref{eqn:varphi} also ensures $\bK_k \in \widehat{\cK}$ for all $k \in \{1, \cdots, K\}$ with probability at least $1-\delta_2$, as \eqref{eqn:varpi} does. By Lemmas \ref{lemma:grad_est} and \ref{lemma:sigma_est}, requirements on the parameters in \eqref{eqn:ir_free_req} with $\varpi$ replaced by $\varphi$ suffice to guarantee that $\bK_k \in \widehat{\cK}$ for all $k \in \{1, \cdots, K\}$ with probability at least $1-\delta_2$. Also, it ensures with probability at least $1-\delta_2$ that the convergence in \eqref{eqn:grad_norm2_conv} holds. This completes the proof.
 \end{proof}

\section{Supplementary Proofs}

\subsection{Proof of Lemma \ref{lemma:out_cost_diff_recursive}}\label{sec:cost_diff_recursive_proof}
\begin{proof}
 Firstly, we have $R^w_t - D^{\top}_tP_{K_{t+1}, L(K_{t+1})}D_t$, $R^w_t - D^{\top}_tP_{K'_{t+1}, L(K'_{t+1})}D_t$ invertible since the conditions 
\begin{align*}
	R^w_t - D^{\top}_tP_{K_{t+1}, L(K_{t+1})}D_t > 0, \quad R^w_t - D^{\top}_tP_{K'_{t+1}, L(K'_{t+1})}D_t > 0
\end{align*}
are satisfied. Then, by the definition of $P_{K, L}$ in \eqref{eqn:pkl_recursive} and the definition of $L(K)$, we can derive
\small
	\begin{align*}
		&\hspace{1em}P_{K'_t, L(K'_t)} - P_{K_t, L(K_t)} - A_{K'_t, L(K'_t)}^{\top}(P_{K'_{t+1}, L(K'_{t+1})} - P_{K_{t+1}, L(K_{t+1})})A_{K'_t, L(K'_t)} \\
		&=A_{K'_t, L(K'_t)}^{\top}P_{K_{t+1}, L(K_{t+1})}A_{K'_t, L(K'_t)} - P_{K_t, L(K_t)} +P_{K'_t, L(K'_t)} - A_{K'_t, L(K'_t)}^{\top}P_{K'_{t+1}, L(K'_{t+1})}A_{K'_t, L(K'_t)}\\
		&= (A_t-B_tK'_t)^{\top}P_{K_{t+1}, L(K_{t+1})}(A_t-B_tK'_t) - L(K'_t)^{\top}D^{\top}_tP_{K_{t+1}, L(K_{t+1})}(A_t-B_tK'_t) - (A_t-B_tK'_t)^{\top}P_{K_{t+1}, L(K_{t+1})}D_tL(K'_t)\\
		&\hspace{1em} - P_{K_t, L(K_t)} + Q_t + (K'_t)^{\top}R^u_t(K'_t) - L(K'_t)^{\top}(R^w_t - D^{\top}_tP_{K_{t+1}, L(K_{t+1})}D_t)L(K'_t) \\
		&=(A_t-B_tK'_t)^{\top}\tilde{P}_{K_{t+1}, L(K_{t+1})}(A_t-B_tK'_t) - (A_t-B_tK'_t)^{\top}P_{K_{t+1}, L(K_{t+1})}D_t(R^w_t - D^{\top}_tP_{K_{t+1}, L(K_{t+1})}D_t)^{-1}D^{\top}_tP_{K_{t+1}, L(K_{t+1})}(A_t-B_tK'_t) \\
		&\hspace{1em}- P_{K_t, L(K_t)} + Q_t + (K'_t)^{\top}R^u_t(K'_t) - L(K'_t)^{\top}D^{\top}_tP_{K_{t+1}, L(K_{t+1})}(A_t-B_tK'_t) - (A_t-B_tK'_t)^{\top}P_{K_{t+1}, L(K_{t+1})}D_tL(K'_t)\\
		&\hspace{1em}- L(K'_t)^{\top}(R^w_t - D^{\top}_tP_{K_{t+1}, L(K_{t+1})}D_t)L(K'_t)\\
		&= \cR_{K_t, K'_t} - (A_t-B_tK'_t)^{\top}P_{K_{t+1}, L(K_{t+1})}D_t(R^w_t - D^{\top}_tP_{K_{t+1}, L(K_{t+1})}D_t)^{-1}D^{\top}_tP_{K_{t+1}, L(K_{t+1})}(A_t-B_tK'_t)  \\
		&\hspace{1em}  - L(K'_t)^{\top}D^{\top}_tP_{K_{t+1}, L(K_{t+1})}(A_t-B_tK'_t) - (A_t-B_tK'_t)^{\top}P_{K_{t+1}, L(K_{t+1})}D_tL(K'_t) - L(K'_t)^{\top}(R^w_t - D^{\top}_tP_{K_{t+1}, L(K_{t+1})}D_t)L(K'_t)\\
		&=\cR_{K_t, K'_t} - \big[-(R^w_t - D^{\top}_tP_{K_{t+1}, L(K_{t+1})}D_t)L(K'_t) - D^{\top}_tP_{K_{t+1}, L(K_{t+1})}(A_t-B_tK'_t)\big]^{\top}(R^w_t - D^{\top}_tP_{K_{t+1}, L(K_{t+1})}D_t)^{-1}\\
		&\hspace{1em}\cdot\big[-(R^w_t - D^{\top}_tP_{K_{t+1}, L(K_{t+1})}D_t)L(K'_t) - D^{\top}_tP_{K_{t+1}, L(K_{t+1})}(A_t-B_tK'_t)\big] \\
		&=\cR_{K_t, K'_t} - \Xi^{\top}_{K_t, K'_t}(R^w_t - D^{\top}_tP_{K_{t+1}, L(K_{t+1})}D_t)^{-1}\Xi_{K_t, K'_t}.
	\end{align*}
	\normalsize
	This completes the proof of \eqref{eqn:pk_diff_recursive}. Further, we invoke \eqref{eqn:pkl_recursive} to represent $\cR_{K_t, K'_t}$ as
	\begin{align*}
		\cR_{K_t, K'_t} &= (A_t-B_tK'_t)^{\top}\tilde{P}_{K_{t+1}, L(K_{t+1})}(A_t-B_tK'_t) - P_{K_t, L(K_t)} + Q_t + (K'_t)^{\top}R^u_t(K'_t) \\
 	&= (K'_t-K_t)^{\top}\left((R^u_t + B^{\top}_t\tilde{P}_{K_{t+1}, L(K_{t+1})}B_t)K_t - B^{\top}_t\tilde{P}_{K_{t+1}, L(K_{t+1})}A_t\right) + (K'_t-K_t)^{\top}(R^u_t + B^{\top}_t\tilde{P}_{K_{t+1}, L(K_{t+1})}B_t)(K'_t-K_t) \\
 	&\hspace{1em}+ \left((R^u_t + B^{\top}_t\tilde{P}_{K_{t+1}, L(K_{t+1})}B_t)K_t - B^{\top}_t\tilde{P}_{K_{t+1}, L(K_{t+1})}A_t\right)^{\top}(K'_t-K_t) \\
 	&= (K'_t-K_t)^{\top}F_{K_t, L(K_t)} + F_{K_t, L(K_t)}^{\top}(K'_t-K_t) + (K'_t-K_t)^{\top}(R^u_t + B^{\top}_t\tilde{P}_{K_{t+1}, L(K_{t+1})}B_t)(K'_t-K_t).
 	\end{align*}
 	This completes the proof.
\end{proof}

\subsection{Proof of Lemma \ref{lemma:ZO_PG_help}}\label{proof:ZO_PG_help}
\begin{proof}
	Firstly, for a fixed $\bK \in \cK$ and a given $\bL$, we require $r_{1, \bL} \leq \rho_{\bK, \bL}$ to ensure that the ``size'' of the perturbation added to the gain matrix $\bL$ is smaller than the radius within which the local Lipschitz continuity and the local smoothness properties in Lemma \ref{lemma:landscape_L} hold. In other words, we have $\|r_{1, \bL}\bU\|_F \leq \rho_{\bK, \bL}$, where $\bU$ is drawn uniformly from $\cS(n, m, N)$ and has $\|\bU\|_F = 1$. Then, we invoke Lemma 23 of \cite{malik2020derivative} to obtain that with probability at least $1-\delta_1$, the concentration of gradient estimates around its mean is $\left\|\overline{\nabla}_{\bL}\cG(\bK, \bL) - \nabla_{\bL}\cG_{r_{1, \bL}}(\bK, \bL)\right\|_F \leq \frac{d_1}{r_{1, \bL}\sqrt{M_{1, \bL}}}\left(\cG(\bK, \bL) + \frac{l_{\bK, \bL}}{\rho_{\bK, \bL}}\right)\sqrt{\log\Big(\frac{2d_1}{\delta_1}\Big)}$, where $\overline{\nabla}_{\bL}\cG(\bK, \bL)$ is the gradient estimate obtained from an $M_{1, \bL}$-sample one-point minibatch estimator, $\nabla_{\bL}\cG_{r_{1, \bL}}(\bK, \bL):= \nabla_{\bL}\EE[\cG(\bK, \bL+r_{1, \bL}\bU)]$ is the gradient of the smoothed version of $\cG(\bK, \bL)$, and $d_1 = nmN$ is the degrees of freedom of $\cS(n, m, N)$. Therefore, it suffices to require the batchsize $M_{1, \bL}>0$ to satisfy $M_{1, \bL} \geq \left(\frac{d_1}{r_{1, \bL}}\Big(\cG(\bK, \bL) + \frac{l_{\bK, \bL}}{\rho_{\bK, \bL}}\Big)\sqrt{\log\Big(\frac{2d_1}{\delta_1}\Big)}\right)^2\frac{1024}{\mu_{\bK}\epsilon_1}$ in order to ensure that with probability at least $1-\delta_1$ that 
\begin{align}\label{eqn:gradL_est}
	\big\|\overline{\nabla}_{\bL}\cG(\bK, \bL) - \nabla_{\bL}\cG_{r_{1, \bL}}(\bK, \bL)\big\|_F \leq \frac{\sqrt{\mu_{\bK} \epsilon_1}}{32}.
\end{align}
Then, we can bound, with probability at least $1-\delta_1$, the ``size'' of the one-step ZO-PG update \eqref{eqn:pg_free_L} by
\begin{align*}
	\|\bL'-\bL\|_F &= \|\eta_{\bL}\overline{\nabla}_{\bL}\cG(\bK, \bL)\|_F \leq \eta_{\bL}\|\overline{\nabla}_{\bL}\cG(\bK, \bL) - \nabla_{\bL}\cG_{r_{1, \bL}}(\bK, \bL)\|_F + \eta_{\bL}\|\nabla_{\bL}\cG_{r_{1, \bL}}(\bK, \bL) - \nabla_{\bL}\cG(\bK, \bL)\|_F + \eta_{\bL}\|\nabla_{\bL}\cG(\bK, \bL)\|_F \\
	&\leq \eta_{\bL}\left(\frac{\sqrt{\mu_{\bK}\epsilon_1}}{32} + \psi_{\bK, \bL}\sqrt{\epsilon_1}+ l_{\bK, \bL}\right) \leq \eta_{\bL}\left(\frac{\sqrt{\mu_{\bK}}}{32} + \psi_{\bK, \bL}+ l_{\bK, \bL}\right),
\end{align*}
where the second to last inequality follows from \eqref{eqn:gradL_est}, Lemma 14(b) of \cite{malik2020derivative}, Lemma \ref{lemma:landscape_L}, and $r_{1, \bL} \sim \Theta(\sqrt{\epsilon_1})$. The last inequality follows from our requirement that $\epsilon_1 < 1$. Therefore, it suffices to require $\eta_{\bL} \leq \rho_{\bK, \bL}\cdot\big(\frac{\sqrt{\mu_{\bK}}}{32} + \psi_{\bK, \bL} + l_{\bK, \bL}\big)^{-1}$ to ensure that $\bL'$ lies within the radius that the local Lipschitz continuity and the local smoothness properties hold with probability at least $1-\delta_1$. Now, we exploit the local smoothness property to derive that
\begin{align}
	\cG(\bK, \bL) &- \cG(\bK, \bL') \leq -\eta_{\bL} \langle \nabla_{\bL}\cG(\bK, \bL), \overline{\nabla}_{\bL}\cG(\bK, \bL)\rangle + \frac{\psi_{\bK, \bL}\eta_{\bL}^2}{2}\|\overline{\nabla}_{\bL}\cG(\bK, \bL)\|_F^2 \nonumber\\
	&\hspace{-2em}= -\eta_{\bL} \langle \nabla_{\bL}\cG(\bK, \bL), \overline{\nabla}_{\bL}\cG(\bK, \bL) - \nabla_{\bL}\cG_{r_{1, \bL}}(\bK, \bL)\rangle - \eta_{\bL}\langle\nabla_{\bL}\cG(\bK, \bL), \nabla_{\bL}\cG_{r_{1, \bL}}(\bK, \bL)\rangle + \frac{\psi_{\bK, \bL}\eta_{\bL}^2}{2}\|\overline{\nabla}_{\bL}\cG(\bK, \bL)\|_F^2 \nonumber\\
	&\hspace{-2em}\leq \eta_{\bL}\|\nabla_{\bL}\cG(\bK, \bL)\|_F\|\overline{\nabla}_{\bL}\cG(\bK, \bL) - \nabla_{\bL}\cG_{r_{1, \bL}}(\bK, \bL)\|_F - \eta_{\bL}\|\nabla_{\bL}\cG(\bK, \bL)\|_F^2  + \eta_{\bL}\psi_{\bK, \bL}r_{1, \bL}\|\nabla_{\bL}\cG(\bK, \bL)\|_F + \frac{\psi_{\bK, \bL}\eta_{\bL}^2}{2}\|\overline{\nabla}_{\bL}\cG(\bK, \bL)\|_F^2\nonumber\\
	&\hspace{-2em}\leq -\frac{\eta_{\bL}}{2}\|\nabla_{\bL}\cG(\bK, \bL)\|_F^2 + \frac{\eta_{\bL}}{2}\|\overline{\nabla}_{\bL}\cG(\bK, \bL) - \nabla_{\bL}\cG_{r_{1, \bL}}(\bK, \bL)\|_F^2 + \eta_{\bL}\psi_{\bK, \bL}r_{1, \bL}\|\nabla_{\bL}\cG(\bK, \bL)\|_F + \frac{\psi_{\bK, \bL}\eta_{\bL}^2}{2}\|\overline{\nabla}_{\bL}\cG(\bK, \bL)\|^2_F. \label{eqn:descent_lemma_free}
\end{align}
Moreover, we can bound the last term of \eqref{eqn:descent_lemma_free} as 
\begin{align}\label{eqn:last_term}
	\frac{\psi_{\bK, \bL}\eta_{\bL}^2}{2}\|\overline{\nabla}_{\bL}\cG(\bK, \bL)\|^2_F \leq \psi_{\bK, \bL}\eta_{\bL}^2\|\overline\nabla_{\bL}\cG(\bK, \bL) - \nabla_{\bL}\cG_{r_{1, \bL}}(\bK, \bL)\|_F^2 + 2\psi_{\bK, \bL}\eta_{\bL}^2(\psi_{\bK, \bL}^2r^2_1 + \|\nabla_{\bL}\cG(\bK, \bL)\|_F^2). 
\end{align}
Substituting \eqref{eqn:last_term} into \eqref{eqn:descent_lemma_free} yields that the following inequality holds almost surely:
\begin{align*}
	\cG(\bK, \bL) - \cG(\bK, \bL') &\leq \left(-\frac{\eta_{\bL}}{2} + 2\psi_{\bK, \bL}\eta_{\bL}^2\right)\|\nabla_{\bL}\cG(\bK, \bL)\|_F^2 + \left(\frac{\eta_{\bL}}{2} + \psi_{\bK, \bL}\eta_{\bL}^2\right)\|\overline{\nabla}_{\bL}\cG(\bK, \bL) - \nabla_{\bL}\cG_{r_{1, \bL}}(\bK, \bL)\|_F^2 \\
	&\hspace{1em}+ \eta_{\bL}\psi_{\bK, \bL}r_{1, \bL}\|\nabla_{\bL}\cG(\bK, \bL)\|_F + 2\psi_{\bK, \bL}^3\eta_{\bL}^2r_{1, \bL}^2.
\end{align*}
Next, recalling the definition that $\theta_{\bK, \bL} = \min\big\{\frac{1}{2\psi_{\bK, \bL}}, \frac{\rho_{\bK, \bL}}{l_{\bK, \bL}}\big\}$, we invoke the local smoothness property to obtain
\begin{align*}
	\left(\theta_{\bK, \bL} - \frac{\theta_{\bK, \bL}^2\psi_{\bK, \bL}}{2}\right)\|\nabla_{\bL}\cG(\bK, \bL)\|_F^2 \leq \cG\big(\bK, \bL + \theta_{\bK, \bL}\nabla_{\bL}\cG(\bK, \bL)\big) - \cG(\bK, \bL)  \leq \cG(\bK, \bL(\bK)) - \cG(\bK, \bL). 
\end{align*}
Define $\Delta := \cG(\bK, \bL(\bK)) - \cG(\bK, \bL)$ and $\Delta' :=\cG(\bK, \bL(\bK)) - \cG(\bK, \bL')$, then it holds almost surely that
\begin{align*}
	\Delta' - \Delta &\leq \left(-\frac{\eta_{\bL}}{2} + 2\psi_{\bK, \bL}\eta_{\bL}^2\right)\|\nabla_{\bL}\cG(\bK, \bL)\|^2_F + \frac{2\eta_{\bL}\psi_{\bK, \bL}r_{1, \bL}}{\theta_{\bK, \bL}}\Delta^{1/2} + \left(\frac{\eta_{\bL}}{2} + \psi_{\bK, \bL}\eta_{\bL}^2\right)\|\overline{\nabla}_{\bL}\cG(\bK, \bL) - \nabla_{\bL}\cG_{r_{1, \bL}}(\bK, \bL)\|^2_F  + 2\psi_{\bK, \bL}^3\eta_{\bL}^2r_{1, \bL}^2 \\
	&\leq -\frac{\eta_{\bL} \mu_{\bK}}{4}\Delta + \frac{2\eta_{\bL}\psi_{\bK, \bL}r_{1, \bL}}{\theta_{\bK, \bL}}\Delta^{1/2} + \eta_{\bL}\|\overline{\nabla}_{\bL}\cG(\bK, \bL) - \nabla_{\bL}\cG_{r_{1, \bL}}(\bK, \bL)\|^2_F  + 2\psi_{\bK, \bL}^3\eta_{\bL}^2r_{1, \bL}^2 \\
	&\leq -\frac{\eta_{\bL} \mu_{\bK}}{8}\Delta + \frac{8\eta_{\bL}\psi_{\bK, \bL}^2r_{1, \bL}^2}{\mu_{\bK}\theta_{\bK, \bL}^2} + \eta_{\bL}\|\overline{\nabla}_{\bL}\cG(\bK, \bL) - \nabla_{\bL}\cG_{r_{1, \bL}}(\bK, \bL)\|^2_F  + 2\psi_{\bK, \bL}^3\eta_{\bL}^2r_{1, \bL}^2,
\end{align*}
where the last two inequalities utilize $\eta_{\bL} \leq \frac{1}{8\psi_{\bK, \bL}}$ and the $\mu_{\bK}$-PL condition in Lemma \ref{lemma:landscape_L}. Thus, if the stepsize $\eta_{\bL}$ of the one-step ZO-PG update \eqref{eqn:pg_free_L} and the smoothing radius $r_{1, \bL}$ for the minibatch estimator further satisfy
\begin{align*}
	\eta_{\bL} \leq \min\Big\{1, \frac{1}{8\psi_{\bK, \bL}}\Big\}, \quad r_{1, \bL} \leq \frac{1}{8\psi_{\bK, \bL}} \min\Big\{\theta_{\bK, \bL} \mu_{\bK}\sqrt{\frac{\epsilon_1}{240}}, \frac{1}{\psi_{\bK, \bL}}\sqrt{\frac{\epsilon_{1}\mu_{\bK}}{30}}\Big\},
\end{align*}
then with probability at least $1-\delta_1$, we can bound the one-step ascent as 
\begin{align}\label{eqn:one-step-zopg_cost}
 \Delta' - \Delta \leq -\frac{\eta_{\bL} \mu_{\bK}}{8}\Delta + \eta_{\bL}\|\overline{\nabla}_{\bL}\cG(\bK, \bL)- \nabla_{\bL}\cG_{r_{1, \bL}}(\bK, \bL)\|^2_F +  \frac{\eta_{\bL} \mu_{\bK}\epsilon_1}{60} \Rightarrow \Delta' \leq \big(1 - \frac{\eta_{\bL} \mu_{\bK}}{8}\big)\Delta + \eta_{\bL}\frac{\mu_{\bK} \epsilon_1}{16},
\end{align}
where the last inequality follows from \eqref{eqn:gradL_est}. Lastly, by \eqref{eqn:one-step-zopg_cost}, we have $\cG(\bK, \bL) - \cG(\bK, \bL') = \Delta' - \Delta \leq -\frac{\eta_{\bL} \mu_{\bK}}{8}\Delta + \eta_{\bL}\frac{\mu_{\bK} \epsilon_1}{16}$. Therefore, it holds that $\cG(\bK, \bL) - \cG(\bK, \bL') \leq 0$ with probability at least $1-\delta_1$ since by the implicit assumption that $\epsilon_1 \leq \Delta$. This completes the proof.
\end{proof}

\subsection{Proof of Lemma \ref{lemma:ZO_NPG_help}} \label{proof:ZO_NPG_help}
\begin{proof}
	We first introduce the following lemma and defer its proof to the end of this subsection.
\begin{lemma}($\Sigma_{\bK, \bL}$ Estimation)\label{lemma:sigma_L_estimation}
	For any $\bK \in \cK$ and any $\bL \in \cS(n, m, N)$, let the batchsize $M_1$ in Algorithm \ref{alg:model_free_inner_PG} satisfy
	\begin{align*}
		M_{1} \geq \frac{1}{2}c_{\Sigma_{\bK, \bL}}^2\epsilon^{-2}_1\log\Big(\frac{2d_{\Sigma}}{\delta_1}\Big),
	\end{align*}
	where $c_{\Sigma_{\bK, \bL}}$ is a polynomial of $\|\bA\|_F$, $\|\bB\|_F$, $\|\bD\|_F$, $\|\bK\|_F$, $\|\bL\|_F$, and is linear in $c_0$. Then, it holds with probability at least $1-\delta_1$ that $\big\|\overline{\Sigma}_{\bK, \bL} - \Sigma_{\bK, \bL}\big\|_F \leq \epsilon_1$, where $\overline{\Sigma}_{\bK, \bL}$ is the estimated state correlation matrix obtained from Algorithm \ref{alg:model_free_inner_PG}. Moreover, if $\epsilon_1 \leq \phi/2$, then it satisfies with probability at least $1-\delta_1$ that $\lambda_{\min}(\overline{\Sigma}_{\bK, \bL}) \geq \phi/2$.
\end{lemma}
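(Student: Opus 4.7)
The plan is to recognize $\overline{\Sigma}_{\bK,\bL}$ as an empirical average of i.i.d., almost-surely bounded, unbiased estimates of $\Sigma_{\bK,\bL}$, and then apply a matrix (or entrywise) Hoeffding bound. Concretely, by the construction in Algorithm \ref{alg:model_free_inner_PG}, the single-trajectory estimate $\overline{\Sigma}^i_{\bK,\bL} = diag\bigl[x^{i,1}_{l,0}(x^{i,1}_{l,0})^\top, \cdots, x^{i,1}_{l,N}(x^{i,1}_{l,N})^\top\bigr]$ is produced by simulating the \emph{unperturbed} closed-loop system $x_{t+1} = (A_t - B_tK_t - D_tL_t)x_t + \xi_t$ with $x^{i,1}_{l,0} \sim \cD$ and independent $\xi^{i,1}_{l,t} \sim \cD$; combined with \eqref{eqn:sigmaKL}, this yields $\EE[\overline{\Sigma}^i_{\bK,\bL}] = \Sigma_{\bK,\bL}$, so I would first record this unbiasedness. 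The different estimates across $i = 0, \ldots, M_1-1$ are then i.i.d. because each trajectory uses independent fresh noise.

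The next step would be to produce a uniform almost-sure bound $\|\overline{\Sigma}^i_{\bK,\bL}\|_F \leq c_{\Sigma_{\bK,\bL}}$ with $c_{\Sigma_{\bK,\bL}}$ polynomial in $\|\bA\|_F, \|\bB\|_F, \|\bD\|_F, \|\bK\|_F, \|\bL\|_F$ and linear in $c_0$. Since $\|x_0\|, \|\xi_t\| \leq \vartheta$ almost surely, iterating the closed-loop recursion gives $\|x_t\| \leq \sum_{\tau=0}^t \|\bA - \bB\bK - \bD\bL\|^{t-\tau}_F \cdot \vartheta$, so each diagonal block $x_tx_t^\top$ has Frobenius norm bounded by a polynomial in the indicated quantities, and summing over $t \in \{0, \ldots, N\}$ and absorbing $(N+1)\vartheta^2 = c_0$ as a linear factor yields the claimed $c_{\Sigma_{\bK,\bL}}$.

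With unbiasedness and the uniform bound in hand, the third step is to apply the vector Hoeffding inequality to each of the $d_\Sigma = m^2(N+1)$ coordinates of the symmetric block-diagonal matrix $\overline{\Sigma}_{\bK,\bL} - \Sigma_{\bK,\bL}$ (or, equivalently, a matrix Hoeffding bound in Frobenius norm). Each centered summand has Frobenius norm at most $2c_{\Sigma_{\bK,\bL}}$, so Hoeffding gives a per-coordinate tail of $2\exp(-2M_1\epsilon_1^2/c_{\Sigma_{\bK,\bL}}^2)$; a union bound over the $d_\Sigma$ free coordinates and inverting in $M_1$ reproduces exactly the stated sample complexity $M_1 \geq \frac{1}{2} c_{\Sigma_{\bK,\bL}}^2 \epsilon_1^{-2}\log(2d_\Sigma/\delta_1)$, with probability at least $1-\delta_1$ that $\|\overline{\Sigma}_{\bK,\bL} - \Sigma_{\bK,\bL}\|_F \leq \epsilon_1$.

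Finally, the eigenvalue lower bound is a two-line consequence. From \eqref{eqn:sigmaKL}, $\Sigma_{\bK,\bL} \geq \Sigma_0$, so $\lambda_{\min}(\Sigma_{\bK,\bL}) \geq \phi$. By Weyl's inequality, $\lambda_{\min}(\overline{\Sigma}_{\bK,\bL}) \geq \lambda_{\min}(\Sigma_{\bK,\bL}) - \|\overline{\Sigma}_{\bK,\bL} - \Sigma_{\bK,\bL}\| \geq \phi - \epsilon_1 \geq \phi/2$ when $\epsilon_1 \leq \phi/2$, all on the same high-probability event. I expect the only delicate piece to be bookkeeping the explicit polynomial form of $c_{\Sigma_{\bK,\bL}}$ when unrolling the dynamics; the concentration step itself is standard and the eigenvalue bound is immediate.
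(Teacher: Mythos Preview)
Your proposal is correct and follows essentially the same approach as the paper: unbiasedness of $\overline{\Sigma}^i_{\bK,\bL}$ from the unperturbed simulation, an almost-sure bound $\|\overline{\Sigma}^i_{\bK,\bL}\|_F \leq c_{\Sigma_{\bK,\bL}}$ obtained by unrolling the closed-loop dynamics, Hoeffding with a union bound over the $d_\Sigma$ coordinates, and Weyl's inequality for the eigenvalue claim. The only cosmetic difference is that the paper bounds the single-trajectory estimate via the compact identity $diag(x_0x_0^\top,\ldots,x_Nx_N^\top)=\sum_{t=0}^{N-1}\bA_{\bK,\bL}^t\,diag(x_0x_0^\top,\xi_0\xi_0^\top,\ldots,\xi_{N-1}\xi_{N-1}^\top)\,(\bA_{\bK,\bL}^\top)^t$ together with $\|diag(x_0x_0^\top,\xi_0\xi_0^\top,\ldots)\|_F\le c_0$, which makes the linear dependence on $c_0$ immediate, whereas you iterate $\|x_t\|$ directly; both routes yield the stated polynomial bound.
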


\noindent Similar to the proof of Lemma \ref{lemma:ZO_PG_help}, we first require $r_{1, \bL} \leq \rho_{\bK, \bL}$ to ensure that perturbing $\bL$ preserves the local Lipschitz and smoothness properties. Then, Lemma 23 of \cite{malik2020derivative} suggests that if $M_{1, \bL} \geq \left(\frac{d_1}{r_{1, \bL}}\Big(\cG(\bK, \bL(\bK)) + \frac{l_{\bK, \bL}}{\rho_{\bK, \bL}}\Big)\sqrt{\log\Big(\frac{4d_1}{\delta_1}\Big)}\right)^2\frac{64(\varkappa+1)^2}{\phi^2\mu_{\bK}\epsilon_1}$, where $\varkappa:= c_{\Sigma_{\bK, \bL}}+\frac{\phi}{2}$ and $c_{\Sigma_{\bK, \bL}}$ follows the definition in Lemma \ref{lemma:sigma_L_estimation}, then it holds with probability at least $1-\frac{\delta_1}{2}$ that 
	\begin{align}\label{eqn:grad_est_2}
		\big\|\overline{\nabla}_{\bL}\cG(\bK, \bL) - \nabla_{\bL}\cG_{r_1}(\bK, \bL)\big\|_F \leq \frac{\phi\sqrt{\mu_{\bK} \epsilon_1}}{8(\varkappa+1)}.
	\end{align}
	Moreover, by Lemma \ref{lemma:sigma_L_estimation} and if we require $M_{1, \bL} \geq 2\varkappa^2/\phi^2\cdot\log(4d_{\Sigma}/\delta_1)$, then it holds with probability at least $1-\frac{\delta_1}{2}$ that $\left\|\overline{\Sigma}_{\bK, \bL} - \Sigma_{\bK, \bL}\right\|_F \leq \frac{\phi}{2}$. By the standard matrix perturbation theory (see for example Theorem 35 of \cite{fazel2018global}), we have
	\begin{align}\label{eqn:sigma_est2}
		\big\|\overline{\Sigma}_{\bK, \bL}^{-1} - \Sigma^{-1}_{\bK, \bL}\big\|_F \leq \frac{2\|\overline{\Sigma}_{\bK, \bL} - \Sigma_{\bK, \bL}\|_F}{\phi} \leq 1.
	\end{align}
	Furthermore, by Lemma \ref{lemma:sigma_L_estimation} and the condition that $\|\overline{\Sigma}_{\bK, \bL} - \Sigma_{\bK, \bL}\| \leq \frac{\phi}{2}$, we can ensure that $\lambda_{\min}(\overline{\Sigma}_{\bK, \bL}) \geq \frac{\phi}{2}$ and thus $\big\|\overline{\Sigma}^{-1}_{\bK, \bL}\big\| \leq \frac{2}{\phi}$. Combining \eqref{eqn:grad_est_2} and \eqref{eqn:sigma_est2} yields that with probability at least $1-\delta_1$, the ``size'' of the one-step inner-loop ZO-NPG update \eqref{eqn:npg_free_L} can be bounded by
	\begin{align*}
		\|\bL'-\bL\|_F &= \eta_{\bL}\|\overline{\nabla}_{\bL}\cG(\bK, \bL)\overline{\Sigma}_{\bK, \bL}^{-1}\|_F \leq \eta_{\bL}\big\|\overline{\nabla}_{\bL}\cG(\bK, \bL)\overline{\Sigma}_{\bK, \bL}^{-1} - \nabla_{\bL}\cG(\bK, \bL)\Sigma_{\bK, \bL}^{-1}\big\|_F  + \eta_{\bL}\big\|\nabla_{\bL}\cG(\bK, \bL)\Sigma_{\bK, \bL}^{-1}\big\|_F\\
		&\leq \eta_{\bL}\left[\big\|\overline{\nabla}_{\bL}\cG(\bK, \bL) - \nabla_{\bL}\cG(\bK, \bL)\big\|_F\big\|\overline{\Sigma}^{-1}_{\bK, \bL}\big\|_F + \big\|\nabla_{\bL}\cG(\bK, \bL)\big\|_F\big\|\overline{\Sigma}_{\bK, \bL}^{-1} - \Sigma^{-1}_{\bK, \bL}\big\|_F + \big\|\nabla_{\bL}\cG(\bK, \bL)\big\|_F\big\|\Sigma_{\bK, \bL}^{-1}\big\|_F\right]\\
		&\leq  \eta_{\bL} \Big[\frac{\sqrt{\mu_{\bK}}}{4(\varkappa+1)} + \frac{2\psi_{\bK, \bL}}{\phi} + l_{\bK, \bL} +\frac{\phi l_{\bK, \bL}}{2}\Big],
	\end{align*}
	where the last inequality follows from \eqref{eqn:grad_est_2}, Lemma 14 of \cite{malik2020derivative}, $\epsilon_1 < 1$ and $r_{1, \bL} \sim \Theta(\sqrt{\epsilon_1})$. Therefore, it suffices to require $\eta_{\bL} \leq \rho_{\bK, \bL}\cdot \big[\frac{\sqrt{\mu_{\bK}}}{4(\varkappa+1)} + \frac{2\psi_{\bK, \bL}}{\phi} + l_{\bK, \bL} +\frac{\phi l_{\bK, \bL}}{2}\big]^{-1}$ to ensure that $\bL'$ lies within the radius that the Lipschitz and smoothness properties hold. Now, we exploit the smoothness property to derive that
\begin{align}
	&\hspace{1em}\cG(\bK, \bL) - \cG(\bK, \bL') \leq -\eta_{\bL} \big\langle \nabla_{\bL}\cG(\bK, \bL), \overline{\nabla}_{\bL}\cG(\bK, \bL)\overline{\Sigma}_{\bK, \bL}^{-1}\big\rangle + \frac{\psi_{\bK, \bL}\eta_{\bL}^2}{2}\big\|\overline{\nabla}_{\bL}\cG(\bK, \bL)\overline{\Sigma}_{\bK, \bL}^{-1}\big\|^2_F \nonumber\\
	&\leq - \eta_{\bL}\big\langle \nabla_{\bL}\cG(\bK, \bL), \nabla_{\bL}\cG(\bK, \bL)\overline{\Sigma}_{\bK, \bL}^{-1}\big\rangle  -\eta_{\bL} \big\langle \nabla_{\bL}\cG(\bK, \bL), \overline{\nabla}_{\bL}\cG(\bK, \bL)\overline{\Sigma}_{\bK, \bL}^{-1} - \nabla_{\bL}\cG(\bK, \bL)\overline{\Sigma}_{\bK, \bL}^{-1}\big\rangle  + \frac{\psi_{\bK, \bL}\eta_{\bL}^2}{2}\big\|\overline{\nabla}_{\bL}\cG(\bK, \bL)\overline{\Sigma}_{\bK, \bL}^{-1}\big\|^2_F \nonumber\\
	& \leq -\eta_{\bL} \lambda_{\min}(\overline{\Sigma}_{\bK, \bL}^{-1})\big\|\nabla_{\bL}\cG(\bK, \bL)\big\|^2_F + \eta_{\bL}\big\|\nabla_{\bL}\cG(\bK, \bL)\big\|_F\big\|\overline{\nabla}_{\bL}\cG(\bK, \bL) - \nabla_{\bL}\cG(\bK, \bL)\big\|_F\big\|\overline{\Sigma}_{\bK, \bL}^{-1}\big\|_F  +\frac{\psi_{\bK, \bL}\eta_{\bL}^2\big\|\overline{\Sigma}_{\bK, \bL}^{-1}\big\|_F^2}{2}\big\|\overline{\nabla}_{\bL}\cG(\bK, \bL)\big\|^2_F \nonumber \\
	&\leq  -\eta_{\bL} \lambda_{\min}(\overline{\Sigma}_{\bK, \bL}^{-1})\big\|\nabla_{\bL}\cG(\bK, \bL)\big\|^2_F + \frac{\eta_{\bL}}{2}\lambda_{\min}(\overline{\Sigma}_{\bK, \bL}^{-1})\big\|\nabla_{\bL}\cG(\bK, \bL)\big\|^2_F + \frac{\eta_{\bL}\big\|\overline{\Sigma}_{\bK, \bL}^{-1}\big\|_F^2}{2\lambda_{\min}(\overline{\Sigma}_{\bK, \bL}^{-1})}\big\|\overline{\nabla}_{\bL}\cG(\bK, \bL) - \nabla_{\bL}\cG_{r_1}(\bK, \bL)\big\|^2_F \nonumber \\
	&\hspace{1em}+ \psi_{\bK, \bL}r_{1, \bL}\big\|\nabla_{\bL}\cG(\bK, \bL)\big\|_F\big\|\overline{\Sigma}_{\bK, \bL}^{-1}\big\|_F + \frac{\psi_{\bK, \bL}\eta_{\bL}^2\big\|\overline{\Sigma}_{\bK, \bL}^{-1}\big\|_F^2}{2}\big\|\overline{\nabla}_{\bL}\cG(\bK, \bL)\big\|^2_F \nonumber\\
	&\leq - \frac{\eta_{\bL}}{2\varkappa}\big\|\nabla_{\bL}\cG(\bK, \bL)\big\|^2_F + \frac{2\eta_{\bL}\varkappa}{\phi^2}\big\|\overline{\nabla}_{\bL}\cG(\bK, \bL) - \nabla_{\bL}\cG_{r_1}(\bK, \bL)\big\|^2_F + \frac{2\eta_{\bL}\psi_{\bK, \bL}r_{1, \bL}}{\phi}\big\|\nabla_{\bL}\cG(\bK, \bL)\big\|_F + \frac{2\psi_{\bK, \bL}\eta_{\bL}^2}{\phi^2}\big\|\overline{\nabla}_{\bL}\cG(\bK, \bL)\big\|^2_F, \label{eqn:npg_free_descent_lemma}
\end{align} 
where the second to last inequality utilizes Lemma 14 of \cite{malik2020derivative} and
\small
\begin{align*}
	\eta_{\bL}\big\|\nabla_{\bL}\cG(\bK, \bL)\big\|_F\big\|\overline{\nabla}_{\bL}\cG(\bK, \bL) - \nabla_{\bL}\cG(\bK, \bL)\big\|_F\big\|\overline{\Sigma}_{\bK, \bL}^{-1}\big\|_F \leq \frac{\eta_{\bL}\lambda_{\min}(\overline{\Sigma}_{\bK, \bL}^{-1})}{2}\big\|\nabla_{\bL}\cG(\bK, \bL)\big\|^2_F + \frac{\eta_{\bL}\big\|\overline{\Sigma}_{\bK, \bL}^{-1}\big\|_F^2}{2\lambda_{\min}(\overline{\Sigma}_{\bK, \bL}^{-1})}\big\|\overline{\nabla}_{\bL}\cG(\bK, \bL) - \nabla_{\bL}\cG_{r_1}(\bK, \bL)\big\|^2_F.
\end{align*}
\normalsize
The last inequality in \eqref{eqn:npg_free_descent_lemma} uses $\lambda_{\min}(\overline{\Sigma}_{\bK, \bL}^{-1}) = \|\overline{\Sigma}_{\bK, \bL}\|^{-1} \geq \big[\|\Sigma_{\bK, \bL}\|_F + \|\overline{\Sigma}_{\bK, \bL} - \Sigma_{\bK, \bL}\|_F\big]^{-1}$ and $\big\|\overline{\Sigma}^{-1}_{\bK, \bL}\big\| \leq \frac{2}{\phi}$. Moreover, we can bound the last term in \eqref{eqn:npg_free_descent_lemma} as 
\begin{align}\label{eqn:npg_free_last_term}
	\frac{2\psi_{\bK, \bL}\eta_{\bL}^2}{\phi^2}\big\|\overline{\nabla}_{\bL}\cG(\bK, \bL)\big\|^2_F &\leq \frac{4\psi_{\bK, \bL}\eta_{\bL}^2}{\phi^2}\big\|\overline\nabla_{\bL}\cG(\bK, \bL) - \nabla_{\bL}\cG_{r_1}(\bK, \bL)\big\|_F^2 + \frac{8\psi_{\bK, \bL}\eta_{\bL}^2}{\phi^2}\big(\psi_{\bK, \bL}^2r^2_{1, \bL} + \|\nabla_{\bL}\cG(\bK, \bL)\|_F^2\big).
\end{align}
Further, recalling the definition that $\theta_{\bK, \bL} = \min\big\{\frac{1}{2\psi_{\bK, \bL}}, \frac{\rho_{\bK, \bL}}{l_{\bK, \bL}}\big\}$, we invoke the local smoothness property to obtain
\begin{align*}
	\left(\theta_{\bK, \bL} - \frac{\theta_{\bK, \bL}^2\psi_{\bK, \bL}}{2}\right)\|\nabla_{\bL}\cG(\bK, \bL)\|_F^2 \leq \cG\big(\bK, \bL + \theta_{\bK, \bL}\nabla_{\bL}\cG(\bK, \bL)\big) - \cG(\bK, \bL)  \leq \cG(\bK, \bL(\bK)) - \cG(\bK, \bL). 
\end{align*}
Let $\Delta := \cG(\bK, \bL(\bK)) - \cG(\bK, \bL)$ and $\Delta' :=\cG(\bK, \bL(\bK)) - \cG(\bK, \bL')$. Substituting \eqref{eqn:npg_free_last_term} into \eqref{eqn:npg_free_descent_lemma} implies that the following inequality holds almost surely:
\begin{align*}
	\Delta' - \Delta &\leq \Big(-\frac{\eta_{\bL}}{2\varkappa} + \frac{8\psi_{\bK, \bL}\eta_{\bL}^2}{\phi^2}\Big)\big\|\nabla_{\bL}\cG(\bK, \bL)\big\|_F^2 + \frac{2\eta_{\bL}\psi_{\bK, \bL}r_{1, \bL}}{\phi}\big\|\nabla_{\bL}\cG(\bK, \bL)\big\|_F \\
	&\hspace{1em}+ \left(\frac{2\eta_{\bL} \varkappa}{\phi^2} + \frac{4\psi_{\bK, \bL}\eta_{\bL}^2}{\phi^2}\right)\big\|\overline{\nabla}_{\bL}\cG(\bK, \bL) - \nabla_{\bL}\cG_{r_{1, \bL}}(\bK, \bL)\big\|_F^2 + \frac{8\psi_{\bK, \bL}^3\eta_{\bL}^2r_{1, \bL}^2}{\phi^2}\\
	&\leq -\frac{\eta_{\bL}\mu_{\bK}}{4\varkappa}\Delta + \frac{4\eta_{\bL}\psi_{\bK, \bL}r_{1, \bL}}{\phi\theta_{\bK, \bL}}\Delta^{1/2} + \left(\frac{2\eta_{\bL} \varkappa}{\phi^2} + \frac{4\psi_{\bK, \bL}\eta_{\bL}^2}{\phi^2}\right)\big\|\overline{\nabla}_{\bL}\cG(\bK, \bL) - \nabla_{\bL}\cG_{r_{1, \bL}}(\bK, \bL)\big\|_F^2 + \frac{8\psi_{\bK, \bL}^3\eta_{\bL}^2r_{1, \bL}^2}{\phi^2}\\
	&\leq -\frac{\eta_{\bL}\mu_{\bK}}{8\varkappa}\Delta + \frac{32\eta_{\bL}\varkappa\psi_{\bK, \bL}^2r_{1, \bL}^2}{\mu_{\bK}\phi^2\theta_{\bK, \bL}^2} + \left(\frac{2\eta_{\bL} \varkappa}{\phi^2} + \frac{4\psi_{\bK, \bL}\eta_{\bL}^2}{\phi^2}\right)\big\|\overline{\nabla}_{\bL}\cG(\bK, \bL) - \nabla_{\bL}\cG_{r_{1, \bL}}(\bK, \bL)\big\|_F^2 + \frac{8\psi_{\bK, \bL}^3\eta_{\bL}^2r_{1, \bL}^2}{\phi^2}
\end{align*}
where the second to last inequality uses $\eta_{\bL} \leq \phi^2/[32\psi_{\bK, \bL}\varkappa]$ and the PL condition. The last inequality uses $2ab \leq a^2+b^2$. Thus, if the stepsize $\eta_{\bL}$ of the one-step ZO-NPG update \eqref{eqn:npg_free_L} and the smoothing radius $r_{1, \bL}$ for the minibatch estimator satisfy
\begin{align*}
	\eta_{\bL} \leq \min\Big\{\frac{\phi^2}{32\psi_{\bK, \bL}\varkappa}, \frac{1}{2\psi_{\bK, \bL}}\Big\}, \quad r_{1, \bL} \leq \frac{\phi}{16\varkappa\psi_{\bK, \bL}}\min\Big\{\sqrt{\mu_{\bK}\epsilon_1}, \mu_{\bK}\theta_{\bK, \bL}\sqrt{\frac{\epsilon_1}{8}}\Big\},
\end{align*}
then with probability at least $1-\delta_1$, we can bound the one-step ascent as 
\begin{align}\label{eqn:one-step-zonpg_cost}
 \Delta' - \Delta &\leq -\frac{\eta_{\bL}\mu_{\bK}}{8\varkappa}\Delta + \frac{2\eta_{\bL}(\varkappa+1)}{\phi^2}\big\|\overline{\nabla}_{\bL}\cG(\bK, \bL) - \nabla_{\bL}\cG_{r_{1, \bL}}(\bK, \bL)\big\|_F^2 + \frac{32\eta_{\bL}\varkappa\psi_{\bK, \bL}^2r_{1, \bL}^2}{\mu_{\bK}\phi^2\theta_{\bK, \bL}^2} + \frac{4\eta_{\bL}\varkappa\psi_{\bK, \bL}^2r_{1, \bL}^2}{\phi^2} \nonumber\\
 &\leq -\frac{\eta_{\bL}\mu_{\bK}}{8\varkappa}\Delta + \frac{\eta_{\bL}\mu_{\bK}\epsilon_1}{16\varkappa} \Longrightarrow \Delta' \leq \big(1 -\frac{\eta_{\bL}\mu_{\bK}}{8\varkappa}\big)\Delta + \frac{\eta_{\bL}\mu_{\bK}\epsilon_1}{16\varkappa},
\end{align}
where the last inequality uses \eqref{eqn:grad_est_2}. By \eqref{eqn:one-step-zonpg_cost}, we have $\cG(\bK, \bL) - \cG(\bK, \bL') = \Delta' - \Delta \leq -\frac{\eta_{\bL}\mu_{\bK}}{8\varkappa}\Delta + \frac{\eta_{\bL}\mu_{\bK}\epsilon_1}{16\varkappa}$. Therefore, it holds that $\cG(\bK, \bL) - \cG(\bK, \bL') \leq 0$ with probability at least $1-\delta_1$ because $\epsilon_1 \leq \Delta$. 

Lastly, we prove Lemma \ref{lemma:sigma_L_estimation}. Due to the modification to sample state trajectories using unperturbed gain matrix, we avoid the bias induced by the perturbation on the gain matrix compared to the results presented in \cite{fazel2018global}. As a result, the only bias between the estimated correlation matrix and the exact one, denoted as $\|\overline{\Sigma}_{\bK, \bL} - \Sigma_{\bK, \bL}\|_F$, where $\overline{\Sigma}_{\bK, \bL(\bK)} = \frac{1}{M_{1}}\sum^{M_{1}-1}_{i=0}diag\big(x^i_{0} (x^i_{0})^{\top}, \cdots, x^i_{N} (x^i_{N})^{\top}\big)$ and $\Sigma_{\bK, \bL} = \EE_{\bm{\xi}}\big[\overline{\Sigma}_{\bK, \bL}\big]$, is induced by $\bm{\xi}$. Since 
\begin{align*}
	\big\|diag\big(x^i_{0} (x^i_{0})^{\top}, \xi^i_{0} (\xi^i_{0})^{\top}, \cdots, \xi^i_{N-1} (\xi^i_{N-1})^{\top}\big)\big\|_F \leq c_0
\end{align*}
 holds almost surely, and
 \begin{align*}
 		diag\big(x^i_{0} (x^i_{0})^{\top}, \cdots, x^i_{N} (x^i_{N})^{\top}\big) = \sum_{t=0}^{N-1}(\bA_{\bK, \bL})^t\cdot diag\big(x^i_{0} (x^i_{0})^{\top}, \xi^i_{0} (\xi^i_{0})^{\top}, \cdots, \xi^i_{N-1} (\xi^i_{N-1})^{\top}\big)\cdot (\bA_{\bK, \bL}^{\top})^t,
 	\end{align*}
  there exists a $c_{\Sigma_{\bK, \bL}}$ that is a polynomial of $\|\bA\|_F$, $\|\bB\|_F$, $\|\bD\|_F$, $\|\bK\|_F$, $\|\bL\|_F$, and is linear in $c_0$ such that the inequality $\big\|diag\big(x^i_{0} (x^i_{0})^{\top}, \cdots, x^i_{N} (x^i_{N})^{\top}\big)\big\|_F \leq c_{\Sigma_{\bK, \bL}}$ holds almost surely. Then, we apply Hoeffding's inequality to get with probability at least $1-2\delta_1$ that $\big\|\overline{\Sigma}_{\bK, \bL} - \Sigma_{\bK, \bL}\big\|_F   \leq \sqrt{c_{\Sigma_{\bK, \bL}}^2\log(d_{\Sigma}/\delta_1)/[2M_{1}}]$. Therefore, it suffices to choose $M_{1} \geq \frac{1}{2}c_{\Sigma_{\bK, \bL}}^2\epsilon^{-2}_1\log(2d_{\Sigma}/\delta_1)$ to ensure that $\big\|\overline{\Sigma}_{\bK, \bL} - \Sigma_{\bK, \bL}\big\|_F \leq \epsilon_1$ with probability at least $1-\delta_1$. Lastly, by Weyl's theorem, we can bound $\lambda_{\min}(\overline{\Sigma}_{\bK, \bL}) \geq \phi/2$ by requiring $\epsilon_1 \leq \phi/2$. This completes the proof.	
\end{proof}

\subsection{Proof of Lemma \ref{lemma:outer_perturb}}\label{proof:outer_perturb}
\begin{proof}
	We note that \eqref{eqn:p_perturb} follows from Proposition B.1 of \cite{zhang2019policy}, but with the inner- and outer-loop players being interchanged. That is, our $\bB, \bD, \bK, \bL, \bR^u, \bR^w$ correspond to $C, B, L, K, R^v, R^u$ in \cite{zhang2019policy}. Following (B.31) and (B.32) of \cite{zhang2019policy}, we can prove \eqref{eqn:p_perturb}. We note that the constants in Proposition B.1 of \cite{zhang2019policy} are uniform over the compact set $\Omega$ therein. However, it is hard to construct such a compact set in our setting without posing additional constraints. Therefore, our $\cB_{1, \bK}$ and $\cB_{\bP, \bK}$ are only continuous functions of $\bK$ rather than being absolute constants, due to the quotient of two continuous functions being a continuous function provided that the denominator is not zero.  Equations \eqref{eqn:lk_perturb} and \eqref{eqn:sigmak_perturb} follow from Lemmas C.2 and B.8, respectively, in the supplementary material of \cite{zhang2019policy}, with the same notational correspondences introduced above. Similarly, we do not have uniform constants here but instead $\cB_{\bL(\bK), \bK}, \cB_{\Sigma, \bK}$ are continuous functions of $\bK$. This completes the proof. 
\end{proof}

\subsection{Proof of Lemma \ref{lemma:grad_perturb}}\label{proof:grad_perturb}
\begin{proof}
By the definition that $\nabla_{\bK}\cG(\bK, \bL(\bK)) = 2\bF_{\bK, \bL(\bK)}\Sigma_{\bK, \bL(\bK)}$, we have
\small
	\begin{align*}
	\big\|\nabla_{\bK}\cG(\bK', \bL(\bK')) - \nabla_{\bK}\cG(\bK, \bL(\bK))\big\|_F \leq 2\big\|\bF_{\bK', \bL(\bK')}-\bF_{\bK, \bL(\bK)}\big\|_F\big\|\Sigma_{\bK', \bL(\bK')}\big\|_F + 2\big\|\bF_{\bK, \bL(\bK)}\big\|_F\big\|\Sigma_{\bK', \bL(\bK')}-\Sigma_{\bK, \bL(\bK)}\big\|_F.
	\end{align*}
\normalsize
From Lemma \ref{lemma:subfk} and let $\bK, \bK' \in\cK$ be sufficiently close to each other such that $\|\bK' - \bK\|_F \leq \cB_{1, \bK}$, we have $\|\bF_{\bK', \bL(\bK')} - \bF_{\bK, \bL(\bK)}\|_F \leq c_{5, \bK} \cdot\|\bK'-\bK\|_F$, where $c_{5, \bK}$ is defined in \S\ref{sec:axu}. Next, the term $\|\Sigma_{\bK', \bL(\bK')}\|_F$ can be separated into $\|\Sigma_{\bK', \bL(\bK')}\|_F \leq \|\Sigma_{\bK, \bL(\bK)}\|_F + \|\Sigma_{\bK', \bL(\bK')} - \Sigma_{\bK, \bL(\bK)}\|_F$. To bound $\|\Sigma_{\bK, \bL(\bK)}\|_F$, we note that because $\|x_0\|, \|\xi_t\|\leq \vartheta$ almost surely for all $t$, there exists a finite $c_{\Sigma_{\bK, \bL(\bK)}}$ that is a polynomial of $\|\bA\|_F$, $\|\bB\|_F$, $\|\bD\|_F$, $\|\bK\|_F$, and $c_0$ such that $\|\Sigma_{\bK, \bL(\bK)}\|_F \leq c_{\Sigma_{\bK, \bL(\bK)}}$ almost surely. By Lemma \ref{lemma:outer_perturb} and the condition that $\|\bK' - \bK\|_F \leq \cB_{1, \bK}$, we have $\|\Sigma_{\bK', \bL(\bK')}\|_F \leq c_{\Sigma_{\bK, \bL(\bK)}} +\cB_{\Sigma, \bK}$, which leads to
	\begin{align*}
	2\|\bF_{\bK', \bL(\bK')}-\bF_{\bK, \bL(\bK)}\|_F\|\Sigma_{\bK', \bL(\bK')}\|_F \leq 2c_{5, \bK}(c_{\Sigma_{\bK, \bL(\bK)}} +\cB_{\Sigma, \bK})\cdot \|\bK'-\bK\|_F.
	\end{align*}
	Therefore, if we require $\|\bK' -\bK\|_F\leq \epsilon_2/[4c_{5, \bK}(c_{\Sigma_{\bK, \bL(\bK)}} +\cB_{\Sigma, \bK})]$, then $2\big\|\bF_{\bK', \bL(\bK')}-\bF_{\bK, \bL(\bK)}\big\|_F\big\|\Sigma_{\bK', \bL(\bK')}\big\|_F \leq \frac{\epsilon_2}{2}$ holds almost surely. Subsequently, we combine \eqref{bound:fk} with \eqref{eqn:sigmak_perturb} to get
	\begin{align*}
		2\|\bF_{\bK, \bL(\bK)}\|_F\|\Sigma_{\bK', \bL(\bK')}-\Sigma_{\bK, \bL(\bK)}\|_F \leq 2c_{2, \bK}\cB_{\Sigma, \bK}\cdot\|\bK'-\bK\|_F.
	\end{align*}
	By requiring $\|\bK' - \bK\|_F \leq \frac{\epsilon_2}{4c_{2, \bK}\cB_{\Sigma, \bK}}$, it holds almost surely that $2\big\|\bF_{\bK, \bL(\bK)}\big\|_F\big\|\Sigma_{\bK', \bL(\bK')}-\Sigma_{\bK, \bL(\bK)}\big\|_F \leq \frac{\epsilon_2}{2}$.
	Therefore, we can almost surely bound $\|\nabla_{\bK}\cG(\bK', \bL(\bK')) - \nabla_{\bK}\cG(\bK, \bL(\bK))\|_F \leq \epsilon_2$ by enforcing the above requirements on $\|\bK'-\bK\|_F$. This completes the proof.
\end{proof}

 \subsection{Proof of Lemma \ref{lemma:safe_perturb}}\label{sec:proof_lemma_safe_perturb}
 \begin{proof}
 	For any $\underline{\bK} \in \cK$, we can define the following set:
 	\begin{align}\label{eqn:underline_ck_set}
 		\underline{\cK} := \Big\{\bK \mid \eqref{eqn:DARE_black_L} \text{ admits a solution } \bP_{\bK, \bL(\bK)}\geq 0, \text{~and }  \bP_{\bK, \bL(\bK)} \leq \bP_{\underline{\bK}, \bL(\underline{\bK})}\Big\}.
 	\end{align}
 	Clearly, it holds that $\underline{\bK} \in \underline{\cK}$. Then, we first prove that $\underline{\cK}$ is bounded. Recall that $\bP_{\bK, \bL(\bK)} \geq 0$ is the solution to \eqref{eqn:DARE_black_L}, such that 
 	\begin{align}\label{eqn:underlineK}
 		\bP_{\bK, \bL(\bK)} = \bQ + \bK^{\top}\bR^u\bK + (\bA-\bB\bK)^{\top}\big(\bP_{\bK, \bL(\bK)}+\bP_{\bK, \bL(\bK)}\bD(\bR^w - \bD^{\top}\bP_{\bK, \bL(\bK)}\bD)^{-1}\bD^{\top}\bP_{\bK, \bL(\bK)}\big)(\bA-\bB\bK).
 	\end{align}
 	Since $\underline{\bK} \in \cK$, any $\bK \in \underline{\cK}$ also satisfies $\bK\in \cK$, due to that $\bP_{\bK, \bL(\bK)} \leq \bP_{\underline{\bK}, \bL(\underline{\bK})}$ implies $\bR^w - \bD^{\top}\bP_{\bK, \bL(\bK)}\bD >0$. As a result, the second term on the RHS of \eqref{eqn:underlineK} is p.s.d. and we can obtain $\bQ + \bK^{\top}\bR^u\bK \leq \bP_{\bK, \bL(\bK)}$, where $\bQ\geq 0$ and $\bR^u > 0$. Then, from the definition of $\underline{\cK}$ and $\bP_{\underline{\bK}, \bL(\underline{\bK})}, \bP_{\bK, \bL(\bK)}$ are symmetric and p.s.d., we have $\|\bP_{\bK, \bL(\bK)}\|_F \leq \|\bP_{\underline{\bK}, \bL(\underline{\bK})}\|_F$. These arguments together imply that for any $\bK\in\underline{\cK}$, $\|\bK\|_F\leq \sqrt{\|\bP_{\underline{\bK}, \bL(\underline{\bK})}\|_F/\lambda_{\min}(\bR^u)}$, proving the boundedness of $\underline{\cK}$. 
 	
 	Next, we take an arbitrary sequence $\{\bK_n\} \in \underline{\cK}$ and note that $\|\bK_n\|_F$ is bounded for all $n$. Applying Bolzano-Weierstrass theorem implies that the set of limit points of $\{\bK_n\}$, denoted as $\underline{\cK}_{\lim}$, is nonempty. Then, for any $\bK_{\lim} \in \underline{\cK}_{\lim}$, we can find a subsequence $\{\bK_{\tau_n}\} \in \underline{\cK}$ that converges to $\bK_{\lim}$. We denote the corresponding sequence of solutions to \eqref{eqn:underlineK} as $\{\bP_{\bK_{\tau_n}, \bL(\bK_{\tau_n})}\}$, where $0 \leq \bP_{\bK_{\tau_n}, \bL(\bK_{\tau_n})} \leq \bP_{\underline{\bK}, \bL(\underline{\bK})}$ for all $n$. By Bolzano-Weierstrass theorem, the boundedness of $\{\bP_{\bK_{\tau_n}, \bL(\bK_{\tau_n})}\}$, and the continuity of \eqref{eqn:underlineK} with respect to $\bK$, we have the set of limit points of $\{\bP_{\bK_{\tau_n}, \bL(\bK_{\tau_n})}\}$, denoted as $\underline{\cP}_{\lim}$, is nonempty. Then, for any $\bP_{\lim} \in \underline{\cP}_{\lim}$, we can again find a subsequence $\{\bP_{\bK_{\kappa_{\tau_n}}, \bL(\bK_{\kappa_{\tau_n}})}\}$ that converges to $\bP_{\lim}$. Since $\bP_{\bK_{\kappa_{\tau_n}}, \bL(\bK_{\kappa_{\tau_n}})}$ is a p.s.d. solution to \eqref{eqn:underlineK} satisfying $0 \leq \bP_{\bK_{\kappa_{\tau_n}}, \bL(\bK_{\kappa_{\tau_n}})} \leq \bP_{\underline{\bK}, \bL(\underline{\bK})}$ for all $n$ and \eqref{eqn:underlineK} is continuous in $\bK$, $\bP_{\lim}$ must solve \eqref{eqn:underlineK} and satisfy $0 \leq \bP_{\lim} \leq \bP_{\underline{\bK}, \bL(\underline{\bK})}$, which implies $\bK_{\lim} \in \underline{\cK}$. Note that the above arguments work for any sequence $\{\bK_n\} \in \underline{\cK}$ and any limit points $\bK_{\lim}$ and $\bP_{\lim}$, which proves the closedness of  $\underline{\cK}$. Together with the boundedness, $\underline{\cK}$ is thus compact.
 	 
 	 Finally, let us denote the closure of the complement of $\cK$ as $\overline{\cK^c}$. By $\underline{\bK} \in \underline{\cK} \subset \cK$ and \eqref{eqn:underline_ck_set}, any $\bK \in \underline{\cK}$ satisfies: i) $\bP_{\bK, \bL(\bK)} \geq 0$ exists; ii) $\bR^w - \bD^{\top}\bP_{\bK, \bL(\bK)}\bD \geq \bR^w - \bD^{\top}\bP_{\underline{\bK}, \bL(\underline{\bK})}\bD >0$. This implies that $\underline{\cK}$ is disjoint with $\overline{\cK^c}$, i.e., $\underline{\cK}\cap\overline{\cK^c} = \varnothing$. Then, there exists a distance $\cB_{2, \bK}>0$ between $\underline{\cK}$ and $\overline{\cK^c}$ such that for a given $\underline{\bK} \in \underline{\cK}$, all $\bK'$ satisfying $\|\bK'-\underline{\bK}\| \leq \cB_{2, \bK}$ also satisfy $\bK' \in \cK$ (see for example Lemma A.1 of \cite{danishcomplex}). This completes the proof.
 \end{proof}
 
\subsection{Proof of Lemma \ref{lemma:grad_est}}
\begin{proof}
	To simplify the notations, we first define
	\begin{align*}
	\check{\nabla} &:= \overline{\nabla}_{\bK}\cG(\bK, \overline{\bL}(\bK)) = \frac{1}{M_2} \sum^{M_2-1}_{j=0} \frac{d_2}{r_2}\Big[\sum_{t=0}^N c^j_{t}\Big] \bV^j, ~~~ \overline{\nabla} := \overline{\nabla}_{\bK}\cG(\bK, \bL(\bK)), ~~~ \widehat{\nabla} := \EE_{\bm{\xi}}[\overline{\nabla}], \\ 
	\tilde{\nabla} &:= \nabla_{\bK}\cG_{r_2}(\bK, \bL(\bK)), ~~~ \nabla := \nabla_{\bK}\cG(\bK, \bL(\bK)),
	\end{align*}
	where $\overline{\bL}(\bK)$ is the approximate inner-loop solution obtained from Algorithm \ref{alg:model_free_inner_PG}. Our goal is to quantify $\|\check{\nabla} - \nabla\|_F$, which can be separated into four terms $\|\check{\nabla} - \nabla\|_F \leq \|\check{\nabla} - \overline{\nabla}\|_F + \|\overline{\nabla} - \widehat{\nabla}\|_F + \|\widehat{\nabla} - \tilde{\nabla}\|_F + \|\tilde{\nabla} - \nabla\|_F$. 
	
	According to Theorem \ref{theorem:free_pg_L}, $\cG(\bK+r_2\bV, \overline{\bL}(\bK+r_2\bV)) \geq \cG(\bK+r_2\bV, \bL(\bK+r_2\bV)) - \epsilon_1$ holds with probability at least $1-\delta_1$. Applying union bound and triangle inequality yield that $ \|\check{\nabla} - \overline{\nabla}\|_F \leq d_2\epsilon_1/r_2$ holds with probability at least $1-M_2\delta_1$. Therefore, we can bound $\|\check{\nabla} - \overline{\nabla}\|_F$ by $\epsilon_2/4$ with probability at least $1-\delta_2/3$ by requiring $\epsilon_1 \leq \epsilon_2r_2/[4d_2]$, and $\delta_1 \leq  \delta_2/[3M_2]$. 
	 
	 Subsequently, we consider the bias induced by $\bm{\xi}$. By definition in \eqref{eqn:ckl}, we have $\sum_{t=0}^N c^j_{t}$ driven by $(\bK , \bL(\bK))$ and $x_0, \xi_t \sim \cD$ for all $t$ can also be represented as $\tr\big[\bP_{\bK , \bL(\bK)}\cdot diag(x_0x_0^{\top}, \xi_0\xi_0^{\top}, \cdots, \xi_{N-1}\xi_{N-1}^{\top})\big]$, where $\big\|diag(x_0x_0^{\top}, \xi_0\xi_0^{\top}, \cdots, \xi_{N-1}\xi_{N-1}^{\top})\big\|_F \\ \leq c_0$ almost surely. Then, we apply Hoeffding's inequality and \eqref{bound:pk} to get with probability at least $1-2\delta_2$ that $\|\overline{\nabla} - \widehat{\nabla}\|_F \leq d_2c_0\cG(\bK, \bL(\bK))/[r_2\phi] \cdot \sqrt{\log(d_2/\delta_2)/[2M_2}]$. Then, it suffices to choose $M_2 \geq 8d_2^2c_0^2\cG(\bK, \bL(\bK))^2/[r_2^2\phi^2\epsilon_2^2] \cdot\log(6d_2/\delta_2)$ to ensure that $\|\overline{\nabla}- \widehat{\nabla}\|_F \leq \epsilon_2/4$ with probability at least $1-\delta_2/3$. 
	 
	 For the third term, we can apply standard results in zeroth-order optimization \cite{flaxman2005online}, which leads to $\tilde{\nabla} = d_2/r_2\cdot\EE_{\bV}\big[\cG(\bK+r_2\bV, \bL(\bK+r_2\bV)) \bV\big]$. Then, we invoke Lemma \ref{lemma:outer_perturb} and Hoeffding's inequality to get with probability at least $1-2\delta_2$ that $\|\widehat{\nabla} - \tilde{\nabla}\|_F \leq d_2/r_2 \cdot (\cG(\bK, \bL(\bK)) + r_2\cB_{\bP, \bK}c_0)\cdot\sqrt{2\log(d_2/\delta_2)/M_2}$, where we require $r_2 \leq \cB_{1, \bK}$. Thus, it suffices to choose $M_2 \geq 32d_2^2(\cG(\bK, \bL(\bK)) + r_2\cB_{\bP, \bK}c_0)^2/[r_2^2\epsilon_2^2]\cdot\log(6d_2/\delta_2)$ to ensure that $\|\widehat{\nabla}- \tilde{\nabla}\|_F \leq \epsilon_2/4$ with probability at least $1-\delta_2/3$. 
	 
	 Lastly, by Lemma \ref{lemma:grad_perturb}, we can bound the last term $\|\tilde{\nabla} - \nabla\|_F$ almost surely by $\epsilon_2/4$ if requiring
\begin{align*}
	r_2 = \|\bK' - \bK\|_F \leq \min\Big\{\cB_{1, \bK}, \cB_{2, \bK}, \frac{\epsilon_2}{16c_{5, \bK}(c_{\Sigma_{\bK, \bL(\bK)}} +\cB_{\Sigma, \bK})}, \frac{\epsilon_2}{16c_{2, \bK}\cB_{\Sigma, \bK}}\Big\}
\end{align*}
where $\cB_{1, \bK}, \cB_{\Sigma, \bK}$ are defined in Lemma \ref{lemma:outer_perturb}, $c_{\Sigma_{\bK, \bL(\bK)}}$ is defined in \S\ref{proof:grad_perturb}, and $r_2 \leq \cB_{2, \bK}$ ensures the perturbed gain matrices $\bK + r_2\bV^j$, for all $j$, lie within $\cK$, as proved in Lemma \ref{lemma:safe_perturb}. Thus, we have $\|\check{\nabla} - \nabla\|_F \leq \epsilon_2$ with probability at least $1-\delta_2$, which completes the proof.
\end{proof}

 \subsection{Proof of Lemma \ref{lemma:sigma_est}}\label{proof:sigma_est_K}
 \begin{proof}
We first introduce a lemma similar to Lemma 16 of \cite{fazel2018global} and defer its proof to the end of this subsection. 
 \begin{lemma}\label{lemma:sigma_L_perturbation}($\Sigma_{\bK, \bL}$ Perturbation) For a fixed $\bK \in \cK$, there exist some continuous functions $\cB_{1, \bL}, \cB_{\Sigma, \bL} > 0$ such that if $\|\bL' - \bL\|_F \leq \cB_{1, \bL}$, then it holds that $\|\Sigma_{\bK, \bL'} - \Sigma_{\bK, \bL}\|_F \leq \cB_{\Sigma, \bL} \cdot \|\bL' - \bL\|_F$.
 \end{lemma}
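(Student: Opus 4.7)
The plan is to exploit the block-nilpotent structure of the finite-horizon compact representation to obtain an explicit finite-sum expansion of $\Sigma_{\bK,\bL}$, and then do a telescoping perturbation argument entirely analogous to Lemma 16 of \cite{fazel2018global}. Specifically, denote $\bA_{\bK,\bL}:=\bA-\bB\bK-\bD\bL$. By the definitions in \S\ref{sec:compact_forms}, $\bA$ is block-lower-triangular with zero diagonal blocks and zero last block-column, and both $\bB\bK$ and $\bD\bL$ inherit the same sparsity; hence $(\bA_{\bK,\bL})^{N+1}=\bm{0}$. Unrolling the Lyapunov recursion \eqref{eqn:sigmaKL} therefore terminates, giving the closed form
\begin{align*}
\Sigma_{\bK,\bL}\;=\;\sum_{t=0}^{N}(\bA_{\bK,\bL})^{t}\,\Sigma_{0}\,\big((\bA_{\bK,\bL})^{\top}\big)^{t},
\end{align*}
and the same expression holds for $\Sigma_{\bK,\bL'}$ with $\bL$ replaced by $\bL'$.

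Next I would write the perturbation of each power $(\bA_{\bK,\bL'})^{t}$ via the standard telescoping identity
\begin{align*}
(\bA_{\bK,\bL'})^{t}-(\bA_{\bK,\bL})^{t}\;=\;\sum_{i=0}^{t-1}(\bA_{\bK,\bL'})^{i}\,(\bA_{\bK,\bL'}-\bA_{\bK,\bL})\,(\bA_{\bK,\bL})^{t-1-i},
\end{align*}
and note that $\bA_{\bK,\bL'}-\bA_{\bK,\bL}=-\bD(\bL'-\bL)$, so that in operator norm this difference is bounded by $\|\bD\|\cdot\|\bL'-\bL\|_{F}$. Applying a bilinear telescoping to $\Sigma_{\bK,\bL'}-\Sigma_{\bK,\bL}$ (one side for $(\bA_{\bK,\bL'})^{t}-(\bA_{\bK,\bL})^{t}$, the other side for $((\bA_{\bK,\bL'})^{\top})^{t}-((\bA_{\bK,\bL})^{\top})^{t}$) expresses the whole matrix difference as a sum whose every summand carries exactly one factor of $-\bD(\bL'-\bL)$ (or its transpose).

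To turn this into a uniform Lipschitz bound, I would first choose $\cB_{1,\bL}:=1$ (or any fixed positive quantity independent of $\bL'$; continuity in $\bL$ is trivial since we can just take $\cB_{1,\bL}$ to be constant) so that whenever $\|\bL'-\bL\|_{F}\leq\cB_{1,\bL}$ we have the uniform bound $\|\bA_{\bK,\bL'}\|\leq\|\bA_{\bK,\bL}\|+\|\bD\|\cdot\cB_{1,\bL}=:M_{\bK,\bL}$. Substituting this into the telescoping expression and summing the resulting polynomial-in-$t$ terms from $t=0$ to $N$ yields
\begin{align*}
\|\Sigma_{\bK,\bL'}-\Sigma_{\bK,\bL}\|_{F}\;\leq\;\cB_{\Sigma,\bL}\cdot\|\bL'-\bL\|_{F},
\end{align*}
with $\cB_{\Sigma,\bL}$ an explicit polynomial in $N$, $\|\bD\|$, $\|\Sigma_{0}\|_{F}$, and $M_{\bK,\bL}$. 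Since $M_{\bK,\bL}$ is a continuous function of $\bL$ (and $\bK$ is fixed), so is $\cB_{\Sigma,\bL}$.

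I do not expect a genuine obstacle: the nilpotent structure removes the need for any spectral-radius or Neumann-series argument of the sort that is delicate in the infinite-horizon LTI setting, and the telescoping step is purely algebraic. The only minor care is in book-keeping the doubly-indexed sum when both sides of $\Sigma_{\bK,\bL}$ are perturbed simultaneously; this is handled by adding and subtracting $(\bA_{\bK,\bL'})^{t}\Sigma_{0}((\bA_{\bK,\bL})^{\top})^{t}$ and bounding the two resulting pieces separately with the identity above.
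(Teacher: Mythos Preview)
Your argument is correct and, in fact, more direct than the paper's own proof. The paper proves Lemma~\ref{lemma:sigma_L_perturbation} by invoking Lemma~16 of \cite{fazel2018global} (an infinite-horizon perturbation bound) with the substitutions $\bA-\bB\bK,\bD,\bQ+\bK^{\top}\bR^u\bK,\bL\leftrightarrow A,B,Q,K$, and then patches the one place where that lemma breaks down here---namely the bound on $\|\Sigma_{\bK,\bL}\|$, which in \cite{fazel2018global} relies on $Q\succ 0$---by instead using compactness of the superlevel set $\cL_{\bK}(\cG(\bK,\bL_0))$ to get a uniform constant $c_{\Sigma_{\cG(\bK,\bL_0)}}$. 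Their resulting constants are $\cB_{1,\bL}=\phi/[4c_{\Sigma_{\cG(\bK,\bL_0)}}\|\bD\|_F(\|\bA_{\bK,\bL}\|_F+1)]$ and $\cB_{\Sigma,\bL}=4c_{\Sigma_{\cG(\bK,\bL_0)}}^2\|\bD\|_F(\|\bA_{\bK,\bL}\|_F+1)/\phi$.

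By contrast, you exploit the finite-horizon nilpotency $(\bA_{\bK,\bL})^{N+1}=\bm{0}$ to write $\Sigma_{\bK,\bL}$ as a \emph{finite} sum and telescope directly. This sidesteps any Neumann-series or spectral-radius reasoning, removes the need to bound $\|\Sigma_{\bK,\bL}\|$ via the superlevel-set compactness, and yields constants that are explicit polynomials in $N$, $\|\bD\|$, $\|\Sigma_0\|_F$, and $\|\bA_{\bK,\bL}\|$ alone---in particular, independent of the initial iterate $\bL_0$. The paper's route has the advantage of reusing off-the-shelf infinite-horizon machinery; yours is more elementary and better tailored to the finite-horizon structure actually present.
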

The estimation bias, $\|\overline{\Sigma}_{\bK, \overline{\bL}(\bK)} - \Sigma_{\bK, \bL(\bK)}\|_F$, can be separated into two terms, such that
\begin{align}\label{eqn:sigmaK_diff}
	\|\overline{\Sigma}_{\bK, \overline{\bL}(\bK)} - \Sigma_{\bK, \bL(\bK)}\|_F \leq \|\overline{\Sigma}_{\bK, \overline{\bL}(\bK)} - \Sigma_{\bK, \overline{\bL}(\bK)}\|_F + \|\Sigma_{\bK, \overline{\bL}(\bK)} - \Sigma_{\bK, \bL(\bK)}\|_F,
\end{align}
where $\overline{\Sigma}_{\bK, \overline{\bL}(\bK)} = \frac{1}{M_2}\sum^{M_2-1}_{j=0}diag\big(x^j_{0} (x^j_{0})^{\top}, \cdots, x^j_{N} (x^j_{N})^{\top}\big)$, $\Sigma_{\bK, \overline{\bL}(\bK)} = \EE_{\bm{\xi}}\left[\overline{\Sigma}_{\bK, \overline{\bL}(\bK)}\right]$, and $\{x_t\}$ is the sequence of noisy states driven by the independently sampled noises $x_0, \xi_t \sim \cD$, for all $t$, and the pair of control gain matrix $(\bK, \overline{\bL}(\bK))$, where $\overline{\bL}(\bK)$ is the approximate inner-loop solution. Recall that $\big\|diag\big(x^j_{0} (x^j_{0})^{\top}, \xi^j_{0} (\xi^j_{0})^{\top}, \cdots, \xi^j_{N-1} (\xi^j_{N-1})^{\top}\big)\big\|_F \leq c_0$ almost surely and
 	\begin{align*}
 		diag\big(x^j_{0} (x^j_{0})^{\top}, \cdots, x^j_{N} (x^j_{N})^{\top}\big) = \sum_{t=0}^{N-1}(\bA_{\bK, \overline{\bL}(\bK)})^t\cdot diag\big(x^j_{0} (x^j_{0})^{\top}, \xi^j_{0} (\xi^j_{0})^{\top}, \cdots, \xi^j_{N-1} (\xi^j_{N-1})^{\top}\big)\cdot (\bA_{\bK, \overline{\bL}(\bK)}^{\top})^t,
 	\end{align*}
 	then there exists a $c_{\Sigma_{\bK, \bL(\bK)}}$ that is a polynomial of $\|\bA\|_F$, $\|\bB\|_F$, $\|\bD\|_F$, $\|\bK\|_F$, and is linear in $c_0$ such that $\big\|diag\big(x^j_{0} (x^j_{0})^{\top}, \cdots, x^j_{N} (x^j_{N})^{\top}\big)\big\|_F \leq  c_{\Sigma_{\bK, \bL(\bK)}}$ almost surely. Then, we apply Hoeffding's inequality to get with probability at least $1-2\delta_2$ that $\Big\|\overline{\Sigma}_{\bK, \overline{\bL}(\bK)} - \Sigma_{\bK, \overline{\bL}(\bK)}\Big\|_F   \leq \sqrt{c^2_{\Sigma_{\bK, \bL(\bK)}}\log(d_{\Sigma}/\delta_2)/[2M_2]}$. Therefore, it suffices to choose $M_2 \geq 2c^2_{\Sigma_{\bK, \bL(\bK)}}\epsilon^{-2}_2\log\big(\frac{4d_{\Sigma}}{\delta_2}\big)$ to ensure that $\|\overline{\Sigma}_{\bK, \overline{\bL}(\bK)} - \Sigma_{\bK, \overline{\bL}(\bK)}\|_F \leq \epsilon_2/2$ with probability at least $1-\delta_2/2$. 

For the second term on the RHS of \eqref{eqn:sigmaK_diff}, we have by Theorems \ref{theorem:free_pg_L} or \ref{theorem:free_npg_L}, depending on whether the inner-loop oracle is implemented with the ZO-PG \eqref{eqn:pg_free_L} or the ZO-NPG \eqref{eqn:npg_free_L} updates, that with probability at least $1-\delta_1$: $\|\overline{\bL}(\bK)-\bL(\bK)\|^2_F \leq \lambda^{-1}_{\min}(\bH_{\bK, \bL(\bK)})\cdot\epsilon_1$, where $\bH_{\bK, \bL(\bK)}>0$ is as defined in \eqref{eqn:gh_def}. Then, we apply Lemma \ref{lemma:sigma_L_perturbation} to get when $\|\overline{\bL}(\bK)-\bL(\bK)\|_F \leq \cB_{1, \bL(\bK)}$, it holds that $\|\Sigma_{\bK, \overline{\bL}(\bK)} - \Sigma_{\bK, \bL(\bK)}\|_F \leq \cB_{\Sigma, \bL(\bK)} \|\overline{\bL}(\bK)-\bL(\bK)\|_F$ with probability at least $1-\delta_1$. Thus, if
\begin{align*}
	\epsilon_1 \leq \min\Big\{\cB_{1, \bL(\bK)}^2\lambda_{\min}(\bH_{\bK, \bL(\bK)}), \frac{\epsilon_2^2 \lambda_{\min}(\bH_{\bK, \bL(\bK)})}{4\cB_{\Sigma, \bL(\bK)}^2}\Big\}, \quad \delta_1 \leq \frac{\delta_2}{2},
\end{align*}
then we can bound $\|\Sigma_{\bK, \overline{\bL}(\bK)} - \Sigma_{\bK, \bL(\bK)}\|_F \leq \epsilon_2/2$ with probability at least $1-\delta_2/2$. Combining two terms together, we can conclude that $\|\overline{\Sigma}_{\bK, \overline{\bL}(\bK)} - \Sigma_{\bK, \bL(\bK)}\|_F \leq \epsilon_2$ with probability at least $1-\delta_2$. Then, by the Weyl's Theorem, we can bound $\lambda_{\min}(\overline{\Sigma}_{\bK, \overline{\bL}(\bK)}) \geq \phi/2$ if $\epsilon_2 \leq \phi/2$. Lastly, we prove Lemma \ref{lemma:sigma_L_perturbation}, which mostly follows from Lemma 16 of \cite{fazel2018global}, with our $\bA-\bB\bK, \bD, \bQ+\bK^{\top}\bR^u\bK, \bL$ matrices being replaced by the $A, B, Q, K$ therein, respectively, except that the upper bound for $\|\Sigma_{K}\|$ therein does not hold in our setting since we only require $\bQ \geq 0$ and thus $\bQ+\bK^{\top}\bR^u\bK$ may not be full-rank. Instead, we utilize that the value of the objective function following \eqref{eqn:pg_free_L} or \eqref{eqn:npg_free_L} is monotonically non-decreasing, as shown in Theorems \ref{theorem:free_pg_L} and \ref{theorem:free_npg_L}. Moreover, the superlevel set $\cL_{\bK}(\cG(\bK, \bL_0))$ following the definition in \eqref{eqn:levelset_L} is compact. Thus, there exists a constant $c_{\Sigma_{\cG(\bK, \bL_0)}}:=\max_{\bL \in \cL_{\bK}(\cG(\bK, \bL_0))}\|\Sigma_{\bK, \bL}\|$ depending on $\bK$ such that for a fixed $\bK \in \cK$, $\|\Sigma_{\bK, \bL}\|_F \leq c_{\Sigma_{\cG(\bK, \bL_0)}}$ holds for all iterates of $\bL$ following \eqref{eqn:pg_free_L} or \eqref{eqn:npg_free_L} until convergence of the inner loop. This implies that for a fixed $\bK \in \cK$, if $\|\bL' - \bL\|_F \leq \cB_{1, \bL} : = \phi/[4c_{\Sigma_{\cG(\bK, \bL_0)}}\|\bD\|_F(\|\bA-\bB\bK-\bD\bL\|_F + 1)]$, then it holds that $\|\Sigma_{\bK, \bL'} - \Sigma_{\bK, \bL}\|_F \leq \cB_{\Sigma, \bL}\cdot \|\bL' - \bL\|_F$, where $\cB_{\Sigma, \bL}:= 4c_{\Sigma_{\cG(\bK, \bL_0)}}^2\|\bD\|_F(\|\bA-\bB\bK-\bD\bL\|_F + 1)/\phi$. This completes the proof. 
\end{proof}

\section{Auxiliary Results}\label{sec:axu}

\subsection{Auxiliary Bounds}
Define the following polynomials of $\cG(\bK, \bL(\bK))$:
\small
\begin{align*}
	&c_{1, \bK} := \frac{\cG(\bK, \bL(\bK))}{\phi} + \frac{\|\bD\|^2_F\cG(\bK, \bL(\bK))^2}{\phi^2\cdot\lambda_{\min}(\bH_{\bK, \bL(\bK)})}, \qquad c_{2, \bK} := \sqrt{\frac{\|\bG_{\bK, \bL(\bK)}\|}{\phi}\cdot\big(\cG(\bK, \bL(\bK)) - \cG(\bK^*, \bL^*)\big)}, \\
	&c_{3, \bK} := \frac{c_{2, \bK} + \|\bB\|_F\|\bA\|_Fc_{1, \bK}}{\lambda_{\min}(\bR^u)}, \qquad c_{4, \bK} := \frac{\|\bD\|_F\cG(\bK, \bL(\bK))}{\phi\cdot\lambda_{\min}(\bH_{\bK, \bL(\bK)})} \big(\|\bA\|_F + \|\bB\|_Fc_{3, \bK}\big),\\
	&c_{5, \bK} := 2\Big[\|\bR^u\|_F + \|\bB\|_F^2\cG(\bK, \bL(\bK)) + \|\bB\|_F\|\bD\|_F\cG(\bK, \bL(\bK))\cB_{\bL(\bK), \bK} + \cB_{\bP, \bK}\|\bB\|_F\Big(\|\bB\|_F\big(c_{3, \bK} + \cB_{1, \bK}\big) + \|\bA\|_F + \|\bD\|_F(c_{4, \bK} +\cB_{\bL(\bK), \bK})\Big)\Big],
\end{align*}
\normalsize
where $\cB_{1, \bK}, \cB_{\bP, \bK}, \cB_{\bL(\bK), \bK}$ follows from Lemma \ref{lemma:outer_perturb} and $\bG_{\bK, \bL}$ is defined in \eqref{eqn:gh_def}. Then, we present the following lemmas.

\begin{lemma} For $\bK, \bK' \in \cK$, the following inequalities hold
\vspace{-2em}
\begin{flushleft}
\begin{minipage}[t]{0.33\textwidth}
\begin{align}
	&\|\bP_{\bK, \bL(\bK)}\|_F \leq \cG(\bK, \bL(\bK))/\phi \label{bound:pk}, \\
	&\|\tilde{\bP}_{\bK, \bL(\bK)}\|_F \leq c_{1, \bK} \label{bound:tilde_pk},
\end{align}
\end{minipage}
\begin{minipage}[t]{0.39\textwidth}
\begin{align}
&\|\bF_{\bK, \bL(\bK)}\|_F \leq c_{2, \bK} \label{bound:fk} \\
 &\|\nabla_{\bK}\cG(\bK, \bL(\bK))\|_F \leq 2\|\Sigma_{\bK, \bL(\bK)}\|c_{2, \bK},\label{bound:grad_K}
\end{align}
\end{minipage}
\begin{minipage}[t]{0.27\textwidth}
\begin{align}
 	&\|\bK\|_F \leq c_{3, \bK} \label{bound:k}, \\ 
	&\|\bL(\bK)\|_F \leq c_{4, \bK}. \label{bound:lk}	
\end{align}
\end{minipage}
\end{flushleft}
\end{lemma}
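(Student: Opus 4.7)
The plan is to prove the six bounds in the order listed, since each subsequent bound is built from the previous ones by elementary submultiplicativity. First, to establish $\|\bP_{\bK,\bL(\bK)}\|_F\leq \cG(\bK,\bL(\bK))/\phi$, I will use that $\bK\in\cK$ forces $\bP_{\bK,\bL(\bK)}\geq 0$ via Lemma~\ref{lemma:assumption_L}, so $\|\bP_{\bK,\bL(\bK)}\|_F\leq\Tr(\bP_{\bK,\bL(\bK)})$. Combining the closed form $\cG(\bK,\bL(\bK))=\Tr(\bP_{\bK,\bL(\bK)}\Sigma_0)$ from \eqref{eqn:ckl} with $\Sigma_0\geq\phi\bI$ yields $\cG(\bK,\bL(\bK))\geq\phi\Tr(\bP_{\bK,\bL(\bK)})$, which rearranges to \eqref{bound:pk}. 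For \eqref{bound:tilde_pk}, I plug the closed form of $\tilde{\bP}_{\bK,\bL(\bK)}$ from \eqref{eqn:tilde_pk} into the triangle inequality, use $\|\bH_{\bK,\bL(\bK)}^{-1}\|\leq 1/\lambda_{\min}(\bH_{\bK,\bL(\bK)})$ together with submultiplicativity, and invoke \eqref{bound:pk}; this reproduces the two summands in $c_{1,\bK}$.

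The bound \eqref{bound:fk} on $\|\bF_{\bK,\bL(\bK)}\|_F$ will be the main obstacle, since the outer objective is not PL in $\bK$ and thus no direct gradient-domination inequality is available. My plan is to borrow the one-step matrix-monotone decrement that already appeared in the implicit regularization proof: by \eqref{eqn:npg_exact_rate}, one exact NPG step from $\bK$ with the specific choice $\alpha=1/(2\|\bG_{\bK,\bL(\bK)}\|)$ produces a $\bK'\in\cK$ satisfying
\begin{align*}
\bP_{\bK',\bL(\bK')}-\bP_{\bK,\bL(\bK)}\;\leq\; -4\alpha\bF_{\bK,\bL(\bK)}^\top\big(\bI-\alpha\bG_{\bK,\bL(\bK)}\big)\bF_{\bK,\bL(\bK)}\;\leq\;-2\alpha\bF_{\bK,\bL(\bK)}^\top\bF_{\bK,\bL(\bK)}.
\end{align*}
Taking an inner product with $\Sigma_0\geq\phi\bI$ gives $\cG(\bK,\bL(\bK))-\cG(\bK',\bL(\bK'))\geq(\phi/\|\bG_{\bK,\bL(\bK)}\|)\|\bF_{\bK,\bL(\bK)}\|_F^2$. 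Since $\cG(\bK',\bL(\bK'))\geq\cG(\bK^*,\bL^*)$ by the global optimality of $(\bK^*,\bL^*)$ on $\cK$ (Lemma~\ref{lemma:landscape_K}), I can upper-bound the left-hand side by $\cG(\bK,\bL(\bK))-\cG(\bK^*,\bL^*)$ and rearrange to obtain \eqref{bound:fk}. The gradient bound \eqref{bound:grad_K} is then immediate from $\nabla_{\bK}\cG(\bK,\bL(\bK))=2\bF_{\bK,\bL(\bK)}\Sigma_{\bK,\bL(\bK)}$ in \eqref{eqn:nabla_K} and submultiplicativity.

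Finally, \eqref{bound:k} and \eqref{bound:lk} are bootstrapped from the preceding estimates. For \eqref{bound:k}, I rearrange the definition of $\bF_{\bK,\bL(\bK)}$ in \eqref{eqn:tilde_pk} into $\bK=\bG_{\bK,\bL(\bK)}^{-1}\big(\bF_{\bK,\bL(\bK)}+\bB^\top\tilde{\bP}_{\bK,\bL(\bK)}\bA\big)$, note $\bG_{\bK,\bL(\bK)}\geq\bR^u$ so $\|\bG_{\bK,\bL(\bK)}^{-1}\|\leq 1/\lambda_{\min}(\bR^u)$, and apply \eqref{bound:fk} and \eqref{bound:tilde_pk}, which recovers the definition of $c_{3,\bK}$. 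For \eqref{bound:lk}, I use the closed form $\bL(\bK)=-\bH_{\bK,\bL(\bK)}^{-1}\bD^\top\bP_{\bK,\bL(\bK)}(\bA-\bB\bK)$ from \eqref{eqn:l_k}, bound $\|\bH_{\bK,\bL(\bK)}^{-1}\|\leq 1/\lambda_{\min}(\bH_{\bK,\bL(\bK)})$, and combine \eqref{bound:pk} with \eqref{bound:k} to match $c_{4,\bK}$. Apart from \eqref{bound:fk}, the remaining steps are direct submultiplicativity and substitution; the crux is recognizing that the single-step matrix decrement from implicit regularization delivers a usable surrogate for gradient domination on the outer loop.
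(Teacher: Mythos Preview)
Your proposal is correct and follows essentially the same route as the paper. The only cosmetic difference is that for \eqref{bound:fk} and \eqref{bound:k} the paper outsources the key inequalities to Lemmas~11 and~25 of \cite{fazel2018global}, whereas you unpack them explicitly from the one-step NPG matrix decrement \eqref{eqn:npg_exact_rate} and from rearranging $\bF_{\bK,\bL(\bK)}=\bG_{\bK,\bL(\bK)}\bK-\bB^\top\tilde{\bP}_{\bK,\bL(\bK)}\bA$; the underlying arguments are identical.
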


\begin{proof}
To start with, we note that for any $\bK \in \cK$, the solution to the Riccati equation \eqref{eqn:DARE_black_L} satisfies $\bP_{\bK,\bL(\bK)}\geq 0$. Thus, \eqref{bound:pk} can be proved by $\cG(\bK, \bL(\bK)) = \Tr(\bP_{\bK, \bL(\bK)}\Sigma_0) \geq \phi\|\bP_{\bK, \bL(\bK)}\|_F$, where the inequality follows from $\phi = \lambda_{\min}(\Sigma_0) >0$ and $\Tr(M) \geq \|M\|_F$ for any matrix $M \geq 0$. Then, for any $\bK \in \cK$, we have by \eqref{bound:pk} and the definition of $\tilde{\bP}_{\bK, \bL(\bK)}$ in \eqref{eqn:tilde_pk} that
\begin{align*}
	\big\|\tilde{\bP}_{\bK, \bL(\bK)}\big\|_F &\leq \big\|\bP_{\bK, \bL(\bK)}\big\|_F + \big\|\bP_{\bK, \bL(\bK)}\bD\bH_{\bK, \bL(\bK)}^{-1}\bD^{\top}\bP_{\bK, \bL(\bK)}\big\|_F \leq \frac{\cG(\bK, \bL(\bK))}{\phi} + \frac{\|\bD\|^2_F\cG(\bK, \bL(\bK))^2}{\phi^2\cdot\lambda_{\min}(\bH_{\bK, \bL(\bK)})} = c_{1, \bK}.
\end{align*}
This completes the proof of \eqref{bound:tilde_pk}. Subsequently, for any $\bK, \bK' \in \cK$, we invoke Lemma 11 of \cite{fazel2018global} to get 
	\begin{align*}
	\cG(\bK, \bL(\bK)) - \cG(\bK^*, \bL^*) &\geq \cG(\bK, \bL(\bK)) - \cG(\bK', \bL(\bK')) \geq \frac{\phi}{\|\bG_{\bK, \bL(\bK)}\|}\Tr(\bF_{\bK, \bL(\bK)}^{\top}\bF_{\bK, \bL(\bK)}).
	\end{align*}
	As a result,  we have $\|\bF_{\bK, \bL(\bK)}\|^2_F = \Tr(\bF_{\bK, \bL(\bK)}^{\top}\bF_{\bK, \bL(\bK)})\leq \frac{\|\bG_{\bK, \bL(\bK)}\|}{\phi} \big(\cG(\bK, \bL(\bK)) - \cG(\bK^*, \bL^*)\big)$, and taking a square root of both sides yields \eqref{bound:fk}. Moreover, we can prove \eqref{bound:grad_K} such that
	\begin{align*}
		\|\nabla_{\bK}\cG(\bK, \bL(\bK))\|_F^2 \leq 4\|\Sigma_{\bK, \bL(\bK)}\|^2\Tr(\bF_{\bK, \bL(\bK)}^{\top}\bF_{\bK, \bL(\bK)}) \leq \frac{4\|\Sigma_{\bK, \bL(\bK)}\|^2\|\bG_{\bK, \bL(\bK)}\|}{\phi} \big(\cG(\bK, \bL(\bK)) - \cG(\bK^*, \bL(\bK^*))\big).
	\end{align*}
	Next, we invoke Lemma 25 of \cite{fazel2018global} and \eqref{bound:tilde_pk} to get
 	\begin{align*}
 		\|\bK\|_F &\leq \frac{c_{2, \bK}}{\lambda_{\min}(\bR^u)}+ \frac{\big\|\bB^{\top}\tilde{\bP}_{\bK, \bL(\bK)}\bA\big\|_F}{\lambda_{\min}(\bR^u)} \leq \frac{c_{2, \bK} + \|\bB\|_F\|\bA\|_Fc_{1, \bK}}{\lambda_{\min}(\bR^u)} = c_{3, \bK},
 \end{align*}
 which proves \eqref{bound:k}. Lastly, for any $\bK \in \cK$ and recall the definition of $\bL(\bK)$ from \eqref{eqn:l_k}
\begin{align*}
	\|\bL(\bK)\|_F = \|\bH_{\bK, \bL(\bK)}^{-1}\bD^{\top}\bP_{\bK, \bL(\bK)}(\bA-\bB\bK)\|_F \leq \frac{\|\bD\|_F\cG(\bK, \bL(\bK))}{\phi\cdot\lambda_{\min}(\bH_{\bK, \bL(\bK)})} \big(\|\bA\|_F + \|\bB\|_Fc_{3, \bK}\big) = c_{4, \bK},
\end{align*}
which proves \eqref{bound:lk}. 
\end{proof}

\begin{lemma}\label{lemma:subfk}
	For $\bK, \bK' \in \cK$ satisfying $\|\bK' - \bK\|_F \leq \cB_{1, \bK}$, where $\cB_{1, \bK}$ is as defined in Lemma \ref{lemma:outer_perturb}, it holds that
	\begin{align}
	 \|\bF_{\bK', \bL(\bK')} - \bF_{\bK, \bL(\bK)}\|_F \leq c_{5, \bK}\cdot \|\bK'-\bK\|_F. \label{bound:subfk}	
	\end{align}
\end{lemma}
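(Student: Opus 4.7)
The plan is to write $\bF_{\bK,\bL(\bK)}$ in the form that makes perturbations of both $\bP_{\bK,\bL(\bK)}$ and $\bL(\bK)$ appear explicitly, then decompose $\bF_{\bK',\bL(\bK')}-\bF_{\bK,\bL(\bK)}$ via a telescoping add--subtract argument, and finally apply the perturbation bounds of Lemma~\ref{lemma:outer_perturb} together with the a priori bounds \eqref{bound:pk}, \eqref{bound:k}, \eqref{bound:lk}.

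Specifically, I would first rewrite $\bF_{\bK,\bL(\bK)} = (\bR^u+\bB^{\top}\bP_{\bK,\bL(\bK)}\bB)\bK - \bB^{\top}\bP_{\bK,\bL(\bK)}(\bA-\bD\bL(\bK))$, which is algebraically equivalent to the expression in \eqref{eqn:tilde_pk} after substituting the closed form \eqref{eqn:l_k} for $\bL(\bK)$. Writing $\Delta\bK := \bK'-\bK$, $\Delta\bP := \bP_{\bK',\bL(\bK')}-\bP_{\bK,\bL(\bK)}$, and $\Delta\bL := \bL(\bK')-\bL(\bK)$, a direct add--subtract computation yields
\begin{align*}
\bF_{\bK',\bL(\bK')}-\bF_{\bK,\bL(\bK)}
&= \bigl(\bR^u + \bB^{\top}\bP_{\bK,\bL(\bK)}\bB\bigr)\Delta\bK
 + \bB^{\top}(\Delta\bP)\,\bB\bK' \\
&\quad - \bB^{\top}(\Delta\bP)\,\bA
 + \bB^{\top}(\Delta\bP)\,\bD\bL(\bK')
 + \bB^{\top}\bP_{\bK,\bL(\bK)}\bD\,\Delta\bL.
\end{align*}

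Next, I would apply the triangle inequality and submultiplicativity of the Frobenius norm term by term. The condition $\|\Delta\bK\|_F \leq \cB_{1,\bK}$ triggers Lemma~\ref{lemma:outer_perturb}, giving $\|\Delta\bP\|_F \leq \cB_{\bP,\bK}\|\Delta\bK\|_F$ and $\|\Delta\bL\|_F \leq \cB_{\bL(\bK),\bK}\|\Delta\bK\|_F$. For the remaining base-point quantities, I would use $\|\bP_{\bK,\bL(\bK)}\|_F \leq \cG(\bK,\bL(\bK))/\phi$ from \eqref{bound:pk}, $\|\bK\|_F\leq c_{3,\bK}$ from \eqref{bound:k}, and $\|\bL(\bK)\|_F \leq c_{4,\bK}$ from \eqref{bound:lk}, together with $\|\bK'\|_F \leq c_{3,\bK}+\cB_{1,\bK}$ and $\|\bL(\bK')\|_F \leq c_{4,\bK}+\cB_{\bL(\bK),\bK}$, where the latter two use $\|\Delta\bK\|_F\leq\cB_{1,\bK}$ (and a standard normalization so that the $\cB_{\bL(\bK),\bK}\|\Delta\bK\|_F$ contribution can be absorbed into $\cB_{\bL(\bK),\bK}$). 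Collecting these bounds and factoring out $\|\Delta\bK\|_F$ yields exactly the expression of $c_{5,\bK}$ in \S\ref{sec:axu} (up to the constant factor $2$ that the definition includes for slack).

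The step I expect to require the most care is not any single bound but rather the bookkeeping: making sure that each of the five terms above is matched with exactly the right combination of perturbation constants and base-point norms so that the final sum agrees with the exact definition of $c_{5,\bK}$. In particular, the term $\bB^{\top}(\Delta\bP)\bD\bL(\bK')$ produces the cross term $\|\bB\|_F\|\bD\|_F\cB_{\bP,\bK}(c_{4,\bK}+\cB_{\bL(\bK),\bK})\|\Delta\bK\|_F$, while $\bB^{\top}\bP_{\bK,\bL(\bK)}\bD\,\Delta\bL$ contributes the $\|\bB\|_F\|\bD\|_F\cG(\bK,\bL(\bK))\cB_{\bL(\bK),\bK}$ term (after using \eqref{bound:pk}); these two sources must be kept separate so that the constant $c_{5,\bK}$ recovers precisely the formula stated in \S\ref{sec:axu}. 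No new analytical ingredients beyond Lemma~\ref{lemma:outer_perturb} and the a priori bounds are needed.
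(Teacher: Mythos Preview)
Your proposal is correct and follows essentially the same approach as the paper's proof: both rewrite $\bF_{\bK,\bL(\bK)}$ in the form $(\bR^u+\bB^{\top}\bP_{\bK,\bL(\bK)}\bB)\bK-\bB^{\top}\bP_{\bK,\bL(\bK)}(\bA-\bD\bL(\bK))$, perform an add--subtract/telescoping decomposition into the same five pieces, and then apply the perturbation bounds of Lemma~\ref{lemma:outer_perturb} together with \eqref{bound:pk}, \eqref{bound:k}, \eqref{bound:lk}. Your observation that the factor of $2$ in $c_{5,\bK}$ is slack is accurate---the paper's proof carries this factor from the outset, while your decomposition shows it is not strictly needed.
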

\begin{proof}
For $\bK, \bK' \in \cK$ and recalling the definition of $\bF_{\bK, \bL}$ in \eqref{eqn:nabla_K}, we have
\small
\begin{align*}
	&\hspace{-1em}\|\bF_{\bK', \bL(\bK')} - \bF_{\bK, \bL(\bK)}\|_F = 2\big\|(\bR^u + \bB^{\top}\bP_{\bK', \bL(\bK')}\bB)\bK' - \bB^{\top}\bP_{\bK', \bL(\bK')}(\bA-\bD\bL(\bK')) - (\bR^u + \bB^{\top}\bP_{\bK, \bL(\bK)}\bB)\bK - \bB^{\top}\bP_{\bK, \bL(\bK)}(\bA-\bD\bL(\bK))\big\|_F\\
	&\leq 2\|\bR^u\|_F\|\bK'-\bK\|_F + 2\|\bB\|_F^2\|\bK'\|_F\|\bP_{\bK', \bL(\bK')}- \bP_{\bK, \bL(\bK)}\|_F + 2\|\bB\|^2_F\|\bP_{\bK, \bL(\bK)}\|_F\|\bK'-\bK\|_F + 2\|\bB\|_F\|\bA\|_F\|\bP_{\bK', \bL(\bK')}- \bP_{\bK, \bL(\bK)}\|_F\\
	&\hspace{1em}  + 2\|\bB\|_F\|\bD\|_F\|\bL(\bK')\|_F\|\bP_{\bK', \bL(\bK')}- \bP_{\bK, \bL(\bK)}\|_F + 2\|\bB\|_F\|\bP_{\bK, \bL(\bK)}\|_F\|\bD\|_F\|\bL(\bK') - \bL(\bK)\|_F\\
	&= 2\big(\|\bR^u\|_F + \|\bB\|^2_F\|\bP_{\bK, \bL(\bK)}\|_F\big)\cdot \|\bK'-\bK\|_F + 2\big(\|\bB\|_F\|\bP_{\bK, \bL(\bK)}\|_F\|\bD\|_F\big)\cdot\|\bL(\bK') - \bL(\bK)\|_F\\
	&\hspace{1em}+ 2\Big(\|\bB\|^2_F\big(\|\bK\|_F + \|\bK'-\bK\|_F\big) + \|\bB\|_F\|\bA\|_F + \|\bB\|_F\|\bD\|_F\big(\|\bL(\bK)\|_F+ \|\bL(\bK') -\bL(\bK)\|_F\big)\Big)\cdot \|\bP_{\bK', \bL(\bK')} - \bP_{\bK, \bL(\bK)}\|_F.
\end{align*}
\normalsize
Applying \eqref{bound:pk}, \eqref{bound:k}, \eqref{bound:lk}, \eqref{eqn:p_perturb}, and \eqref{eqn:lk_perturb} proves \eqref{bound:subfk}, such that
\begin{align*}
	\|\bF_{\bK', \bL(\bK')} - \bF_{\bK, \bL(\bK)}\|_F &\leq 2\big(\|\bR^u\|_F+ \|\bB\|^2_F\cG(\bK, \bL(\bK))\big)\cdot \|\bK'-\bK\|_F + 2\big(\|\bB\|_F\|\bD\|_F\cG(\bK, \bL(\bK))\big)\cdot\|\bL(\bK') - \bL(\bK)\|_F \\
	&\hspace{1em} + 2\Big(\|\bB\|_F^2\big(c_{3, \bK} + \cB_{1, \bK}\big) + \|\bB\|_F\|\bA\|_F + \|\bB\|_F\|\bD\|_F(c_{4, \bK}+\cB_{\bL(\bK), \bK})\Big)\cdot \|\bP_{\bK', \bL(\bK')} - \bP_{\bK, \bL(\bK)}\|_F\\
	&\leq 2\Big[\|\bR^u\|_F + \|\bB\|_F^2\cG(\bK, \bL(\bK)) + \|\bB\|_F\|\bD\|_F\cG(\bK, \bL(\bK))\cB_{\bL(\bK), \bK} \\
	&\hspace{1em} + \cB_{\bP, \bK}\|\bB\|_F\Big(\|\bB\|_F\big(c_{3, \bK} + \cB_{1, \bK}\big) + \|\bA\|_F + \|\bD\|_F(c_{4, \bK} +\cB_{\bL(\bK), \bK})\Big)\Big] \cdot \|\bK'-\bK\|_F.
\end{align*}
This completes the proof.
\end{proof}

\end{document}